\numberwithin{equation}{section}
\theoremstyle{plain}
\newtheorem{theorem}{Theorem}[section]
\newtheorem{lemma}[theorem]{Lemma}
\newtheorem{proposition}[theorem]{Proposition}
\theoremstyle{definition}
\newtheorem{remark}[theorem]{Remark}
\def\beqn{\begin{equation}}
\def\beqn*{$$}
\def\eeqn{\end{equation}}
\def\ms{\mathsf}
\def\P{\mathbb{P}}
\def\E{\mathbb{E}}
\def\ga{\gamma}
\def\Pn{\mathcal P_n}
\newcommand{\reals}{{\mathbb R}}
\newcommand{\R}{\reals}
\newcommand{\bbn}{{\mathbb N}}
\newcommand{\vep}{\varepsilon}
\newcommand{\B}{\mathcal B}
\newcommand{\ta}{\theta}
\newcommand{\btheta}{{\bm \theta}}
\newcommand{\one}{{\mathbf 1}}
\newcommand{\remove}[1]{}
\newcommand{\A}{\mathcal A}
\newcommand{\al}{\alpha}
\newcommand{\bt}{{\bf t}}
\newcommand{\bs}{{\bf s}}
\newcommand{\mT}{\mathcal T}
\newcommand{\mD}{\mathcal D}
\newcommand{\gaR}{{\gamma R_n}}
\newcommand{\half}{\frac{1}{2}}
\newcommand{\radpdf}{\bar \rho_{n,\al}}
\newcommand{\Ckal}{B_{k,\al}}
\newcommand{\indh}{{2\al/(k-1)}}
\newcommand{\Ta}{\Theta}
\newcommand{\mC}{\mathcal C}
\newcommand{\mainu}{e^{\frac{1}{2}(d-1)u}}
\newcommand{\wholet}{{0\le t_1 \le \cdots \le t_{m-1} \le u}}
\newcommand{\mS}{\mathcal S}
\newcommand{\mQ}{\mathcal Q}
\newcommand{\bphi}{{\bm \phi}}
\newcommand{\bpsi}{{\bm \psi}}
\newcommand{\mU}{\mathcal U}
\newcommand{\mA}{\mathcal A}
\begin{document}

\bibliographystyle{abbrv}

\renewcommand{\baselinestretch}{1.05}

\title[Stable and Fr\'echet limit theorem]
{Stable and Fr\'echet limit theorem for subgraph functionals in the hyperbolic random geometric graph}

\author{Christian Hirsch}
\address{Department of Mathematics\\
Aarhus University \\
Ny Munkegade, 118, 8000, Aarhus C, Denmark}
\email{hirsch@math.au.dk}
\author{Takashi Owada}
\address{Department of Statistics\\
Purdue University \\
IN, 47907, USA}
\email{owada@purdue.edu}
\author{Ruiting Tong}
\address{Department of Statistics\\
Purdue University \\
IN, 47907, USA}
\email{tong125@purdue.edu}

\thanks{Hirsch's research was supported by a research grant (VIL69126) from VILLUM FONDEN. Owada's research was partially supported by the AFOSR grants FA9550-22-1-0238 at Purdue University. }

\subjclass[2020]{Primary 60D05, 60F17.  Secondary 60G52, 60G70.}
\keywords{hyperbolic random geometric graph, stochastic geometry, stable limit theorem, extreme value theory, star shape counts, clique counts}

\begin{abstract}
We study the fluctuations of subgraph counts in hyperbolic random geometric graphs on the $d$-dimensional Poincar\'e ball in the heterogeneous, heavy-tailed degree regime. In a hyperbolic random geometric graph whose vertices are given by a Poisson point process on a growing hyperbolic ball, we consider two basic families of subgraphs: star shape counts and clique counts, and we analyze their global counts and maxima over the vertex set. Working in the parameter regime where a small number of vertices close to the center of the Poincar\'e ball carry very large degrees and act as hubs, we establish joint functional limit theorems for suitably normalized star shape and clique count processes together with the associated maxima processes. The limits are given by a two-dimensional dependent process whose components are a stable L\'evy process and an extremal Fr\'echet process, reflecting the fact that a small number of hubs dominates both the total number of local subgraphs and their extremes. As an application, we derive fluctuation results for the global clustering coefficient, showing that its asymptotic behavior is described by the ratio of the components of a bivariate L\'evy process with perfectly dependent stable jumps.
\end{abstract}

\maketitle

\section{Introduction}  \label{sec:intro}

Random geometric graphs   in negatively curved spaces have become canonical models for complex networks exhibiting  heavy-tailed degree distributions, tree-like structure, high clustering, and small typical distances. A prominent example is the hyperbolic random geometric graph (HRGG), in which vertices are embedded in a hyperbolic space and edges are drawn between points that are close in hyperbolic distance. Introduced in the physics literature by Krioukov et al.~in \cite{krioukov:papadopoulos:kitsak:vahdat:boguna:2010}, this model was shown  to  produce many features of real networks, such as scale-free behavior via power-law degree distributions. Subsequent works have rigorously analyzed connectivity \cite{bode:fountoulakis:muller:2016}, typical distances \cite{abdullah:fountoulakis:bode:2017}, clustering \cite{candellero:fountoulakis:2016}, and component structure \cite{bode:fountoulakis:muller:2015, fountoulakis:muller:2018, kiwi:mitsche:2019} in hyperbolic network models. 

In this paper we work with the  $d$-dimensional Poincar\'e ball with $d\ge 2$  as in \cite{owada:yogeshwaran:2022}:
\begin{equation}  \label{e:poincare.ball}
\B_d := \{ (x_1,\dots,x_d)\in \R^d: x_1^2 + \cdots + x_d^2 < 1 \}, 
\end{equation}
equipped with  the Riemannian metric 
\begin{equation}  \label{e:riemannian.metric}
ds^2 := \frac{4 (\dif x_1^2 + \cdots + \dif x_d^2)}{(1 - x_1^2 - \cdots - x_d^2)^2}. 
\end{equation}
We define a sequence $R_n := \frac{2}{d-1}\log n$ and consider the hyperbolic ball $B(o,R_n)\subset \B_d$, where $o$ is the origin of $\B_d$ and $R_n$ is the hyperbolic radius. The hyperbolic random geometric graph $\ms{HG}(R_n; \al)$ is then defined to have vertex set given by a Poisson point process $\Pn$ on $B(o,R_n)$ with intensity measure $n(\radpdf \otimes \pi)$, where $\radpdf$ is the density of the radial component, \emph{measured from the boundary of $B(o,R_n)$}; that is, 
\begin{equation}  \label{e:radial.pdf}
\radpdf (t) := \frac{\sinh^{d-1}\big(\al(R_n - t)\big)}{\int_0^{R_n}\sinh^{d-1}(\al s)\dif s}, \ \ \ 0 \le t \le R_n. 
\end{equation}
Moreover, $\pi$ denotes the uniform density of the angular component:
\begin{equation}  \label{e:uniform.angular}
\pi(\ta_1,\dots,\ta_{d-1}) := \frac{\prod_{i=1}^{d-2}\sin^{d-i-1}\ta_i}{2\prod_{i=1}^{d-1}\int_0^\pi \sin^{d-i-1}\ta \dif \ta}, \ \ \ (\ta_1,\dots,\ta_{d-1}) \in \mA_d:= [0,\pi]^{d-2}\times [0,2\pi). 
\end{equation}
The edge set of $\ms{HG}(R_n; \al)$ is defined as $\big\{ (p,q) \in \Pn^2 : 0 < d_H(p,q) \le R_n \big\}$, where $d_H$ denotes the hyperbolic distance determined  by \eqref{e:riemannian.metric}.  We refer to \cite[Section 2.1]{owada:yogeshwaran:2022} for the detailed derivation of \eqref{e:radial.pdf} and \eqref{e:uniform.angular}. 
In the special case $d=2$, the densities \eqref{e:radial.pdf} and \eqref{e:uniform.angular}  simplify respectively as 
\begin{equation}  \label{e:2dim.case.density}
\radpdf(t) = \frac{\al \sinh \big( \al(R_n-t) \big)}{\cosh (\al R_n)-1}, \ \ 0 \le t \le R_n, \ \ \ \text{and} \ \ \ \pi(\theta) =\frac{1}{2\pi}, \ \ \theta \in [0,2\pi). 
\end{equation}
In this case, the resulting model is often referred to as the  KPKVB model. 

A natural way to investigate  the local and mesoscopic structure of such graphs is  through subgraph counts. In the Euclidean setting, the asymptotic behavior of subgraph counts in random geometric graphs is by now well understood: depending on the regime, one obtains variance asymptotics and central limit theorems (CLTs) for counts of edges, trees, and more general subgraph patterns (see \cite{penrose:yukich:2001, penrose:yukich:2003} and the monograph \cite{penrose:2003}). In contrast,   much less is known in the hyperbolic setting. Among the early works on fluctuations of subgraph counts in HRGGs is that  of \cite{owada:yogeshwaran:2022}, who studied the number of copies of a fixed tree in the Poincar\'e ball \eqref{e:poincare.ball}. They obtained precise first- and second-order asymptotics and proved a CLT for subtree counts, revealing a rich pattern of phase transitions as the space becomes more hyperbolic. Additionally, \cite{fountoulakis:yukich:2020} established expectation and variance asymptotics and a CLT for the number of isolated and extreme points in the hyperbolic space. Although not directly related to the specific structure of the KPKVB model and its high-dimensional version, \cite{herold:hug:thale:2021} and \cite{hug:last:schulte:2024} develop fluctuation theory for geometric functionals in hyperbolic space: \cite{herold:hug:thale:2021} establishes quantitative CLTs for the $k$-skeleton of Poisson hyperplanes, while \cite{hug:last:schulte:2024} derives quantitative CLTs for geometric characteristics of hyperbolic Boolean models.

Cliques form another important class of subgraphs; they are widely used for community detection and are closely related to clustering coefficients and higher-order connectivity. For the HRGG, Bl{\"a}sius, Friedrich and Krohmer \cite{blasius:friedrich:krohmer:2018} investigated the expected number of cliques, together with the size of the largest clique, and identified a phase transition at a critical power-law exponent. Moreover, \cite{baguley:maus:ruff:skretas:2025} studied the clique number and degeneracy for colouring problems. All of these works are essentially the first-moment or extremal results   and do not explore the second-order behavior or limit theorems for the fluctuations of clique counts. To the best of our knowledge, there are at present no CLTs, stable limit theorems, or functional limit theorems for clique counts in HRGGs. 

A striking phenomenon that is critical for the present work can already  be found  in \cite{fountoulakis:yukich:2020}, where the authors proved that for $\al>1$ in \eqref{e:2dim.case.density}, 
a Gaussian limit appears for the suitably normalized count of isolated vertices, whereas for $\al \in \big(\half, 1\big)$ 
the usual CLT fails. 
The failure of Gaussian fluctuations suggests that in heavy-tailed regimes with smaller $\al$, the functionals of hyperbolic graphs should instead be governed by non-Gaussian stable laws. 

To complement this point, we conduct simulations of $\ms{HG}(R_n; \al)$ in the Poincar\'e disk with $d=2$. For small values of $\al$ (see Figure \ref{fig:simulation} (a)), the simulations exhibit a highly heterogeneous structure: a small number of vertices close to the center, carry very large degrees and act as hubs, while the majority of vertices near the boundary have relatively small degrees. The resulting graph displays high clustering and a  core–periphery structure. As $\al$ increases (see Figure \ref{fig:simulation} (d)), these hubs become less dominant and the network geometry gradually resembles that of a ``classical" random geometric graph. These simulation results are consistent with the failure of the CLT  in \cite{fountoulakis:yukich:2020}, and are in line with related findings in other hyperbolic models: in the setting of Poisson hyperplanes, \cite{herold:hug:thale:2021} also proved the breakdown of Gaussian weak limits.
Motivated by this picture, a main contribution of this paper is to show rigorously that when $\al$ is small, statistics based on subgraph counts exhibit heavy-tailed fluctuations, which naturally lead to stable limit theorems. This is the first work to identify stable weak limits in a Poincar\'e-ball-type setting.

\begin{figure}
\centering
\includegraphics[scale=0.55]{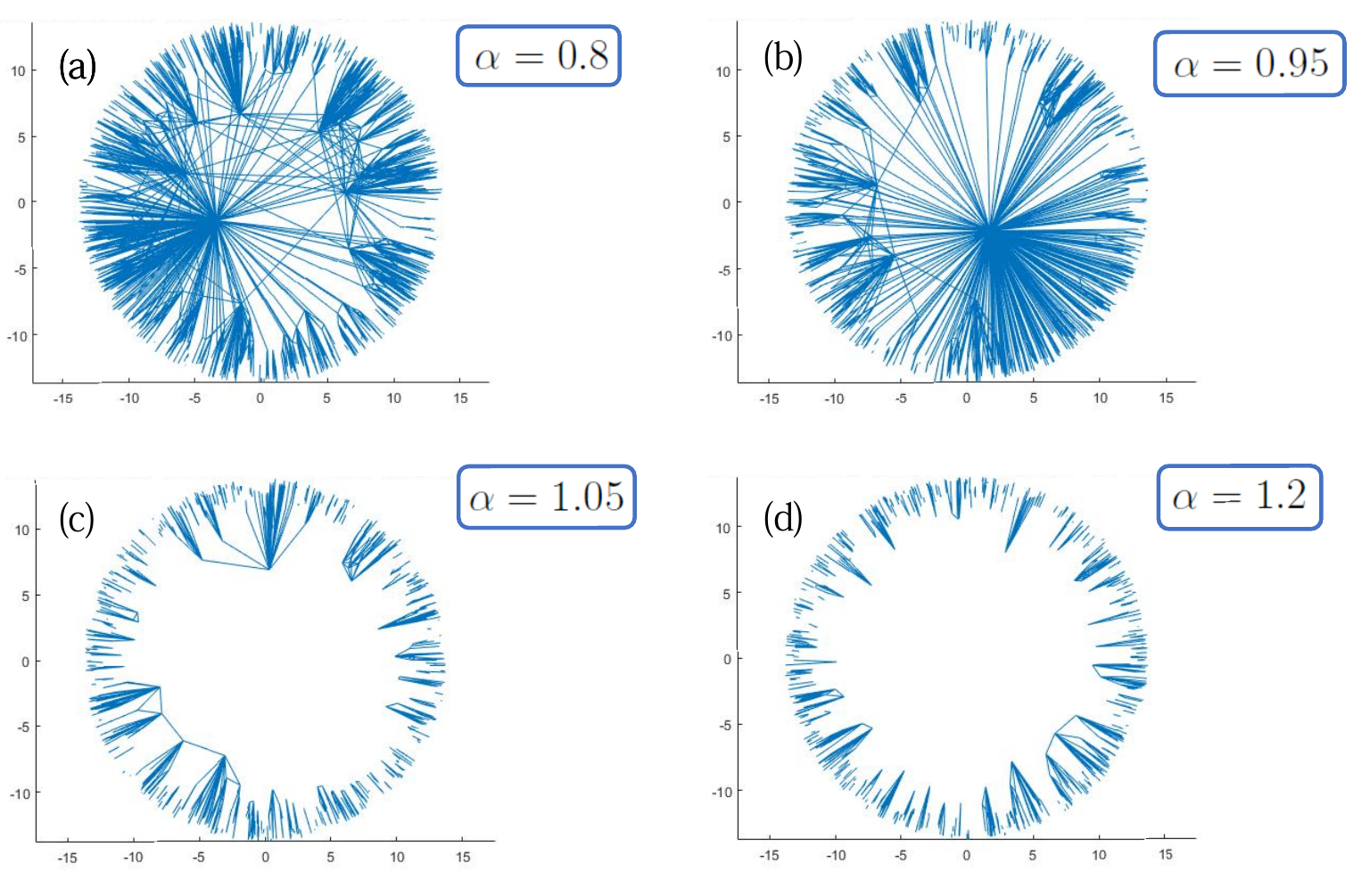}
\caption{\label{fig:simulation} \footnotesize{Simulations of $\ms{HG}(R_n; \al)$ with $d=2$, $R_n = 2 \log 1000 = 13.82$, and different values of $\al$. This figure is taken from \cite{owada:yogeshwaran:2022}. } }
\end{figure}

From a probabilistic viewpoint, heavy-tailed limit theorems for sums and extremes provide a natural framework for our results. It is well known that partial sums of regularly varying sequences lie in the domain of attraction of (non-Gaussian) stable laws. In the functional setting, one typically obtains stable L\'evy processes as the scaling limits of the partial sum processes. Moreover, extremal Fr\'echet processes arise as weak limits of partial maxima processes; see, e.g.,  \cite{resnick:2007}. In the extreme-value literature, a substantial body of work establishes joint weak convergence of partial sum and partial maxima processes for a broad class of regularly varying (and often dependent) sequences; see \cite{chow:teugels:1978, resnick:1986, krizmanic:2020, bai:tang:2024, matsui:mikosch:wintenberger:2025}. According to these results, in heavy-tailed models,  sums and maxima typically grow on the same scale, and a joint analysis is required to capture their intrinsic dependence, which arises from a  small number of extreme vertices dominating the contribution of all the others to the sum.
In the present context, where subgraph counts in hyperbolic graphs are built from vertices whose degrees are heavy-tailed, the hubs that generate the largest numbers of subgraphs also dominate the total number of local subgraph in the entire space. It is therefore natural to expect that the joint fluctuations of subgraph counts and their   maxima are asymptotically governed by a pair of dependent processes: a stable L\'evy process and an extremal Fr\'echet process driven by the same  extreme vertices. 

From a broader point of view, HRGGs are  special cases of more general frameworks for scale-free spatial networks. Geometric inhomogeneous random graphs (GIRGs), introduced by Bringmann, Keusch, and Lengler (\cite{bringmann:keusch:lengler:2019}), model vertices with independent weights and positions in a metric space, and connect pairs with probability depending on their weights and distance. The HRGGs are asymptotically special cases of GIRGs, and more generally as instances of spatial inhomogeneous random graphs (SIRGs). 
For the relation with the GIRGs, at least for $d$=2, the local limits of HRGGs are known to coincide with those of suitable GIRGs; see \cite[Theorem 9.33]{book-hof}. However, local limits capture only first-order, neighborhood-level properties. In contrast, the joint functional stable-Fr\'echet limit theorems established in this paper are driven directly  by  extreme vertices whose influence extends across the entire graph. Such fluctuation phenomena cannot be explored from the existing results on local weak convergence for GIRGs.
Additionally, within the SIRG framework, local weak limits and the scaling of clustering functions have recently been worked out (\cite{hofstad:hoorn:maitra:2023a, hofstad:hoorn:maitra:2023b}; see also \cite{komjathy:lodewijks:2020}), revealing, in particular, universal features of clustering in scale-free spatial networks. Nevertheless, distributional limit theorems for subgraph counts in these models are still  limited. One notable exception  is the age-dependent random connection model (ADRCM), for which Hirsch and Owada \cite{hirsch:owada:2026} established stable limit theorems for the counts of certain trees and cliques in heavy-tailed regimes. These ADRCM results, though non-hyperbolic, give important methodological guidance for the present work. However, the  results in \cite{hirsch:owada:2026} are restricted to one-dimensional (non-functional) limits; functional convergence in Skorokhod spaces and the joint behavior of sums and maxima are not treated there. 

Another key  observable in network analysis is the (global) clustering coefficient $\ms{CC}_n:= 3T_n/W_n$, where $T_n$ is the  number of triangles and $W_n$ the number of wedges in the network of size $n$. In many  random graph models with power-law degrees, the clustering coefficient converges to a positive constant. This has been shown, for example, in spatial preferential attachment models   \cite{jacob:morters:2015}, in the HRGG \cite{candellero:fountoulakis:2016, fountoulakis:hoorn:muller:schepers:2021}, and, in a more general setting, for SIRGs \cite{hofstad:hoorn:maitra:2023a} and for related hyperbolic and other random graph models \cite{stegehuis:hofstad:leeuwaarden:2019}. These works focus mainly on law-of-large-numbers behavior: they identify limiting values of clustering coefficients and  describe how clustering scales with degree, but they do not derive  limit theorems for the fluctuations of $\ms{CC}_n$. Since the numerator and denominator of $\ms{CC}_n$ are  triangle and wedge counts, any fluctuation theory for $\ms{CC}_n$ in hyperbolic graphs must be built on a joint analysis of these counting statistics. In this paper, we develop such a theory for $\ms{CC}_n$ in the HRGG. 

The remainder of  the paper is organized as follows. In Section \ref{sec:star.shape} we establish joint functional limit theorems for star shaped subgraph counts, with dependent stable L\'evy and extremal  Fr\'echet process limits.  Section \ref{sec:clique.counts} establishes analogous functional limit theorems for clique counts. 
A key mechanism there is domination by a small number of extreme vertices near the origin of \eqref{e:poincare.ball}, which simultaneously drive subgraph counts and their extremes. From this view, it is crucial to compute the moments of subgraph statistics when the extreme vertex is fixed near the origin; see Sections \ref{sec:moments.star} and \ref{sec:moments.clique}. We remark that the stable–Fr\'echet joint limits belong to the same universality class as classical i.i.d.~heavy-tailed sums and maxima. However, we would like to emphasize that the novelty of this paper does not lie in the form of the limiting process itself, but in the fact that it arises endogenously from hyperbolic geometry.  
Finally, Section \ref{sec:clustering.coeff} develops the fluctuation theory for the global clustering coefficient by exploiting the main results of the previous sections. 
The key mechanism in this section is  that  extreme vertices near the origin of \eqref{e:poincare.ball} simultaneously drive multiple subgraph statistics, leading to  aligned extreme fluctuations across them. Indeed,  the limiting random variable for  clustering coefficients is given by the ratio of two heavy-tailed random variables with perfectly aligned jumps.

From a technical viewpoint, our proofs combine tools from stochastic geometry and point process theory. We exploit the relationship between HRGGs and GIRGs/SIRGs (in particular the ADRCM), but we must refine these methods to cope with the strong inhomogeneity of the hyperbolic metric. In particular, when dealing with cliques on $m$ vertices (henceforth we call it $m$-cliques), one encounters a delicate regime $\half <\al< \frac{2m-3}{2m-2}$, in which contributions from a small number of extremely central vertices dominate more strongly than in the regime $\frac{2m-3}{2m-2} < \al <1$.  At present, we have not succeeded in decoupling this effect (see Remark \ref{rem:open.q.clique}), and fully treating the case $\half <\al< \frac{2m-3}{2m-2}$ remains an open problem. 

Another important direction for future research is the extension of our main results, namely Theorems \ref{t:joint.sum.max.star} and \ref{t:joint.sum.max.clique}, to more general subgraph counts. Unlike in Euclidean random geometric graphs (see, e.g., the monograph \cite{penrose:2003}), the situation for the HRGG is much more delicate due to the inhomogeneity of the hyperbolic metric, and we expect that substantially different techniques will be required to obtain the analogues of the first- and second-moment estimates in Sections \ref{sec:moments.star} and \ref{sec:moments.clique}.

Throughout the paper, we will use the following notation. The cardinality of a set $A$ is denoted by $|A|$. Given a sequence $(a_i)_{i\ge1}$ of real numbers, we denote $\bigvee_{i=1}^n a_i:= \max_{1\le i \le n} a_i$ and $\bigwedge_{i=1}^n a_i:= \min_{1\le i \le n} a_i$. Moreover, $\Rightarrow$ denotes weak convergence and $\stackrel{\P}{\to}$ is convergence in probability in a given space. Furthermore, $\one\{\cdot\}$ represents an indicator function and $(a)_+ = a$ if $a\ge0$ and $0$ otherwise. Finally, we will use the capital letter $C$ and its variants, say $C^*$ and $c_i$, to denote finite positive constants whose exact values are not important and may change from one appearance to the next.

\section{Star shape counts}   \label{sec:star.shape}

We first denote by $\Pn = \{ X_1,X_2,\dots,X_{N_n} \}$ the Poisson point process in $B(o,R_n)\subset \B_d$, where  $X_1,X_2,\dots$ are i.i.d.~random variables with density $\radpdf \otimes \pi$ and $N_n$ is Poisson distributed with mean $n$ and is independent of $(X_i)_{i\ge1}$. 
For $X, Y\in \Pn$, we write $X\to Y$ if and only if $d_H(X,Y)\le R_n$ and $d_H(o,X)> d_H(o, Y)$. 

Assume $1/2 < \al < k-1$, and 
let $p = (u,\btheta) \in B(o,R_n)$, where $u=R_n-d_H(o,p)$ is the radial component measured from the boundary of $B(o,R_n)$, and $\btheta=(\ta_1, \dots, \ta_{d-1})\in \A_d$ is the angular component. 
For $k\ge 2$, denote by $S_{k-1}$  the star graph on vertices $\{0,1,\dots,k-1\}$ with center $0$ and leaves $1,\dots,k-1$ (i.e., edges $\{0,i\}$ for $i=1,\dots,k-1$). Then, an \emph{injective} graph homomorphism $\phi :S_{k-1}\to \ms{HG}(R_n; \alpha)\cup \{p\}$ with the constraint $\phi(0)=p$ is uniquely determined by an ordered $(k-1)$-tuple of distinct neighbors of $p$ in $\ms{HG}(R_n; \al)$. Hence, the (labeled) star homomorphism count centered at $p$ is defined by 
\begin{equation}  \label{e:def.star.shape.counts}
\mD_{k,n}(p):= \sum_{(Y_1,\dots, Y_{k-1})\in (\Pn)_{\neq}^{k-1}} \one \big\{ \max_{1\le i \le k-1}d_H(Y_i,p)\le R_n \big\}, 
\end{equation}
where 
$(\Pn)_{\neq}^{k-1} := \big\{ (Y_1,\dots, Y_{k-1}) \in \Pn^{k-1}: Y_i \neq Y_j \text{ for } i \neq j \big\}$. 
Note that $\mD_{k,n}(p)$ does not necessarily enumerate induced stars. 
For $u \in (0,R_n)$, write $\mD_{k,n}(u) := \mD_{k,n}\big(u,{\bf 0}\big)$ with ${\bf 0} = (0,\dots,0)\in \mA_d$ and define $\mu_{k,n}(u) := \E \big[ \mD_{k,n}(u,\btheta) \big]$. Note that $\mu_{k,n}(u)$ is invariant with respect to $\btheta\in \mA_d$, due to the uniformity of the density \eqref{e:uniform.angular}. 

Our main theorem describes the joint weak convergence of the process-level star shape counts $\sum_{i=1}^{[N_nt]} \mD_{k,n}(X_i)$ and their maxima process $\bigvee_{i=1}^{[N_nt]}\mD_{k,n}(X_i)$. We note that the boundary case $\al = (k-1)/2$ is excluded from consideration.
It is already known  from \cite[Theorem 3]{owada:yogeshwaran:2022} (see in particular Equ.~(20) therein) that if $\al>k-1$, a (non-functional) CLT holds for the star shape counts. Thus,  when combined with Theorem \ref{t:joint.sum.max.star} below,  we have obtained a complete picture of the asymptotic behavior of the star shape counts, apart from the boundary case.

Before stating the theorem, we introduce the scaling constants $a_{k,n}:=B_{k,\al} n^{(k-1)/(2\al)}$ for $n\ge1$, where 
$$
B_\al := \frac{2^d \al}{(d-1)(2\al-1)\kappa_{d-2}} \ \ \text{ with } \ \ \kappa_{d-2} := \int_0^\pi \sin^{d-2}\ta \dif \ta,
$$
and  $B_{k,\al} := (B_\al)^{k-1}$. 

\begin{theorem}  \label{t:joint.sum.max.star}
$(i)$ If $\half (k-1)<\al< k-1$, then as $n\to\infty$, 
\begin{align}
&\left( \frac{1}{a_{k,n}} \bigg( \sum_{i=1}^{[N_n\cdot\,]}\mD_{k,n}(X_i) - [N_n\cdot\,]  \E [\mD_{k,n}(X_1)] \bigg), \ \frac{1}{a_{k,n}}\bigvee_{i=1}^{[N_n \cdot \,]} \mD_{k,n}(X_i)\right)  \Rightarrow (S_{2\al/(k-1)}(\cdot),\, Y_{2\al/(k-1)}(\cdot)),   \label{e:weak.conv.join.sum.max.star}
\end{align}
in the space $D\big([0,1], \R \times [0,\infty)\big)$ of right-continuous functions from $[0,1]$ to $\R\times [0,\infty)$ with left limits. The weak limit in \eqref{e:weak.conv.join.sum.max.star} is defined by the hybrid characteristic–distribution function
\begin{equation}  \label{e:hybrid1}
\E \big[ e^{ivS_{2\al/(k-1)}(t)} \one \{ Y_{2\al/(k-1)}(t) \le z \} \big] = t\int_0^z e^{ivy} g(v,y)^t e^{-ty^{-2\al/(k-1)}} \ms{m}_\indh (\dif y),  \  \ v\in \R,\,  z\ge 0,  
\end{equation}
where
$$
g(v,y) := \exp \Big\{ \int_0^\infty \big( (e^{ivx}-1)\one \{ x<y \} -ivx \big) \ms{m}_\indh (\dif x)\Big\}, 
$$
and $\ms{m}_{2\al/(k-1)}$ is a Radon measure on $(0,\infty]$ satisfying $\ms{m}_{2\al/(k-1)}\big((y,\infty]\big)=y^{-2\al/(k-1)}$ for $y>0$. \\
Marginally, $S_{2\al/(k-1)}$ is a zero-mean $2\al/(k-1)$-stable L\'evy process whose characteristic function is given by \eqref{e:chf.first.comp.zero.mean.stable} below. Furthermore, $Y_{2\al/(k-1)}$ is an extremal $2\al/(k-1)$-Fr\'echet process; that is, for $0\equiv t_0<t_1<\cdots <t_m<\infty$ and $z_i\ge0$, $i=1,\dots,m$, 
$$
\P\big( Y_{2\al/(k-1)}(t_i)\le z_i, \, i=1,\dots,m \big)=\exp \Big\{ -\sum_{j=1}^m (t_j-t_{j-1}) (z_j \wedge \cdots \wedge z_m)^{-2\al/(k-1)} \Big\}. 
$$
$(ii)$ If $\frac{1}{2}<\al<\half (k-1)$, then as $n\to\infty$, 
$$
\left( \frac{1}{a_{k,n}}  \sum_{i=1}^{[N_n \cdot\, ]}\mD_{k,n}(X_i), \ \frac{1}{a_{k,n}} \bigvee_{i=1}^{[N_n \cdot \,]} \mD_{k,n}(X_i) \right)  \Rightarrow (\widetilde{S}_{2\al/(k-1)}(\cdot), Y_\indh(\cdot)), \ \ \text{in } D\big( [0,1], [0,\infty)^2\big), 
$$
where the weak limit is defined by 
\begin{equation}  \label{e:hybrid2}
\E \big[ e^{iv\widetilde{S}_{2\al/(k-1)}(t)} \one \{ Y_{2\al/(k-1)}(t) \le z \} \big] = t\int_0^z e^{ivy} \, \widetilde{g}(v,y)^t e^{-ty^{-2\al/(k-1)}} \ms{m}_\indh (\dif y),  \ \ \ v\in \R,\,  z\ge 0,  
\end{equation}
with 
$$
\widetilde{g}(v,y) := \exp \Big\{ \int_0^y (e^{ivx}-1) \ms{m}_\indh (\dif x)\Big\}. 
$$
Marginally, $\widetilde S_{2\al/(k-1)}$ is a $2\al/(k-1)$-stable subordinator (i.e., a stable L\'evy process with increasing paths) with its characteristic function given by \eqref{e:chf.first.comp.zero.stable.subordinator} below. 
\end{theorem}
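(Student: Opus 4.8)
# Proof Strategy for Theorem \ref{t:joint.sum.max.star}

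The plan is to reduce the problem to a heavy-tailed limit theorem for i.i.d.\ (or near-i.i.d.) summands by a careful moment analysis of $\mD_{k,n}(X_i)$, exploiting that a vertex $X_i$ contributes a large star count precisely when it sits very close to the center of the Poincar\'e ball. First I would condition on a vertex at radial coordinate $u = R_n - d_H(o, X_i)$ (measured from the boundary) and compute the conditional expectation $\mu_{k,n}(u) = \E[\mD_{k,n}(u,\btheta)]$. Using the volume growth of hyperbolic balls and the radial density \eqref{e:radial.pdf}, one expects $\mu_{k,n}(u) \asymp (\text{const}\cdot e^{(d-1)u/2})^{k-1}$ up to lower-order corrections, so that after pushing $u$ through the radial density the distribution of $\mu_{k,n}(U)/a_{k,n}$, with $U$ having density $\radpdf$, is regularly varying of index $2\al/(k-1)$; this is where the specific scaling $a_{k,n} = B_{k,\al}\, n^{(k-1)/(2\al)}$ and the tail measure $\ms{m}_{2\al/(k-1)}$ with $\ms{m}_{2\al/(k-1)}((y,\infty]) = y^{-2\al/(k-1)}$ emerge. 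These first-moment computations are precisely what Section \ref{sec:moments.star} is meant to supply, so I would invoke them as a black box here.

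The second step is to establish a Poisson-approximation / point-process convergence statement: the empirical point process $\sum_{i=1}^{N_n}\delta_{(i/N_n,\, \mD_{k,n}(X_i)/a_{k,n})}$ on $[0,1]\times(0,\infty]$ converges weakly to a Poisson random measure with intensity $\mathrm{Leb}\otimes \ms{m}_{2\al/(k-1)}$. The subtlety, and the reason the sum and max must be analyzed jointly, is that conditionally on one hub vertex being extremely central, the \emph{rest} of the star count (the part of $\mD_{k,n}(X_i)$ coming from other vertices and the cross terms among non-hub vertices) is not negligible at scale $a_{k,n}$: it produces the drift/compensation term and the function $g(v,y)$ (resp.\ $\widetilde g(v,y)$) in \eqref{e:hybrid1} (resp.\ \eqref{e:hybrid2}). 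I would handle this by a decomposition $\mD_{k,n}(X_i) = \mD_{k,n}^{\mathrm{hub}}(X_i) + \mD_{k,n}^{\mathrm{bulk}}(X_i)$ and show that the bulk part, summed over $i$, concentrates (a second-moment / variance bound, to be proved in Section \ref{sec:moments.star}) around its mean, while its contribution conditioned on being near a hub vertex gives exactly the extra Lévy factor. The split between cases $(i)$ and $(ii)$ is governed by whether $\al > \half(k-1)$: when $\al$ is large the bulk part has finite variance after centering and contributes the compensation $-ivx$ inside $g$, giving a zero-mean stable Lévy process; when $\al < \half(k-1)$ even the bulk count is heavy enough that no centering is possible (or needed at scale $a_{k,n}$), the limit is a stable subordinator, and $\widetilde g$ carries only the $(e^{ivx}-1)$ term.

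From the point-process convergence, the joint convergence of the partial-sum and partial-maximum processes in $D([0,1],\R\times[0,\infty))$ follows by a continuous-mapping argument à la \cite{resnick:1986, resnick:2007}: the maximum functional $\nu \mapsto \sup\{y : \nu([0,t]\times(y,\infty]) \ge 1\}$ is continuous (in the appropriate vague topology, away from a null set), and the summation functional requires the usual truncation argument — sum the jumps exceeding $\epsilon$, show the small jumps (after centering, in case $(i)$) are uniformly negligible, and let $\epsilon\downarrow 0$. The hybrid characteristic--distribution function \eqref{e:hybrid1} is then read off directly from the Poisson structure: conditioning on $\{Y_{2\al/(k-1)}(t)\le z\}$ means no Poisson point in $[0,t]\times(z,\infty]$, and integrating the single ``record'' point over $(0,z]$ against $\ms{m}_{2\al/(k-1)}$ while the remaining points below $z$ produce $g(v,y)^t$ yields the stated formula; the marginal identifications of $S_{2\al/(k-1)}$ as a zero-mean stable Lévy process and of $Y_{2\al/(k-1)}$ as an extremal Fréchet process are then immediate from setting $z=\infty$ or $v=0$ respectively. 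I expect the main obstacle to be the Skorokhod-space tightness together with the uniform negligibility of the centered bulk/small-jump contributions: because the hyperbolic metric is strongly inhomogeneous, controlling the variance of $\sum_i \mD_{k,n}^{\mathrm{bulk}}(X_i)$ uniformly in $t$ — ruling out that a moderate number of moderately-central vertices conspire to create a spurious fluctuation at scale $a_{k,n}$ — will require the delicate second-moment estimates of Section \ref{sec:moments.star}, and it is exactly the analogue of this step that fails in the open regime $\half<\al<\frac{2m-3}{2m-2}$ for cliques mentioned in the introduction.
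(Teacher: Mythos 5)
Your overall skeleton (moment asymptotics $\to$ point-process convergence $\to$ continuous mapping with an $\vep$-truncation of the jumps $\to$ uniform negligibility of the centered small-jump part) is the same architecture as the paper, and your identification of the scaling $a_{k,n}$, the tail measure $\ms{m}_{2\al/(k-1)}$, and the role of the max functional are correct. However, there are two genuine problems. First, your account of where $g(v,y)$ comes from is wrong: it is not an ``extra L\'evy factor'' produced by the bulk part of an individual count $\mD_{k,n}(X_i)$ conditioned on a nearby hub. In the paper (and in the classical Chow--Teugels/Resnick i.i.d.\ sum--max theory it invokes), $g(v,y)^t$ is simply the conditional characteristic function of $S_{2\al/(k-1)}(t)-Y_{2\al/(k-1)}(t)$ given $Y_{2\al/(k-1)}(t)=y$, i.e.\ it records the sub-maximal Poisson points of the \emph{same} limiting point process (together with the deterministic centering, which is where the $-ivx$ term originates); no hub/bulk decomposition of a single star count enters, and your proposed decomposition $\mD_{k,n}=\mD^{\mathrm{hub}}+\mD^{\mathrm{bulk}}$ is not needed and would not produce \eqref{e:hybrid1}.

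Second, and more seriously, your plan works directly with the point process of the actual counts $\mD_{k,n}(X_i)/a_{k,n}$ and then asserts that the centered small-jump sum is uniformly negligible using ``the second-moment estimates of Section \ref{sec:moments.star}.'' But the variables $\mD_{k,n}(X_1),\dots,\mD_{k,n}(X_{N_n})$ are \emph{not} independent: they are all functionals of the same Poisson process $\Pn$ (the leaves are shared), so Kolmogorov's maximal inequality is not available, and Proposition \ref{p:var.mu.n} only bounds the variance of a single count with a fixed center---it says nothing about the covariances $\mathrm{Cov}\big(\mD_{k,n}(X_i),\mD_{k,n}(X_j)\big)$ that dominate the variance of the partial sums. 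This is exactly the difficulty the paper's proof is organized around: it first proves the functional limit for the genuinely i.i.d.\ surrogates $\mu_{k,n}(U_i)$ (Propositions \ref{p:pp.conv.star} and \ref{p:main.mu.version.star}, where Kolmogorov's inequality applies), and then transfers to $\mD_{k,n}(X_i)$ via Lemmas \ref{l:marginal.max.star} and \ref{l:stable.limit.thm.D}, which in case $(i)$ require a radial cutoff $c_1R_n$, a decoupling of the Poissonized expectation, explicit cross-covariance estimates $K_{n,\ell}(p,q)$, and Pruss's maximal inequality for exchangeable sequences (Lemma \ref{l:Pruss.maximal.inequ}). Without either this surrogate-and-comparison device or some substitute maximal inequality valid for the dependent counts, your uniform negligibility step does not go through, so the proposal as written has a gap at precisely the point you flag as ``the main obstacle.''
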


\begin{remark} 
$(i)$ The hybrid characteristic–distribution function $\E\big[ e^{ivX}\one \{Z\le z\} \big]$ combines the characteristic function of $X$ with the distribution function of a non-negative random variable $Z$. This function uniquely determines the joint distribution of $(X,Z)$ (see \cite[Lemma D.2.1]{mikosch:wintenberger:2024} and \cite{chow:teugels:1978}). 

\noindent $(ii)$ Since Lemma \ref{l:marginal.max.star} in Section \ref{sec:main.result.star.count}  applies for all $\half < \al < k-1$, the restriction $\al \neq (k-1)/2$ is not required for the weak convergence of the partial maxima process.
\end{remark}

For the proof of Theorem \ref{t:joint.sum.max.star}, we first analyze the first and second moments $\mu_{k,n}(u)$   and $\text{Var}\big( \mD_{k,n}(u) \big)$ for $u\in (0,R_n)$, in Propositions \ref{p:exp.mu.n} and \ref{p:var.mu.n}, respectively. Based on these moment asymptotics, one of the main goals of Section \ref{sec:pp.conv.star.shape} is to  derive the weak convergence of a point process associated with $\big( a_{k,n}^{-1}\mu_{k,n}(U_i), \, i=1,\dots,N_n \big)$. Section \ref{sec:main.result.star.count} then completes the proof of Theorem \ref{t:joint.sum.max.star}. First, Proposition \ref{p:main.mu.version.star} shows that this point process convergence yields  the weak  convergence of the star shape counts and their maxima associated with $\big( a_{k,n}^{-1}\mu_{k,n}(U_i), \, i=1,\dots,N_n \big)$. Subsequently, Lemmas \ref{l:marginal.max.star} and \ref{l:stable.limit.thm.D} ensure that the discrepancy between the weak convergence for $\big( a_{k,n}^{-1}\mu_{k,n}(U_i), \, i=1,\dots,N_n \big)$ and that for $\big( a_{k,n}^{-1}\mD_{k,n}(X_i), \, i=1,\dots,N_n \big)$ is negligible.

\subsection{Moments}  \label{sec:moments.star}

We begin with the first moment results on $\mu_{k,n}(u)$ for $u\in (0,R_n)$. 

\begin{proposition} \label{p:exp.mu.n}
Let $\gamma\in \big(\frac{1}{2\al},1\big)$. \\
$(i)$ We have, uniformly for $u \in (0,\gaR)$, 
$$
\mu_{k,n}(u) \le \big( 1+o_n(1) \big) B_{k,\al} \bigg( 1+ \sum_{\ell=0}^{k-2}\binom{k-1}{\ell} (e^{\half (d-1)(1-2\al)u} +s_n)^{k-1-\ell} \bigg) e^{\frac{1}{2}(d-1)(k-1)u}, 
$$
where $s_n:= B_\al^{-1} n^{-(2\al-1)(1-\ga)} e^{\al(d-1)\omega_n}$. \\
$(ii)$ We have, uniformly for $u \in (0,\gaR)$, 
$$
\mu_{k,n}(u) \ge \big( 1+o_n(1) \big) B_{k,\al} \big( 1- e^{\half (d-1)(1-2\al)u} \big)^{k-1} e^{\frac{1}{2}(d-1)(k-1)u}. 
$$
$(iii)$ There exists $C\in (0,\infty)$, independent of $n$ and $u$, such that for all $n\ge1$ and $u\in (0,R_n)$,  
$$
\mu_{k,n} (u) \le C e^{\al(d-1)(k-1)\omega_n}\cdot e^{\frac{1}{2}(d-1)(k-1)u}, 
$$ 
where $\omega_n :=\log \log R_n$. 
\end{proposition}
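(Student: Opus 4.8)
The plan is to reduce the proposition to a scalar asymptotic analysis and then raise the result to the power $k-1$. Since $p=(u,\btheta)$ is deterministic and the indicator in \eqref{e:def.star.shape.counts} factorizes over $Y_1,\dots,Y_{k-1}$, the multivariate Mecke equation for the Poisson process $\Pn$ (with intensity $n(\radpdf\otimes\pi)$) gives
\[
\mu_{k,n}(u)=\nu_n(u)^{\,k-1},\qquad \nu_n(u):=n\int_{B(o,R_n)}\one\{d_H(y,p)\le R_n\}\,(\radpdf\otimes\pi)(\dif y).
\]
Hence it suffices to establish the three scalar estimates $\nu_n(u)\le(1+o_n(1))B_\al\big(1+e^{\frac12(d-1)(1-2\al)u}+s_n\big)e^{\frac12(d-1)u}$ and $\nu_n(u)\ge(1+o_n(1))B_\al\big(1-e^{\frac12(d-1)(1-2\al)u}\big)e^{\frac12(d-1)u}$, both uniformly on $(0,\gaR)$, together with $\nu_n(u)\le Ce^{\al(d-1)\omega_n}e^{\frac12(d-1)u}$ uniformly on $(0,R_n)$. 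Parts $(i)$--$(iii)$ then follow by expanding the $(k-1)$st power, using $(1+x)^{k-1}=1+\sum_{\ell=0}^{k-2}\binom{k-1}{\ell}x^{k-1-\ell}$ and $B_\al^{k-1}=B_{k,\al}$.

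For the scalar estimates I pass to geodesic polar coordinates $y=(v,\bphi)$ with $v=R_n-d_H(o,y)$, and use the rotation invariance of $\pi$ to write $\nu_n(u)=n\int_0^{R_n}\radpdf(v)\,\Theta_n(u,v)\,\dif v$, where $\Theta_n(u,v)=\kappa_{d-2}^{-1}\int_0^{\psi_{u,v}}\sin^{d-2}\theta\,\dif\theta$ is the normalized spherical cap of half-angle $\psi_{u,v}\in[0,\pi]$ determined by the hyperbolic law of cosines
\[
\cos\psi_{u,v}=\frac{\cosh(R_n-u)\cosh(R_n-v)-\cosh R_n}{\sinh(R_n-u)\sinh(R_n-v)},
\]
with the convention $\Theta_n(u,v)=1$ when the right-hand side is $\le -1$, i.e.\ in the regime $u+v\gtrsim R_n$. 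Two asymptotic inputs are then needed. First, a Laplace estimate of the normalizer in \eqref{e:radial.pdf} gives $\int_0^{R_n}\sinh^{d-1}(\al s)\,\dif s=(1+o_n(1))\,2^{-(d-1)}\big(\al(d-1)\big)^{-1}e^{\al(d-1)R_n}$, so that, since $e^{\al(d-1)R_n}=n^{2\al}$, one has $\radpdf(v)=(1+o_n(1))\,\al(d-1)\,e^{-\al(d-1)v}\big(1-e^{-2\al(R_n-v)}\big)^{d-1}$ uniformly in $v$, and in particular $\radpdf(v)\le Ce^{-\al(d-1)v}$. Second, expanding the law-of-cosines identity (retaining the $\cosh/\sinh$ remainders) shows that on $\{u+v\le R_n\}$, when $R_n-u$ is bounded below, $\Theta_n(u,v)=(1+o(1))\frac{2^{d-1}}{(d-1)\kappa_{d-2}}e^{\frac12(d-1)(u+v-R_n)}$, the $o(1)$ being of order $e^{u+v-R_n}$ plus the above remainders, whereas $\Theta_n(u,v)=1$ on $\{u+v\ge R_n\}$.

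I then split the $v$-integral at $v=R_n-u$. On $(0,R_n-u)$, inserting the small-cap asymptotics and using the identities $ne^{-\frac12(d-1)R_n}=1$ and $\int_0^\infty e^{\frac12(d-1)(1-2\al)v}\,\dif v=\frac{2}{(d-1)(2\al-1)}<\infty$ (the convergence of the latter is precisely where $\al>\frac12$ is used), the leading contribution equals $\frac{2^{d-1}}{(d-1)\kappa_{d-2}}\cdot\al(d-1)\cdot\frac{2}{(d-1)(2\al-1)}\,e^{\frac12(d-1)u}=B_\al\,e^{\frac12(d-1)u}$. The remaining pieces --- the error from truncating the $v$-integral at $R_n-u$ instead of $\infty$, the contribution of the subleading term of $\Theta_n$ and of the deviation of $(1-e^{-2\al(R_n-v)})^{d-1}$ from $1$, and the contribution of $\{v\ge R_n-u\}$ (where $\Theta_n\equiv1$, giving $n\int_{R_n-u}^{R_n}\radpdf(v)\,\dif v\asymp ne^{-\al(d-1)(R_n-u)}=e^{\frac12(d-1)u}e^{-\frac12(d-1)(2\al-1)(R_n-u)}$) --- are, after using $u\le\gaR$ (with $\ga$ in the stated range, so that $e^{-\frac12(d-1)(2\al-1)(R_n-u)}\le n^{-(2\al-1)(1-\ga)}$), all dominated by $B_\al e^{\frac12(d-1)(1-2\al)u}e^{\frac12(d-1)u}$ together with $B_\al s_n e^{\frac12(d-1)u}$ (the factor $e^{\al(d-1)\omega_n}$ in $s_n$ absorbing the various $o_n(1)$'s); this gives the upper bound. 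The lower bound follows by keeping only the leading term and replacing each approximation by the matching one-sided inequality, noting that $1-e^{\frac12(d-1)(1-2\al)u}\ge0$ since $\al>\frac12$. For $(iii)$ I instead split at $u=R_n-\omega_n$: for $u\le R_n-\omega_n$ the computation above (now only requiring a fixed constant $C$ rather than $1+o_n(1)$, the condition $R_n-u\ge\omega_n\to\infty$ keeping all geometric remainders under control) gives $\nu_n(u)\le Ce^{\frac12(d-1)u}$, while for $u\in(R_n-\omega_n,R_n)$ one uses the trivial bound $\nu_n(u)\le n=e^{\frac12(d-1)R_n}\le e^{\frac12(d-1)\omega_n}e^{\frac12(d-1)u}\le e^{\al(d-1)\omega_n}e^{\frac12(d-1)u}$ (the last step because $\al>\frac12$); raising to the power $k-1$ completes $(iii)$.

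The step I expect to be the main obstacle is the uniform control of the angular cap $\Theta_n(u,v)$ underlying everything above. Away from the ``diagonal'' $u+v\approx R_n$ and from the degenerate corners --- where $p$ or $y$ sits very close to the center or to the boundary of $B(o,R_n)$, so that some $\cosh$ or $\sinh$ ratio in the law of cosines degenerates --- the small-cap expansion is routine; but near those regimes the remainder terms are no longer negligible, and the strong inhomogeneity of the hyperbolic metric forces one either to track them carefully (this is what produces the boundary correction $e^{\frac12(d-1)(1-2\al)u}$ and the $s_n$-term in $(i)$--$(ii)$) or, in $(iii)$, to abandon the sharp estimate near the center and accept the benign loss $e^{\al(d-1)(k-1)\omega_n}$.
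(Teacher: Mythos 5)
Your proposal is correct, and it reaches the stated bounds by a route that differs from the paper's in two organizational respects rather than in substance. Where the paper partitions $\mD_{k,n}(p)$ according to the number $\ell$ of neighbours with radial coordinate below $u$ and then, via the Mecke formula, estimates the resulting products $\big(A_n(u)+B_n(u)\big)^{\ell}\big(A_n'(u)+B_n'(u)\big)^{k-1-\ell}$ term by term, you observe that the multivariate Mecke formula gives the exact factorial-moment identity $\mu_{k,n}(u)=\nu_n(u)^{k-1}$ with $\nu_n(u)=\mu_{2,n}(u)$ the expected degree of $p$; the paper's $\ell$-decomposition is precisely the binomial expansion of this identity, so your reduction is a legitimate streamlining that derives $(i)$--$(iii)$ from a single scalar estimate and explains why the bound in $(i)$ has the form $(1+x)^{k-1}$ with $x=e^{\half(d-1)(1-2\al)u}+s_n$. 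Second, where the paper simply invokes Lemmas 1, 3 and 4 of \cite{owada:yogeshwaran:2022} for the radial density and the pairwise connection probability, you rederive them from the Laplace estimate of the normalizer in \eqref{e:radial.pdf} and the hyperbolic law of cosines; your constants and bookkeeping check out (in particular $e^{\al(d-1)R_n}=n^{2\al}$, $ne^{-\half(d-1)R_n}=1$, and the leading constant assembling to $B_\al$), and your splitting of the radial integral at $v=R_n-u$ instead of the paper's margin $R_n-\omega_n-u$ is admissible because, as you note, the near-diagonal and truncation errors are absorbed either into the $1+o_n(1)$ factor or into the $e^{\al(d-1)\omega_n}$ slack built into $s_n$. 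Two spots are terser than the argument ultimately requires, though neither is a gap: for the lower bound, to produce the factor $1-e^{\half(d-1)(1-2\al)u}$ you should make explicit that you restrict to $v\le u$ and that when $u>\half(R_n-\omega_n)$ the additional diagonal truncation still only costs a $1+o_n(1)$ factor (this is the paper's step \eqref{e:unif.conv.ratio}); and in $(iii)$ your claim $\nu_n(u)\le Ce^{\half(d-1)u}$ for $u\le R_n-\omega_n$ needs a constant-factor cap bound valid all the way up to the diagonal, whereas the cruder bound of the cap by $1$ on a window of width $\omega_n$ (the paper's $B_n$, $B_n'$ treatment) costs only a factor $e^{\al(d-1)\omega_n}$, which the statement permits.
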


\begin{proof}
Given $p = (u,\btheta) \in B(o,R_n)$ with $u=R_n-d_H(o,p)$ and $\btheta\in \A_d$, 
the star shape counts \eqref{e:def.star.shape.counts} admits the exact partition 
$$
\mD_{k,n}(p) = \sum_{\ell=0}^{k-1} \binom{k-1}{\ell} \sum_{(Y_1,\dots, Y_{k-1})\in (\Pn)_{\neq}^{k-1}} \hspace{-10pt}\one \big\{ \max_{1\le i \le k-1}d_H(Y_i,p)\le R_n, \, \max_{1\le i \le \ell}V_i \le u, \, \min_{\ell+1\le i \le k-1}V_i >u\big\}, 
$$
where $V_i = R_n - d_H(o,Y_i)$ for $i = 1,\dots,k-1$. For $\ell=0$, we remove the condition $\max_{1\le i \le \ell}V_i \le u$, and for $\ell = k-1$, the condition $\min_{\ell+1\le i \le k-1}V_i >u$ is removed. Taking expectations on the both sides, we obtain $\mu_{k,n}(u) = \sum_{\ell=0}^{k-1}\binom{k-1}{\ell}\mu_{k,n}^{(\ell)}(u)$, where
$$
\mu_{k,n}^{(\ell)}(u) := \E \bigg[ \sum_{(Y_1,\dots,Y_{k-1}) \in (\Pn)_{\neq}^{k-1}} \one \big\{ \max_{1\le i \le k-1}d_H(Y_i,p) \le R_n, \, \max_{1\le i \le \ell}V_i \le u, \, \min_{\ell+1\le i \le k-1}V_i >u \big\}\bigg]. 
$$

Recalling that $X_1,\dots,X_{k-1}$ denote   i.i.d.~random variables with density $\radpdf \otimes \pi$, such that $U_i = R_n-d_H(o,X_i)$, $i=1,\dots,k-1$, 
the Mecke formula for Poisson point processes (see, e.g., Chapter 4 in \cite{last:penrose:2017}) yields that 
$$
\mu_{k,n}^{(\ell)}(u) = n^{k-1} \P \big( \max_{1\le i \le k-1}d_H(X_i,p) \le R_n, \, \max_{1\le i \le \ell}U_i \le u, \, \min_{\ell+1\le i \le k-1}U_i >u \big). 
$$
By independence and the conditioning on radial components, 
\begin{align}  
\mu_{k,n}^{(\ell)}(u) &= \Big( n \int_0^u \P \big( d_H(X,p) \le R_n \,|\, t \big) \bar \rho_{n,\al}(t) \dif t \Big)^{\ell} \Big( n \int_u^{R_n} \P \big( d_H(X,p) \le R_n \,|\, t \big) \bar \rho_{n,\al}(t) \dif t \Big)^{k-1-\ell} \label{e:X=(t, Theta)} \\
&=:  \big( A_n(u)+B_n(u) \big)^\ell \big( A_n'(u)+B_n'(u) \big)^{k-1-\ell},  \notag 
\end{align}
where $X = (t,\Theta)$, so that the conditional probabilities above   act only on the angular component $\Theta$,  and 
\begin{align}   \label{e:An(u)}
A_n(u) &:= n\int_{0}^u  \P\big( d_H(X,p) \le R_n\, \big| \, t \big)\one \{ t +u\le R_n-\omega_n  \} \, \bar \rho_{n,\alpha}(t)\dif t, \\
B_n(u) &:= n\int_{0}^u  \P\big( d_H(X,p) \le R_n\, \big| \, t \big) \one \{ t +u >  R_n-\omega_n  \} \, \bar \rho_{n,\alpha}(t)\dif t,  \label{e:Bn(u)} \\
A_n'(u) &:= n\int_{u}^{R_n}  \P\big( d_H(X,p) \le R_n\, \big| \, t \big)\one \{ t +u\le R_n-\omega_n  \} \, \bar \rho_{n,\alpha}(t)\dif t,   \notag  \\ 
B_n'(u) &:= n\int_u^{R_n}  \P\big( d_H(X,p) \le R_n\, \big| \, t \big) \one \{ t +u >  R_n-\omega_n  \} \, \bar \rho_{n,\alpha}(t)\dif t. \notag 
\end{align}
\medskip

\noindent \underline{\emph{Proofs of $(i)$ and $(iii)$}}:
It follows from Lemma \ref{l:Lemma1.OY} $(ii)$ and Lemma \ref{l:Lemma4.OY} in the Appendix that, uniformly for $u\in (0,\gaR)$, 
\begin{equation}  \label{e:initial.Anu}
A_n(u) = \big( 1+o_n(1) \big) \frac{2^{d-1}\al}{\kappa_{d-2}}\, e^{\frac{1}{2}(d-1)u}\int_0^{u} e^{\half (d-1)(1-2\al)t} \one \{ t+u\le R_n-\omega_n \}\dif t. 
\end{equation}
We claim that,  as $n\to \infty$, 
\begin{equation}  \label{e:unif.conv.ratio}
\sup_{u \in (0,\gaR)} \Big( \int_0^u e^{\half (d-1)(1-2\al)t}\dif t \Big)^{-1}\int_0^u e^{\half (d-1)(1-2\al)t} \one \{ t+u> R_n-\omega_n \}\dif t \to 0. 
\end{equation}
For the proof, note first that $t+u>R_n-\omega_n$ implies $u>\half (R_n-\omega_n)$, so we may restrict to $u \in \big( \half (R_n-\omega_n), \gaR \big)$ (here we implicitly assume $\ga\ge 1/2$, since otherwise there is nothing to prove). Now,  \eqref{e:unif.conv.ratio} can be  bounded by
$$
\Big( \int_0^{\half (R_n-\omega_n)}e^{\half (d-1)(1-2\al)t}\dif t \Big)^{-1} \int_{(1-\ga)R_n-\omega_n}^\infty e^{\half (d-1)(1-2\al)t}\dif t \to 0, \ \ \ n\to\infty. 
$$
Thus, we can remove the indicator in \eqref{e:initial.Anu} and conclude that, uniformly for $u\in (0,\gaR)$,
\begin{align}
A_n(u) &= \big( 1+o_n(1) \big) \frac{2^{d-1}\al}{\kappa_{d-2}}\, e^{\frac{1}{2}(d-1)u}\int_0^{u} e^{\half (d-1)(1-2\al)t}\dif t \label{e:def.A.infty.u}\\
&= \big( 1+o_n(1) \big) B_\al \big( 1-e^{\half (d-1)(1-2\al)u} \big) e^{\half (d-1)u}. \notag  
\end{align}

We also need a uniform upper bound for $A_n(u)$ that holds for all $n \ge 1$ and $u \in (0,R_n)$. To this end, we apply Lemma \ref{l:Lemma1.OY} $(i)$, instead of Lemma \ref{l:Lemma1.OY} $(ii)$, together with Lemma \ref{l:Lemma4.OY} as before, to obtain 
\begin{equation}  \label{e:An.u.bound}
A_n(u) \le C   e^{\frac{1}{2}(d-1)u} \int_0^u e^{\frac{1}{2}(d-1)(1-2\al)t}\dif t \le C e^{\frac{1}{2}(d-1)u}, 
\end{equation}
for all $n\ge1$ and $u\in (0,R_n)$, where $C$ is a finite constant independent of both $n$ and $u$. 

Next, we consider $B_n(u)$. First, for $u\in (0,R_n)$, Lemma \ref{l:Lemma1.OY} $(i)$ yields that 
\begin{align*}
B_n(u) &\le n\int_{R_n-\omega_n-u}^\infty \bar \rho_{n,\al}(t) \dif t \le \big( 1+o_n(1) \big) n\int_{R_n-\omega_n-u}^\infty\al (d-1) e^{-\al (d-1)t}\dif t \\
&\le \big( 1+o_n(1) \big)ne^{-\al (d-1)(R_n-\omega_n-u)} = \big( 1+o_n(1) \big)n^{1-2\al} e^{\al(d-1)\omega_n} \cdot e^{\al(d-1)u}.
\end{align*}
If $u\in (0,\gaR)$, then
\begin{align}
B_n(u) &\le \big( 1+o_n(1) \big) n^{1-2\al} e^{\al (d-1)\omega_n} \cdot  e^{\half (d-1)(2\al-1)\gaR}\cdot  e^{\half (d-1)u} =\big( 1+o_n(1) \big)  B_\al s_n  e^{\half (d-1)u}. \label{e:bound.Bnu2}
\end{align}
We also have, for all $n\ge1$ and  $u \in (0,R_n)$ (i.e., $u$ can be greater than $\gaR$), 
\begin{equation}  \label{e:bound.Bnu3}
B_n(u)\le   Cn^{1-2\al} e^{\al (d-1)\omega_n} \cdot e^{\half (d-1)(2\al-1)R_n} \cdot e^{\half (d-1)u} = Ce^{\al (d-1)\omega_n} \cdot e^{\half (d-1)u}. 
\end{equation}

Subsequently, we turn to $A_n'(u)$: by Lemma \ref{l:Lemma1.OY} $(i)$ and Lemma \ref{l:Lemma4.OY}, one can see that uniformly for $u\in(0,R_n)$, 
\begin{align}
A_n'(u) &\le \big( 1+o_n(1) \big) \frac{2^{d-1}\al}{\kappa_{d-2}} \mainu \int_u^\infty  e^{\half (d-1)(1-2\al)t}\dif t  \label{e:An'(u).bdd} \\
&=  \big( 1+o_n(1) \big) B_\al e^{\half (d-1)(1-2\al)u} \cdot \mainu. \notag 
\end{align}
Moreover, since we again have $B_n'(u) \le n\int_{R_n-\omega_n-u}^\infty \radpdf (t)\dif t$,  the same bounds are applicable here as in \eqref{e:bound.Bnu2} and \eqref{e:bound.Bnu3}, namely, 
\begin{align}
\begin{split}  \label{e:B_n'bdd}
B_n'(u) \le \begin{cases}
\big( 1+o_n(1) \big)  B_\al s_n \mainu & \text{ for all } n\ge1 \text{ and } u \in (0,\gaR), \\[3pt]
Ce^{\al(d-1)\omega_n} \cdot \mainu & \text{ for all } n\ge1 \text{ and } u \in (0,R_n). 
\end{cases}
\end{split}
\end{align}

By \eqref{e:def.A.infty.u} and \eqref{e:bound.Bnu2}, it now  holds that $A_n(u) + B_n(u)\le \big( 1+o_n(1) \big)B_\al \mainu$ uniformly for $u\in (0,\gaR)$. Similarly,  by \eqref{e:An'(u).bdd} and \eqref{e:B_n'bdd}, we have, uniformly for $u\in (0,\gaR)$,
$$
A_n'(u) + B_n'(u)\le \big( 1+o_n(1) \big)B_\al (e^{\half (d-1)(1-2\al)u}+s_n )\mainu. 
$$
Substituting these results into \eqref{e:X=(t, Theta)}, we have, uniformly for $u\in (0,\gaR)$, 
$$
\mu_{k,n}^{(\ell)}(u) \le \big( 1+o_n(1) \big)B_{k,\al} (e^{\half (d-1)(1-2\al)u} + s_n)^{k-1-\ell} e^{\half (d-1)(k-1)u}, 
$$
and thus, 
\begin{align*}
\mu_{k,n}(u) &\le \big( 1+o_n(1) \big)B_{k,\al} \bigg( 1+ \sum_{\ell=0}^{k-2}\binom{k-1}{\ell} (e^{\half (d-1)(1-2\al)u} + s_n)^{k-1-\ell} \bigg) e^{\half (d-1)(k-1)u}, 
\end{align*}
which completes the proof of the bound in $(i)$.

For the proof of $(iii)$, if $u\in (0,R_n)$  (so $u$ may be larger than $\gaR$), then \eqref{e:An.u.bound}, \eqref{e:bound.Bnu3}, \eqref{e:An'(u).bdd}, and \eqref{e:B_n'bdd} yield
$$
A_n(u)+B_n(u)\le Ce^{\al(d-1)\omega_n}\cdot \mainu, \ \ \text{ and } \ \ A_n'(u)+B_n'(u)\le Ce^{\al(d-1)\omega_n}\cdot \mainu, 
$$
and therefore, for all $n\ge 1$ and $u \in (0,R_n)$, 
$$
\mu_{k,n}^{(\ell)}(u) \le C e^{\al(d-1)(k-1)\omega_n}\cdot e^{\half (d-1)(k-1)u}. 
$$
This has established the bound in $(iii)$. 
\medskip

\noindent \underline{\emph{Proof of $(ii)$}}: The  desired lower bound follows directly from  
$$
\mu_{k,n}(u) \ge \mu_{k,n}^{(k-1)}(u) \ge A_n(u)^{k-1} = \big( 1+o_n(1) \big) B_{k,\al} \big( 1-e^{\half (d-1)(1-2\al)u} \big)^{k-1}e^{\half (d-1)(k-1)u}, 
$$
uniformly for $u\in (0,\gaR)$, 
where  \eqref{e:def.A.infty.u} is applied for the last equality. 
\end{proof}

The next result provides an upper bound of the variance of $\mD_{k,n}(u)$ for $u\in (0,R_n)$. 

\begin{proposition}  \label{p:var.mu.n} 
There exists a constant $C\in (0,\infty)$ not depending on $n$ and $u$, such that for all $n\ge1$ and $u \in (0,R_n)$, 
$$
\text{Var}\big( \mD_{k,n}(u) \big)\le C e^{\al(d-1)(2k-3)\omega_n} \cdot e^{\half (d-1)(2k-3)u}. 
$$
\end{proposition}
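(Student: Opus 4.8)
The plan rests on a structural simplification: since the indicator in \eqref{e:def.star.shape.counts} splits as $\one\{\max_{1\le i\le k-1}d_H(Y_i,p)\le R_n\}=\prod_{i=1}^{k-1}\one\{d_H(Y_i,p)\le R_n\}$, the count $\mD_{k,n}(p)$ is a \emph{deterministic} function of the single integer $W_n(p):=|\Pn\cap B(p,R_n)|$, namely the falling factorial $\mD_{k,n}(p)=(W_n(p))_{k-1}:=W_n(p)(W_n(p)-1)\cdots(W_n(p)-k+2)$. Because $p=(u,{\bf 0})$ is a fixed point, $W_n(p)$ is Poisson distributed, with mean $\lambda_n(u):=A_n(u)+B_n(u)+A_n'(u)+B_n'(u)=\mu_{2,n}(u)$, the total intensity of $\Pn$ on $B(p,R_n)$ in the notation of \eqref{e:An(u)}--\eqref{e:Bn(u)} (this is exactly the decomposition used in the proof of Proposition \ref{p:exp.mu.n}, read off at $k=2$). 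Thus the whole problem reduces to controlling the variance of a falling factorial of a Poisson variable, together with a uniform bound on its mean.

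For the first part I would use the elementary facts that a $\mathrm{Poisson}(\lambda)$ variable $W$ has factorial moments $\E[(W)_m]=\lambda^m$ and obeys the falling-factorial product identity $(W)_{k-1}^2=\sum_{\ell=0}^{k-1}\binom{k-1}{\ell}^2\ell!\,(W)_{2(k-1)-\ell}$; taking expectations and subtracting the $\ell=0$ term $\lambda^{2(k-1)}=\E[(W)_{k-1}]^2$ gives
$$
\mathrm{Var}\big((W)_{k-1}\big)=\sum_{\ell=1}^{k-1}\binom{k-1}{\ell}^2\ell!\;\lambda^{2(k-1)-\ell}.
$$
(The same expression also drops out of the Poisson $U$-statistic variance formula applied to the order-$(k-1)$ product kernel $h(y_1,\dots,y_{k-1})=\prod_i\one\{d_H(y_i,p)\le R_n\}$, whose contractions all factor.) The second part is already available inside the proof of Proposition \ref{p:exp.mu.n}: adding the four estimates \eqref{e:An.u.bound}, \eqref{e:bound.Bnu3}, \eqref{e:An'(u).bdd} and the second line of \eqref{e:B_n'bdd} --- all stated ``for all $n\ge1$ and $u\in(0,R_n)$'' and using only $\al>1/2$, so that $\int_0^\infty e^{\frac12(d-1)(1-2\al)t}\dif t<\infty$ --- yields $\lambda_n(u)\le C\,e^{\al(d-1)\omega_n}\,\mainu$ uniformly over that full range.

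Plugging $\lambda=\lambda_n(u)$ into the variance formula, each summand is at most $C_\ell\big(e^{\al(d-1)\omega_n}e^{\frac12(d-1)u}\big)^{2(k-1)-\ell}=C_\ell\,e^{\al(d-1)(2(k-1)-\ell)\omega_n}\,e^{\frac12(d-1)(2(k-1)-\ell)u}$. Since $\ell\ge1$ forces $2(k-1)-\ell\le 2k-3$, and since $u>0$ and $\omega_n\ge0$ for all large $n$, every exponent may be increased to $2k-3$; summing the finitely many terms (and absorbing the finitely many small $n$ into the constant) gives $\mathrm{Var}(\mD_{k,n}(u))\le C\,e^{\al(d-1)(2k-3)\omega_n}\,e^{\frac12(d-1)(2k-3)u}$, as claimed. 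I do not anticipate a serious obstacle; the only point needing attention is that the degree bound must hold on the \emph{whole} interval $(0,R_n)$ and not merely on $(0,\gaR)$, which is precisely why one invokes the ``for all $n$'' estimates of Proposition \ref{p:exp.mu.n} rather than their sharper variants, together with the bookkeeping remark that for $\ell\ge2$ the corresponding terms carry strictly smaller powers of both $e^{\frac12(d-1)u}$ and $e^{\al(d-1)\omega_n}$ and are therefore automatically subdominant.
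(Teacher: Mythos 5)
Your proposal is correct, and it takes a genuinely different route from the paper. You exploit the fact that the star kernel factorizes, so that $\mD_{k,n}(u)$ is exactly the falling factorial $(W)_{k-1}$ of the single Poisson variable $W=|\Pn\cap B_H(p,R_n)|$ with mean $\lambda_n(u)=\mu_{2,n}(u)$; the identity $(W)_{k-1}^2=\sum_{\ell=0}^{k-1}\binom{k-1}{\ell}^2\ell!\,(W)_{2(k-1)-\ell}$ together with $\E[(W)_m]=\lambda^m$ then gives the \emph{exact} variance $\sum_{\ell=1}^{k-1}\binom{k-1}{\ell}^2\ell!\,\lambda_n(u)^{2(k-1)-\ell}$, and the only analytic input you need is the uniform bound $\lambda_n(u)\le Ce^{\al(d-1)\omega_n}\mainu$ on all of $(0,R_n)$, which indeed follows by summing \eqref{e:An.u.bound}, \eqref{e:bound.Bnu3}, \eqref{e:An'(u).bdd} and the second line of \eqref{e:B_n'bdd}, valid for $\al>\half$. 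The paper instead applies the Mecke formula twice: it expands the variance over pairs of tuples indexed by their overlap $q$, identifies $q=1$ as the leading contribution, and bounds each overlap term by rerunning the argument of Proposition \ref{p:exp.mu.n} $(iii)$ with $k-1$ replaced by $2k-3$. Your route is shorter and sharper (an identity rather than an upper bound, and your dominant term $\lambda^{2k-3}$ matches the paper's $q=1$ term), but it hinges entirely on the count being a function of one Poisson variable, a feature special to star shapes; the paper's overlap/Mecke machinery is the one that carries over to the clique counts in Proposition \ref{p:var.clique}, which is presumably why the authors argue that way here. Two small points to keep tidy: raising each exponent $2(k-1)-\ell$ to $2k-3$ requires the base $Ce^{\al(d-1)\omega_n}e^{\half(d-1)u}$ to be at least $1$ (true for $u>0$ and all large $n$, with the finitely many small $n$ absorbed into $C$ using $\lambda_n(u)\le n$), and the identification $\lambda_n(u)=\mu_{2,n}(u)$ is purely notational, so no assumption of the form $\al<1$ is being smuggled in.
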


\begin{proof}
For $p, x_1, \dots, x_{k-1}\in B(o,R_n)$, we define $g_n(p,x_1,\dots, x_{k-1}):= \sum_{\ell=0}^{k-1} g_n^{(\ell)} (p, x_1,\dots, x_{k-1})$, where  
$$
g_n^{(\ell)} (p, x_1,\dots, x_{k-1}) := \one \Big\{ \max_{1\le i \le k-1}d_H(x_i, p)\le R_n, \, \sum_{i=1}^{k-1}\one\{u_i \le u\}=\ell  \Big\}, 
$$
with $x_i=(u_i, \ta_i)$,  $u_i =R_n-d_H(o, x_i)$ for $i=1,\dots,k-1$. Then,  the Mecke formula gives that 
\begin{align}
\begin{split}  \label{e:decomp.var}
\text{Var}\big( \mD_{k,n}(u) \big) &= \mu_{k,n}(u) + \sum_{q=1}^{k-2}  \sum_{\ell_1=0}^{k-1}\sum_{\ell_2=0}^{k-1} \E \bigg[ \sum_{(Y_1,\dots,Y_{k-1})\in (\Pn)_{\neq}^{k-1}} \hspace{-10pt}\sum_{\substack{(Z_1,\dots,Z_{k-1})\in (\Pn)_{\neq}^{k-1}, \\ |(Y_1,\dots,Y_{k-1})\cap (Z_1,\dots,Z_{k-1})|=q}} \hspace{-20pt}g_n^{(\ell_1)} (p, Y_1,\dots, Y_{k-1}) \\
&\qquad \qquad \qquad \qquad\qquad \qquad\qquad \qquad\qquad \qquad\qquad \qquad \qquad \times g_n^{(\ell_2)} (p, Z_1,\dots, Z_{k-1})  \bigg]. 
\end{split}
\end{align}
Observe that the leading term in the second term of \eqref{e:decomp.var} corresponds to $q = 1$. 
Applying the Mecke formula once more, the expectation (with $q=1$ and generic $\ell_1,\ell_2$) can be bounded, up to   constants, by 
\begin{align*}
&n^{2k-3} \P \Big( \max_{1\le i \le 2k-3}d_H(X_i,p) \le R_n, \, \sum_{i=1}^{2k-3}\one\{U_i\le u\}=\ell_1+\ell_2-1 \Big) \\
&\qquad + n^{2k-3} \P \Big( \max_{1\le i \le 2k-3}d_H(X_i,p) \le R_n, \, \sum_{i=1}^{2k-3}\one\{U_i\le u\}=\ell_1+\ell_2 \Big), 
\end{align*}
where $X_i = (U_i,\Theta_i)$, $i = 1,\dots,2k-3$, are i.i.d.~random variables with density $\radpdf \otimes \pi$, and $U_i = R_n - d_H(o,X_i)$ for $i = 1,\dots,2k-3$.

The remainder of the argument is identical to the proof of Proposition \ref{p:exp.mu.n} $(iii)$, with the obvious replacement of $k-1$ by $2k-3$. Consequently, the second term in \eqref{e:decomp.var} is at most $C e^{\al(d-1)(2k-3)\omega_n} \cdot e^{\half(d-1)(2k-3)u}$ for all $n\ge 1$ and $u\in (0,R_n)$. Combining this with  Proposition \ref{p:exp.mu.n} $(iii)$, we conclude that for all $n \ge 1$ and $u\in (0,R_n)$, 
\begin{align*}
\text{Var}\big( \mD_{k,n}(u) \big) &\le C \Big( e^{\al(d-1)(k-1)\omega_n}\cdot e^{\half(d-1)(k-1)u} + e^{\al(d-1)(2k-3)\omega_n} \cdot e^{\half(d-1)(2k-3)u} \Big) \\
&\le C e^{\al(d-1)(2k-3)\omega_n} \cdot e^{\half(d-1)(2k-3)u}. 
\end{align*}
\end{proof}

\subsection{Point process convergence}  \label{sec:pp.conv.star.shape}

In what follows, let $\ms{PPP}(\nu)$ denote the Poisson point process with intensity measure $\nu$.  For the proof of our main result,  namely Theorem \ref{t:joint.sum.max.star}, we will rely only on \eqref{e:PPmu_n}. Nevertheless, we also present  the corresponding result for $\big( \mD_{k,n}(X_i), \, i=1,\dots,N_n \big)$, as it is of independent interest.  The proof of  \eqref{e:PPD_n} is deferred to Lemma \ref{l:PPD_n} in the Appendix. 

\begin{proposition}  \label{p:pp.conv.star}
Assume $1/2<\al<k-1$; then, as $n\to\infty$, 
\begin{equation}  \label{e:PPmu_n}
\sum_{i=1}^{N_n} \delta_{(i/N_n, \, a_{k,n}^{-1} \mu_{k,n}(U_i))}  \Rightarrow \ms{PPP}\big(\ms{Leb}\otimes \ms{m}_{2\al/(k-1)}\big), \  \ \text{in } M_p\big( [0,1]\times  (0,\infty ] \big), 
\end{equation}
and 
\begin{equation}  \label{e:PPD_n}
\sum_{i=1}^{N_n} \delta_{(i/N_n, \, a_{k,n}^{-1} \mD_{k,n}(X_i))}  \Rightarrow \ms{PPP}\big(\ms{Leb}\otimes \ms{m}_{2\al/(k-1)}\big), \  \ \text{in } M_p\big( [0,1]\times  (0,\infty ] \big), 
\end{equation}
where $\ms{Leb}$ is Lebesgue measure on $[0,1]$, and 
$M_p\big([0,1]\times (0,\infty]  \big)$ denotes the space of point measures in $[0,1]\times (0,\infty]$. 
\end{proposition}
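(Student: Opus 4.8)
The plan is to verify Kallenberg's criterion for weak convergence of point processes to a simple (here Poisson) limit. Writing $\xi_n := \sum_{i=1}^{N_n}\delta_{(i/N_n,\, a_{k,n}^{-1}\mu_{k,n}(U_i))}$ and $M_i := a_{k,n}^{-1}\mu_{k,n}(U_i)$, it suffices to prove that $\E[\xi_n(A)] \to (\ms{Leb}\otimes\ms{m}_{2\al/(k-1)})(A)$ for every rectangle $A=(a,b]\times(c,d]$ with $0\le a<b\le1$ and $0<c<d\le\infty$, and that $\P(\xi_n(B)=0)\to \exp\{-(\ms{Leb}\otimes\ms{m}_{2\al/(k-1)})(B)\}$ for every finite disjoint union $B$ of such rectangles — these rectangles form a dissecting semiring generating the Borel $\sigma$-algebra of $[0,1]\times(0,\infty]$, where $(0,\infty]$ carries the one-point compactification topology at the top so that the relatively compact sets are the $[0,1]\times[c,\infty]$ with $c>0$. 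Since, conditionally on $N_n$, the first coordinates $i/N_n$ are deterministic and the marks $M_i$ are i.i.d.\ functions of the i.i.d.\ radial variables $U_i$ with density $\bar\rho_{n,\al}$, both quantities reduce to the single-mark probability $\P(M_1\in\cdot\,)$.

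The heart of the argument is the one-dimensional tail estimate: for all $0<c<d\le\infty$,
$$
n\,\P\big(M_1\in(c,d]\big)\ \longrightarrow\ \ms{m}_{2\al/(k-1)}\big((c,d]\big)=c^{-2\al/(k-1)}-d^{-2\al/(k-1)},\qquad n\to\infty.
$$
I would fix $\ga\in\big(\tfrac1{2\al},1\big)$, which is possible precisely because $\al>\tfrac12$, and split on $\{U_1\le\ga R_n\}$ versus its complement. On $\{U_1>\ga R_n\}$ the tail bound for $\bar\rho_{n,\al}$ (Lemma \ref{l:Lemma1.OY}) gives $\P(U_1>\ga R_n)\le Cn^{-2\al\ga}$, so $n\,\P(U_1>\ga R_n)=O(n^{1-2\al\ga})=o(1)$ since $2\al\ga>1$; this region is therefore negligible for the single-mark probability (and later for $\xi_n$ itself). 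On $\{U_1\le\ga R_n\}$, Proposition \ref{p:exp.mu.n}(i)--(ii) yield $\mu_{k,n}(u)=(1+o_n(1))\,B_{k,\al}\,e^{\frac12(d-1)(k-1)u}$ uniformly over $u$ in an $O(1)$-neighbourhood of the critical value $u_\ast:=\tfrac1{\al(d-1)}\log n=R_n/(2\al)$, which lies below $\ga R_n$: here the lower-bound factor $(1-e^{\frac12(d-1)(1-2\al)u})^{k-1}$ and the upper-bound bracket $1+\sum_{\ell=0}^{k-2}\binom{k-1}{\ell}(e^{\frac12(d-1)(1-2\al)u}+s_n)^{k-1-\ell}$ both tend to $1$ because $1-2\al<0$ and $s_n\to0$. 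Hence, for each $\delta>0$ and $n$ large, $\{M_1>y,\ U_1\le\ga R_n\}$ is sandwiched between the events $\{U_1> u_\ast+\tfrac2{(d-1)(k-1)}\log(y(1\pm\delta)),\ U_1\le\ga R_n\}$, and a direct computation with $\bar\rho_{n,\al}$ (Lemma \ref{l:Lemma1.OY}, after discarding the negligible boundary layer $u>R_n-\omega_n$) gives $n\,\P\big(U_1>u_\ast+\tfrac2{(d-1)(k-1)}\log a\big)\to a^{-2\al/(k-1)}$ for every $a>0$. Letting $\delta\downarrow0$ yields $n\,\P(M_1>y)\to y^{-2\al/(k-1)}$, and the two-point version follows by subtraction.

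From here the point-process statement is essentially bookkeeping. Conditioning on $N_n=m$, $\E[\xi_n(A)\mid N_n=m]=|\{1\le i\le m:i/m\in(a,b]\}|\cdot\P(M_1\in(c,d])=(m(b-a)+O(1))\,\P(M_1\in(c,d])$, so $\E[\xi_n(A)]=(n(b-a)+O(1))\,\P(M_1\in(c,d])\to(b-a)\,\ms{m}_{2\al/(k-1)}((c,d])$. For void probabilities let $p_i^{(m)}:=\P(M_1\in B_{i/m})$, the probability that the $i$-th mark falls in the section of $B$ above $i/m$; by independence $\P(\xi_n(B)=0\mid N_n=m)=\prod_{i=1}^m(1-p_i^{(m)})$, and since $\max_i p_i^{(m)}\le\P(M_1>c_\ast)=O(1/n)$ (with $c_\ast$ the smallest lower endpoint occurring in $B$) while $\sum_{i=1}^m p_i^{(m)}\to(\ms{Leb}\otimes\ms{m}_{2\al/(k-1)})(B)$ uniformly for $m$ in a window around $n$ (using $m/n\to1$), the inequality $|\prod_i(1-p_i)-e^{-\sum_i p_i}|\le(\max_i p_i)\sum_i p_i$ gives $\P(\xi_n(B)=0\mid N_n=m)\to\exp\{-(\ms{Leb}\otimes\ms{m}_{2\al/(k-1)})(B)\}$ on that window; since $N_n/n\to1$ in probability and these conditional probabilities are bounded, $\P(\xi_n(B)=0)\to\exp\{-(\ms{Leb}\otimes\ms{m}_{2\al/(k-1)})(B)\}$ by dominated convergence. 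Kallenberg's theorem (see, e.g., \cite{resnick:2007}) then yields \eqref{e:PPmu_n}, and \eqref{e:PPD_n} follows along the same lines with $\mu_{k,n}(U_i)$ replaced by the random $\mD_{k,n}(X_i)$, the extra input being that $\mD_{k,n}(X_i)$ concentrates around $\mu_{k,n}(U_i)$ via the variance bound of Proposition \ref{p:var.mu.n} (carried out as Lemma \ref{l:PPD_n} in the Appendix).

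I expect the only genuinely delicate step to be the one-dimensional tail estimate in the second paragraph: one must invoke the two-sided bounds of Proposition \ref{p:exp.mu.n} only on $u\le\ga R_n$ where they are asymptotically sharp, confirm that the lower-order corrections $e^{\frac12(d-1)(1-2\al)u}$ and $s_n$ truly vanish at the critical scale $u\asymp R_n/(2\al)$ — which dictates the choice $\ga>1/(2\al)$ and relies essentially on $\al>1/2$ — and at the same time show that the range $u>\ga R_n$, where only the crude estimate of Proposition \ref{p:exp.mu.n}(iii) is available, contributes nothing, which works exactly because $2\al\ga>1$. The remaining ingredients (the conditioning on $N_n$, the Poisson approximation for products of $(1-p_i)$, and the Kallenberg reduction) are routine.
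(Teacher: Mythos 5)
Your proposal is correct, and its analytic core is exactly the paper's: the tail estimate $n\,\P\big(\mu_{k,n}(U_1)\ge y a_{k,n},\,U_1\le\ga R_n\big)\to y^{-2\al/(k-1)}$, obtained by fixing $\ga\in\big(\tfrac1{2\al},1\big)$, discarding $\{U_1>\ga R_n\}$ via $n\P(U_1>\ga R_n)=O(n^{1-2\al\ga})\to0$, and sandwiching with the two-sided bounds of Proposition \ref{p:exp.mu.n} $(i)$--$(ii)$ near the critical radius $R_n/(2\al)<\ga R_n$, where the correction factors and $s_n$ vanish. Where you genuinely differ is the point-process wrapper: the paper first reduces to a deterministic-$n$ i.i.d.\ array $\sum_{i\le n}\delta_{(i/n,\,a_{k,n}^{-1}\mu_{k,n}(U_i))}$ and invokes Resnick's Corollary 6.1/Lemma 6.1, and then handles the replacement of $n$ by the Poisson number $N_n$ through a separate vague-metric de-Poissonization argument (\eqref{e:de-Poissonization.PP}, the $I_{1,n},I_{2,n},I_{3,n}$ estimates), whereas you keep $N_n$ throughout, condition on it, and verify Kallenberg's criterion (mean measures on rectangles plus void probabilities on finite unions) by hand, using the conditional i.i.d.\ structure of the marks, the bound $\big|\prod_i(1-p_i)-e^{-\sum_i p_i}\big|\le(\max_i p_i)\sum_i p_i$, and $N_n/n\to1$; both routes are valid, and yours trades the citable i.i.d.-array result and the de-Poissonization lemma for some elementary Poisson-approximation bookkeeping. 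One caveat concerning \eqref{e:PPD_n}: your Kallenberg computation relies on conditional independence of the marks, which fails for $\mD_{k,n}(X_i)$ since each of these depends on all of $\Pn$; so ``along the same lines'' cannot mean rerunning that computation, but rather (as you in fact indicate) first proving \eqref{e:PPmu_n} and then showing the two point processes are vaguely close, which requires the integrated concentration estimate $n\int_0^{R_n}\P\big(|\mD_{k,n}(u)-\mu_{k,n}(u)|>\eta a_{k,n}\big)\radpdf(u)\dif u\to0$ (Chebyshev with Proposition \ref{p:var.mu.n}, split at $u=\tfrac{2}{(d-1)(k-1)}\log(Ma_{k,n})$, then $M\to\infty$) together with a modulus-of-continuity step for the test functions --- precisely the content of Lemma \ref{l:PPD_n}; since you name these ingredients, your plan is sound, but that comparison is the actual argument for the second convergence.
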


\begin{proof}[Proof of \eqref{e:PPmu_n}]
As before, let $\ga \in \big(\frac{1}{2\al},1\big)$ be a fixed constant. 
We first claim that the behavior of the point process in \eqref{e:PPmu_n} can be approximated by its restricted version: 
\begin{equation}  \label{e:vagueP3}
d_{\ms{v}} \bigg(\sum_{i=1}^{N_n} \delta_{(i/N_n, \, a_{k,n}^{-1} \mu_{k,n}(U_i))}, \  \sum_{i=1}^{N_n}\one \big\{ U_i\le \gaR \big\} \, \delta_{(i/N_n, \, a_{k,n}^{-1} \mu_{k,n}(U_i))}  \bigg) \stackrel{\P}{\to} 0, \ \ \ n\to\infty, 
\end{equation}
where $d_{\ms{v}}$ denotes the  vague metric (see, e.g., Proposition 3.17 in \cite{resnick:1987} for an explicit representation of $d_\ms{v}$). The convergence in \eqref{e:vagueP3} follows from the the following result:   for every $B\subset [0,1]$ and $f \in C_K^+\big( (0,\infty] \big)$, the space of non-negative continuous functions on $(0,\infty]$ with compact support, 
$$
\sum_{i=1}^{N_n}  \one \{ U_i> \gaR \}  \one \{  i/N_n\in B\}f\big( a_{k,n}^{-1}\mu_{k,n}(U_i) \big)\stackrel{\P}{\to} 0. 
$$
To prove this,  we have, for every $\vep>0$, 
\begin{align*}
\P \Big( \sum_{i=1}^{N_n}  \one \{ U_i> \gaR\} f\big( a_{k,n}^{-1}\mu_{k,n}(U_i) \big)\ge \vep \Big) &\le \frac{\|f\|_\infty}{\vep}\, \E \Big[ \sum_{i=1}^{N_n} \one \{ U_i> \gaR \} \Big] \\
&= \frac{\|f\|_\infty n}{\vep}\, \int_{\gaR}^\infty \bar \rho_{n,\al}(u)\dif u \le Cn^{1-2\al\gamma}\to0, \ \ \ n\to\infty. 
\end{align*}

In view of \eqref{e:vagueP3},  we may restrict attention to the truncated version; hence, to complete the entire proof, it remains to show  that, as $n \to \infty$,
\begin{equation}  \label{e:direct.conv.mu_n}
\sum_{i=1}^{N_n} \one \big\{ U_i\le \gaR \big\} \,\delta_{(i/N_n, \, a_{k,n}^{-1} \mu_{k,n}(U_i))}  \Rightarrow \ms{PPP}(\ms{Leb}\otimes \ms{m}_{2\al/(k-1)}), \ \ \text{in } M_P\big( [0,1]\times (0,\infty] \big). 
\end{equation}
For this purpose it suffices to verify  the following  results: as $n \to \infty$,
\begin{equation}  \label{e:conv.to.PPP}
\sum_{i=1}^n  \one \big\{ U_i\le \gaR \big\}\, \delta_{(i/n, \, a_{k,n}^{-1} \mu_{k,n}(U_i))} \Rightarrow \ms{PPP}(\ms{Leb}\otimes \ms{m}_{2\al/(k-1)}), 
\end{equation}
where $U_1,U_2,\dots$ are i.i.d.~random variables with density $\radpdf$, and also,
\begin{equation}  \label{e:de-Poissonization.PP}
d_\ms{v} \bigg(   \sum_{i=1}^n  \one \big\{ U_i\le \gaR\big\}\, \delta_{(i/n, \, a_{k,n}^{-1} \mu_{k,n}(U_i))}, \ \sum_{i=1}^{N_n} \one \big\{ U_i\le  \gaR \big\}\, 
\delta_{(i/N_n, \, a_{k,n}^{-1}\mu_{k,n}(U_i))} \bigg) \stackrel{\P}{\to} 0. 
\end{equation}

For the proof of \eqref{e:conv.to.PPP}, by Corollary 6.1 and Lemma 6.1 in \cite{resnick:2007} it suffices to verify that for every $y>0$, 
\begin{equation}  \label{e:Cor6.1.and.Lemma6.1}
n \P  \big( \mu_{k,n}(U_1)\ge y a_{k,n}, \,  U_1\le \gaR \big) \to \ms{m}_{2\al/(k-1)}\big((y,\infty]\big)  =y^{-2\al/(k-1)}. 
\end{equation}
By Proposition \ref{p:exp.mu.n} $(i)$ and $(ii)$, and using that $s_n$ is a decreasing sequence, we see that for every $\vep>0$ there exists $N\in \bbn$ such that, for all $n\ge N$ and $u\in (0, \gaR)$,
\begin{align}  
&(1-\vep) B_{k,\al}\big( 1-e^{\half (d-1)(1-2\al)u} \big)^{k-1} e^{\half (d-1)(k-1)u} \label{e:mu_n.upper.lower.bdd} \\ 
&\qquad \qquad \le \mu_{k,n}(u)\le (1+\vep) B_{k,\al} \bigg( 1 + \sum_{\ell=0}^{k-2}\binom{k-1}{\ell} (e^{\half (d-1)(1-2\al)u} + s_N)^{k-1-\ell} \bigg) e^{\half (d-1)(k-1)u},  \notag
\end{align}
and $\sum_{\ell=0}^{k-2}\binom{k-1}{\ell}(2s_N)^{k-1-\ell}\le \vep$. Now, 
choose $M>0$ so large that $e^{\half (d-1)(1-2\al)M} \le s_N$; then, 
 the upper bound in \eqref{e:mu_n.upper.lower.bdd}, as well as Lemma \ref{l:Lemma1.OY} $(ii)$, gives that 
\begin{align}
&n \P  \big( \mu_{k,n}(U_1)\ge y a_{k,n}, \,  U_1\le \gaR \big) \label{e:upper.limit.mun} \\
&\le n \P \Big( (1+\vep)^2 \Ckal e^{\half (d-1)(k-1)U_1} \ge ya_{k,n}, \, M\le U_1\le \gaR  \Big) \notag  \\
&\quad + n \P \bigg( (1+\vep)\Big(1+\sum_{\ell=0}^{k-2}\binom{k-1}{\ell} (1+s_N)^{k-1-\ell}\Big) \Ckal e^{\half (d-1)(k-1)U_1} \ge ya_{k,n}, \,  U_1< M  \bigg) \notag \\
&\sim n \P \Big( (1+\vep)^2 \Ckal e^{\half (d-1)(k-1)U_1} \ge ya_{k,n}, \, M\le U_1\le \gaR  \Big) \notag \\
&= n\int_{\frac{2}{(d-1)(k-1)}\log (ya_{k,n} (1+\vep)^{-2}\Ckal^{-1})}^\gaR \radpdf (u)\dif u \notag \\
&\to \Big( \frac{y}{(1+\vep)^2} \Big)^{-2\al/(k-1)}, \ \ \text{as } n\to\infty. \notag 
\end{align}
Letting $\vep\downarrow 0$, we conclude that 
$$
\limsup_{n\to\infty} n \P  \big( \mu_{k,n}(U_1)\ge y a_{k,n}, \, U_1\le \gaR \big)  \le y^{-2\al/(k-1)}. 
$$

Subsequently, 
the lower bound  in  \eqref{e:mu_n.upper.lower.bdd} assures that for all $n\ge N$, 
\begin{align}
&n \P  \big( \mu_{k,n}(U_1)\ge y a_{k,n}, \,  U_1\le \gaR \big) \label{e:liminf.lower.bdd} \\
&\ge n \P \Big( (1-\vep) \Ckal \big( 1-e^{\half (d-1)(1-2\al)U_1} \big)^{k-1} e^{\half (d-1)(k-1)U_1} \ge y a_{k,n}, \, \omega_n \le U_1 \le \gaR  \Big). \notag 
\end{align}
Then,  there exists $N' \ge N$, such  that for all $n\ge N'$, 
$$
1-e^{\half (d-1)(1-2\al)U_1} \ge  1-e^{\half (d-1)(1-2\al)\omega_n}\ge 1-\vep; 
$$
hence, the right-hand side in \eqref{e:liminf.lower.bdd} is  bounded below by
\begin{align*}
&n\P \Big( e^{\half (d-1)(k-1)U_1}\ge \frac{ya_{k,n}}{(1-\vep)^k \Ckal}, \, \omega_n \le U_1 \le \gaR \Big) \\
&= n \P\bigg( \frac{2}{(d-1)(k-1)}\log \Big( \frac{ya_{k,n}}{(1-\vep)^k\Ckal} \Big) \le U_1 \le \gaR\bigg) \to \Big( \frac{y}{(1-\vep)^k} \Big)^{-2\al/(k-1)}, \ \ \text{as } n\to\infty. 
\end{align*}
Thus, by letting $\vep \downarrow 0$, 
$$
\liminf_{n\to\infty} n \P  \big( \mu_{k,n}(U_1)\ge y a_{k,n}, \,  U_1\le \gaR \big)  \ge y^{-2\al/(k-1)}, 
$$
and \eqref{e:Cor6.1.and.Lemma6.1} is obtained. 

Finally, we turn to the proof of \eqref{e:de-Poissonization.PP}. To this aim, we need to demonstrate that for every $B\subset [0,1]$ with $\ms{Leb}(\partial B)=0$ and $f\in C_K^+\big( (0,\infty] \big)$, 
\begin{align*}
&\sum_{i=1}^{n} \one\big\{ U_i\le\gaR \big\}\, \one \{ i/n\in B \} f\big( a_{k,n}^{-1}\mu_{k,n}(U_i) \big)  \\
&\qquad\qquad - \sum_{i=1}^{N_n} \one\big\{ U_i\le \gaR \big\}\, \one \{ i/N_n\in B \} f\big( a_{k,n}^{-1}\mu_{k,n}(U_i) \big)\stackrel{\P}{\to}0, \ \  \ n\to\infty. 
\end{align*}
Denoting $[n]:=\{ 1,\dots,n \}$, we have 
\begin{align}
&\E \bigg[ \, \bigg| \sum_{i=1}^{n} \one\big\{ U_i\le \gaR \big\}\, \one \{ i/n\in B \} f\big( a_{k,n}^{-1}\mu_{k,n}(U_i) \big) \label{e:E_N_n-E_n}\\
&\qquad \qquad \qquad -\sum_{i=1}^{N_n} \one\big\{ U_i\le\gaR \big\}\, \one \{ i/N_n\in B \} f\big( a_{k,n}^{-1}\mu_{k,n}(U_i) \big)\,  \bigg| \, \bigg] \notag \\
&= \sum_{m=0}^\infty \P(N_n=m) \Big|  \#\big\{ i \in [n]\cap nB  \big\}  -\#\big\{ i \in [m]\cap mB  \big\}\Big| \E\Big[ f\big(a_{k,n}^{-1}\mu_{k,n}(U_1)\big) \one \big\{ U_1 \le \gaR \big\} \Big]. \notag 
\end{align}
Since $f$ has compact support, there exists $\eta>0$ such that the support of $f$ is contained in $[\eta, \infty]$. By Proposition \ref{p:exp.mu.n} $(i)$, 
\begin{align*}
\E\Big[ f\big(a_{k,n}^{-1}\mu_{k,n}(U_1)\big) \one \big\{ U_1 \le \gaR \big\} \Big] &\le \|f\|_\infty \P \big( \mu_{k,n}(U_1)\ge \eta a_{k,n}, \, U_1 \le \gaR \big) \\
&\le C\int_{\frac{2}{(d-1)(k-1)}\log (\eta a_{k,n}/C)}^\infty e^{-\al(d-1)u} \dif u \le Cn^{-1}. 
\end{align*}
Substituting this into \eqref{e:E_N_n-E_n} will lead to the upper bound of the form 
\begin{align*}
&Cn^{-1}  \sum_{m=0}^\infty \P(N_n=m) \Big| \#\big\{ i \in [n]\cap nB  \big\}-  \#\big\{ i \in [m]\cap mB  \big\} \Big| \\
&\le Cn^{-1}  \sum_{m=0}^\infty \P(N_n=m) \big|  \#\big\{ i \in [n]\cap nB  \big\}-  n\ms{Leb}(B) \big| \\
&\quad + Cn^{-1}  \sum_{m=0}^\infty \P(N_n=m) |n-m| \, \ms{Leb}(B) +Cn^{-1}  \sum_{m=0}^\infty \P(N_n=m) \big|   m\ms{Leb}(B) -  \#\big\{ i \in [m]\cap mB  \big\} \big|  \\
&=: I_{1,n} +I_{2,n} + I_{3,n}. 
\end{align*}
Clearly, it holds that $I_{1,n} \to 0$ as $n\to\infty$. By the Cauchy-Schwarz inequality, 
$$
I_{2,n} =C\ms{Leb}(B)n^{-1}\E \big[ |N_n-n| \big] \le C\ms{Leb}(B)n^{-1} \sqrt{\text{Var}(N_n)}= C\ms{Leb}(B)n^{-1/2}\to 0, \ \ \text{as } n\to\infty. 
$$
As for $I_{3,n}$, given $\vep>0$, there exists $N\in\bbn$ such that for all $n\ge N$, 
$$
\big|  \ms{Leb}(B) -m^{-1}  \#\big\{ i \in [m]\cap mB  \big\} \big| \le \vep, 
$$
which implies that $I_{3,n}\le o_n(1)+C\vep$.  Now, $\lim_{\vep\downarrow0}\limsup_{n\to\infty}I_{3,n}=0$ follows. 
\end{proof}
\medskip

\subsection{Proof of Theorem \ref{t:joint.sum.max.star}}  \label{sec:main.result.star.count}

For the proof of Theorem \ref{t:joint.sum.max.star}, our next step is to  establish the corresponding results for the version in which $\mD_{k,n}(X_i)$ in Theorem \ref{t:joint.sum.max.star} is replaced by $\mu_{k,n}(U_i)$, where $U_i=R_n-d_H(o,X_i)$.

\begin{proposition}  \label{p:main.mu.version.star}
$(i)$ If $\half (k-1)<\al< k-1$, then as $n\to\infty$, 
\begin{align*}
&\left( \frac{1}{a_{k,n}} \bigg( \sum_{i=1}^{[N_n\cdot\,]}\mu_{k,n}(U_i) - [N_n\cdot\,]  \E [\mu_{k,n}(U_1)] \bigg), \ \frac{1}{a_{k,n}}\bigvee_{i=1}^{[N_n \cdot \,]} \mu_{k,n}(U_i)\right)  \Rightarrow (S_{2\al/(k-1)}(\cdot),\, Y_{2\al/(k-1)}(\cdot)) 
\end{align*}
in the space $D\big( [0,1], \R\times [0,\infty)\big)$, where the weak limit is given in Theorem \ref{t:joint.sum.max.star} $(i)$. \\
$(ii)$ If $\frac{1}{2}<\al<\half (k-1)$, then as $n\to\infty$, 
$$
\left( \frac{1}{a_{k,n}}  \sum_{i=1}^{[N_n \cdot\, ]}\mu_{k,n}(U_i), \ \frac{1}{a_{k,n}} \bigvee_{i=1}^{[N_n \cdot \,]} \mu_{k,n}(U_i) \right)  \Rightarrow (\widetilde{S}_{2\al/(k-1)}(\cdot), Y_\indh(\cdot)), \ \ \text{in } D\big( [0,1], [0,\infty)^2\big), 
$$ 
where the weak limit is given  in Theorem \ref{t:joint.sum.max.star} $(ii)$. 
\end{proposition}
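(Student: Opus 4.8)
The plan is to obtain both limit processes directly from the point-process convergence \eqref{e:PPmu_n}, by applying a maximum functional and a (truncated) summation functional together with the classical large-jump scheme for partial sums in a stable domain of attraction. Write $\gamma := 2\al/(k-1)$ and $\xi^{(n)}_i := a_{k,n}^{-1}\mu_{k,n}(U_i)$, so that \eqref{e:PPmu_n} reads $\eta_n := \sum_{i=1}^{N_n}\delta_{(i/N_n,\,\xi^{(n)}_i)} \Rightarrow \eta := \ms{PPP}(\ms{Leb}\otimes\ms{m}_\gamma)$ in $M_p([0,1]\times(0,\infty])$. For a configuration $\mu$ in that space and $t\in[0,1]$, set $M_t(\mu) := \sup\{y : (s,y)\in\mu,\ s\le t\}$ (with $\sup\emptyset := 0$) and, for $\varepsilon>0$, $\Sigma^{(\varepsilon)}_t(\mu) := \sum_{(s,y)\in\mu,\ s\le t} y\,\one\{y>\varepsilon\}$. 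Since $\{i : i/N_n\le t\} = \{1,\dots,[N_nt]\}$, the processes in the statement are, up to the factor $a_{k,n}^{-1}$, exactly $M_\cdot(\eta_n)$ and the (centered, in case $(i)$) sum of all atom heights of $\eta_n$ over $[0,t]$; the latter will be recovered as the $\varepsilon\downarrow0$ limit of $\Sigma^{(\varepsilon)}_\cdot(\eta_n)$ up to a small-atom remainder controlled below.

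For the maximum component, the map $\mu\mapsto\bigl(t\mapsto M_t(\mu)\bigr)$, from $M_p([0,1]\times(0,\infty])$ to $D([0,1],[0,\infty))$ with the $J_1$ topology, is continuous at $\eta$ almost surely, because $\eta$ a.s.\ charges no horizontal line, has no two atoms sharing a first coordinate, and has no atom over $\{0,1\}$. By the continuous mapping theorem, $a_{k,n}^{-1}M_\cdot(\eta_n) \Rightarrow Y_\gamma := M_\cdot(\eta)$ in $D([0,1],[0,\infty))$, and the finite-dimensional distributions of $M_\cdot(\eta)$ are precisely the avoidance probabilities of $\eta$ displayed in the statement, identifying $Y_\gamma$ as the extremal $\gamma$-Fr\'echet process. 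This step uses only \eqref{e:PPmu_n}, hence is valid for all $\half<\al<k-1$; cf.\ Lemma \ref{l:marginal.max.star}.

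For the sum component, fix $\varepsilon>0$ (since $\ms{m}_\gamma$ is non-atomic, $\eta$ a.s.\ has no atom on $\{y=\varepsilon\}$) and split $\mu_{k,n}(U_i) = \mu_{k,n}(U_i)\one\{\xi^{(n)}_i>\varepsilon\} + \mu_{k,n}(U_i)\one\{\xi^{(n)}_i\le\varepsilon\}$. The map $\mu\mapsto\Sigma^{(\varepsilon)}_\cdot(\mu)$ is continuous at $\eta$, so by the continuous mapping theorem, jointly with $M_\cdot$, one gets $\bigl(a_{k,n}^{-1}\Sigma^{(\varepsilon)}_\cdot(\eta_n),\, a_{k,n}^{-1}M_\cdot(\eta_n)\bigr) \Rightarrow \bigl(\Sigma^{(\varepsilon)}_\cdot(\eta),\, Y_\gamma\bigr)$ in $D([0,1],\R\times[0,\infty))$ --- crucially in the joint Skorokhod space, since the maximum jumps only at a subset of the finitely many jump times of the truncated sum, so a single time change serves both coordinates. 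The deterministic centering is matched via the first-moment asymptotics $n\,a_{k,n}^{-1}\E\bigl[\mu_{k,n}(U_1)\one\{\xi^{(n)}_1>\varepsilon\}\bigr] \to \int_\varepsilon^\infty x\,\ms{m}_\gamma(\dif x)$, obtained by integrating the tail estimate \eqref{e:Cor6.1.and.Lemma6.1} (equivalently, Proposition \ref{p:exp.mu.n}). The part with $\xi^{(n)}_i\le\varepsilon$ is shown to be uniformly-in-$n$ negligible as $\varepsilon\downarrow0$: in case $(i)$, where $\gamma\in(1,2)$, one centers the truncated sum and applies Doob's maximal inequality together with $n\,a_{k,n}^{-2}\E\bigl[\mu_{k,n}(U_1)^2\one\{\xi^{(n)}_1\le\varepsilon\}\bigr] \to \int_0^\varepsilon x^2\,\ms{m}_\gamma(\dif x) \to 0$; in case $(ii)$, where $\gamma\in(0,1)$, no centering is needed and one uses $n\,a_{k,n}^{-1}\E\bigl[\mu_{k,n}(U_1)\one\{\xi^{(n)}_1\le\varepsilon\}\bigr] \to \int_0^\varepsilon x\,\ms{m}_\gamma(\dif x) \to 0$ with Markov's inequality. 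A standard convergence-together argument (cf.\ \cite{resnick:2007}) then permits the passage $\varepsilon\downarrow0$, yielding jointly with $Y_\gamma$ the zero-mean spectrally positive $\gamma$-stable L\'evy process $S_\gamma$ with characteristic exponent $\int_0^\infty(e^{ivx}-1-ivx)\,\ms{m}_\gamma(\dif x)$ in case $(i)$, and the $\gamma$-stable subordinator $\widetilde S_\gamma$ with exponent $\int_0^\infty(e^{ivx}-1)\,\ms{m}_\gamma(\dif x)$ in case $(ii)$. The joint law of the limit, in particular the hybrid characteristic--distribution functions \eqref{e:hybrid1}--\eqref{e:hybrid2}, is then identified by a direct Fourier/avoidance computation on $\eta$ obtained by conditioning on the location and size of the largest atom of $\eta$ in $[0,t]$; finally, the replacement of $N_n$ by $n$ in the centering is handled exactly as in the proof of \eqref{e:de-Poissonization.PP}.

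I expect the main obstacle to be the interchange $\lim_{\varepsilon\downarrow0}\lim_{n\to\infty}$ in the sum component. It rests on the uniform-in-$n$ negligibility of the small-atom part, hence on the truncated second-moment asymptotics in case $(i)$ (and the first-moment asymptotics in case $(ii)$), which must be extracted with care from Propositions \ref{p:exp.mu.n}--\ref{p:var.mu.n} (using the sharp estimates for $u\in(0,\gaR)$ and the crude bound for the negligible range $u>\gaR$), and on checking that the centering $[N_n\cdot]\E[\mu_{k,n}(U_1)]$ in the statement coincides, after the large/small split, with the compensator $t\int_0^\infty(e^{ivx}-1-ivx)\,\ms{m}_\gamma(\dif x)$ of $S_\gamma$. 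A subsidiary technical point is the a.s.\ continuity of the maximum and truncated-sum functionals at $\eta$, which is what upgrades the convergence to the joint space $D([0,1],\R\times[0,\infty))$ rather than merely the product of two Skorokhod spaces.
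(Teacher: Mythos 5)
Your proposal is correct and follows essentially the same route as the paper: point-process convergence \eqref{e:PPmu_n} plus the continuous mapping theorem for the truncated-sum/maximum functional, the centering asymptotics \eqref{e:sub.comp.exp}, negligibility of the small-jump part via a maximal inequality combined with the moment estimates of Propositions \ref{p:exp.mu.n}--\ref{p:var.mu.n} (where, as you note, one uses the sharp bounds on $(0,\gaR)$ and the fact that $\mu_{k,n}(u)\le\vep a_{k,n}$ forces $u\le\gaR$), and identification of the joint limit law by conditioning on the largest atom as in \cite[Proposition 2.3]{resnick:1986}. The only cosmetic differences are your use of Doob's rather than Kolmogorov's maximal inequality and the claim that the truncated second moment converges to $\int_0^\vep x^2\,\ms{m}_\gamma(\dif x)$, whereas the paper only establishes (and only needs) a bound of order $\vep^{2(k-1-\al)/(k-1)}$ uniformly in large $n$.
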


\begin{proof}[Proof of Proposition \ref{p:main.mu.version.star}]
Throughout the proof, write  $\sum_\ell \delta_{(t_\ell, j_\ell)}$ for the  $\ms{PPP}(\ms{Leb}\otimes \ms{m}_{2\al/(k-1)})$ and fix the constant $\ga \in \big(\frac{1}{2\al},1\big)$. 
\vspace{3pt}

\noindent \underline{\textit{Proof of $(i)$}}: Given $\vep > 0$, consider the map $\hat T_\vep: M_p\big( [0,1]\times (0,\infty] \big) \to D\big( [0,1], [0,\infty)^2\big)$ defined by 
$$
\hat T_\vep \Big( \sum_\ell \delta_{(s_\ell,\, z_\ell)} \Big) = \Big( \sum_{\ell: \, s_\ell \le \,\cdot} z_\ell \one \{ z_\ell > \vep \}, \ \bigvee_{\ell:\, s_\ell \le \, \cdot} z_\ell \Big), 
$$ 
which is known to be continuous with respect to the law of $\ms{PPP}(\ms{Leb}\otimes \ms{m}_{2\al/(k-1)})$ (see Section 7.2.3 in \cite{resnick:2007} and Appendix 3.5 in \cite{resnick:1986}). By combining Proposition \ref{p:pp.conv.star} with the continuous mapping theorem, we obtain, for every $\vep > 0$, 
\begin{align}  
&\left( \frac{1}{a_{k,n}}  \sum_{i=1 }^{[N_n\cdot\,]} \mu_{k,n}(U_i)\one \big\{  \mu_{k,n}(U_i)> \vep a_{k,n}\big\}, \ \frac{1}{a_{k,n}} \bigvee_{i=1}^{[N_n\cdot\,]} \mu_{k,n}(U_i) \right) \label{e:main.comp.random} \\[5pt]
&\qquad \Rightarrow \Big( \sum_{\ell:\, t_\ell \le \, \cdot} j_\ell\one \{ j_\ell > \vep \}, \  \bigvee_{\ell:\, t_\ell \le \, \cdot} j_\ell \, \Big), \ \ \text{in } D\big( [0,1], [0,\infty)^2 \big), \ \ \text{as }n\to\infty. \notag 
\end{align}
Furthermore, we have, as $n\to\infty$, 
\begin{equation}  \label{e:sub.comp.exp}
\frac{[N_n  \cdot\,]}{a_{k,n}}\, \E\Big[ \mu_{k,n}(U_1)\one \big\{ \mu_{k,n}(U_1)>\vep a_{k,n} \big\}  \Big] \Rightarrow (\cdot) \int_\vep^\infty x \ms{m}_{2\al/(k-1)}(\dif x), \ \ \text{in } D\big( [0,1], [0,\infty) \big). 
\end{equation}
It follows from \eqref{e:main.comp.random} and \eqref{e:sub.comp.exp} that 
\begin{align*}
&\left( \frac{1}{a_{k,n}}  \sum_{i=1 }^{[N_n\cdot\,]} \mu_{k,n}(U_i)\one \big\{  \mu_{k,n}(U_i)> \vep a_{k,n}\big\} - \frac{[N_n  \cdot\,]}{a_{k,n}} \E\Big[ \mu_{k,n}(U_1)\one \big\{ \mu_{k,n}(U_1)>\vep a_{k,n} \big\}  \Big],  \frac{1}{a_{k,n}} \bigvee_{i=1}^{[N_n\cdot\,]} \mu_{k,n}(U_i) \right) \\[5pt]
&\qquad \Rightarrow \Big(  \sum_{\ell:\, t_\ell \le \, \cdot} j_\ell\one \{ j_\ell > \vep \} -  (\cdot) \int_\vep^\infty x \ms{m}_{2\al/(k-1)}(\dif x), \  \bigvee_{\ell:\, t_\ell \le \, \cdot} j_\ell \, \Big), \ \ \text{in } D\big( [0,1], \R\times [0,\infty) \big). 
\end{align*}
The first component in the above limit  converges weakly to $S_{2\al/(k-1)}(\cdot)$ as $\vep \downarrow 0$, where, for each $t\ge0$, 
\begin{equation}  \label{e:chf.first.comp.zero.mean.stable}
\E \big[ e^{iv S_{2\al/(k-1)}(t)} \big] = \exp\Big\{ t\int_0^\infty (e^{ivx}-1-ivx) \ms{m}_{2\al/(k-1)} (\dif x)  \Big\}, \ \ \ v\in \R 
\end{equation}
(see, e.g.,  Section 5.5 in \cite{resnick:2007}). 
Furthermore, it holds that $\bigvee_{\ell:\, t_\ell \le \, \cdot} j_\ell \stackrel{d}{=}Y_\indh(\cdot)$. Indeed, for  $0\equiv s_0<s_1<\cdots <s_m<\infty$ and $z_i\ge 0$, $i=1,\dots,m$, 
\begin{align*}
\P \Big( \bigvee_{\ell:\, t_\ell \le s_i} j_\ell\le z_i, \, i=1,\dots,m  \Big) &= \P \Big( \sum_\ell \delta_{(t_\ell, j_\ell)} \big(  (0,s_i]\times (z_i,\infty]\big)=0, \, i=1,\dots,m \Big)\\
&=\prod_{j=1}^m \P  \Big( \sum_\ell \delta_{(t_\ell, j_\ell)} \big(  (s_{j-1},s_j]\times (z_j \wedge \cdots \wedge z_m,\infty]\big)=0 \Big) \\
&=\exp \Big\{ -\sum_{j=1}^m (s_j-s_{j-1}) (z_j \wedge \cdots \wedge z_m)^{-2\al/(k-1)} \Big\}\\
&= \P \big( Y_{2\al/(k-1)}(s_i)\le z_i, \, i=1,\dots,m \big). 
\end{align*}
In order to characterize  the joint law of $(S_\indh(t), Y_\indh(t))$, one can see that for $v\in \R$ and  $z\ge 0$, 
\begin{align*}
&\E \big[ e^{ivS_\indh(t)} \one \{ Y_\indh(t)\le z\} \big] \\
&= \E \Big[ e^{ivY_\indh(t)} \one \{ Y_\indh(t) \le z\} \E \Big( e^{iv(S_\indh(t)-Y_\indh(t))} \Big|  Y_\indh(t)\Big) \Big]. 
\end{align*}
It then follows from   \cite[Proposition 2.3]{resnick:1986} and \eqref{e:chf.first.comp.zero.mean.stable} that 
$$
\E \Big( e^{iv(S_\indh(t)-Y_\indh(t))} \Big|  Y_\indh(t)\Big) = g(v, Y_\indh(t))^t, 
$$
and therefore, 
\begin{align*}
\E \big[ e^{ivS_\indh(t)} \one \{ Y_\indh(t) \le z\} \big] &= \int_0^z e^{ivy}\, g(v,y)^t\, \P \big(Y_\indh(t)\in \dif y\big) \\
&= t\int_0^z e^{ivy} g(v,y)^t e^{-ty^{-2\al/(k-1)}} \ms{m}_\indh (\dif y), 
\end{align*}
yielding \eqref{e:hybrid1}. 

To finish the proof, it remains to verify that, for every $\eta \in (0,1)$,
\begin{align}
&\lim_{\vep\to 0}\limsup_{n\to\infty}\P \bigg(\sup_{0\le t \le 1} \bigg| \sum_{i=1 }^{[N_nt]}\mu_{k,n}(U_i)\one \big\{  \mu_{k,n}(U_i)\le  \vep a_{k,n}\big\}\label{e:prop3.5(1)} \\
&\qquad \qquad \qquad \qquad \qquad \qquad  -[N_nt] \E \Big[  \mu_{k,n}(U_1)\one \big\{  \mu_{k,n}(U_1)\le \vep a_{k,n}\big\} \Big] \bigg|>\eta a_{k,n} \bigg)=0. \notag
\end{align}
Since $\big( \mu_{k,n}(U_i) \big)_{i\ge1}$ are i.i.d.~random variables,  Kolmogorov's maximal inequality ensures that the last probability can be bounded above by 
\begin{align}
&\E\bigg[ \P \bigg(\max_{0\le j \le N_n} \bigg| \sum_{i=1 }^j \Big\{ \mu_{k,n}(U_i)\one \big\{  \mu_{k,n}(U_i)\le  \vep a_{k,n}\big\}  - \E \Big[  \mu_{k,n}(U_i)\one \big\{  \mu_{k,n}(U_i)\le \vep a_{k,n}\big\} \Big]\Big\} \bigg|>\eta a_{k,n} \bigg|  N_n \bigg) \bigg] \label{e:chebyshev} \\ 
&\le \frac{1}{\eta^2 a_{k,n}^2}\, \E\bigg[ \text{Var}\Big( \sum_{i=1}^{N_n} \mu_{k,n}(U_i)\one \big\{  \mu_{k,n}(U_i)\le \vep a_{k,n} \big\} \, \Big|  N_n \Big) \bigg] \notag  \\ 
&\le \frac{1}{\eta^2 a_{k,n}^2}\,\text{Var}\Big( \sum_{i=1}^{N_n} \mu_{k,n}(U_i)\one \big\{  \mu_{k,n}(U_i)\le \vep a_{k,n} \big\} \Big) \notag  \\
&=\frac{n}{\eta^2 a_{k,n}^2}\, \int_{0}^{R_n} \mu_{k,n}(u)^2 \one \big\{ \mu_{k,n}(u) \le \vep a_{k,n} \big\} \radpdf (u) \dif u. \notag
\end{align}
Clearly, $\mu_{k,n}(u)$ is strictly increasing in $u$, and by Proposition \ref{p:exp.mu.n} $(ii)$, we have $\mu_{k,n}(\gaR)\ge \vep a_{k,n}$ for large enough $n$. Therefore, $\mu_{k,n}(u)\le \vep a_{k,n}$ immediately implies $u \le  \gaR$. Consequently, the upper limit of the last integral in \eqref{e:chebyshev} can be restricted from $R_n$ to $\gaR$. Applying Proposition \ref{p:exp.mu.n} $(i)$ and Lemma \ref{l:Lemma1.OY}, the last expression in \eqref{e:chebyshev} is at most
\begin{align}
&\frac{Cn}{\eta^2 a_{k,n}^2}\, \int_0^\gaR e^{(d-1)(k-1-\al)u}\one \big\{ \mu_{k,n}(u) \le \vep a_{k,n} \big\}\dif u \label{e:var.2nd.momemnt} \\
&=\frac{Cn}{\eta^2 a_{k,n}^2}\, \int_{\vep_0}^\gaR e^{(d-1)(k-1-\al)u}\one \big\{ \mu_{k,n}(u) \le \vep a_{k,n} \big\}\dif u + o_n(1), \notag 
\end{align}
where $\vep_0\in (0,1)$. 
It follows from Proposition \ref{p:exp.mu.n} $(ii)$ that there exists $N\in \bbn$ such that for all $n\ge N$ and $\vep_0\le u \le \gaR$, 
$$
\mu_{k,n}(u) \ge (1-\eta) \Ckal \big( 1-e^{\half (d-1)(1-2\al)\vep_0} \big)^{k-1} e^{\half (d-1)(k-1)u}. 
$$
Using this lower bound, the main term in \eqref{e:var.2nd.momemnt} is further bounded from above by 
\begin{equation}  \label{e:lower.mu.n.vep0}
\frac{Cn}{\eta^2 a_{k,n}^2}\, \int_{\vep_0}^\gaR e^{(d-1)(k-1-\al)u} \one \Big\{ e^{\half (d-1)(k-1)u} \le \frac{\vep a_{k,n}}{C_1} \Big\} \dif u, 
\end{equation}
where $C_1:= (1-\eta)\Ckal \big( 1-e^{\half (d-1)(1-2\al)\vep_0} \big)^{k-1}$. Finally, one can bound \eqref{e:lower.mu.n.vep0} from above  by 
\begin{align*}
\frac{Cn}{\eta^2 a_{k,n}^2}\,  \int_0^{\frac{2}{(d-1)(k-1)}\log (\vep C_1^{-1} a_{k,n})} e^{(d-1)(k-1-\al)u} \dif u \le \frac{Cn^{1-\frac{k-1}{\al}}}{\eta^2}\, \Big( \frac{\vep a_{k,n}}{C_1} \Big)^{\frac{2(k-1-\al)}{k-1}} = C \vep^{\frac{2(k-1-\al)}{k-1}}. 
\end{align*}
Since the final term tends to $0$ as $\vep \downarrow 0$,  \eqref{e:prop3.5(1)} has been established. 
\medskip

\noindent \underline{\textit{Proof of $(ii)$}}: By Proposition \ref{p:pp.conv.star} and the continuous mapping theorem based on the map $\hat T_\vep$ as before, we obtain, once again, that as $n \to \infty$,  
\begin{align*}
\left( \frac{1}{a_{k,n}} \sum_{i=1 }^{[N_n \cdot\,]}\mu_{k,n}(U_i)\one \big\{  \mu_{k,n}(U_i)> \vep a_{k,n}\big\}, \ \frac{1}{a_{k,n}}\bigvee_{i=1}^{[N_n \cdot\, ]} \mu_{k,n}(U_i)\right) \Rightarrow \Big( \sum_{\ell:\, t_\ell \le \, \cdot} j_\ell \one \{  j_\ell > \vep\}, \ \bigvee_{\ell:\, t_\ell \le \,\cdot } j_\ell\Big).  
\end{align*}
In this limit, the first component converges weakly to $\widetilde{S}_{2\al/(k-1)}(\cdot)$ as $\vep   \downarrow 0$, where
\begin{equation}  \label{e:chf.first.comp.zero.stable.subordinator}
\E \big[ e^{iv \widetilde{S}_{2\al/(k-1)}(t)} \big] = \exp\Big\{t \int_0^\infty (e^{ivx}-1) \ms{m}_{2\al/(k-1)} (\dif x)  \Big\}, \ \ \ v\in \R 
\end{equation}
(see e.g.,  Section 5.5 in \cite{resnick:2007}). The second component has the same distribution as $Y_\indh(\cdot)$. 
Moreover, the hybrid characteristic-distribution function \eqref{e:hybrid2} can again be derived from \cite[Proposition 2.3]{resnick:1986} and \eqref{e:chf.first.comp.zero.stable.subordinator}, by carrying out the same calculations as in case $(i)$.

To conclude the proof, it remains to demonstrate that for every $\eta\in (0,1)$, 
\begin{equation}  \label{e:2nd.conv.subordinator}
\lim_{\vep\to 0}\limsup_{n\to\infty}\P \bigg( \sup_{0\le t \le 1}\sum_{i=1}^{[N_n t]}\mu_{k,n}(U_i)\one \big\{  \mu_{k,n}(U_i)\le  \vep a_{k,n}\big\} > \eta a_{k,n} \bigg)=0. 
\end{equation}
By Markov's inequality, the   probability above is at most 
\begin{align*}
&\frac{1}{\eta a_{k,n}}\, \E\Big[\sum_{i=1}^{N_n}  \mu_{k,n}(U_i) \one \big\{ \mu_{k,n}(U_i)\le \vep a_{k,n} \big\}  \Big]  = \frac{n}{\eta a_{k,n}}\, \int_{0}^{R_n} \mu_{k,n}(u)\one \big\{ \mu_{k,n}(u)\le \vep a_{k,n} \big\} \radpdf (u)\dif u. 
\end{align*}
Proceeding as in the proof of case $(i)$, this is further bounded above by 
$C\vep^{\frac{k-1-2\al}{k-1}} + o_n(1)$, which clearly satisfies $\lim_{\vep \downarrow0} \limsup_{n\to\infty} \big( C\vep^{\frac{k-1-2\al}{k-1}} + o_n(1) \big)=0$. 
\end{proof}

By Proposition \ref{p:main.mu.version.star}, the proof of Theorem \ref{t:joint.sum.max.star} reduces to establishing the following two lemmas.

\begin{lemma}  \label{l:marginal.max.star}
Let $\half <\al < k-1$; then, as $n\to\infty$, 
$$
\sup_{0\le t\le 1}\bigg| a_{k,n}^{-1}\bigvee_{i=1}^{[N_n t]} \mD_{k,n}(X_i) - a_{k,n}^{-1} \bigvee_{i=1}^{[N_n t]} \mu_{k,n}(U_i) \bigg| \stackrel{\P}{\to} 0. 
$$
\end{lemma}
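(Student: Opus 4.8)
The plan is to reduce the supremum of the difference between the two maxima processes to a single maximum over all vertices, and then to estimate that maximum by a union bound combining Mecke's formula, Chebyshev's inequality, and the moment bounds of Propositions~\ref{p:exp.mu.n} and~\ref{p:var.mu.n}. First I would use the elementary inequality $\big|\bigvee_i a_i-\bigvee_i b_i\big|\le\bigvee_i|a_i-b_i|$, valid for finite real sequences, applied for each $t$ to the sequences indexed by $i\le[N_nt]$; this shows it suffices to prove $a_{k,n}^{-1}\bigvee_{i=1}^{N_n}\big|\mD_{k,n}(X_i)-\mu_{k,n}(U_i)\big|\stackrel{\P}{\to}0$. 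Then I would fix $\ga\in\big(\tfrac{1}{2\al},1\big)$, chosen close enough to $\tfrac{1}{2\al}$ (the precise constraint on $\ga$ appears below), and note that by Lemma~\ref{l:Lemma1.OY},
$$
\P\big(\exists\,i\le N_n:\,U_i>\gaR\big)\le\E\big[\#\{i\le N_n:\,U_i>\gaR\}\big]=n\int_{\gaR}^{R_n}\radpdf(u)\dif u\le Cn^{\,1-2\al\ga}\to0,
$$
since $2\al\ga>1$. Hence, on an event of probability tending to one no vertex has radial coordinate exceeding $\gaR$, and there the maximum above equals $\bigvee_{i:\,U_i\le\gaR}\big|\mD_{k,n}(X_i)-\mu_{k,n}(U_i)\big|$.

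Fix $\eta>0$. Using $\P(\mathrm{count}\ge1)\le\E[\mathrm{count}]$, then Mecke's formula together with the rotational invariance of $\Pn$ (so that, conditionally on $U_i=u$, $\mD_{k,n}(X_i)$ has the law of $\mD_{k,n}(u)$, with mean $\mu_{k,n}(u)$), and finally Chebyshev's inequality, I would bound
\begin{align*}
\P\Big(\bigvee_{i:\,U_i\le\gaR}\big|\mD_{k,n}(X_i)-\mu_{k,n}(U_i)\big|>\eta a_{k,n}\Big)&\le n\int_0^{\gaR}\P\big(|\mD_{k,n}(u)-\mu_{k,n}(u)|>\eta a_{k,n}\big)\radpdf(u)\dif u\\
&\le\frac{n}{\eta^2a_{k,n}^2}\int_0^{\gaR}\text{Var}\big(\mD_{k,n}(u)\big)\radpdf(u)\dif u.
\end{align*}
Plugging in Proposition~\ref{p:var.mu.n} and the bound $\radpdf(u)\le Ce^{-\al(d-1)u}$ from Lemma~\ref{l:Lemma1.OY}, the right-hand side is at most
$$
\frac{Cn\,e^{\al(d-1)(2k-3)\omega_n}}{\eta^2a_{k,n}^2}\int_0^{\gaR}e^{(d-1)(\frac{2k-3}{2}-\al)u}\dif u.
$$
Using $a_{k,n}^2=B_{k,\al}^2\,n^{(k-1)/\al}$, $e^{(d-1)R_n}=n^2$, and the fact that $e^{\al(d-1)(2k-3)\omega_n}$ grows slower than any positive power of $n$, it remains to verify that the exponent of $n$ in this bound is strictly negative. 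If $\al\ge k-\tfrac{3}{2}$ the integral is $O(R_n)$ and the exponent equals $1-(k-1)/\al$, which is negative because $\al<k-1$. If $\al<k-\tfrac{3}{2}$ the integral is of order $n^{\ga(2k-3-2\al)}$ and the exponent equals $1+\ga(2k-3-2\al)-(k-1)/\al$, which is negative as soon as $\ga<\tfrac{k-1-\al}{\al(2k-3-2\al)}$; the elementary inequality $\tfrac{k-1-\al}{\al(2k-3-2\al)}>\tfrac{1}{2\al}$ (equivalent to $2(k-1-\al)>2k-3-2\al$, i.e.\ $-2>-3$) guarantees such a $\ga\in\big(\tfrac{1}{2\al},1\big)$ exists, which is the smallness requirement made in the first paragraph. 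Since the displayed probability tends to $0$ for each fixed $\eta>0$, the conclusion follows.

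The main obstacle is this last estimate, and in particular the fact that the truncation to $\{U_i\le\gaR\}$ is essential rather than cosmetic: the integrand $\text{Var}\big(\mD_{k,n}(u)\big)\,\radpdf(u)$ behaves like $e^{(d-1)(\frac{2k-3}{2}-\al)u}$, which for $\al<k-\tfrac{3}{2}$ increases in $u$ and is therefore controlled by the upper limit of integration; integrating up to $R_n$ would contribute a spurious factor $n^{2k-3-2\al}$ and could push the exponent of $n$ above $0$, whereas stopping at $\gaR$ with $\ga$ close to $\tfrac{1}{2\al}$ keeps it negative. A possible alternative is to split the vertices according to whether $\mu_{k,n}(U_i)\le\delta a_{k,n}$, treating the $O(1)$ ``hubs'' individually via Chebyshev and the bulk with the same second-moment bound, but the direct union bound above is shorter and already covers the whole range $\half<\al<k-1$.
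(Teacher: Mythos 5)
Your proof is correct and follows essentially the same route as the paper: reduce the supremum of the difference of running maxima to the event that some vertex satisfies $|\mD_{k,n}(X_i)-\mu_{k,n}(U_i)|>\eta a_{k,n}$, then control this via a union bound, the Mecke formula, and Chebyshev's inequality combined with the variance bound of Proposition~\ref{p:var.mu.n}. The only (minor) deviation is in the treatment of large radial coordinates: the paper cuts the resulting integral at $u=\frac{2}{(d-1)(k-1)}\log(Ma_{k,n})$ and lets $M\to\infty$ as in \eqref{e:A_n.to.0.second}, whereas you discard the event $\{\exists\, i\le N_n:\,U_i>\gaR\}$ outright and choose $\ga$ slightly above $\tfrac{1}{2\al}$ (with $\ga<\tfrac{k-1-\al}{\al(2k-3-2\al)}$ when $\al<k-\tfrac{3}{2}$), which is an equally valid way to close the estimate.
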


\begin{lemma}  \label{l:stable.limit.thm.D}
$(i)$ If $\half (k-1)<\al< k-1$, then as $n\to\infty$, it holds that 
$$
\sup_{0\le t\le 1}\bigg|  \frac{1}{a_{k,n}} \Big( \sum_{i=1}^{[N_n t]}\mD_{k,n}(X_i) -[N_n t] \E \big[\mD_{k,n}(X_1) \big]\Big) - \frac{1}{a_{k,n}} \Big( \sum_{i=1}^{[N_n t]}\mu_{k,n}(U_i) -[N_n t]  \E \big[  \mu_{k,n}(U_1) \big]\Big)\bigg| \stackrel{\P}{\to} 0. 
$$
$(ii)$ If $\frac{1}{2}<\al<\half (k-1)$, then as $n\to\infty$, 
$$
\sup_{0\le t\le 1}\bigg|  \frac{1}{a_{k,n}}  \sum_{i=1}^{[N_n t]}\mD_{k,n}(X_i) - \frac{1}{a_{k,n}} \sum_{i=1}^{[N_n t]}\mu_{k,n}(U_i) \bigg|\stackrel{\P}{\to} 0. 
$$
\end{lemma}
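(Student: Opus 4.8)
The plan is as follows. I would first dispose of the centering. By the tower property and the Mecke formula, $\E[\mD_{k,n}(X_1)]$ and $\E[\mu_{k,n}(U_1)]$ agree up to a de-Poissonization factor $1+O(1/n)$, and since $\E[\mu_{k,n}(U_1)]$ stays bounded when $\al>(k-1)/2$ (Proposition \ref{p:exp.mu.n}), the gap between $[N_nt]\E[\mD_{k,n}(X_1)]$ and $[N_nt]\E[\mu_{k,n}(U_1)]$ is $O(1)=o(a_{k,n})$ uniformly in $t$ in part $(i)$, while in part $(ii)$ there is no centering. Thus in both parts it suffices to prove
$$
\sup_{0\le t\le 1}\Big|\frac{1}{a_{k,n}}\sum_{i=1}^{[N_nt]}\big(\mD_{k,n}(X_i)-\mu_{k,n}(U_i)\big)\Big|\stackrel{\P}{\to}0 .
$$
Next I would fix $\ga\in(1/(2\al),1)$, its proximity to $1/(2\al)$ to be chosen later, and truncate: since $\E[\#\{i\le N_n:U_i>\ga R_n\}]=n\int_{\ga R_n}^{R_n}\radpdf(u)\dif u\le Cn^{1-2\al\ga}\to0$, with probability tending to one every vertex satisfies $U_i\le\ga R_n$, so $\mD_{k,n}(X_i)-\mu_{k,n}(U_i)$ may be replaced throughout by $c_{X_i}:=(\mD_{k,n}(X_i)-\mu_{k,n}(U_i))\one\{U_i\le\ga R_n\}$.

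For the truncated process I would use a maximal inequality conditionally on the configuration. Given $\Pn$, the numbers $\{c_X:X\in\Pn\}$ are fixed and $X_1,\dots,X_{N_n}$ is a uniformly random ordering of $\Pn$, so $m\mapsto\sum_{i\le m}c_{X_i}$ is a partial-sum process under sampling without replacement and a H\'ajek--R\'enyi-type maximal inequality gives $\E[\max_{m\le N_n}(\sum_{i\le m}c_{X_i})^2\mid\Pn]\le C(\sum_X c_X^2+(\sum_X c_X)^2)$. Taking expectations, using the Mecke formula and $\E[\sum_X c_X]=0$ (again by Mecke, since $\E[\mD_{k,n}(x;\Pn)]=\mu_{k,n}(u_x)$ for deterministic $x$), the lemma reduces to
$$
\E\Big[\sum_{X\in\Pn}c_X^2\Big]=o(a_{k,n}^2)\qquad\text{and}\qquad\mathrm{Var}\Big(\sum_{X\in\Pn}c_X\Big)=o(a_{k,n}^2).
$$

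The first bound is easy: by Mecke it equals $n\,\E[\mathrm{Var}(\mD_{k,n}(U_1))\one\{U_1\le\ga R_n\}]$, which by Proposition \ref{p:var.mu.n} and $\radpdf(u)\le Ce^{-\al(d-1)u}$ is, up to polyloglog factors, of order $n$ when $\al\ge k-\tfrac32$ and of order $n^{1+\ga(2k-3-2\al)}$ otherwise; both are $o(n^{(k-1)/\al})=o(a_{k,n}^2)$ once $\ga$ is close enough to $1/(2\al)$, because $(k-1)/\al>1$ and $1+\ga(2k-3-2\al)\downarrow\tfrac{2k-3}{2\al}<\tfrac{2k-2}{2\al}$. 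The variance bound is the crux. Expanding $\mD_{k,n}(X)$ as a sum over ordered $(k-1)$-tuples in $\Pn\setminus\{X\}$, one gets $\sum_X c_X=\Sigma_1-\Sigma_2$, where $\Sigma_1$ counts ordered $k$-tuples of distinct points of $\Pn$ forming a center-distinguished star whose center has radial coordinate $\le\ga R_n$, and $\Sigma_2=\sum_X\mu_{k,n}(U_X)\one\{U_X\le\ga R_n\}$ cancels exactly the part of $\Sigma_1$ carried by the \emph{center} vertex. Crucially, it does \emph{not} cancel the part carried by a vertex in the role of a \emph{leaf}: the leading fluctuation of $\Sigma_1-\Sigma_2$ is $(k-1)\int\ell_n(u_x)\,(\Pn-n\nu_1)(\dif x)$, where $\ell_n(u)$ is the expected number of stars in which a point at radial coordinate $u$ is a fixed leaf while the center has radial coordinate $\le\ga R_n$. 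Using the connection-probability estimate behind \eqref{e:initial.Anu} (namely $n\,\P(d_H(X,(u,\cdot))\le R_n\mid U_X=v)\asymp (e^{(d-1)(u+v)/2}\wedge n)$), one would show $n\,\E[\ell_n(U_1)^2]=o(a_{k,n}^2)$ for $\ga$ near $1/(2\al)$. The remaining terms in $\mathrm{Var}(\Sigma_1-\Sigma_2)$, coming from pairs, triples, \dots\ of distinct stars sharing at least two vertices (extracted via the multivariate Mecke formula), are each of the form $n^{\#\{\text{distinct vertices}\}}$ times a geometric integral that, once all radial coordinates are truncated at $\ga R_n$, is controlled by the same computations as in Propositions \ref{p:exp.mu.n}--\ref{p:var.mu.n} applied with extra distinguished points; for $\ga$ sufficiently close to $1/(2\al)$ every such term is $o(a_{k,n}^2)$, and summing yields $\mathrm{Var}(\sum_X c_X)=o(a_{k,n}^2)$.

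I expect this variance estimate to be the main obstacle. Subtracting $\mu_{k,n}$ removes only the ``center'' chaos of the star count, so one still has to prove that the fluctuations produced by a single vertex in the role of a leaf, and by two or more overlapping or adjacent stars, genuinely sit below the scale $a_{k,n}$. Because of the inhomogeneity of the hyperbolic metric these pieces do not decouple from the heavy-tailed central vertices, and forcing all of them to be $o(a_{k,n}^2)$ is exactly what requires taking the truncation radius $\ga$ arbitrarily close to the critical value $1/(2\al)$ --- the same mechanism that, for $m$-cliques, obstructs the regime $\tfrac12<\al<\tfrac{2m-3}{2m-2}$ mentioned in the introduction.
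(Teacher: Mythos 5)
Your architecture (truncate, use a maximal inequality for the exchangeable ordering, then close with two Mecke second-moment bounds) is similar in spirit to the paper's, but the decomposition is genuinely different, and the difference is where the problem lies. The paper cuts at $c_1R_n$ with $c_1<\tfrac{1}{2\al}$, i.e.\ \emph{below} the radial scale $\approx R_n/(2\al)$ of the dominating vertices: on the outer region it compares $\mD_{k,n}$ with $\mu_{k,n}$ pointwise (Markov/Chebyshev with Proposition \ref{p:var.mu.n}), handles the expectations by an add-one-point Mecke argument as in \eqref{e:main(i).cond2-2} (this is also what your asserted ``agree up to a factor $1+O(1/n)$'' centering step actually requires), and only on the inner region does it use Pruss's maximal inequality plus a covariance computation (Lemma \ref{l:Pruss.maximal.inequ}). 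You instead truncate at $\ga R_n$ with $\ga>\tfrac{1}{2\al}$, which removes essentially nothing (the hubs sit near radius $R_n/(2\al)<\ga R_n$), and try to finish with a single global bound $\mathrm{Var}\big(\sum_X c_X\big)=o(a_{k,n}^2)$. For case $(ii)$ this is much heavier than the paper's two-line Markov/Chebyshev argument but plausible; for case $(i)$ it runs into a real obstruction.

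The gap is your key claim that $n\,\E[\ell_n(U_1)^2]=o(a_{k,n}^2)$ ``for $\ga$ near $1/(2\al)$''. In case $(i)$ one has $\ell_n(v)\asymp e^{\half(d-1)v}$, so for $\al<1$,
\begin{equation*}
n\,\E\big[\ell_n(U_1)^2\one\{U_1\le\ga R_n\}\big]\;\asymp\; n\int_0^{\ga R_n}e^{(d-1)(1-\al)v}\radpdf(v)\,\dif v\cdot e^{(d-1)\al v}\Big|_{\text{bookkeeping}}\;\asymp\; n^{\,1+2\ga(1-\al)},
\end{equation*}
and the requirement $1+2\ga(1-\al)<\tfrac{k-1}{\al}$ is compatible with the truncation requirement $\ga>\tfrac{1}{2\al}$ only when $k\ge3$: at $\ga=\tfrac{1}{2\al}$ the exponent equals $\tfrac1\al$, which for $k=2$ is exactly the exponent of $a_{2,n}^2$, and for every admissible $\ga>\tfrac1{2\al}$ it is strictly larger. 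Worse, for $k=2$ this is not merely a failure of the second-moment method: the leaf role and the center role are then symmetric, so $\ell_n\equiv\mu_{2,n}$ up to negligible corrections, and your leading ``leaf-linear'' term $(k-1)\int\ell_n(u_x)\,(\Pn-n\nu_1)(\dif x)$ is essentially the centered sum $\sum_i\mu_{2,n}(U_i)-N_n\E[\mu_{2,n}(U_1)]$ itself, which by Proposition \ref{p:main.mu.version.star} is of exact order $a_{2,n}$ with a nondegenerate stable limit --- certainly not $o_P(a_{2,n})$. Hence your reduction to $\sup_t|\sum(\mD_{k,n}-\mu_{k,n})|=o_P(a_{k,n})$ followed by a global variance bound cannot cover the $k=2$ part of the range $\half(k-1)<\al<k-1$; some additional cancellation or a separate treatment of configurations in which a point near the origin enters a star as a \emph{leaf} of a far-out center is needed, and that is exactly the configuration the paper isolates by cutting at $c_1R_n$ below the hub scale (and, incidentally, the borderline configuration hidden behind the ``without loss of generality $s_1=s_2=k-1$'' step in Lemma \ref{l:Pruss.maximal.inequ}). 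For $k\ge3$ your exponent bookkeeping does close, and the overlap terms you defer to ``the same computations as Propositions \ref{p:exp.mu.n}--\ref{p:var.mu.n}'' are precisely the content of the paper's Lemma \ref{l:Pruss.maximal.inequ}; but as written the proposal does not prove the lemma at $k=2$, and the centering step should be fleshed out along the lines of \eqref{e:main(i).cond2-2} rather than asserted.
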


\begin{proof}[Proof of Lemma \ref{l:marginal.max.star}]
Note that for every $\vep>0$, 
\begin{align}
&\P \bigg( \sup_{0\le t \le 1}\Big| \bigvee_{i=1}^{[N_nt]}\mD_{k,n}(X_i)  -\bigvee_{i=1}^{[N_nt]}\mu_{k,n}(U_i)\Big| >\vep a_{k,n}  \bigg) \le \P \bigg( \bigcup_{i=1}^{N_n} \Big\{ \big| \mD_{k,n}(X_i) - \mu_{k,n}(U_i)  \big| >\vep a_{k,n} \Big\}\bigg) \label{e:max.difference}\\
&\le \E \Big[ \sum_{i=1}^{N_n} \one \Big\{  \big|  \mD_{k,n}(X_i)-\mu_{k,n}(U_i) \big| >\vep a_{k,n} \Big\} \Big]  \notag \\
&=n\int_0^{R_n} \P \Big( \big| \mD_{k,n}(u)-\mu_{k,n}(u) \big|>\vep a_{k,n} \Big) \radpdf (u) \dif u \to 0, \ \ \ n\to\infty, \notag 
\end{align}
where the last  convergence can be established in the same way as  \eqref{e:A_n.to.0.second} in the Appendix. 
\end{proof}

For the proof of Lemma \ref{l:stable.limit.thm.D}, we begin with case $(ii)$, since its argument is considerably shorter than that of case $(i)$.

\begin{proof}[Proof of Lemma \ref{l:stable.limit.thm.D} $(ii)$]
For any $\vep >0$, 
\begin{equation}  \label{e:diff.D.mu.conv.in.prob}
\P\bigg(  \sup_{0\le t \le 1} \Big| \sum_{i=1}^{[N_n t]} \mD_{k,n}(X_i) - \sum_{i=1}^{[N_n t]} \mu_{k,n}(U_i) \Big| >\vep a_{k,n}\bigg) \le \P \bigg( \sum_{i=1}^{N_n} \big| \mD_{k,n}(X_i) - \mu_{k,n}(U_i) \big| >\vep a_{k,n}\bigg)
\end{equation}
Then, \eqref{e:diff.D.mu.conv.in.prob} is further bounded above by 
\begin{align*}
&\P \bigg( \bigcup_{i=1}^{N_n} \Big\{ \big| \mD_{k,n}(X_i)-\mu_{k,n}(U_i) \big| \ge  a_{k,n} \Big\} \bigg)  \\
&\qquad + \P \bigg(  \sum_{i=1}^{N_n} \big| \mD_{k,n}(X_i)-\mu_{k,n}(U_i) \big|\, \one \Big\{  \big| \mD_{k,n}(X_i)-\mu_{k,n}(U_i) \big| \le a_{k,n} \Big\} >\vep a_{k,n} \bigg) \\
&=: C_n + D_n. 
\end{align*}
From the proof of Lemma \ref{l:marginal.max.star}, we already know  that  $C_n\to0$ as $n\to\infty$. 

By the Markov and Chebyshev inequalities, 
\begin{align*}
D_n &\le \frac{1}{\vep a_{k,n}}\, \E \Big[ \sum_{i=1}^{N_n} \big| \mD_{k,n}(X_i)-\mu_{k,n}(U_i) \big| \, \one \Big\{ \big| \mD_{k,n}(X_i)-\mu_{k,n}(U_i) \big| \le a_{k,n} \Big\}\,\Big] \\
&\le \frac{n}{\vep a_{k,n}}\, \int_0^{\frac{2}{(d-1)(k-1)}\log (Ma_{k,n})} \sqrt{\text{Var}\big( \mD_{k,n}(u) \big)} \, \radpdf (u)\dif u + \frac{n}{\vep}\, \int_{\frac{2}{(d-1)(k-1)}\log (Ma_{k,n})}^\infty \radpdf (u)\dif u \\
&\le Cn^{1-\frac{k-1}{2\al}} e^{\al(d-1)(k-\frac{3}{2})\omega_n} \int_0^{\frac{2}{(d-1)(k-1)}\log (Ma_{k,n})} e^{\half (d-1)(k-\frac{3}{2}-2\al)u}\dif u + CM^{-\frac{2\al}{k-1}}. 
\end{align*}
If $k-\frac{3}{2}-2\al\le 0$, then $D_n\le Cn^{1-\frac{k-1}{2\al}}e^{\al(d-1)(k-\frac{3}{2})\omega_n} \log n + CM^{-\frac{2\al}{k-1}}$, and if $k-\frac{3}{2}-2\al>0$, then $D_n \le CM^{\frac{k-3/2-2\al}{k-1}}e^{\al(d-1)(k-\frac{3}{2})\omega_n}n^{-1/(4\al)} + CM^{-\frac{2\al}{k-1}}$. In both cases, we have $\lim_{M\to\infty}\limsup_{n\to\infty}D_n=0$ as desired. 
\end{proof}

\begin{proof}[Proof of Lemma \ref{l:stable.limit.thm.D} $(i)$]
Choose a constant $c_1$ satisfying 
\begin{equation}  \label{e:constraint.c1}
0 < \frac{2\al-k+1}{2\al (2\al-k+3/2)} < c_1 < \frac{1}{2\al}. 
\end{equation}
For our purposes, it suffices to establish that for every $\eta>0$, 
\begin{align*}
\P \bigg( \sup_{0\le t \le 1} \Big|  \sum_{i=1}^{[N_nt]} \big( \mD_{k,n}(X_i) - \E [\mD_{k,n}(X_i)] \big) - \sum_{i=1}^{[N_nt]} \big( \mu_{k,n}(U_i) - \E [\mu_{k,n}(U_i)] \big) \Big| >4\eta a_{k,n} \bigg)\to0, \  \ n\to\infty. 
\end{align*}
This can be obtained by proving the following results: 
\begin{equation}   \label{e:main(i).cond1}
\P \bigg( \sup_{0\le t \le 1} \bigg| \sum_{i=1}^{[N_nt]} \big(\mD_{k,n}(X_i) -\mu_{k,n}(U_i) \big)\one \{ U_i >c_1 R_n \} \bigg|>\eta a_{k,n} \bigg) \to 0, 
\end{equation}
\begin{equation}  \label{e:main(i).cond2}
\P \bigg( \sup_{0\le t \le 1} [N_nt] \Big| \, \E \big[ \mD_{k,n}(X_1)\one\{ U_1>c_1 R_n \} \big] - \E \big[ \mu_{k,n}(U_1)\one\{ U_1>c_1 R_n \} \big] \, \Big| >\eta a_{k,n} \bigg) \to 0, 
\end{equation}
\begin{equation}  \label{e:main(i).cond3}
\P \bigg( \sup_{0\le t \le 1} \bigg| \sum_{i=1}^{[N_nt]} \Big(\mu_{k,n}(U_i)\one \{ U_i \le c_1 R_n \} -\E \big[ \mu_{k,n}(U_i)\one \{ U_i \le c_1 R_n \} \big] \Big) \bigg|>\eta a_{k,n} \bigg) \to 0, 
\end{equation}
and
\begin{equation}  \label{e:main(i).cond4}
\P \bigg( \sup_{0\le t \le 1} \bigg| \sum_{i=1}^{[N_nt]} \Big(\mD_{k,n}(X_i)\one \{ U_i \le c_1 R_n \} -\E \big[ \mD_{k,n}(X_i)\one \{ U_i \le c_1 R_n \} \big] \Big) \bigg|>\eta a_{k,n} \bigg) \to 0. 
\end{equation}

For the proof of \eqref{e:main(i).cond1}, it follows from the Markov and Chebyshev  inequalities, along with Proposition \ref{p:var.mu.n}, that
\begin{align}
&\P \bigg( \sup_{0\le t \le 1} \bigg| \sum_{i=1}^{[N_nt]} \big(\mD_{k,n}(X_i) -\mu_{k,n}(U_i) \big)\one \{ U_i >c_1 R_n \} \bigg|>\eta a_{k,n} \bigg)  \label{e:diff.big.u}\\
&\le \frac{1}{\eta a_{k,n}}\E \bigg[   \sum_{i=1}^{N_n} \big| \mD_{k,n}(X_i)-\mu_{k,n}(U_i) \big|\, \one \{ U_i>c_1 R_n \} \bigg]\notag  \\
&= \frac{n}{\eta a_{k,n}} \, \int_{c_1R_n}^{R_n} \E \Big[ \big| \mD_{k,n}(u)-\mu_{k,n}(u) \big| \Big] \radpdf (u)\dif u\notag  \\
&\le Cn^{1-\frac{k-1}{2\al}}\, \int_{c_1R_n}^{R_n} \sqrt{\text{Var}\big(\mD_{k,n}(u)  \big)} \, e^{-\al(d-1)u}\dif u \notag \\
&\le Cn^{1-\frac{k-1}{2\al}} e^{\al(d-1)(k-\frac{3}{2})\omega_n} \, \int_{c_1R_n}^{R_n} e^{\half (d-1)(k-\frac{3}{2}-2\al)u} \dif u \notag \\
&\le Cn^{\frac{2\al - k+1}{2\al} - c_1(2\al-k+\frac{3}{2})}e^{\al(d-1)(k-\frac{3}{2})\omega_n}  \to 0, \ \ \text{as } n\to\infty, \notag 
\end{align}
where the last convergence is due to the constraint  in \eqref{e:constraint.c1}. 

Next, we turn to \eqref{e:main(i).cond2} and define $X_1'=(U_1',\Theta_1')$ (with $U_1'=R_n-d_H(o,X_1')$) as  an independent copy of $X_1=(U_1,\Theta_1)$ that is also independent of $\Pn$. Throughout the proof of \eqref{e:main(i).cond2}, we write $\mD_{k,n}(\cdot) = \mD_{k,n}(\cdot, \Pn)$ to emphasize the dependence of $\mD_{k,n}$ on the process $\Pn$. Note that \eqref{e:main(i).cond2} follows if one can show that 
\begin{equation}  \label{e:main(i).cond2-1}
\frac{n}{a_{k,n}}\, \Big|\,\E \big[ \mD_{k,n}(X_1', \Pn) \one \{ U_1' >c_1 R_n\}  \big] - \E \big[ \mu_{k,n}(U_1) \one \{ U_1 >c_1 R_n\}  \big] \Big|\to 0, \ \ \ n\to\infty, 
\end{equation}
and 
\begin{equation}  \label{e:main(i).cond2-2}
\frac{n}{a_{k,n}}\, \Big|\,\E \big[ \mD_{k,n}(X_1', \Pn) \one \{ U_1' >c_1 R_n\}  \big] - \E \big[ \mD_{k,n}(X_1, \Pn) \one \{ U_1 >c_1 R_n\}  \big] \Big|\to 0, \ \ \ n\to\infty. 
\end{equation}
As for \eqref{e:main(i).cond2-1}, since $U_1'$ is an independent copy of $U_1$, 
\begin{align*}
&\frac{n}{a_{k,n}}\, \Big|\,\E \big[ \mD_{k,n}(X_1', \Pn) \one \{ U_1' >c_1 R_n\}  \big] - \E \big[ \mu_{k,n}(U_1) \one \{ U_1 >c_1 R_n\}  \big] \Big| \\
&\le \frac{n}{a_{k,n}}\,  \E \Big[ \,\big| \mD_{k,n}(X_1', \Pn) - \mu_{k,n}(U_1') \big|\,\one \{U_1' >c_1 R_n\}  \Big]\\
&=\frac{n}{a_{k,n}}\, \int_{c_1R_n}^{R_n} \E \Big[ \big| \mD_{k,n}(u)-\mu_{k,n}(u) \big| \Big]\radpdf (u)\dif u. 
\end{align*}
We know from \eqref{e:diff.big.u} that the final term vanishes as $n\to\infty$. 

For the proof of \eqref{e:main(i).cond2-2}, let us  write 
$$
\E \big[ \mD_{k,n}(X_1', \Pn) \one \{ U_1' >c_1 R_n\}  \big] = \int_{c_1R_n}^{R_n} \E \big[ \mD_{k,n}(u,\Pn) \big]\radpdf (u)\dif u, 
$$
and 
$$
\E \big[ \mD_{k,n}(X_1, \Pn) \one \{ U_1 >c_1 R_n\}  \big] = \int_{c_1R_n}^{R_n} \E \big[ \mD_{k,n}(u,\Pn\setminus \{ X_1\}) \big]\radpdf (u)\dif u, 
$$
while noting that by definition, $\mD_{k,n}(u,\Pn)\ge \mD_{k,n}\big( u,\Pn\setminus \{ X_1 \} \big)$ holds. Hence, we have 
\begin{align}
&\frac{n}{a_{k,n}}\, \Big|\,\E \big[ \mD_{k,n}(X_1', \Pn) \one \{ U_1' >c_1 R_n\}  \big] - \E \big[ \mD_{k,n}(X_1, \Pn) \one \{ U_1 >c_1 R_n\}  \big] \Big| \label{e:diff.Poissonization}\\
&= \frac{n}{a_{k,n}}\, \int_{c_1R_n}^{R_n} \E \big[ \mD_{k,n}(u,\Pn) - \mD_{k,n}(u, \Pn\setminus \{X_1\}) \big]\radpdf (u)\dif u.\notag  
\end{align}
Denote the difference in the last term as 
\begin{align}
&\mD_{k,n}(u,\Pn) - \mD_{k,n}(u, \Pn\setminus \{X_1\})   \label{e:difference.D.X1} \\
&= (k-1)\one \big\{  d_H(X_1,p)\le R_n  \big\}\hspace{-15pt}\sum_{(Y_1,\dots, Y_{k-2})\in (\{ X_2,\dots,X_{N_n} \})_{\neq}^{k-2}} \hspace{-15pt}\one \{ \max_{1\le i \le k-2}d_H(Y_i, p)\le R_n \}, \notag 
\end{align}
where $p=(u,{\bf 0})$ with $u=R_n-d_H(o,p)$. Since $X_1$ is independent of $\Pn\setminus \{ X_1 \}$ and $\{ X_2,\dots, X_{N_n} \}$ is contained in $\Pn$, we  derive that 
\begin{align*}
&\E \big[ \mD_{k,n}(u,\Pn) - \mD_{k,n}(u, \Pn\setminus \{X_1\}) \big]\\
&\le (k-1)\P\big(  d_H(X_1,p)\le R_n \big) \E\bigg[ \sum_{(Y_1,\dots, Y_{k-2})\in (\Pn)_{\neq}^{k-2}} \hspace{-15pt}\one \{ \max_{1 \le i \le k-2}d_H(Y_i, p)\le R_n \} \bigg]. 
\end{align*}
By the proof of Proposition \ref{p:exp.mu.n} $(iii)$,  it holds that 
$\P\big(  d_H(X_1,p)\le R_n \big) \le Cn^{-1} e^{\al(d-1)\omega_n} \cdot e^{\half (d-1)u}$, 
and also 
$$
 \E\bigg[ \sum_{(Y_1,\dots, Y_{k-2})\in (\Pn)_{\neq}^{k-2}} \hspace{-15pt}\one \{ \max_{1\le i \le k-2}d_H(Y_i, p)\le R_n \} \bigg]\le C e^{\al(d-1)(k-2)\omega_n}\cdot e^{\half (d-1)(k-2)u}. 
$$
Referring these bounds back to \eqref{e:diff.Poissonization}, 
\begin{align*}
&\frac{n}{a_{k,n}}\, \int_{c_1R_n}^{R_n} \E \big[ \mD_{k,n}(u,\Pn) - \mD_{k,n}(u, \Pn\setminus \{X_1\}) \big]\radpdf (u)\dif u \\
&\le \frac{Cn}{a_{k,n}}\, n^{-1}e^{\al(d-1)(k-1)\omega_n} \int_{c_1R_n}^{R_n} e^{\half (d-1)(k-1-2\al)u}\dif u \\
&\le Ca_{k,n}^{-1}e^{\al(d-1)(k-1)\omega_n}  \to 0, \ \ \text{as } n\to\infty. 
\end{align*}

We now address \eqref{e:main(i).cond3}. Proceeding as in \eqref{e:chebyshev} and applying Kolmogorov's inequality, the probability in \eqref{e:main(i).cond3} can be bounded above by
\begin{align*}
&\frac{1}{\eta^2 a_{k,n}^2} \text{Var} \Big( \sum_{i=1}^{N_n} \mu_{k,n}(U_i) \one\{  U_i\le c_1R_n\}\Big) = \frac{n}{\eta^2 a_{k,n}^2}\, \int_{0}^{c_1R_n} \mu_{k,n}(u)^2 \radpdf (u)\dif u \\
&\qquad \le \frac{Cn}{a_{k,n}^2}\, \int_0^{c_1R_n} e^{(d-1)(k-1-\al)u}\dif u \le Cn^{-(k-1-\al)(1/\al - 2c_1)} \to 0,  \ \ \text{as } n\to\infty. 
\end{align*}

Finally, the proof of \eqref{e:main(i).cond4} is highly technical and  postponed to Lemma \ref{l:Pruss.maximal.inequ} in the Appendix.  
\end{proof}

\section{Clique counts}   \label{sec:clique.counts}

We consider an $m$-clique, $m \ge 3$,  formed by the vertices $X_i = (U_i, \Theta_i)$, $i = 1, \dots, m-1$, and $p=(u,\btheta) \in B(o, R_n)$, where $U_i = R_n - d_H(o, X_i)$ for $i = 1, \dots, m-1$, and $u =R_n-d_H(o, p)$, subject to the   ordering $U_1 \le \cdots \le U_{m-1} \le u$. Specifically, we define the count of $m$-cliques in which the point $p$ is the vertex closest to the origin of $\B_d$: 
\begin{align*}
&\mC_{m,n}(p) := \sum_{(Y_1,\dots,Y_{m-1})\in (\Pn)_{\neq}^{m-1}} \one \big\{ \max_{1\le i, j \le m-1}d_H(Y_i,Y_j)\le R_n, \\
&\qquad \qquad \qquad \qquad \qquad \qquad\qquad\quad  \max_{1\le i \le m-1}d_H(Y_i,p)\le R_n,  \, V_1 \le \cdots \le V_{m-1} \le u \big\}, 
\end{align*}
where $V_i=R_n-d_H(o,Y_i)$ for $i=1,\dots,m-1$. 
For $u\in (0,R_n)$, denote $\mC_{m,n}(u):=\mC_{m,n}(u,{\bf 0})$ with ${\bf 0}\in \mA_d$, and define $\nu_{m,n}(u):= \E \big[ \mC_{m,n}(u,\btheta) \big]$. Note  that $\nu_{m,n}(u)$ is independent of $\btheta$ by the uniformity of an underlying density. Define also 
$$
C_{m,\al} := 2^{d-1} \xi_{d-1} \big( s_{d-1}^{-1}\al (d-1) \big)^{m-1}\ell(\infty), 
$$
where $\xi_{d-1}$ denotes the volume of $(d-1)$-dimensional unit ball, $s_{d-1}$ is the surface area of the $d$-dimensional unit ball in $\R^d$, and $\ell(\infty):=\lim_{u\to\infty}\ell(u)$ is  defined  in \eqref{e:def.ell} below. 

Our main result for the clique counts is, once again, the joint weak convergence of the process-level clique counts $\sum_{i=1}^{[N_nt]} \mC_{m,n}(X_i)$,  representing the total number of $m$-cliques generated by $\Pn$,  together with their maxima process $\bigvee_{i=1}^{[N_nt]}\mC_{m,n}(X_i)$. We introduce  the scaling constants   by $b_{m,n} := C_{m,\al} n^{1/(2\al)}$. 

\begin{theorem}  \label{t:joint.sum.max.clique}
Let $\frac{2m-3}{2m-2}<\al<1$. Then, as $n\to\infty$, 
\begin{align}
&\left( \frac{1}{b_{m,n}} \bigg( \sum_{i=1}^{[N_n\cdot\,]}\mC_{m,n}(X_i) - [N_n\cdot\,]  \E [\mC_{m,n}(X_1)] \bigg), \ \frac{1}{b_{m,n}}\bigvee_{i=1}^{[N_n \cdot \,]} \mC_{m,n}(X_i)\right)  \Rightarrow (S_{2\al}(\cdot),\, Y_{2\al}(\cdot)),  \label{e:joint.weak.clique}
\end{align}
in the space $D\big([0,1], \R \times [0,\infty)\big)$, where the hybrid characteristic-distribution function is given in \eqref{e:hybrid1} with $k=2$. 
\end{theorem}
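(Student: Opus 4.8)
The plan is to run the same three-stage scheme as for Theorem~\ref{t:joint.sum.max.star}, specialized to the ``edge-type'' exponent $k=2$: the normalizations, limiting processes and continuous-mapping arguments will be the $k=2$ instances of Section~\ref{sec:star.shape}, with $\mD_{k,n}$ replaced by $\mC_{m,n}$, $\mu_{k,n}$ by $\nu_{m,n}$, $a_{k,n}$ by $b_{m,n}=C_{m,\al}n^{1/(2\al)}$, and $\ms{m}_{2\al/(k-1)}$ by the Radon measure $\ms{m}_{2\al}$ with $\ms{m}_{2\al}\big((y,\infty]\big)=y^{-2\al}$; in particular there is no analogue of Theorem~\ref{t:joint.sum.max.star}$(ii)$, since for $k=2$ the subordinator regime is empty.

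\emph{Stage 1 (moments; Section~\ref{sec:moments.clique}).} First I would establish the clique analogues of Propositions~\ref{p:exp.mu.n} and~\ref{p:var.mu.n}: a sharp first-moment asymptotic $\nu_{m,n}(u)=(1+o_n(1))\,C_{m,\al}\,e^{\frac12(d-1)u}$ uniformly for $u\in(0,\ga R_n)$, a matching lower bound, a crude global bound $\nu_{m,n}(u)\le Ce^{c\omega_n}e^{\frac12(d-1)u}$ valid for all $u\in(0,R_n)$, and a variance bound of the form $\text{Var}\big(\mC_{m,n}(u)\big)\le Ce^{c\omega_n}e^{\frac12(d-1)(2m-3)u}$. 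As for stars, these come from the Mecke formula followed by conditioning on the ordered radial coordinates $V_1\le\dots\le V_{m-1}\le u$; the new feature is that one must integrate out all $\binom{m-1}{2}+(m-1)$ pairwise hyperbolic-distance constraints simultaneously, not just the $m-1$ radial constraints relative to $p$. The dominant configuration has the $m-1$ non-central vertices all sitting inside the angular window of $p$ -- so that the neighbourhood of a central vertex is essentially a clique -- which is why the extra $m-2$ vertices do not inflate the count and $\nu_{m,n}(u)$ scales like the edge count rather than like a star on $m-1$ leaves; the overlap volume of these angular windows, after a change of variables, is what produces $C_{m,\al}$ and the limiting factor $\ell(\infty)$. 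For the variance, the leading term in the clique analogue of~\eqref{e:decomp.var} is again the $q=1$ overlap (two $m$-cliques through $p$ sharing a single non-central vertex), and dropping the cross-clique constraints reduces the count of the $2m-3$ vertices involved to an edge-type moment, producing the exponent $2m-3$.

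\emph{Stages 2--3 (point process convergence, continuous mapping, transfer).} Exactly as in Proposition~\ref{p:pp.conv.star}, fixing $\ga\in\big(\tfrac1{2\al},1\big)$, discarding $\{U_i>\ga R_n\}$ (of expected count $O(n^{1-2\al\ga})\to0$), and using the two-sided bound of Stage~1 to verify $n\,\P\big(\nu_{m,n}(U_1)\ge y\,b_{m,n},\,U_1\le\ga R_n\big)\to y^{-2\al}$, one obtains
\[
\sum_{i=1}^{N_n}\delta_{(i/N_n,\,b_{m,n}^{-1}\nu_{m,n}(U_i))}\ \Rightarrow\ \ms{PPP}\big(\ms{Leb}\otimes\ms{m}_{2\al}\big)\quad\text{in }M_p\big([0,1]\times(0,\infty]\big),
\]
with de-Poissonization verbatim as in~\eqref{e:de-Poissonization.PP}. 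Applying the maps $\hat T_\vep$, letting $\vep\downarrow0$, and controlling the small-jump part by Kolmogorov's maximal inequality together with $n\,b_{m,n}^{-2}\int_0^{\ga R_n}\nu_{m,n}(u)^2\radpdf(u)\,\dif u=O\big(\vep^{\,2(1-\al)}\big)$, one gets the $\nu$-version of the theorem (the analogue of Proposition~\ref{p:main.mu.version.star}$(i)$), and the hybrid characteristic-distribution function~\eqref{e:hybrid1} with $k=2$ comes out of \cite[Proposition~2.3]{resnick:1986} and \cite[Section~5.5]{resnick:2007} exactly as in Section~\ref{sec:main.result.star.count}. It then remains to pass from $\nu_{m,n}(U_i)$ to $\mC_{m,n}(X_i)$: the maxima statement is the clique analogue of Lemma~\ref{l:marginal.max.star}, reducing to $n\int_0^{R_n}\P\big(|\mC_{m,n}(u)-\nu_{m,n}(u)|>\vep b_{m,n}\big)\radpdf(u)\,\dif u\to0$, and the centered-sum statement is the clique analogue of Lemma~\ref{l:stable.limit.thm.D}$(i)$, split at $\{U_i\le c_1R_n\}$ versus $\{U_i>c_1R_n\}$, with a de-Poissonization of the centering term ($\mC_{m,n}(u,\Pn)$ versus $\mC_{m,n}(u,\Pn\setminus\{X_1\})$, whose difference is bounded by a constant times the count of $(m-2)$-vertex extensions of $\{p,X_1\}$ to an $m$-clique) and a Pruss-type maximal inequality for the $\{U_i\le c_1R_n\}$ contribution, as in Lemma~\ref{l:Pruss.maximal.inequ}.

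The crux, and the step I expect to be the main obstacle, is the $\mC_{m,n}$-versus-$\nu_{m,n}$ transfer of Stage~3, and it is exactly here that the hypothesis $\frac{2m-3}{2m-2}<\al<1$ (equivalently $2(m-1)(1-\al)<1$) is forced: the fluctuation $\mC_{m,n}(u)-\nu_{m,n}(u)$ is controlled by the variance bound $\text{Var}(\mC_{m,n}(u))\le Ce^{c\omega_n}e^{\frac12(d-1)(2m-3)u}$ from Stage~1, and pushing this through against the normalization $b_{m,n}=C_{m,\al}n^{1/(2\al)}$, while exploiting the smallness of $\{U_i>cR_n\}$ only for $u$ close to $R_n$, requires precisely this lower bound on $\al$. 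In the complementary regime $\tfrac12<\al<\tfrac{2m-3}{2m-2}$ the contribution of a handful of extremely central vertices can no longer be decoupled from the rest, so this transfer -- and with it the theorem as stated -- breaks down; cf.\ Remark~\ref{rem:open.q.clique}.
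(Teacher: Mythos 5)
Your overall architecture is the same as the paper's (moment estimates, point-process convergence to $\ms{PPP}(\ms{Leb}\otimes\ms{m}_{2\al})$, continuous mapping via $\hat T_\vep$ plus a small-jump bound, then transfer from $\nu_{m,n}(U_i)$ to $\mC_{m,n}(X_i)$ through a maxima comparison, a de-Poissonized centering comparison, and a Pruss-type maximal inequality). However, there is a genuine gap in your Stage 1, and it propagates into exactly the step you call the crux. Your proposed variance bound $\text{Var}\big(\mC_{m,n}(u)\big)\le Ce^{c\omega_n}e^{\frac12(d-1)(2m-3)u}$ — obtained, as you describe, by taking the $q=1$ overlap and ``dropping the cross-clique constraints'' so that the $2m-3$ vertices are treated as star leaves of $p$ — is too weak to close the transfer. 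For instance, in the $\{U_i>c_1R_n\}$ comparison (the analogue of \eqref{e:main(i).cond1.clique}) one needs $n^{1-\frac{1}{2\al}}\int_{c_1R_n}^{R_n}\sqrt{\text{Var}(\mC_{m,n}(u))}\,e^{-\al(d-1)u}\dif u\to0$ while the Pruss step forces $c_1$ to be small enough that $n^{1-\frac1\al+2c_1(m-\frac32-\al)}\to0$ (your bound gives $\E[\mC_{m,n}(u)^2]\lesssim e^{\frac12(d-1)(2m-3)u}$, with $2m-3>2$); with the exponent $2m-3$ these two requirements on $c_1$ are incompatible already for $m=3$ on the whole range $\al\in(\tfrac34,1)$, and for $m\ge4$ the first integral diverges for every $c_1$. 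Likewise the maxima transfer (Lemma \ref{l:marginal.max.clique}), which is a Chebyshev bound $nb_{m,n}^{-2}\int_0^{R_n}\text{Var}(\mC_{m,n}(u))\radpdf(u)\dif u$, diverges under your bound. The paper's proof hinges on the sharper estimate $\text{Var}\big(\mC_{m,n}(u)\big)\le Ce^{c_1\omega_n}e^{\frac12(d-1)(2m-3-(2m-4)\al)u}$ (Proposition \ref{p:var.clique}), whose extra factor $e^{-\frac12(d-1)(2m-4)\al u}$ comes precisely from \emph{not} discarding the clique structure of the two overlapping copies: one tracks the paths \eqref{e:path.no.removal}--\eqref{e:path.removal} through the shared vertex closest to the boundary and integrates the induced angular clustering, which is the main technical effort of Section \ref{sec:moments.clique}. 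Without this improvement (note $2m-3-(2m-4)\al<2$ exactly when $\al>\frac{2m-5}{2m-4}$, which your hypothesis guarantees), the whole Stage 3 scheme you outline does not go through.

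A secondary inaccuracy: you assert that the hypothesis $\frac{2m-3}{2m-2}<\al$ is forced only at the $\mC$-versus-$\nu$ transfer. In fact it is already needed in Stage 1: the first-moment asymptotics $\nu_{m,n}(u)\sim C_{m,\al}e^{\frac12(d-1)u}$ requires $\ell(\infty)<\infty$, i.e.\ $2m-3-(2m-2)\al<0$, and the negligibility of the competing configuration in which all $m-1$ non-central vertices sit near radial level $u$ (Remark \ref{rem:open.q.clique}, and the terms $B_n,C_{n,\ell},D_n,E_n$ of Lemma \ref{l:negligible.BCDE}) holds only in this regime; below the threshold the mean itself no longer scales like the edge count, so the phase transition occurs at the level of the moments, not merely in the decoupling argument.
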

Since the first component of the weak limit in \eqref{e:joint.weak.clique} is a $2\al$-stable L\'evy process, the upper bound $\al<1$ is optimal. In contrast, the lower bound $\frac{2m-3}{2m-2}$ marks a phase transition, and the remaining regime $\frac{1}{2} <\al< \frac{2m-3}{2m-2}$ is left as an open problem; see Remark \ref{rem:open.q.clique} below.

The structure of the proof of Theorem \ref{t:joint.sum.max.clique}  parallels that of Theorem \ref{t:joint.sum.max.star}.  First, we compute the moments of $\mC_{m,n}(u)$ in Section \ref{sec:moments.clique}, and then establish the corresponding point process convergence in Section \ref{sec:pp.conv.clique}. Subsequently, the weak convergence for the clique counts  and their maxima  associated with $\big( b_{k,n}^{-1}\nu_{k,n}(U_i), \, i=1,\dots,N_n\big)$ is derived in \eqref{e:join.weak.nu.version}.  Finally, Lemmas \ref{l:marginal.max.clique} and  \ref{l:stable.limit.thm.C} prove  the required negligibility between the results for $\big( b_{k,n}^{-1}\nu_{k,n}(U_i), \, i=1,\dots,N_n\big)$ and those for $\big( b_{k,n}^{-1}\mC_{k,n}(X_i), \, i=1,\dots,N_n\big)$. 

In our  analysis,  the main technical effort lies in establishing the moment estimates. The connectivity structure of cliques is substantially more complicated  than that of star shapes, and we must carefully identify which internal connections give the dominant contribution and which are negligible.

\subsection{Moments}  \label{sec:moments.clique}

We begin with the result on $\nu_{k,n}(u)$ for $u\in (0,R_n)$. 

\begin{proposition}  \label{p:exp.clique}
Let $\frac{2m-3}{2m-2} < \al < 1$ and $\gamma \in (\frac{1}{2\al}, 1)$. \\
$(i)$  As $n\to\infty$, 
$$
\nu_{m,n}(u) \le \big( 1+o_n(1) \big) C_{m,\al} e^{\half (d-1)u}, 
$$
uniformly over $u\in (0,\gaR)$. \\
$(ii)$ There exists $C\in (0,\infty)$, independent of $n$ and $u$, such that 
$$
\nu_{m,n}(u)\ge \Big\{ \big( 1+o_n(1) \big) 2^{d-1}\xi_{d-1} \big( s_{d-1}^{-1}\al (d-1) \big)^{m-1} \ell(u) - C \ell'(u)  \Big\}_+e^{\half (d-1)u}, 
$$
uniformly over $u\in (0,\gaR)$, where $\ell'(u)$ is defined in \eqref{e:def.ell'}. \\
$(iii)$ There exist $C, c_0 \in (0,\infty)$, independent of  $n$ and $u$, such that for all $n\ge1$ and $u\in (0,R_n)$, 
$$
\nu_{m,n}(u) \le Ce^{c_0\omega_n} \cdot  \mainu. 
$$
\end{proposition}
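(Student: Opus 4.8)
\emph{Strategy.} I would reduce all three estimates to a single $(m-1)$-fold radial integral whose inner factor is a conditional angular probability, and then evaluate that angular probability through a nested-cap analysis of the clique. Two applications of the multivariate Mecke formula (see Chapter~4 in \cite{last:penrose:2017}) give $\nu_{m,n}(u)=n^{m-1}\,\P\big(d_H(X_i,X_j)\le R_n\ \forall i,j;\ d_H(X_i,p)\le R_n\ \forall i;\ V_1\le\cdots\le V_{m-1}\le u\big)$, where $X_1,\dots,X_{m-1}$ are i.i.d.~with density $\radpdf\otimes\pi$, $V_i:=R_n-d_H(o,X_i)$, $\vec V=(V_1,\dots,V_{m-1})$, and $p=(u,{\bf 0})$. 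Conditioning on the radial coordinates and using the rotational invariance of $\pi$, this equals $n^{m-1}$ times the integral over the simplex $\{0<V_1<\cdots<V_{m-1}<u\}$ of $\prod_j\radpdf(V_j)$ against $P_n(\vec V)$, the conditional probability given $\vec V$ that all clique constraints hold. By Lemma \ref{l:Lemma1.OY}, $\radpdf(t)=(1+o_n(1))\al(d-1)e^{-\al(d-1)t}$ uniformly on $(0,\gaR)$ for $\ga\in(\tfrac1{2\al},1)$, with a two-sided bound of the same order on all of $(0,R_n)$; and by Lemma \ref{l:Lemma4.OY}, for radial coordinates $s,t$ with $s+t\le R_n$ the constraint $d_H\le R_n$ amounts to confining one angular coordinate to a cap of radius $\vartheta(s,t)=(1+o_n(1))\,2e^{(s+t-R_n)/2}$ about the other, of $\pi$-measure $(1+o_n(1))\tfrac{2^{d-1}\xi_{d-1}}{s_{d-1}}e^{(d-1)(s+t-R_n)/2}$, while for $s+t>R_n$ the constraint is vacuous. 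The subregions on which some $V_i+V_j>R_n$ or $V_i+u>R_n$ are absorbed into negligible terms exactly as in the proof of Proposition \ref{p:exp.mu.n} via the $\omega_n$-window truncation.

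\emph{The angular probability --- main obstacle.} For the upper bound I would relax the $m$-clique on $\{p,Y_1,\dots,Y_{m-1}\}$ to the caterpillar spanning tree $p-Y_1-Y_2-\cdots-Y_{m-1}$ chaining the $Y_i$ in increasing order of centrality. Placing the vertices sequentially and using that $\vartheta$ is increasing in both arguments together with the ordering $V_1\le\cdots\le V_{m-1}\le u$ (so that at its turn $Y_i$ faces $\vartheta(V_i,V_{i+1})$ as its tightest constraint), one obtains $P_n(\vec V)\le(1+o_n(1))\big(\tfrac{\xi_{d-1}}{s_{d-1}}\big)^{m-1}\vartheta(V_1,u)^{d-1}\prod_{i=1}^{m-2}\vartheta(V_i,V_{i+1})^{d-1}$. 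For the lower bound I would force an actual clique: place $Y_1$ in a cap of radius $\asymp\vartheta(V_1,u)$ about the direction of $p$ and each $Y_j$ ($j\ge2$) in a cap of radius a small constant times $\vartheta(V_1,V_j)$ about the direction of $Y_1$; since $V_1$ is the smallest coordinate this makes every pairwise distance and every distance to $p$ at most $R_n$, producing a lower bound of the same shape up to the slack between $\prod_i\vartheta(V_i,V_{i+1})$ and products of $\vartheta(V_1,\cdot)$'s. I would then take $\ell(u)$ to be the leading integral left after inserting these cap estimates and carrying out the inner $m-2$ integrations, and $\ell'(u)$ the integral of that slack; Lemma \ref{l:Lemma4.OY} makes $\ell(u)$ increase to a finite limit $\ell(\infty)$. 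The genuinely hard part is getting both bounds to carry the \emph{same} leading constant: unlike for a tree, the $m-1$ leaves of a clique lie in \emph{overlapping} caps, so one must control cap-intersection measures in the transition regime $\vartheta(V_i,V_{i+1})\asymp\vartheta(V_1,u)$ and rule out configurations with two or more near-central $Y_i$'s.

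\emph{Order, constant, and the threshold $\al>\tfrac{2m-3}{2m-2}$.} Substituting $\vartheta(s,t)^{d-1}\sim 2^{d-1}e^{(d-1)(s+t-R_n)/2}$ and $\radpdf(V_j)\sim\al(d-1)e^{-\al(d-1)V_j}$, and using $n^{m-1}e^{-(d-1)(m-1)R_n/2}=(ne^{-(d-1)R_n/2})^{m-1}=1$, reduces $\nu_{m,n}(u)$ --- up to the universal factor $2^{d-1}\xi_{d-1}\big(s_{d-1}^{-1}\al(d-1)\big)^{m-1}$ --- to $\mainu$ times $\int_{\{0<V_1<\cdots<V_{m-1}<u\}}\exp\{\tfrac{d-1}{2}(2V_1+2V_2+\cdots+2V_{m-2}+V_{m-1})\}\prod_je^{-\al(d-1)V_j}\,d\vec V$. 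In this integrand the coefficient of each of $V_1,\dots,V_{m-2}$ equals $(d-1)(1-\al)>0$, so the successive integrations over $V_1,\dots,V_{m-2}$ compound and feed a factor $e^{(m-2)(d-1)(1-\al)V_{m-1}}$ into the outermost integrand, whose exponent in $V_{m-1}$ thereby becomes $(d-1)\big((m-2)(1-\al)+\tfrac12-\al\big)=(d-1)\big(m-\tfrac32-(m-1)\al\big)$; this is negative --- so $\int_0^u$ converges to a finite $\ell(\infty)$ as $u\to\infty$ --- exactly when $\al>\tfrac{2m-3}{2m-2}$. Collecting the constants gives $\nu_{m,n}(u)\le(1+o_n(1))C_{m,\al}\mainu$ uniformly on $(0,\gaR)$, i.e., part $(i)$, and the matching lower bound with the $\ell,\ell'$ correction, i.e., part $(ii)$; the borderline is used only at this convergence step.

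\emph{The crude uniform bound.} Part $(iii)$ follows by running the same scheme for all $u\in(0,R_n)$, with the sharp asymptotics of Lemma \ref{l:Lemma1.OY} replaced by the two-sided bounds of Lemma \ref{l:Lemma1.OY}$(i)$ and the window truncation by the one used in the proof of Proposition \ref{p:exp.mu.n}$(iii)$: on the subregions where a constraint is vacuous, the probability-$1$ angular factor is outweighed by a $\radpdf$-mass that is exponentially small in $R_n-u$ (using $\al>\tfrac12$ and $u\le R_n$, exactly as in Proposition \ref{p:exp.mu.n}$(iii)$), and the nested integral still converges because $\al>\tfrac{2m-3}{2m-2}$; carrying the extra logarithmic factors through produces $\nu_{m,n}(u)\le Ce^{c_0\omega_n}\mainu$ for a suitable $c_0$.
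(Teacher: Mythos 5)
There is a genuine gap, and it sits exactly at the point you flag as ``the genuinely hard part'' without resolving it: your upper and lower bounds do not carry the same leading constant, whereas the statement requires precisely that. Part $(i)$ asserts the upper bound with the specific constant $C_{m,\al}=2^{d-1}\xi_{d-1}\big(s_{d-1}^{-1}\al(d-1)\big)^{m-1}\ell(\infty)$, where $\ell$ is the explicit Euclidean integral \eqref{e:def.ell} encoding the \emph{full} set of pairwise cap constraints among the peripheral vertices, and part $(ii)$ gives a lower bound with the same $\ell(u)$ (up to the $\ell'(u)$ correction of \eqref{e:def.ell'}); this matching is not cosmetic, since $C_{m,\al}$ enters $b_{m,n}$ and the two-sided tail estimate $n\P(\nu_{m,n}(U_1)\ge y b_{m,n},\,U_1\le\gaR)\to y^{-2\al}$ in Proposition \ref{p:pp.conv.clique}. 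Your caterpillar relaxation discards the edges $Y_i\to p$ for $i\ge2$ and all non-chain edges among the $Y_i$'s, so it yields an upper bound whose constant is the (strictly larger, and structurally different) chain integral rather than $\ell(\infty)$; your forced-nested-cap construction yields a lower bound with a constant deflated by the ``small constant'' cap radii. The order $\mainu$ and the threshold $\al>\tfrac{2m-3}{2m-2}$ do come out correctly of your computation (and your part $(iii)$ sketch is fine at the level of order), but the two-sided statement with the common constant is the heart of the proposition, and the proposal contains no mechanism for closing that gap — controlling ``cap-intersection measures in the transition regime'' is exactly what remains to be done.

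The paper closes it by a different split of the constraints. Rather than relaxing the peripheral clique, it keeps the clique among $X_1,\dots,X_{m-1}$ intact and relaxes only the hub edges: the leading term $A_n^{(1)}(u)$ requires just the single edge $X_1\to p$ from the most peripheral vertex, and the correction $A_n^{(2)}(u)$ (some $X_\ell\not\to p$, $\ell\ge2$, despite $X_1\to p$ and $X_1\leftrightarrow X_\ell$) is shown to be $O(\mainu\,\ell'(u))$ via the triangle inequality for relative angles, which forces $t_1\ge t_\ell-2\log\big(1+e^{-\delta u/2}\big)$, i.e.\ two nearly-coincident peripheral radii — an event of vanishing relative weight. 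The leading term is then computed \emph{exactly}, not just bounded: using Lemma \ref{l:Lamma3.OY} the distance constraints become relative-angle constraints, the angular components are pushed to a tangent hyperplane $H^{d-1}$ where their law is a multivariate Cauchy density that is asymptotically constant $2/s_{d-1}$ on the relevant scales, and after rescaling by $e^{-\half R_n}$ and centering at $v_1$ the clique constraints converge to the fixed Euclidean region defining $\ell(u)$; this is what produces the \emph{same} $\ell(u)$ in both the upper and lower estimates. If you want to salvage your scheme, you would have to replace the caterpillar/forced-cap pair by this kind of exact limiting description of the overlapping caps, which is essentially the paper's argument.
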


\begin{remark}   \label{rem:open.q.clique}
Before proving Proposition \ref{p:exp.clique}, we briefly discuss the lower bound $\frac{2m-3}{2m-2}$ on the range of $\al$. This assumption appears to mark a phase transition rather than a mere technical constraint. To see this, consider how different configurations contribute to the clique counts. The proof machinery in Proposition \ref{p:exp.clique}  proceeds by first choosing a low-degree node connected to $p$, which lies farthest away from the origin of $\B_d$. This low-degree node contributes to the clique count in the order of $O(\mainu)$. We then select the remaining $m-2$ nodes in a small neighborhood of this low-degree node, which themselves contribute to the clique count with the order  $O(1)$. 

Next, let us consider an alternative configuration which places all the $m-1$ nodes (except for the $p$ itself) whose radial components, measured from the boundary of $B(o,R_n)$, are close to the level $u$.  Then, the density for radial components of these  $m-1$ nodes is given as $\big(n\radpdf (u)\big)^{m-1}$, which is approximately of order $O\big(n^{m-1} e^{-\al(d-1)(m-1)u}\big)$ by Lemma \ref{l:Lemma1.OY}. Furthermore, Lemma \ref{l:Lemma4.OY} shows that the probability of these $m-1$ nodes lying within distance $R_n$ from $p$ is of order
$O\big( (e^{-\half (d-1)(R_n-2u)})^{m-1} \big) = O\big( n^{-(m-1)} e^{(d-1)(m-1)u} \big)$. 
Thus, the resulting total contribution to the clique count is of order 
\begin{equation}  \label{e:order.alternative}
O\big(n^{m-1} e^{-\al(d-1)(m-1)u}\big) \times O\big( n^{-(m-1)} e^{(d-1)(m-1)u} \big) = O\big( e^{ (d-1) (m-1)(1-\al)u} \big). 
\end{equation}
Consequently, if   \eqref{e:order.alternative} dominates $O(\mainu)$, equivalently, $\half <\al < \frac{2m-3}{2m-2}$, 
the contribution from the latter alternative configuration dominates that in Proposition \ref{p:exp.clique}. This phenomenon identifies the threshold for the range of $\al$, and the asymptotics of clique counts when $\half <\al<\frac{2m-3}{2m-2}$ remains an open question. 
\end{remark}

\begin{proof}[Proof of Proposition \ref{p:exp.clique}]
By the Mecke formula for Poisson point processes,
\begin{align}
&\nu_{m,n}(u)= n^{m-1} \int_\wholet \P \big( \max_{1\le i, j \le m-1}d_H(X_i, X_j) \le R_n,  \,  \max_{1\le i \le m-1}d_H(X_i, p) \le R_n\, \big| \bt\big)\, \radpdf(\bt), \label{e:Mecke.exp.clique}
\end{align}
where $X_i=(t_i,\Ta_i)$ with $t_i=R_n-d_H(o,X_i)$ for $i=1,\dots, m-1$, and $\radpdf(\bt):= \prod_{i=1}^{m-1}\radpdf (t_i)$ is the product of densities of radial components. We will decompose $\nu_{m,n}(u)$ as follows. 
\begin{align*}
\nu_{m,n}(u) &= n^{m-1} \int_\wholet \P \big( \max_{1\le i, j \le m-1}d_H(X_i, X_j) \le R_n,  \, \max_{1\le i\le m-1}d_H(X_i, p) \le R_n \, \big| \bt\big)\, \radpdf(\bt) \\
&\qquad \quad \times \big( \one_{\mS_n(u)}(\bt) +  \one \{ t_1+t_2 >R_n-\omega_n \} \\
&\qquad \qquad + \sum_{\ell=2}^{m-2}\one \{ t_1 + t_\ell \le R_n-\omega_n, \, t_1 + t_{\ell+1} >R_n-\omega_n     \}  \\
&\qquad \qquad + \one \{ t_1 + t_{m-1} \le R_n-\omega_n, \, t_1 + u >R_n-\omega_n \} \\
&\qquad \qquad + \one \{  t_1+ u \le R_n-\omega_n \text{ and at least one of the conditions in \eqref{e:main.restrictions} fails}  \big\}\big) \\
&=: A_n(u) + B_n(u) + \sum_{\ell=2}^{m-2} C_{n,\ell}(u) + D_n(u) + E_n(u), 
\end{align*}
where 
\begin{equation}  \label{e:main.restrictions}
\mS_n(u):=\{ \bt: \max_{1\le i, j \le m-1} (t_i + t_j) \le R_n-\omega_n,  \ \max_{1\le i \le m-1}(t_i + u) \le R_n-\omega_n \}. 
\end{equation}

From this decomposition  we see that $A_n(u)$ is the leading term for $u\in (0,\gaR)$, whereas all the other terms are negligible as $n \to \infty$.
More precise asymptotics for the remaining terms are given in Lemma \ref{l:negligible.BCDE} in the Appendix. 
With this lemma  available, we turn our attention to $A_n(u)$, which itself can be decomposed as follows:
\begin{align*}
A_n(u) &= n^{m-1} \int_\wholet \P \big( \max_{1\le i, j \le m-1}d_H(X_i, X_j )\le R_n, \,   d_H(X_1, p)\le R_n \, \big| \bt \big) \, \one_{\mS_n(u)}(\bt) \radpdf (\bt) \\
&\quad - n^{m-1} \int_\wholet \P \big( \max_{1\le i, j \le m-1} d_H(X_i, X_j )\le R_n, \,   d_H(X_1, p)\le R_n,  \, \\
&\qquad \qquad\qquad\qquad\qquad\qquad\qquad \max_{2\le \ell \le m-1} d_H(X_\ell, p) >R_n\, \big| \bt \big) \, \one_{\mS_n(u)}(\bt) \radpdf (\bt) \\
&=: A_n^{(1)}(u) - A_n^{(2)}(u). 
\end{align*}
In what follows, we show that $A_n^{(1)}(u)$ is the dominant term, whereas $A_n^{(2)}(u)$ is asymptotically negligible. To prove the negligibility of $A_n^{(2)}(u)$, we need to estimate, for each $\ell = 2, \dots, m-1$, 
\begin{align*}
A_{n,\ell}^{(2)}(u) &:= n^{m-1} \int_\wholet \P \big(\max_{1\le i, j \le m-1} d_H(X_i, X_j )\le R_n,  \,  d_H(X_1, p)\le R_n, d_H(X_\ell, p)>R_n\,  \big| \bt \big)  \\
&\qquad  \qquad\qquad\qquad\qquad\qquad\qquad \qquad \qquad \times  \one_{\mS_n(u)}(\bt) \radpdf (\bt) \\
&\le n^{m-1} \int_{0\le t_1\le \cdots \le t_{m-1}\le (1-\delta)u} \P\big( \max_{2\le i \le m-1}d_H(X_1,X_i)\le R_n,  \, d_H(X_1,p)\le R_n, \, d_H(X_\ell, p)>R_n \, \big|\bt\big)\\
&\qquad  \qquad\qquad\qquad\qquad\qquad\qquad  \qquad \qquad \times  \one \{ t_{m-1}+ u\le R_n-\omega_n \} \radpdf (\bt)  \\
&\quad +n^{m-1} \int_{\wholet, \, t_{m-1}>(1-\delta)u} \hspace{-10pt}\P\big( \max_{2\le i \le m-1}d_H(X_1,X_i)\le R_n, \,  d_H(X_1,p)\le R_n\, \big|\bt\big) \\
&\qquad  \qquad\qquad\qquad\qquad\qquad\qquad\qquad\qquad\qquad \times  \one\{t_1+u\le R_n-\omega_n  \}\radpdf (\bt) \\
&=: A_{n,\ell}^{(2,1)}(u)+ A_{n}^{(2,2)}(u), 
\end{align*}
where $\delta\in (0,1)$ is a constant that will be specified  below; see Figure \ref{fig:main.negligible}. 

\begin{figure}
\centering
\includegraphics[scale=0.45]{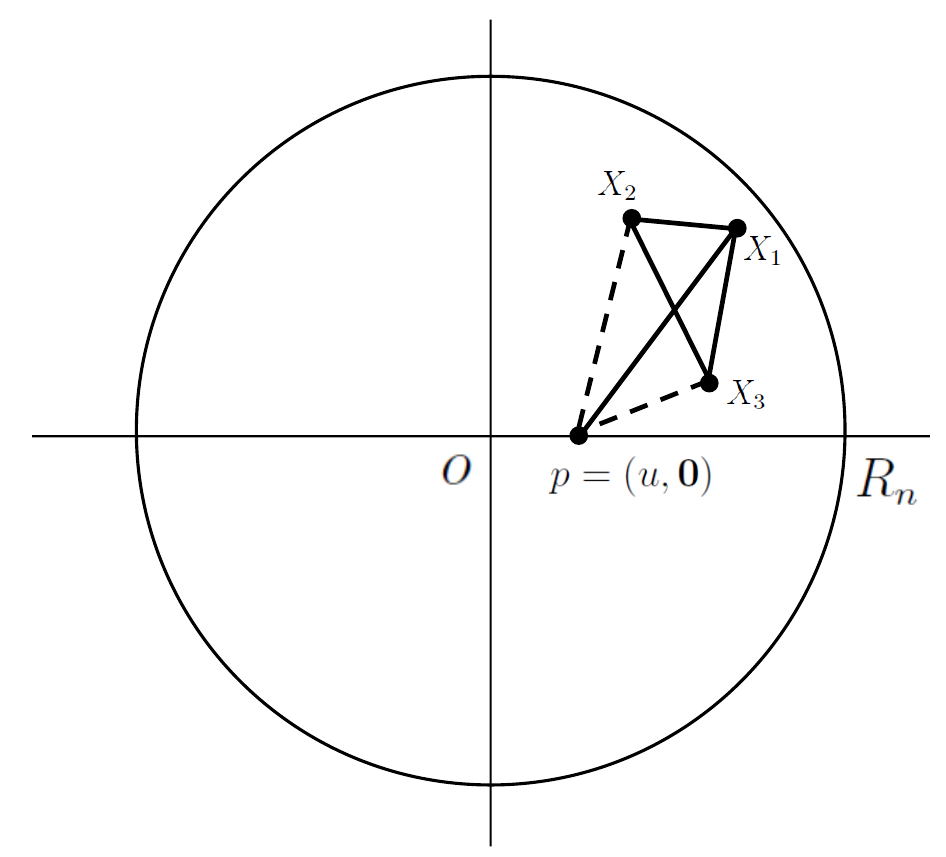}
\caption{\label{fig:main.negligible} \footnotesize{A $4$-clique on the vertices $\{X_1, X_2, X_3, p\}$ with $m=4$ and $d=2$. The asymptotic negligibility of $A_{n,2}^{(2)}(u)$ (resp.~$A_{n,3}^{(2)}(u)$) corresponds to the negligibility of the edge $X_2\to p$ (resp.~$X_3\to p$). Moreover, \eqref{e:rela.angle.main} gives the probability of the configuration with edges $X_1 \to p$, $X_1\to X_i$ for $i=2,3$, and $X_2 \to X_3$. } }
\end{figure}

First, by appealing to Lemmas \ref{l:Lemma1.OY} and \ref{l:Lemma4.OY}, 
\begin{align}
A_{n}^{(2,2)}(u) &\le C \mainu \int_{\wholet, \, t_{m-1}>(1-\delta)u} e^{\half (d-1)(m-1-2\al)t_1} \prod_{i=2}^{m-1} e^{\half (d-1)(1-2\al)t_i} \label{e:bound.An.22.u}  \\
&= C\mainu \int_{t_{m-1}=(1-\delta)u}^u  e^{\half (d-1)(1-2\al)t_{m-1}} \int_{t_1=0}^{t_{m-1}\wedge (R_n-\omega_n-u)}    e^{\half (d-1)(m-1-2\al)t_1} \notag  \\
&\qquad\qquad \qquad  \times \int_{(t_2,\dots,t_{m-2}):\, t_1 \le t_2 \le \cdots \le t_{m-2}\le t_{m-1}} \prod_{i=2}^{m-2} e^{\half (d-1)(1-2\al)t_i} \notag \\
&\le C\mainu \int_{t_{m-1}=(1-\delta)u}^u  e^{\half (d-1)(1-2\al)t_{m-1}} \int_{t_1=0}^{t_{m-1}}    e^{ (d-1)(m-2)(1-\al)t_1} \notag \\
&\le C\mainu \int_{t_{m-1}=(1-\delta)u}^u  e^{\half (d-1)(2m-3-(2m-2)\al)t_{m-1}} \notag \\
&\le C\mainu \cdot e^{\half (d-1)(2m-3-(2m-2)\al)(1-\delta)u}. \notag 
\end{align}

Turning to  the analysis of $A_{n,\ell}^{(2,1)}(u)$, we define $\Ta_{1i}^{(r)}\in [0,\pi]$ to be the relative angle between $\Ta_1$ and $\Ta_i$ for $i=2,\dots,m-1$. Similarly, $\Ta_i^{(r)} \in [0,\pi]$ denotes the relative angle between $\Ta_i$ and ${\bf 0}=(0,\dots, 0)\in \mA_d$. Then, by Lemma \ref{l:Lamma3.OY}, 
\begin{align}
&\P\big( \max_{2\le i \le m-1}d_H(X_1,X_i)\le R_n,  \, d_H(X_1,p) \le R_n,  \, d_H(X_\ell, p)> R_n  \, | \, \bt \big) \label{e:approx.by.Lemma3} \\
&=(1+o_n(1)) \P\big( \max_{2\le i \le m-1} (2R_n -t_1 -t_i + 2\log \sin (\Ta_{1i}^{(r)} / 2))\le R_n,  \notag \\
&\qquad \qquad \qquad \quad   2R_n -t_1 -u + 2\log \sin (\Ta_1^{(r)} / 2)\le  R_n, \,  2R_n -t_\ell -u + 2\log \sin (\Ta_{\ell}^{(r)} / 2)> R_n\big),\notag 
\end{align}
uniformly on the set 
$$
\mS_n'(u):= \big\{  \bt: 0\le t_1 \le \cdots \le t_{m-1}\le (1-\delta)u, \, t_{m-1}+ u \le R_n-\omega_n\big\}. 
$$
As a result of additional approximations 
$$
\sin \Big( \frac{\Theta_{1i}^{(r)}}{2} \Big) \sim  \frac{\Theta_{1i}^{(r)}}{2}, \, i=2,\dots,m-1,  \ \sin \Big( \frac{\Theta_{1}^{(r)}}{2} \Big) \sim  \frac{\Theta_{1}^{(r)}}{2},   \ \ \text{and } \ \ \sin \Big( \frac{\Theta_{\ell}^{(r)}}{2} \Big) \sim  \frac{\Theta_{\ell}^{(r)}}{2}, 
$$ 
uniformly for all $\bt \in \mS_n'(u)$, the expression \eqref{e:approx.by.Lemma3} is approximated uniformly by 
\begin{equation}  \label{e:after.Lemma3}
\P \big( \Ta_{1i}^{(r)} \le 2e^{-\half (R_n-t_1-t_i)}, \, i=2,\dots, m-1, \ \Ta_1^{(r)} \le  2e^{-\half (R_n-t_1-u)}, \, \Ta_\ell^{(r)} > 2e^{-\half (R_n-t_\ell-u)} \big). 
\end{equation}
By the triangle inequality $\Ta_\ell^{(r)} \le \Ta_1^{(r)} + \Ta_{1\ell}^{(r)}$, it holds that 
$$
2e^{-\half (R_n-t_\ell-u)} \le 2e^{-\half (R_n-t_1-u)} +  2e^{-\half (R_n-t_1-t_\ell)}; 
$$
equivalently, $t_1 \ge t_\ell-2\log \big(  1+e^{-\half (u-t_\ell)}\big)$. Since $t_\ell\le t_{m-1}\le (1-\delta)u$, it follows that $t_1 \ge t_\ell -2\log \big(  1+e^{-\frac{\delta u}{2}}\big)$. Now, one can bound \eqref{e:after.Lemma3}   by 
\begin{align*}
&\P \big( \Ta_{1i}^{(r)} \le 2 e^{-\half (R_n-t_1-t_i)}, \, i=2,\dots, m-1, \, \Ta_1^{(r)}\le 2 e^{-\half (R_n-t_1-u)} \big)\one \{ t_1 \ge t_\ell -2\log \big(  1+e^{-\frac{\delta u}{2}}\big) \} \\
&\le C \prod_{i=2}^{m-1} e^{-\half (d-1)(R_n-t_1-t_i)}  \cdot  e^{-\half (d-1)(R_n-t_1-u)}  \one \{ t_1 \ge t_\ell -2\log \big(  1+e^{-\frac{\delta u}{2}}\big) \}\\
&= C n^{-(m-1)} \mainu \cdot e^{\half (d-1) (m-1)t_1} \prod_{i=2}^{m-1} e^{\half (d-1)t_i} \one \{ t_1 \ge t_\ell -2\log \big(  1+e^{-\frac{\delta u}{2}}\big) \}, 
\end{align*}
uniformly for $\bt\in \mS_n'(u)$. Substituting this bound into $A_{n,\ell}^{(2,1)}(u)$, we have, for each $\ell=2,\dots, m-1$, 
\begin{align*}
A_{n,\ell}^{(2,1)}(u) &\le C \mainu \int_{0\le t_1\le \cdots \le t_{m-1}\le (1-\delta)u}  \one \{ t_1 \ge t_\ell -2\log \big(  1+e^{-\frac{\delta u}{2}}\big) \} \\
&\qquad \qquad \qquad \qquad\qquad \qquad\qquad \qquad\times  e^{\half (d-1)(m-1-2\al)t_1} \prod_{i=2}^{m-1} e^{\half (d-1)(1-2\al)t_i} \\
&= C\mainu \int_{0\le t_2\le \cdots \le t_{m-1}\le (1-\delta)u} \prod_{i=2}^{m-1} e^{\half (d-1)(1-2\al)t_i} \int_{t_1=t_\ell -2\log (  1+e^{-\frac{\delta u}{2}})}^{t_\ell} e^{\half (d-1)(m-1-2\al)t_1}  \\
&=C \mainu \big\{ 1-\big( 1+e^{-\frac{\delta u}{2}} \big)^{-(d-1)(m-1-2\al)} \big\} \\
&\qquad \qquad \times \int_{0\le t_2\le \cdots \le t_{m-1}\le (1-\delta)u} \prod_{i=2}^{m-1} e^{\half (d-1)(1-2\al)t_i}  \cdot e^{\half (d-1)(m-1-2\al)t_\ell}. 
\end{align*}
Observe that 
$$
 1-\big( 1+e^{-\frac{\delta u}{2}} \big)^{-(d-1)(m-1-2\al)}   \le (d-1)(m-1-2\al) e^{-\frac{\delta u }{2}}, 
$$
and 
\begin{align*}
& \int_{0\le t_2\le \cdots \le t_{m-1}\le (1-\delta)u} \prod_{i=2}^{m-1} e^{\half (d-1)(1-2\al)t_i}  \cdot e^{\half (d-1)(m-1-2\al)t_\ell} \\
&=\int_{(t_\ell, \dots, t_{m-1}):\, 0\le t_\ell \le \cdots \le t_{m-1}\le (1-\delta)u} \prod_{i=\ell+1}^{m-1} e^{\half (d-1)(1-2\al)t_i}  \cdot e^{\half (d-1)(m-4\al)t_\ell}\\
&\qquad \qquad \times  \int_{(t_2,\dots, t_{\ell-1}):\, 0\le t_2 \le \cdots \le t_{\ell-1}\le t_\ell} \prod_{i=2}^{\ell-1} e^{\half (d-1)(1-2\al)t_i} \\
&\le C \int_{t_\ell=0}^{(1-\delta)u} e^{\half (d-1)(m-4\al)t_\ell} \int_{(t_{\ell+1}, \dots, t_{m-1}):\,  t_\ell\le t_{\ell+1} \le \cdots \le t_{m-1}\le (1-\delta)u} \prod_{i=\ell+1}^{m-1} e^{\half (d-1)(1-2\al)t_i} \\
&\le C\int_{t_\ell=0}^{(1-\delta)u} e^{\half (d-1)(m-4\al)t_\ell}. 
\end{align*}
Hence, 
$$
A_{n,\ell}^{(2,1)}(u)\le C\mainu \cdot e^{-\frac{\delta u}{2}}\int_{t_\ell=0}^{(1-\delta)u} e^{\half (d-1)(m-4\al)t_\ell}. 
$$
If $m-4\al\le 0$, then $A_{n,\ell}^{(2,1)}(u)\le C \mainu \cdot e^{-\frac{\delta u}{4}}$. 
If $m-4\al>0$, we have 
$$
A_{n,\ell}^{(2,1)}(u)\le C\mainu \cdot e^{\half ((d-1)(m-4\al)(1-\delta)-\delta)u}.
$$ 
Now, choose $\delta$ to satisfy 
\begin{equation}  \label{e:constraint.delta}
\frac{(d-1)(m-4\al)_+}{(d-1)(m-4\al)_++1} <\delta <1. 
\end{equation}
Then it follows that 
\begin{equation}  \label{e:bound.Anl.21(u)}
A_{n,\ell}^{(2,1)}(u)\le C \mainu \big( e^{-\frac{\delta u}{4}} + e^{\half ((d-1)(m-4\al)_+(1-\delta)-\delta)u} \big). 
\end{equation}

We conclude from \eqref{e:bound.Anl.21(u)} and \eqref{e:bound.An.22.u} that for all $n\ge1$ and $u\in (0,R_n)$, 
\begin{equation}  \label{e:bound.An2(u)}
A_n^{(2)}(u) \le \sum_{\ell=2}^{m-1} A_{n,\ell}^{(2,1)}(u) + A_{n}^{(2,2)}(u)\le C\mainu \ell'(u), 
\end{equation}
where 
\begin{equation}  \label{e:def.ell'}
\ell'(u):= e^{-\frac{\delta u}{4}} + e^{\half ((d-1)(m-4\al)_+(1-\delta)-\delta)u} + e^{\half (d-1)(2m-3-(2m-2)\al)(1-\delta)u}. 
\end{equation}
Note that $\ell'(u)$ is decreasing in $u$ due to the constraint in  \eqref{e:constraint.delta}, and also, $\ell'(u)\to 0$ as $u\to\infty$. 

It now remains to derive the specific bounds as stated in Proposition \ref{p:exp.clique}, which can be done by simplifying $A_n^{(1)}(u)$.  First, by repeating the argument  as in \eqref{e:approx.by.Lemma3}, 
\begin{align}
&\P \big( \max_{1\le i, j \le m-1}d_H(X_i, X_j) \le R_n,\, d_H(X_1,p)\le R_n \, \big|\bt \big) \label{e:rela.angle.main}\\
&=(1+o_n(1)) \P\big( \Ta_{ij}^{(r)}\le 2 e^{-\half (R_n-t_i-t_j)}, \, i, j=1,\dots, m-1, \ \Ta_{1}^{(r)}\le 2 e^{-\half (R_n-t_1-u)}\big), \notag
\end{align}
uniformly for $\bt \in \mS_n(u)$. Since  $2 e^{-\half (R_n-t_i-t_j)} \le 2e^{-\half \omega_n}\to 0$ and $2 e^{-\half (R_n-t_1-u)} \le 2e^{-\half \omega_n}\to 0$ as $n\to\infty$, all the relative angles  in \eqref{e:rela.angle.main} decay uniformly to zero as $n \to \infty$.

Next, we redefine $\mA_d = [0, \pi]^{d-2} \times [0, 2\pi)$ as the $(d-1)$-dimensional sphere centered at the origin of $\B_d$ in the ambient space $\R^d$. In this ambient space, we then consider a hyperplane $H^{d-1}$ tangent to ${\bf 0} \in \mA_d$. Here, we also regard ${\bf 0} \in \mA_d$ as the origin of $H^{d-1}$. Note that $H^{d-1}$ is homeomorphic to $\R^{d-1}$. For $i=1,\dots,m-1$, let $L_i:= \{ t\Ta_i: t>0 \}$ be the ray in the direction of  $\Ta_i$ and we write \eqref{e:rela.angle.main} as 
\begin{align*}
&\big( 1+o_n(1) \big) \P\big( \Ta_{ij}^{(r)}\le 2 e^{-\half (R_n-t_i-t_j)}, \, i, j=1,\dots, m-1, \\
&\qquad \qquad \qquad \Ta_{1}^{(r)}\le 2 e^{-\half (R_n-t_1-u)} \big| \, L_i \cap H^{d-1}\neq \emptyset, \, i=1,\dots, m-1  \big) \prod_{i=1}^{m-1}\P(L_i \cap H^{d-1}\neq \emptyset) \\
&= \big( 1+o_n(1) \big) 2^{-(m-1)} \P\big( \Ta_{ij}^{(r)}\le 2 e^{-\half (R_n-t_i-t_j)}, \, i, j=1,\dots, m-1, \\
&\qquad \qquad \qquad\qquad \qquad \Ta_{1}^{(r)}\le 2 e^{-\half (R_n-t_1-u)} \big| \, L_i \cap H^{d-1}\neq \emptyset, \, i=1,\dots, m-1  \big). 
\end{align*}

On the event $\bigcap_{i=1}^{m-1}\big\{ L_i \cap H^{d-1}\neq \emptyset \big\}$, let  $S_i$ denote  the vector emanating from the origin of $\B_d$ and intersecting the hyperplane $H^{d-1}$ in the same direction as $\Theta_i$. Let also $T_i$ be the vector in $H^{d-1}$ corresponding to  the intersection point between $S_i$ and $H^{d-1}$; see Figure \ref{fig:hyperplane}. 

\begin{figure}
\centering
\includegraphics[scale=0.4]{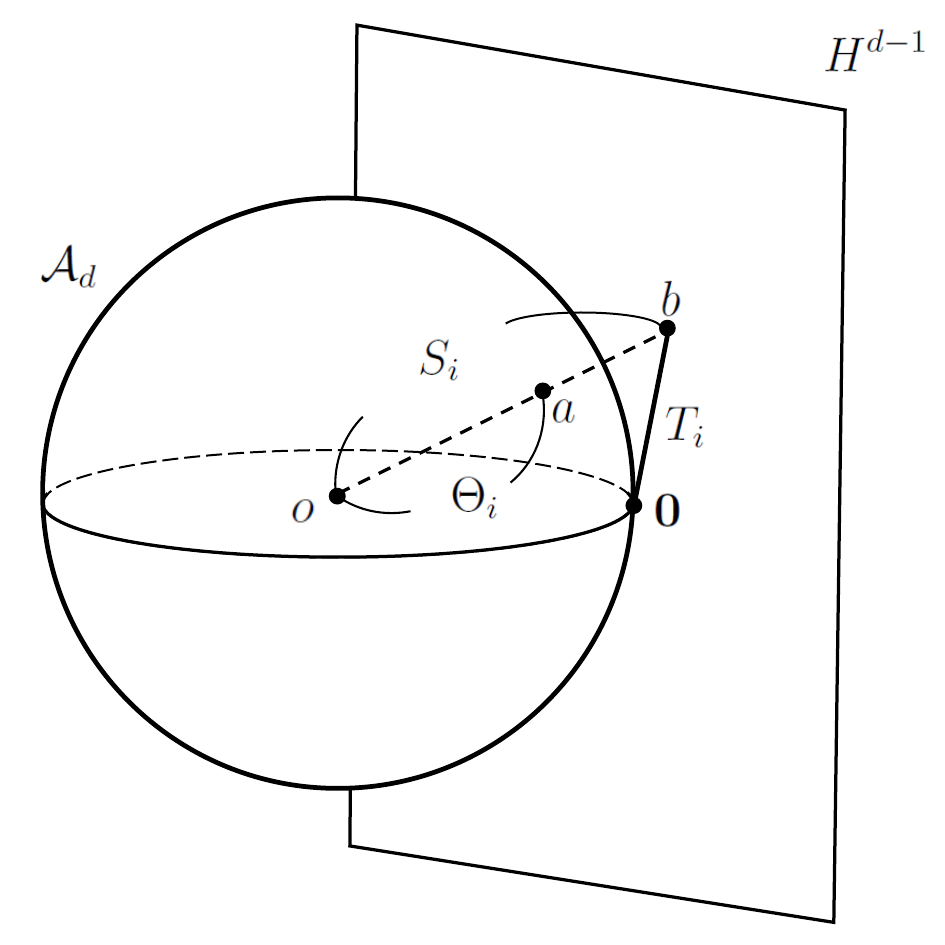}
\caption{\label{fig:hyperplane} \footnotesize{The point $a$ lies in $\A_d$ and $b$ lies in $H^{d-1}$. We set $\Ta_i = \overrightarrow{oa}$, $S_i = \overrightarrow{ob}$, and $T_i = \overrightarrow{{\bf 0}b}$. } }
\end{figure}

 Then the probability density of $T_i$ is $(d-1)$-dimensional multivariate Cauchy, given by 
$$
g(x) :=\frac{2}{s_{d-1}}\, \big( 1+\|x\|^2 \big)^{-d/2}, \  \ \ x\in H^{d-1}. 
$$
By the law of cosines, for $i,j = 1, \dots, m-1$, we have 
$$
\|T_i-T_j\|^2 =\|S_i\|^2 + \|S_j\|^2 - 2 \|S_i\| \|S_j\| \cos (\Ta_{ij}^{(r)}). 
$$
Next, consider the great circle spanned by $S_i$ and ${\bf 0} \in \mA_d$.  
With this circle, we have the following expansions:
\begin{align*}
\|S_i\|^2 &= 1 + \tan^2 (\Ta_{i}^{(r)}) = 1 +(\Ta_{i}^{(r)})^2 + O\big( (\Ta_{i}^{(r)})^4 \big), \\
\cos (\Ta_{ij}^{(r)}) &= 1-\half (\Ta_{ij}^{(r)})^2 +  O\big( (\Ta_{ij}^{(r)})^4 \big). 
\end{align*}
Using these expansions, we derive  that  
$\|T_i-T_j\|= \Ta_{ij}^{(r)}(1+o_n(1))$ 
uniformly for $\bt\in \mS_n(u)$. Similarly,  uniformly for $\bt\in \mS_n(u)$, we have 
$\|T_1\| = \Ta_1^{(r)}(1+o_n(1))$. Hence, one can uniformly approximate \eqref{e:rela.angle.main} by 
\begin{align*}
&2^{-(m-1)}\P\big( \| T_i-T_j \|\le 2e^{-\half (R_n-t_i-t_j)}, \, i,j=1,\dots, m-1, \ \|T_1\|\le  2e^{-\half (R_n-t_1-u)} \big)\\
&=2^{-(m-1)}\int_{(u_1, \dots, u_{m-1})\in (H^{d-1})^{m-1}}\one \big\{ \| u_i-u_j \|\le 2e^{-\half (R_n-t_i-t_j)}, \, i,j=1,\dots, m-1, \\
&\qquad \qquad \qquad \qquad \qquad \qquad \qquad \qquad \qquad \qquad  \|u_1\|\le  2e^{-\half (R_n-t_1-u)}  \big\} \prod_{i=1}^{m-1}g(u_i)\\
&= (2n)^{-(m-1)} \hspace{-3pt}\int_{(v_1,\dots,v_{m-1})\in (H^{d-1})^{m-1}} \one \big\{ \| v_i-v_j \|\le 2e^{\half (t_i+t_j)}, \, i,j=1,\dots, m-1, \\
&\qquad \qquad \qquad \qquad \qquad \qquad \qquad \qquad \qquad \qquad  \|v_1\|\le  2e^{\half  (t_1+u)}  \big\} \prod_{i=1}^{m-1}g(e^{-\half R_n}v_i), 
\end{align*}
where the last line follows from the change of variables $u_i=e^{-\half R_n}v_i$ for $i=1,\dots,m-1$. Now,  $A_n^{(1)}(u)$ can  be expressed   as 
\begin{align}
A_n^{(1)}(u) &= (1+o_n(1))\, 2^{-(m-1)} \label{e:before.radpdf}\\
&\quad \times \int_\wholet \int_{(v_1,\dots,v_{m-1})\in (H^{d-1})^{m-1}}\one \big\{ \| v_i-v_j \|\le 2e^{\half (t_i+t_j)}, \, i,j=1,\dots, m-1, \notag \\
&\qquad \qquad \qquad\qquad \qquad\qquad \qquad \|v_1\|\le  2e^{\half  (t_1+u)}  \big\} \prod_{i=1}^{m-1}g(e^{-\half R_n}v_i)\, \one_{\mS_n(u)}(\bt) \radpdf (\bt). \notag
\end{align}
Since  $e^{-\half R_n}\|v_i\| \le 4e^{-\half \omega_n}\to0$ as $n\to\infty$,  we have, uniformly, 
$$
\prod_{i=1}^{m-1}g(e^{-\half R_n}v_i) \to \Big( \frac{2}{s_{d-1}} \Big)^{m-1} \ \ \text{as } n\to\infty. 
$$
Thus, together with Lemma \ref{l:Lemma1.OY} $(ii)$, $A_n^{(1)}(u)$ asymptotically equals 
\begin{align*}
&(s_{d-1}^{-1}\al(d-1))^{m-1}\int_\wholet \int_{(v_1,\dots,v_{m-1})\in (H^{d-1})^{m-1}} \hspace{-10pt}\one \big\{ \| v_i-v_j \|\le 2e^{\half (t_i+t_j)}, \, i,j=1,\dots, m-1, \,  \\
&\qquad \qquad \qquad \qquad\qquad \qquad\qquad \qquad\qquad \qquad\|v_1\|\le  2e^{\half  (t_1+u)}  \big\}  \prod_{i=1}^{m-1}e^{-\al(d-1)t_i}\,  \one_{\mS_n(u)}(\bt). 
\end{align*}
By performing the change of variables 
$w_1 =v_1$ and $w_i= v_i-v_1$ for $i =2, \dots, m-1$, 
and integrating with respect to $w_1$ over $H^{d-1}$, 
the preceding expression becomes   
\begin{align}
&2^{d-1} \xi_{d-1} (s_{d-1}^{-1}\al(d-1))^{m-1}\mainu \int_\wholet e^{\half (d-1)(1-2\al)t_1} \prod_{i=2}^{m-1}e^{-\al(d-1)t_i}\one_{\mS_n(u)}(\bt) \label{e:An1.final.except.radial.densities}\\
&\quad \times \int_{(w_2,\dots, w_{m-1})\in (H^{d-1})^{m-2}} \one \big\{ \|w_i\| \le 2e^{\half (t_1+t_i)}, \, i=2,\dots,m-1, \notag \\
&\qquad \qquad \qquad \qquad \qquad \qquad  \qquad \qquad \|w_i-w_j\|\le 2e^{\half (t_i + t_j)}, \, i, j=2,\dots,m-1\big\}. \notag
\end{align}

We now claim that,  uniformly for $u \in (0,\gaR)$, 
\begin{equation}  \label{e:exact.An1(u)}
A_n^{(1)}(u) =(1+o_n(1)) \, 2^{d-1} \xi_{d-1} (s_{d-1}^{-1}\al (d-1))^{m-1}\mainu \ell(u), 
\end{equation}
where 
\begin{align} 
\ell(u)&:= \int_\wholet e^{\half (d-1)(1-2\al)t_1} \prod_{i=2}^{m-1} e^{-\al(d-1)t_i}  \label{e:def.ell} \\
&\quad \times \int_{(w_2,\dots, w_{m-1})\in (H^{d-1})^{m-2}} \one \big\{ \|w_i\| \le 2e^{\half (t_1+t_i)}, \, i=2,\dots,m-1, \notag \\
&\qquad \qquad\qquad\qquad\qquad\qquad\qquad \|w_i-w_j\|\le 2e^{\half (t_i + t_j)}, \, i, j=2,\dots,m-1\big\}.   \notag 
\end{align}
Before turning to the proof of \eqref{e:exact.An1(u)}, we record a few properties of the function $\ell$. First, $\ell(u)$ is an increasing function of $u$.  
By removing  the constraint $\|w_i - w_j\|\le 2 e^{\frac{1}{2}(t_i+ t_j)}$ from  \eqref{e:def.ell}, one can see that 
\begin{align*}
\ell(\infty):= \lim_{u\to\infty} \ell(u) \le C \int_0^\infty e^{\half (d-1)(2m-3-(2m-2)\al)t}\dif t<\infty, 
\end{align*}
which, in turn, implies by the dominated convergence theorem that $\ell(u)\to0$ as $u \to 0$. 
For the proof of \eqref{e:exact.An1(u)}, in view of the expression in \eqref{e:An1.final.except.radial.densities}, it suffices to show that, as $n\to\infty$,
\begin{align}
&\sup_{u \in (0,\gaR)}\ell(u)^{-1}\int_\wholet e^{\half (d-1)(1-2\al)t_1} \prod_{i=2}^{m-1}e^{-\al(d-1)t_i}\, \one_{\mS_n(u)^c}(\bt) \label{e:unif.conv.ratio2}\\
&\quad\qquad\qquad \times \int_{(w_2,\dots, w_{m-1})\in (H^{d-1})^{m-2}} \one \big\{ \|w_i\| \le 2e^{\half (t_1+t_i)}, \, i=2,\dots,m-1,  \notag \\
&\qquad \qquad \qquad \qquad \qquad \qquad  \qquad \qquad \|w_i-w_j\|\le 2e^{\half (t_i + t_j)}, \, i, j=2,\dots,m-1\big\}\to 0. \notag 
\end{align}
To prove this, note first that if $u\in (0,\gaR)$ and $\bt \in \mS_n(u)^c$, then $t_{m-1}>(1-\ga)R_n-\omega_n$ and $u>\half (R_n-\omega_n)$. In particular, we may restrict to   $u\in \big( \half (R_n-\omega_n), \gaR \big)$, and \eqref{e:unif.conv.ratio2} can be  bounded from above by 
\begin{align*}
&\ell\Big( \half (R_n-\omega_n) \Big)^{-1} \int_{0\le t_1 \le \cdots \le t_{m-1}<\infty} e^{\half (d-1)(1-2\al)t_1} \prod_{i=2}^{m-1}e^{-\al(d-1)t_i} \, \one\{t_{m-1}>(1-\ga)R_n-\omega_n  \}\\
&\quad\qquad\qquad \times \int_{(w_2,\dots, w_{m-1})\in (H^{d-1})^{m-2}} \one \big\{ \|w_i\| \le 2e^{\half (t_1+t_i)}, \, i=2,\dots,m-1 \big\}\\
&\le C  \int_{0\le t_1 \le \cdots \le t_{m-1}<\infty} e^{\half (d-1)(m-1-2\al)t_1} \prod_{i=2}^{m-1}e^{\half (d-1)(1-2\al)t_i} \, \one\{t_{m-1}>(1-\ga)R_n-\omega_n  \}\\
&\to 0, \ \ \text{as } n\to \infty; 
\end{align*}
hence, \eqref{e:unif.conv.ratio2} has been established. 

Now, combining the first bound in Lemma \ref{l:negligible.BCDE} 
 and \eqref{e:exact.An1(u)}, we have, uniformly over $u\in (0,\gaR)$, 
$$
\nu_{m,n}(u) = (1+o_n(1)) A_n(u) \le (1+o_n(1))  A_n^{(1)}(u) \le (1+o_n(1)) C_{m,\al} \mainu, 
$$
which completes the proof of $(i)$. 
Next, it follows from  \eqref{e:exact.An1(u)} and \eqref{e:bound.An2(u)} that,  uniformly over $u\in (0,\gaR)$, 
\begin{align*}
\nu_{m,n}(u) &\ge A_n(u)\ge  \big( (1+o_n(1))2^{d-1} \xi_{d-1} (s_{d-1}^{-1}\al (d-1))^{m-1}\ell(u) - C\ell'(u) \big)\mainu, 
\end{align*}
which completes the proof of $(ii)$. 

Finally, for $u\in (0, R_n)$, we apply Lemma \ref{l:Lemma1.OY} $(i)$ to 
\eqref{e:before.radpdf} (in place of part $(ii)$), along with an obvious bound $\one_{\mS_n(u)}(\bt)\le 1$; then, 
 uniformly over $u \in (0, R_n)$, we have 
\begin{equation}  \label{e:upper.bound.An1(u)}
A_n^{(1)}(u) \le (1+o_n(1))\, 2^{d-1} \xi_{d-1} (s_{d-1}^{-1}\al (d-1))^{m-1}\mainu \ell(u) \le (1+o_n(1)) C_{m,\al} \mainu. 
\end{equation}
By the second bound of Lemma \ref{l:negligible.BCDE} and \eqref{e:upper.bound.An1(u)}, we have, uniformly over $u\in (0,R_n)$, 
$$
\nu_{m,n}(u)\le Ce^{c_0\omega_n}\cdot \mainu + (1+o_n(1))C_{m,\al} \mainu \le Ce^{c_0\omega_n}\cdot \mainu, 
$$
completing the proof of $(iii)$. 
\end{proof}

\begin{proposition}  \label{p:var.clique}
Let $\frac{2m-3}{2m-2} < \alpha <1$. There exists a constant $C, c_1 \in (0,\infty)$ not depending on $n$ and $u$, such that 
$$
\text{Var}\big(\mC_{m,n}(u)\big) \le Ce^{ c_1\omega_n} \cdot e^{\half (d-1) (2m-3-(2m-4)\alpha)u}, 
$$
for all $n\ge1$ and $u \in (0,R_n)$.
\end{proposition}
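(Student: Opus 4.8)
\textbf{Proof proposal for Proposition \ref{p:var.clique}.}

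The plan is to mirror the proof of Proposition \ref{p:var.mu.n}, replacing the star connectivity by the clique one but handling the angular geometry with considerably more care. Write $\mC_{m,n}(u)=\sum_{(Y_1,\dots,Y_{m-1})\in(\Pn)_{\neq}^{m-1}}\one\{\cdots\}$ and expand $\mC_{m,n}(u)^2$ as a double sum over ordered $(m-1)$-tuples. Grouping by the overlap $q:=|\{Y_1,\dots,Y_{m-1}\}\cap\{Z_1,\dots,Z_{m-1}\}|$ and applying the multivariate Mecke formula (\cite{last:penrose:2017}) twice: the diagonal $q=m-1$ produces $\nu_{m,n}(u)$, the term $q=0$ cancels against $\E[\mC_{m,n}(u)]^2$ by independence of disjoint point sets, so that
\[
\text{Var}\big(\mC_{m,n}(u)\big)=\nu_{m,n}(u)+\sum_{q=1}^{m-2}\big(\text{overlap-}q\text{ term}\big),
\]
and the overlap-$q$ term is bounded, up to combinatorial constants, by $n^{2(m-1)-q}\,\P\big(\text{configuration }\mathcal G_q\big)$, where $\mathcal G_q$ is ``two $m$-cliques through $p$ sharing $p$ and $q$ further vertices'' built on $2(m-1)-q$ i.i.d.\ points with density $\radpdf\otimes\pi$. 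Since fewer shared vertices means more free points and hence a larger count, the dominant contribution comes from $q=1$ (for $m=3$ this is the only case); I would then further split $\mathcal G_1$ according to where the single shared vertex sits in the radial ordering of each of the two cliques, and treat each of these finitely many sub-configurations in the same way.

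Next I would estimate $\P(\mathcal G_1)$ along the route of Proposition \ref{p:exp.clique}. First, exactly as in Lemma \ref{l:negligible.BCDE}, split off the parts of the radial integration on which some pairwise sum $t_i+t_j$ or some $t_i+u$ exceeds $R_n-\omega_n$; by Lemmas \ref{l:Lemma1.OY} and \ref{l:Lemma4.OY} these pieces contribute at most a factor $e^{c\omega_n}$ times a strictly smaller power of $e^{\frac12(d-1)u}$. On the main part, where all these sums are $\le R_n-\omega_n$, I would project every non-$p$ vertex onto the hyperplane $H^{d-1}$ tangent at ${\bf 0}$, replace the angular law by the multivariate Cauchy density $g$ (which is $\sim 2/s_{d-1}$ on the relevant small balls, as in \eqref{e:rela.angle.main}--\eqref{e:def.ell}), and perform the change of variables $u_i=e^{-R_n/2}v_i$. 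The clique constraints then become, for each projected point $T_i$, a family of ball constraints $\|T_i-T_j\|\le 2e^{\frac12(t_i+t_j-R_n)}$ over its clique-neighbours, together with $\|T_i\|\le 2e^{\frac12(t_i+u-R_n)}$ from $p$. The key computation is the joint volume of the resulting region: for each vertex one keeps only the \emph{tightest} ball, and, crucially, the position of the shared vertex is itself constrained by the tightest of its two downstream ball constraints rather than by its own $p$-ball. Carrying this out gives, for each sub-configuration, a bound of the form $C\,e^{\frac12(d-1)u}\int(\text{iterated product of exponentials in the }t_i,\text{ subject to the orderings})$.

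It then remains to evaluate these iterated integrals. Integrating from the innermost radial variable outward, the hypothesis $\frac{2m-3}{2m-2}<\al<1$ is precisely what keeps the partial sums of exponents along the relevant chains negative except for the last one, whose value is $\frac12(d-1)(2m-3-(2m-4)\al)$ in the dominant sub-configurations and strictly smaller in all others (and in every $q\ge2$ term). Hence every overlap term is at most $Ce^{c_1\omega_n}e^{\frac12(d-1)(2m-3-(2m-4)\al)u}$; combining with $\nu_{m,n}(u)\le Ce^{c_0\omega_n}e^{\frac12(d-1)u}$ from Proposition \ref{p:exp.clique}$(iii)$ and the elementary inequality $1\le 2m-3-(2m-4)\al$ (valid since $\al<1$ and $2m-4>0$), the claimed bound follows.

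The main obstacle is the cascading volume estimate in the second step. For a single clique (Proposition \ref{p:exp.clique}) each added vertex is only constrained ``downward'' by the far-out vertex and by $p$; here the shared vertex carries two sets of clique constraints at once, and a naive bound that assigns to each vertex its own $p$-ball overshoots the target exponent. One must instead track which ball is binding for every vertex and how the binding constraint on the shared vertex feeds back into the admissible ranges of the remaining vertices --- the analogue, but substantially more delicate, of the roles played by $\one_{\mS_n(u)}$ and the functions $\ell,\ell'$ in Proposition \ref{p:exp.clique}.
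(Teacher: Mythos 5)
Your opening moves match the paper: expand the variance with the multivariate Mecke formula, group by the overlap $q$ of the two $(m-1)$-tuples, note that the diagonal gives $\nu_{m,n}(u)$ and the disjoint term cancels against $\E[\mC_{m,n}(u)]^2$, and reduce to a worst-case placement of the shared vertices (your sub-splitting by the radial position of the shared vertex is the analogue of the paper's restriction to the case where the outermost shared vertex lies above the outermost non-shared vertices of both cliques). Your final absorption step, $1\le 2m-3-(2m-4)\al$, is also exactly what is needed. The divergence is in how the configuration probability is estimated, and this is where your proposal has a genuine gap: you plan to keep the full clique constraints, project onto the tangent hyperplane, work with the Cauchy density, and then perform a ``tightest ball per vertex'' volume computation in which the binding constraint on the shared vertex feeds back into the ranges of the other vertices. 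You do not carry this computation out --- you only assert that the assumption $\frac{2m-3}{2m-2}<\al<1$ makes the partial sums of exponents along the relevant chains negative except for the last one, with value $\half(d-1)(2m-3-(2m-4)\al)$ --- and you yourself flag the cascading volume estimate as the main unresolved obstacle. As written, the decisive quantitative step of the proof is missing.

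The fix is much simpler than what you outline, and it is what the paper does: since only an upper bound is needed, discard all clique constraints except a cycle-free (spanning-tree-like) subfamily of adjacency requirements, namely the two paths in \eqref{e:path.no.removal}--\eqref{e:path.removal}, in which the shared vertex $Z$ closest to the boundary is attached to the outermost non-shared vertex of each clique and each clique is otherwise spanned by a monotone path ending at $p$ (for $\mC_1$) or at the innermost non-shared vertex (for $\mC_2$). Because this edge set contains no cycles, the conditional probability given the radial coordinates factorizes exactly over edges, and each factor is bounded by Lemma \ref{l:Lemma4.OY} together with the elementary indicator trick \eqref{e:ignore.restriction.omega}, i.e.\ by $Ce^{\half(d-1)\omega_n}e^{-\half(d-1)(R_n-t_i-t_j)}$ with no restriction on $t_i+t_j$. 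No hyperplane projection, Cauchy densities, or binding-ball bookkeeping is needed for the variance (the paper uses that machinery only to get sharp constants in the first moment, Proposition \ref{p:exp.clique}); the remaining work is an elementary iterated exponential integral as in \eqref{e:main.variance.clique}, where $\al>\frac{2m-3}{2m-2}$ makes the innermost integrals $O(1)$ and the exponent $\half(d-1)(2m-3-(2m-4)\al)$ emerges from the sum of vertex degrees in the retained tree, uniformly in $q$ (so you do not even need the claim that $q\ge 2$ is strictly smaller --- it is enough that every overlap term meets the same bound).
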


\begin{proof}
By the Mecke formula, the variance  can be represented as 
\begin{align}
\begin{split}  \label{e:decomp.var.clique}
\text{Var}\big( \mC_{m,n}(u) \big) &= \nu_{m,n}(u) \\
&\quad + \sum_{q=1}^{m-2}\E \bigg[ \frac{1}{\big( (m-1)! \big)^2}\sum_{(P_1,\dots,P_{m-1})\in (\Pn)_{\neq}^{m-1}} \hspace{-10pt}\sum_{\substack{(Q_1,\dots,Q_{m-1})\in (\Pn)_{\neq}^{m-1}, \\ |(P_1,\dots,P_{m-1})\cap (Q_1,\dots,Q_{m-1})|=q}}\hspace{-10pt} h_n (p, P_1,\dots, P_{m-1}) \\
&\qquad \qquad \qquad \qquad\qquad \qquad\qquad \qquad\qquad \qquad\qquad \qquad \qquad \times h_n (p, Q_1,\dots, Q_{m-1})  \bigg] \\
&=: \nu_{m,n}(u) + \sum_{q=1}^{m-2} I_{n,q}(u), 
\end{split}
\end{align}
where $p=(u,{\bf 0})$ with $u=R_n-d_H(o,p)$ and 
\begin{equation}  \label{e:def.hn}
h_n(p, x_1,\dots,x_{m-1}) := \one \big\{ (x_1,\dots,x_{m-1} ) \text{ forms an } (m-1)\text{-clique}, \, x_i \rightarrow p, \, i=1,\dots,m-1 \big\}. 
\end{equation}
For further analysis, we consider two $m$-cliques, denoted $\mC_1$ and $\mC_2$ and  both constructed from $\Pn$, which share $q+1$ common vertices for some $q \in \{1,\dots, m-2\}$.  
Among these $q+1$ shared vertices, $p$ is the point lying closest to the origin of $\B_d$. 
Let $Z=(W,\Psi)$ with $W=R_n-d_H(o,Z)$, be  the shared vertex that lies closest to the boundary of $B(0,R_n)$ among all shared vertices.
Let $Y_1=(V_1,\Ta_1)$, $V_1=R_n-d_H(o,Y_1)$, be the vertex in $\mC_1$ that is not shared by $\mC_2$ and lies closest to the boundary of $B(0,R_n)$ among all non-shared vertices of $\mC_1$. Similarly, let $Z_1=(W_1,\Phi_1)$, $W_1=R_n-d_H(o,Z_1)$ be the vertex in $\mC_2$ that is not shared by $\mC_1$ and lies closest to the boundary of $B(0,R_n)$ among all non-shared vertices of $\mC_2$.

We consider three possible cases according to the size of radial components of $Z$, $Y_1$, and $Z_1$: $(i)$ $W\ge V_1 \vee W_1$, $(ii)$ $V_1 \wedge W_1 \le W < V_1 \vee W_1$, and $(iii)$ $W< V_1 \wedge W_1$. By Lemma \ref{l:Lemma4.OY}, the probability of two edges $\{ Y_1,Z \}$ and $\{Z_1, Z\}$ is asymptotically given as a constant multiple of $n^{-3}e^{\half (d-1) (V_1 + 2W+ W_1)}$. Note that $W$ appears with coefficient $2$, giving it twice the influence of $V_1$ and $W_1$. Hence, Case $(i)$ leads to  the largest edge probability and is therefore the most demanding configuration. Accordingly, we restrict our attention to Case $(i)$ below.

More precisely,   we consider the following configuration built over $\mC_1$ and $\mC_2$. First, define the point sets $\mQ_i := V(\mC_i)\setminus V(\mC_1 \cap\mC_2)$, $i=1,2$, where $V(\mC_i)$ denotes the vertex set of $\mC_i$.  Next, select a node $S$ from $\mQ_1\cup \mQ_2$ that lies closest to the origin of  $\B_d$ among all nodes in $\mQ_1 \cup \mQ_2$. 
For ease of description, suppose without loss of generality that $Y_1 \in \mC_1$, $Z_1\in \mC_2$, and $S\in \mC_2$. 
We   then take  the path formed by the nodes in $\mC_1$:
\begin{equation} \label{e:path.no.removal}
Y_1 \to Z, \ \ Y_1\to Y_2 \to \cdots \to Y_{m-2} \to p=(u,{\bf 0}),
\end{equation}
where $Y_i = (V_i,\Ta_i)$, $V_i=R_n-d_H(o,Y_i)$, $i=1,\dots,m-2$,  satisfy $V_1 \le \cdots \le V_{m-2} \le u$. Note that all $m$ nodes in \eqref{e:path.no.removal} constitute $\mC_1$. 

We next turn to $\mC_2$ and consider the path in $\mC_2$: 
\begin{equation} \label{e:path.removal}
Z_1 \to Z, \ \ Z_1\to Z_2 \to\cdots \to Z_{m-q-1},
\end{equation}
where $Z_i = (W_i, \Phi_i)$, $W_i=R_n-d_H(o,Z_i)$, $i=1,\dots,m-q-1$ such that $W_1\le \cdots \le W_{m-q-1} \le u$.  By construction, $Z_{m-q-1} = S$, and the nodes in \eqref{e:path.removal}, together with the remaining $q$ shared nodes (including the point $p$), constitute $\mC_2$.     Since $Z_{m-q-1}(=S)$ is, by choice, the closest  point to the origin of $\B_d$ among all non-shared vertices of the two cliques, we must have $V_{m-2} \le W_{m-q-1}$.

Based on the configuration in \eqref{e:path.no.removal} and \eqref{e:path.removal}, one can bound  $I_{n,q}(u)$ in \eqref{e:decomp.var.clique} by 
\begin{align}
A_{n,q}(u) &:= n^{2m-q-2}\int_{\mT (\bt, \bs, w)} \P \big( \max_{1\le i \le m-3}d_H(Y_i, Y_{i+1})\le R_n,  \, d_H(Y_{m-2}, p)\le R_n,\label{e:main.variance.clique}\\
&\qquad \qquad \qquad\qquad\qquad \max_{1 \le i \le m-q-2} d_H(Z_i, Z_{i+1})\le R_n, \,  d_H(Y_1, Z)\le R_n,  \notag \\
&\qquad \qquad \qquad\qquad \qquad \qquad d_H(Z_1, Z)\le R_n \, \big|\, \bt, \bs, w \big) \, \radpdf (\bt)\radpdf (\bs)\radpdf (w), \notag 
\end{align}
where we have set, by abuse of notation,  $Y_i=(t_i, \Ta_i)$, $i=1,\dots,m-2$, $Z_i=(s_i, \Phi_i)$, $i=1,\dots,m-q-1$, and $Z=(w,\Psi)$, and 
$$
\mT (\bt, \bs, w) := \big\{ (\bt,\bs,w): 0\le t_1 \le \cdots \le t_{m-2}\le s_{m-q-1},\,  0\le s_1 \le \cdots \le s_{m-q-1} \le u, \, t_1 \vee s_1 \le w \big\}. 
$$
Moreover, $\radpdf (\bt)$, $\radpdf (\bs)$ are the product of  densities of radial components  as   in \eqref{e:Mecke.exp.clique}. 

Since the configurations in \eqref{e:path.no.removal} and \eqref{e:path.removal} do not involve any cycles, the independence of the angular components   allows us to express the conditional  probability in  \eqref{e:main.variance.clique} in a product form. Specifically, it is equal to
\begin{align} 
&\prod_{i=1}^{m-3}\P \big( d_H(Y_i, Y_{i+1})\le R_n \big|\, t_i, t_{i+1} \big) \P\big( d_H(Y_{m-2}, p)\le R_n \big| \, t_{m-2} \big)\label{e:prod.form.var.clique} \\
&\qquad  \times  \prod_{i=1}^{m-q-2} \P\big( d_H(Z_i, Z_{i+1})\le R_n \big|\, s_i, s_{i+1} \big)  \P\big( d_H(Y_1, Z)\le R_n \big|\, t_1, w \big) \P\big( d_H(Z_1, Z)\le R_n \big|\, s_1, w \big). \notag 
\end{align}
According to Lemma \ref{l:Lemma4.OY}, 
\begin{align}
&\P \big( d_H(Y_i, Y_{i+1})\le R_n \big|\, t_i, t_{i+1} \big) \label{e:ignore.restriction.omega}\\
&\le \P \big( d_H(Y_i, Y_{i+1})\le R_n \big|\, t_i, t_{i+1} \big) \one \{ t_i+t_{i+1}\le R_n-\omega_n\} + \one \{ t_i+t_{i+1}> R_n-\omega_n\}  \notag \\
&\le Ce^{\half (d-1)\omega_n} \cdot e^{-\half (d-1)(R_n-t_i-t_{i+1})}. \notag 
\end{align}
Appealing to bounds of the same type as above,  \eqref{e:prod.form.var.clique} can be bounded  by
\begin{align*}
&C e^{\half (d-1) (2m-q-2)\omega_n} \prod_{i=1}^{m-3} e^{-\half (d-1) (R_n-t_i-t_{i+1})} \cdot e^{-\half (d-1)(R_n-t_{m+2}-u)} \\
&\quad \times \prod_{i=1}^{m-q-2} e^{-\half (d-1) (R_n-s_i-s_{i+1})} \cdot e^{-\half (d-1) (R_n-t_1-w)} \cdot  e^{-\half (d-1)(R_n-s_1-w)}  \\
&= C (n^{-1}e^{\half (d-1)\omega_n})^{2m-q-2} e^{\half (d-1)u} \prod_{i=1}^{m-2} e^{(d-1)t_i} \prod_{i=1}^{m-q-2}  e^{(d-1)s_i} \cdot e^{\half (d-1)s_{m-q-1}} \cdot e^{(d-1)w}. 
\end{align*}
Applying the final term back to \eqref{e:main.variance.clique}, while using Lemma \ref{l:Lemma1.OY}, 
\begin{align}
A_{n,q}(u) &\le C e^{\half (d-1)(2m-3)\omega_n}\cdot e^{\half(d-1) u} \int_{\mT(\bt,\bs,v)} \prod_{i=1}^{m-2} e^{(d-1)(1-\al)t_i} \prod_{i=1}^{m-q-2} e^{(d-1)(1-\al)s_i}   \label{e:main.variance.clique.2}\\
&\qquad\qquad\qquad\qquad\qquad\qquad\qquad \qquad  \times e^{\half (d-1)(1-2\al)s_{m-q-1}} \cdot e^{(d-1)(1-\al)w} \notag \\
&=C e^{\half (d-1)(2m-3)\omega_n}\cdot e^{\half(d-1) u} \int_{w=0}^u e^{(d-1)(1-\al)w}\int_{t_1=0}^w e^{(d-1)(1-\al)t_1}\int_{s_1=0}^w e^{(d-1)(1-\al)s_1} \notag \\
&\qquad \qquad \times \int_{(s_2,\dots,s_{m-q-1}):\,s_1 \le s_2 \le \cdots \le s_{m-q-1} \le u} \prod_{i=2}^{m-q-2} e^{(d-1)(1-\al)s_i}  \cdot e^{\half (d-1)(1-2\al)s_{m-q-1}}  \notag \\
&\qquad \qquad \times \int_{(t_2,\dots,t_{m-2}) :\, t_1 \le t_2 \le \cdots \le t_{m-2}\le s_{m-q-1}} \prod_{i=2}^{m-2} e^{(d-1)(1-\al)t_i}. \notag
\end{align}
Observe that 
\begin{align*}
\int_{(t_2,\dots,t_{m-2}) :\, t_1 \le t_2 \le \cdots \le t_{m-2}\le s_{m-q-1}} \prod_{i=2}^{m-2} e^{(d-1)(1-\al)t_i} &\le \Big( \int_{t_1}^{s_{m-q-1}} e^{(d-1)(1-\alpha)t} \dif t  \Big)^{m-3} \\
&\le Ce^{(d-1)(m-3)(1-\alpha)s_{m-q-1}}, 
\end{align*}
and also, 
\begin{align*}
&\int_{(s_2,\dots,s_{m-q-1}):\,s_1 \le s_2 \le \cdots \le s_{m-q-1} \le u} \prod_{i=2}^{m-q-2} e^{(d-1)(1-\al)s_i}  \cdot e^{\half (d-1)(1-2\al)s_{m-q-1}} \cdot e^{(d-1)(m-3)(1-\alpha)s_{m-q-1}} \\
&= \int_{s_{m-q-1}=s_1}^u e^{\half (d-1) (2m-5-(2m-4)\alpha)s_{m-q-1}} \int_{(s_2,\dots,s_{m-q-2}):\,s_1 \le s_2 \le \cdots \le s_{m-q-2}\le s_{m-q-1}} \prod_{i=2}^{m-q-2} e^{(d-1)(1-\al)s_i} \\
&\le Ce^{(d-1)(m-4)(1-\al)u} \cdot e^{\half (d-1)(2m-5-(2m-4)\al)s_1}. 
\end{align*}
Applying the last term to \eqref{e:main.variance.clique.2}, we conclude that 
\begin{align*}
A_{n,q}(u) &\le C e^{\half (d-1)(2m-3)\omega_n} \cdot e^{\half (d-1) (2m-7-(2m-8)\al)u} \\
&\quad \times \int_{w=0}^u e^{(d-1)(1-\al)w}\int_{t_1=0}^w e^{(d-1)(1-\al)t_1}\int_{s_1=0}^w e^{\half (d-1)(2m-3-(2m-2)\al)s_1} \\
&\le C e^{\half (d-1)(2m-3)\omega_n} \cdot e^{\half (d-1) (2m-3-(2m-4)\al)u}. 
\end{align*}
Referring this result back to the second term in \eqref{e:decomp.var.clique}, while  applying Proposition \ref{p:exp.clique} $(iii)$, 
\begin{align*}
\text{Var}\big( \mC_{m,n}(u) \big) &\le C \big( e^{c_0\omega_n}\cdot \mainu + e^{\half (d-1)(2m-3)\omega_n} \cdot e^{\half(d-1) (2m-3-(2m-4)\alpha)u}  \big)  \\
&\le C e^{c_1\omega_n}\cdot e^{\half(d-1) (2m-3-(2m-4)\alpha)u}. 
\end{align*}
\end{proof}

\subsection{Point process convergence}  \label{sec:pp.conv.clique}

The next result concerns the weak  convergence of the point process associated with $m$-clique counts constructed from the variables $X_i=(U_i, \Ta_i)$, $U_i=R_n-d_H(o,X_i)$, for $i=1,\dots,N_n$. 

\begin{proposition} \label{p:pp.conv.clique}
Let $\frac{2m-3}{2m-2}<\al<1$; then, as $n\to\infty$, 
$$
\sum_{i=1}^{N_n} \delta_{(i/N_n, \, b_{m,n}^{-1} \nu_{m,n}(U_i))}  \Rightarrow \ms{PPP}\big(\ms{Leb}\otimes \ms{m}_{2\al}\big), \  \ \text{in } M_p\big( [0,1]\times  (0,\infty ] \big), 
$$
and 
$$
\sum_{i=1}^{N_n} \delta_{(i/N_n, \, b_{m,n}^{-1} \mC_{m,n}(X_i))}  \Rightarrow \ms{PPP}\big(\ms{Leb}\otimes \ms{m}_{2\al}\big), \  \ \text{in } M_p\big( [0,1]\times  (0,\infty ] \big), 
$$
where the measure $\ms{m}_{2\al}$ is defined in Theorem \ref{t:joint.sum.max.star}. 
\end{proposition}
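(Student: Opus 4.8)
The plan is to mirror the proof of Proposition~\ref{p:pp.conv.star} step by step, feeding in the clique moment bounds of Section~\ref{sec:moments.clique} in place of the star bounds. First I would note that it suffices to treat the statement for $\big(\nu_{m,n}(U_i)\big)_{i\le N_n}$; the statement for $\big(\mC_{m,n}(X_i)\big)_{i\le N_n}$ then follows from it by a variance comparison parallel to Lemma~\ref{l:PPD_n}, which I would relegate to the appendix. Fixing $\ga\in\big(\tfrac{1}{2\al},1\big)$, the bound $n\int_{\gaR}^\infty\radpdf(u)\,\dif u\le Cn^{1-2\al\ga}\to0$ lets us replace, in the vague metric, the point process in the statement by its restriction to $\{U_i\le\gaR\}$, exactly as in~\eqref{e:vagueP3}. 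For the restricted process the de-Poissonization argument of~\eqref{e:conv.to.PPP}--\eqref{e:de-Poissonization.PP} carries over unchanged, its sole model-dependent input being $\E\big[f\big(b_{m,n}^{-1}\nu_{m,n}(U_1)\big)\one\{U_1\le\gaR\}\big]\le Cn^{-1}$ for $f\in C_K^+\big((0,\infty]\big)$, which follows from Proposition~\ref{p:exp.clique}~$(i)$ and Lemma~\ref{l:Lemma1.OY}. Hence, by Corollary~6.1 and Lemma~6.1 of~\cite{resnick:2007}, the whole proof reduces to the single-point tail asymptotic
\[
n\,\P\big(\nu_{m,n}(U_1)\ge y\,b_{m,n},\ U_1\le\gaR\big)\ \longrightarrow\ y^{-2\al}=\ms{m}_{2\al}\big((y,\infty]\big),\qquad y>0 .
\]

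To prove this, the key point is that, although the upper bound of Proposition~\ref{p:exp.clique}~$(i)$ already has prefactor $C_{m,\al}$, the lower bound of~$(ii)$ has prefactor $2^{d-1}\xi_{d-1}\big(s_{d-1}^{-1}\al(d-1)\big)^{m-1}\ell(u)-C\ell'(u)$, which approaches $C_{m,\al}$ only as $u\to\infty$ (since $\ell(u)\to\ell(\infty)$ and $\ell'(u)\to0$). So, given $\vep>0$, I would choose $M$ so large that this lower prefactor exceeds $(1-\vep)C_{m,\al}$ for all $u\ge M$. For $u\le M$, Proposition~\ref{p:exp.clique}~$(iii)$ gives $\nu_{m,n}(u)\le Ce^{c_0\omega_n}e^{\frac12(d-1)M}=o(b_{m,n})$, because $e^{c_0\omega_n}=(\log R_n)^{c_0}$ grows sub-polynomially while $b_{m,n}=C_{m,\al}n^{1/(2\al)}$; thus for large $n$ the event $\{\nu_{m,n}(U_1)\ge y\,b_{m,n},\ U_1\le M\}$ is empty, and the probability above equals $\P\big(\nu_{m,n}(U_1)\ge y\,b_{m,n},\ M\le U_1\le\gaR\big)$. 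On $\{M\le U_1\le\gaR\}$, parts~$(i)$ and~$(ii)$ sandwich $(1-\vep)C_{m,\al}e^{\frac12(d-1)U_1}\le\nu_{m,n}(U_1)\le(1+\vep)C_{m,\al}e^{\frac12(d-1)U_1}$ for $n$ large, so $n$ times the probability of interest lies between $n\,\P\big(e^{\frac12(d-1)U_1}\ge y\,n^{1/(2\al)}/(1-\vep),\ M\le U_1\le\gaR\big)$ and the same quantity with $(1-\vep)$ replaced by $(1+\vep)$. Each of these equals $n\int_a^{\gaR}\radpdf(u)\,\dif u$ for the corresponding threshold $a$ (close to $\tfrac{1}{\al(d-1)}\log n$); by Lemma~\ref{l:Lemma1.OY}~$(ii)$, with the upper endpoint contributing negligibly because $\ga>\tfrac{1}{2\al}$, the computation in~\eqref{e:upper.limit.mun} gives the limits $y^{-2\al}(1-\vep)^{2\al}$ and $y^{-2\al}(1+\vep)^{2\al}$ respectively. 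Letting $\vep\downarrow0$ yields $y^{-2\al}$, completing the single-point estimate.

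Finally, for the statement about $\big(\mC_{m,n}(X_i)\big)_{i\le N_n}$ I would follow the proof of Lemma~\ref{l:PPD_n}: it suffices to show that $\sum_{i\le N_n}\big|f\big(b_{m,n}^{-1}\mC_{m,n}(X_i)\big)-f\big(b_{m,n}^{-1}\nu_{m,n}(U_i)\big)\big|\to0$ in probability for every $f\in C_K^+\big((0,\infty]\big)$, which after an $\eta$-continuity reduction for $f$ comes down to $n\int_0^{R_n}\P\big(|\mC_{m,n}(u)-\nu_{m,n}(u)|>\vep\,b_{m,n}\big)\radpdf(u)\,\dif u\to0$, to be obtained from Chebyshev's inequality and the variance bound of Proposition~\ref{p:var.clique}. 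I expect this last step to be the \emph{main obstacle}. Unlike in the star case, the variance bound carries the extra factor $e^{\frac12(d-1)(2m-3-(2m-4)\al)u}$; integrating it against $\radpdf$ up to $u\asymp\tfrac{2}{d-1}\log b_{m,n}$ produces a power of $n$ whose negativity is secured only under the hypothesis $\al>\tfrac{2m-3}{2m-2}$. This is precisely where the clique analysis is strictly more delicate than the star analysis, and where the lower threshold on $\al$ in Theorem~\ref{t:joint.sum.max.clique} enters.
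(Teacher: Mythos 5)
Your proposal is correct and takes essentially the same route as the paper: it reduces everything to the tail asymptotic $n\P\big(\nu_{m,n}(U_1)\ge y\,b_{m,n},\ U_1\le\gaR\big)\to y^{-2\al}$ via the restriction/de-Poissonization machinery of Proposition \ref{p:pp.conv.star} and Lemma \ref{l:PPD_n}, and proves that asymptotic by sandwiching with Proposition \ref{p:exp.clique} $(i)$--$(ii)$, absorbing the non-constant prefactor $\ell(u)$ in the lower bound by truncating at a level where $\ell$ is close to $\ell(\infty)$ (you use a fixed large $M$, the paper uses $\omega_n$; both work). The only inessential discrepancy is in your final step: no cutoff at $u\asymp\frac{2}{d-1}\log b_{m,n}$ is needed, since for $\al>\frac{2m-3}{2m-2}$ the Chebyshev integrand $\text{Var}\big(\mC_{m,n}(u)\big)e^{-\al(d-1)u}\le Ce^{c_1\omega_n}e^{\half(d-1)(2m-3-(2m-2)\al)u}$ is integrable over all of $(0,R_n)$, so the whole transfer term is bounded by $Cn^{1-1/\al}e^{c_1\omega_n}\to0$, exactly as in the paper's Lemma \ref{l:marginal.max.clique}.
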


\begin{proof}
Since most of the argument  parallels the proofs of Proposition \ref{p:pp.conv.star} and Lemma \ref{l:PPD_n}, we state only  the proof of a statement analogous to \eqref{e:Cor6.1.and.Lemma6.1}:
$$
n\P \big( \nu_{m,n}(U_1)\ge yb_{m,n}, \, U_1 \le \gaR \big) \to \ms{m}_{2\al} \big( (y,\infty] \big)=y^{-2\al}, \ \ \ y >0, 
$$
where $\gamma\in \big(\frac{1}{2\al},1\big)$. 

It follows from Proposition \ref{p:exp.clique} $(i)$ that for every $\vep>0$, there exists $N\in \bbn$ such that for all $n\ge N $ and $u\in (0,\gaR)$, 
$$
\nu_{m,n}(u)\le (1+\vep) C_{m,\al} \mainu. 
$$
Using this bound and proceeding as in \eqref{e:upper.limit.mun}, we have as $n\to\infty$, 
$$
n\P \big( \nu_{m,n}(U_1)\ge yb_{m,n}, \, U_1 \le \gaR \big) \le n\int_{\frac{2}{d-1}\log (yb_{m,n} (1+\vep)^{-1}C_{m,\al}^{-1}) }^\gaR \radpdf (u)\dif u \sim \Big( \frac{y}{1+\vep} \Big)^{-2\al}, 
$$
which, in turn,  implies that 
$\limsup_{n\to\infty} n\P \big( \nu_{m,n}(U_1)\ge yb_{m,n}, \, U_1 \le \gaR \big) \le y^{-2\al}$ by letting $\vep \downarrow0$. 

Subsequently, Proposition \ref{p:exp.clique} $(ii)$ implies that  for any $\vep>0$, there exists $N\in \bbn$ such that for all $n\ge N$ and $u\in (0,\gaR)$, 
\begin{equation}  \label{e:lower.nu.n.exact.asym}
\nu_{m,n}(u) \ge \big(  (1-\vep) C_{m,\al}' \ell(u) -C\ell'(u)\big)_+ \mainu, 
\end{equation}
where $C_{m,\al}':= C_{m,\al} / \ell(\infty)$. From this it follows that 
\begin{align*}
&n\P \big( \nu_{m,n}(U_1)\ge yb_{m,n}, \, U_1 \le \gaR \big) \\
&\quad  \ge n \P \Big( \big(  (1-\vep) C_{m,\al}' \ell(U_1) -C\ell'(U_1)\big)_+ e^{\half (d-1) U_1}\ge yb_{m,n}, \, \omega_n \le U_1 \le \gaR \Big). 
\end{align*}
Recall that $\ell(u)$ (resp.~$\ell'(u)$) is increasing (resp.~decreasing) in $u$ such that  $\lim_{u\to\infty} \ell(u) = \ell(\infty) \in (0,\infty)$ and $\lim_{u\to\infty}\ell'(u)=0$; hence, there exists $N'\ge N$ so that for all $n\ge N'$, 
$$
\big(  (1-\vep) C_{m,\al}' \ell(U_1) -C\ell'(U_1)\big)_+ \ge (1-\vep) C_{m,\al}' \ell(\omega_n ) - C\ell'(\omega_n) \ge (1-2\vep)C_{m,\al}. 
$$
Thus, as $n\to\infty$, 
\begin{align*}
n\P \big( \nu_{m,n}(U_1)\ge yb_{m,n}, \, U_1 \le \gaR \big)  &\ge n\P \big( (1-2\vep)C_{m,\al} e^{\half (d-1)U_1} \ge yb_{m,n}, \, \omega_n \le U_1\le \gaR \big) \\
&= n\int_{\frac{2}{d-1}\log (yb_{m,n} (1-2\vep)^{-1}C_{m,\al}^{-1}) }^\gaR \radpdf (u)\dif u \sim \Big( \frac{y}{1-2\vep} \Big)^{-2\al}, 
\end{align*}
and, we obtain   $\liminf_{n\to\infty} n\P \big( \nu_{m,n}(U_1)\ge yb_{m,n}, \, U_1 \le \gaR \big) \ge  y^{-2\al}$ as desired. 
\end{proof}

\subsection{Proof of Theorem \ref{t:joint.sum.max.clique}}

As discussed in Section \ref{sec:main.result.star.count}, the first step toward proving \eqref{e:joint.weak.clique}  is to show that,  as $n\to\infty$,
\begin{align}
&\left( \frac{1}{b_{m,n}} \bigg( \sum_{i=1}^{[N_n\cdot\,]}\nu_{m,n}(U_i) - [N_n\cdot\,]  \E [\nu_{m,n}(U_1)] \bigg), \ \frac{1}{b_{m,n}}\bigvee_{i=1}^{[N_n \cdot \,]} \nu_{m,n}(U_i)\right)  \Rightarrow (S_{2\al}(\cdot),\, Y_{2\al}(\cdot)), \label{e:join.weak.nu.version}
\end{align}
in the space $D\big( [0,1], \R \times [0,\infty) \big)$. Given similarities between \eqref{e:join.weak.nu.version} and Proposition \ref{p:main.mu.version.star} $(i)$, our remaining task is to verify the analogue of \eqref{e:prop3.5(1)}, namely, for any $\eta>0$, 
\begin{align}
&\lim_{\vep\to 0}\limsup_{n\to\infty}\P \bigg(\sup_{0\le t \le 1} \bigg| \sum_{i=1 }^{[N_nt]}\nu_{m,n}(U_i)\one \big\{  \nu_{m,n}(U_i)\le  \vep b_{m,n}\big\}\label{e:prop3.5(1).clique} \\
&\qquad \qquad \qquad \qquad \qquad \qquad  -[N_nt] \E \Big[  \nu_{m,n}(U_1)\one \big\{  \nu_{m,n}(U_1)\le \vep b_{m,n}\big\} \Big] \bigg|>\eta b_{m,n} \bigg)=0. \notag
\end{align}
To show this, as discussed in \eqref{e:chebyshev}, we need to prove that 
\begin{equation}  \label{e:analog.second.moment.convergence}
\lim_{\vep\downarrow0} \limsup_{n\to\infty} \frac{n}{b_{m,n}^2}\, \int_0^{R_n} \nu_{m,n}(u)^2 \one \big\{ \nu_{m,n}(u)\le \vep b_{m,n}\big\} \radpdf (u)\dif u =0. 
\end{equation}
First, fix the constant $\gamma\in \big( \frac{1}{2\al}, 1 \big)$.   By Proposition \ref{p:exp.clique} $(ii)$, we see that 
$\nu_{m,n}(\gaR)\ge 2^{-1}C_{m,\al} e^{\half (d-1)\gaR} = 2^{-1} C_{m,\al} n^\gamma$ 
for sufficiently large $n$, and thus, $\nu_{m,n}(\gaR)/b_{m,n}\to\infty$ as $n\to\infty$. Now, since $\nu_{m,n}(u)$ is increasing in $u$, the condition $\nu_{m,n}(u)\le \vep b_{m,n} $ in \eqref{e:analog.second.moment.convergence} immediately implies $u\le \gaR$. Hence, we can replace the upper limit ``$R_n$" of the integral in \eqref{e:analog.second.moment.convergence} with ``$\gaR$."

For every $M>0$, we can see that as $n\to\infty$, 
$$
\frac{n}{b_{m,n}^2}\, \int_0^{M} \nu_{m,n}(u)^2 \one \big\{ \nu_{m,n}(u)\le \vep b_{m,n} \big\}  \radpdf (u)\dif u \le \frac{Cn}{b_{m,n}^2} \int_0^M e^{(d-1)(1-\al)u} \dif u \le Cn^{1-1/\al} \to 0. 
$$
Furthermore, Proposition \ref{p:exp.clique} $(ii)$ ensures that there exists $N\in \bbn$ such that for all $n\ge N$ and $u\in (M,\gaR)$, 
$$
\nu_{m,n}(u) \ge C_{m,\al}''(M) \mainu, 
$$
where we have chosen $M>0$ so large that 
$C_{m,\al}''(M):= 2^{d-2} \xi_{d-1} \big( s_{d-1}^{-1}\al (d-1) \big)^{m-1} \ell(M)  -C \ell'(M)>0$. It then follows that 
\begin{align*}
&\frac{n}{b_{m,n}^2}\, \int_M^{\gaR} \nu_{m,n}(u)^2 \one \big\{ \nu_{m,n}(u)\le \vep b_{m,n} \big\}  \radpdf (u)\dif u \\
&\le Cn^{1-1/\al} \int_0^\gaR e^{(d-1)(1-\al)u}\one \big\{ C_{m,\al}''(M) \mainu \le \vep b_{m,n} \big\}\dif u\\
&\le Cn^{1-1/\al} \Big( \frac{\vep b_{m,n}}{C_{m,\al}''(M)} \Big)^{2(1-\al)} = C \Big(  \frac{\vep C_{m,\al}}{C_{m,\al}''(M)}\Big)^{2(1-\al)}. 
\end{align*}
The final term vanishes as $\vep\downarrow0$. 

As in Section \ref{sec:main.result.star.count}, the proof of Theorem \ref{t:joint.sum.max.clique} can be completed as a result of the following lemmas. 

\begin{lemma}  \label{l:marginal.max.clique}
As $n\to\infty$, 
$$
\sup_{0\le t \le 1}\bigg| b_{m,n}^{-1}\bigvee_{i=1}^{[N_n t]} \mC_{m,n}(X_i) - b_{m,n}^{-1} \bigvee_{i=1}^{[N_n t]} \nu_{m,n}(U_i)\bigg| \stackrel{\P}{\to} 0. 
$$
\end{lemma}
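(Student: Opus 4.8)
`\textbf{Proof plan for Lemma \ref{l:marginal.max.clique}.}

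The plan is to mirror the proof of Lemma \ref{l:marginal.max.star}, using the same union-bound-to-expectation reduction and then controlling the pointwise discrepancy between $\mC_{m,n}$ and its mean $\nu_{m,n}$ via the variance bound in Proposition \ref{p:var.clique}. Concretely, for any $\vep>0$ I would first write
\begin{align*}
&\P\bigg(\sup_{0\le t\le 1}\Big|\bigvee_{i=1}^{[N_nt]}\mC_{m,n}(X_i)-\bigvee_{i=1}^{[N_nt]}\nu_{m,n}(U_i)\Big|>\vep b_{m,n}\bigg)
\le \P\bigg(\bigcup_{i=1}^{N_n}\big\{|\mC_{m,n}(X_i)-\nu_{m,n}(U_i)|>\vep b_{m,n}\big\}\bigg)\\
&\qquad\le \E\Big[\sum_{i=1}^{N_n}\one\{|\mC_{m,n}(X_i)-\nu_{m,n}(U_i)|>\vep b_{m,n}\}\Big]
= n\int_0^{R_n}\P\big(|\mC_{m,n}(u)-\nu_{m,n}(u)|>\vep b_{m,n}\big)\,\radpdf(u)\,\dif u,
\end{align*}
where the inequality $\bigl|\bigvee a_i-\bigvee b_i\bigr|\le\bigvee|a_i-b_i|$, the Mecke formula and the rotational invariance of $\mC_{m,n}(u,\btheta)$ in $\btheta$ are all that is used in the first display.

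Next I would split the radial integral at the truncation level $c R_n$ for a suitable constant $c\in(\tfrac1{2\al},1)$, exactly as in the analogue $\eqref{e:A_n.to.0.second}$ referenced in Lemma \ref{l:marginal.max.star}. For $u>cR_n$ the integrand is handled crudely: since $\P(U_1>cR_n)=n\int_{cR_n}^{R_n}\radpdf(u)\,\dif u\le Cn^{1-2\al c}\to0$ (this uses $c>\tfrac1{2\al}$), that part of the integral already vanishes after bounding $\P(\cdot)\le 1$. For $u\le cR_n$ I apply Chebyshev's inequality together with Proposition \ref{p:var.clique}:
$$
\P\big(|\mC_{m,n}(u)-\nu_{m,n}(u)|>\vep b_{m,n}\big)\le \frac{\mathrm{Var}(\mC_{m,n}(u))}{\vep^2 b_{m,n}^2}\le \frac{Ce^{c_1\omega_n}e^{\frac12(d-1)(2m-3-(2m-4)\al)u}}{\vep^2 b_{m,n}^2},
$$
and then, using $b_{m,n}=C_{m,\al}n^{1/(2\al)}$ and $\radpdf(u)\le C\al(d-1)e^{-\al(d-1)u}$ from Lemma \ref{l:Lemma1.OY}$(i)$, I bound $n\int_0^{cR_n}(\cdots)\radpdf(u)\,\dif u$ by $C\vep^{-2}n^{1-1/\al}e^{c_1\omega_n}\int_0^{cR_n}e^{\frac12(d-1)(2m-3-(2m-4)\al-2\al)u}\,\dif u$. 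The exponent in the $u$-integral is $\frac12(d-1)(2m-3-(2m-2)\al)$; if it is nonpositive the integral is $O(1)$ and the whole expression is $O(n^{1-1/\al}e^{c_1\omega_n})\to0$ since $\al<1$ forces $1-1/\al<0$ and $e^{c_1\omega_n}=(\log R_n)^{c_1}$ is only polylogarithmic; if it is positive, the integral contributes a factor $e^{\frac12(d-1)(2m-3-(2m-2)\al)cR_n}=n^{c(2m-3-(2m-2)\al)/(d-1)\cdot(d-1)/2}$, wait, more simply $n^{c(2m-3-(2m-2)\al)}$, and one checks that for $c$ chosen close enough to $\tfrac1{2\al}$ the total power of $n$, namely $1-\tfrac1\al+c(2m-3-(2m-2)\al)$, is still strictly negative using $\al>\tfrac{2m-3}{2m-2}$ (which makes $2m-3-(2m-2)\al<0$, so in fact this case does not even arise and the first case always applies). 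Either way the bound tends to $0$, which is the desired conclusion.

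\textbf{Main obstacle.} The only genuinely delicate point is bookkeeping the exponents to confirm that the polylogarithmic factor $e^{c_1\omega_n}$ is always dominated by the negative power of $n$ coming from $n^{1-1/\al}$ (and from the truncation), i.e.\ that $c_1\omega_n = c_1\log\log R_n$ never overtakes $(\tfrac1\al-1)\log n$; this is immediate but must be stated. A secondary bookkeeping issue is making the choice of $c$ (equivalently the constant $\gamma$ used elsewhere) explicit so that the truncation tail $n^{1-2\al c}$ and the main term are simultaneously negligible; the constraint $\al>\tfrac{2m-3}{2m-2}$ is exactly what guarantees room for such a $c\in(\tfrac1{2\al},1)$. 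No new ideas beyond those already deployed for the star-shape case are required; the argument is a routine transcription with Proposition \ref{p:var.clique} in place of Proposition \ref{p:var.mu.n}.`
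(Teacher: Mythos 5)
Your proposal is correct and follows essentially the same route as the paper: the union-bound/Mecke reduction to $n\int_0^{R_n}\P\big(|\mC_{m,n}(u)-\nu_{m,n}(u)|>\vep b_{m,n}\big)\radpdf(u)\dif u\to0$, then Chebyshev with Proposition \ref{p:var.clique} and Lemma \ref{l:Lemma1.OY}. The only difference is that your truncation at $cR_n$ is superfluous: since the variance bound holds for all $u\in(0,R_n)$ and, as you yourself observe, the exponent $2m-3-(2m-2)\al$ is negative under $\al>\tfrac{2m-3}{2m-2}$, the paper applies Chebyshev over the full range and concludes directly from $n^{1-1/\al}e^{c_1\omega_n}\to0$.
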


\begin{lemma}  \label{l:stable.limit.thm.C}
As $n\to\infty$, it holds that 
$$
\sup_{0\le t \le 1}\bigg|  \frac{1}{b_{m,n}} \Big( \sum_{i=1}^{[N_n t]}\mC_{m,n}(X_i) -[N_n t] \E \big[\mC_{m,n}(X_1) \big]\Big) - \frac{1}{b_{m,n}} \Big( \sum_{i=1}^{[N_n t]}\nu_{m,n}(U_i) -[N_nt]  \E \big[  \nu_{m,n}(U_1) \big]\Big) \bigg| \stackrel{\P}{\to} 0. 
$$
\end{lemma}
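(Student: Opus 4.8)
The plan is to follow, essentially verbatim, the template of the proof of Lemma \ref{l:stable.limit.thm.D}$(i)$, with the substitutions $\mD_{k,n}\rightsquigarrow\mC_{m,n}$, $\mu_{k,n}\rightsquigarrow\nu_{m,n}$, $a_{k,n}\rightsquigarrow b_{m,n}$, and with Propositions \ref{p:exp.clique} and \ref{p:var.clique} playing the roles of Propositions \ref{p:exp.mu.n} and \ref{p:var.mu.n}. Fix a constant $c_1$ with
\[
\frac{2\al-1}{\al(2m\al-2m+3)} < c_1 < \frac{1}{2\al},
\]
which is possible precisely because $\al > \frac{2m-3}{2m-2} > \frac{2m-5}{2m-4}$ (the denominator $2m\al-2m+3$ is positive since $\al>\frac{2m-3}{2m}$, and $2(2\al-1)<2m\al-2m+3$ reduces to $\al>\frac{2m-5}{2m-4}$). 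Splitting the vertices into a \emph{peripheral} part $\{U_i > c_1 R_n\}$ and a \emph{central} part $\{U_i \le c_1 R_n\}$, the claim reduces, exactly as in Section \ref{sec:main.result.star.count}, to four statements analogous to \eqref{e:main(i).cond1}--\eqref{e:main(i).cond4}: (a) the peripheral fluctuation of $\mC_{m,n}-\nu_{m,n}$ is $o_{\P}(b_{m,n})$ uniformly in $t$; (b) the peripheral expectation discrepancy $[N_nt]\bigl|\E[\mC_{m,n}(X_1)\one\{U_1>c_1R_n\}]-\E[\nu_{m,n}(U_1)\one\{U_1>c_1R_n\}]\bigr|$ is $o_{\P}(b_{m,n})$ uniformly in $t$; (c) the centered central sum of $\nu_{m,n}(U_i)$ is $o_{\P}(b_{m,n})$ uniformly in $t$; (d) the centered central sum of $\mC_{m,n}(X_i)$ is $o_{\P}(b_{m,n})$ uniformly in $t$.

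For (a), Markov's and Chebyshev's inequalities reduce matters to showing $\tfrac{n}{b_{m,n}}\int_{c_1R_n}^{R_n}\sqrt{\text{Var}(\mC_{m,n}(u))}\,\radpdf(u)\,\dif u\to0$, exactly as in \eqref{e:diff.big.u}. By Proposition \ref{p:var.clique} the integrand is at most $e^{c\omega_n}e^{\frac14(d-1)(2m-3-(2m-4)\al)u}e^{-\al(d-1)u}$, whose exponent equals $\frac14(d-1)(2m-3-2m\al)<0$ since $\al>\frac{2m-3}{2m-2}>\frac{2m-3}{2m}$; hence the integral is of order $n^{\frac12(2m-3-2m\al)c_1}$ and the whole expression is of order $n^{1-\frac1{2\al}+\frac12(2m-3-2m\al)c_1}e^{c\omega_n}$, which tends to $0$ by the lower bound on $c_1$ (the $e^{c\omega_n}=(\log\log n)^{O(1)}=n^{o(1)}$ factors are immaterial everywhere below). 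For (b), introduce an independent copy $X_1'=(U_1',\Theta_1')$ of $X_1$, independent of $\Pn$, and split as in \eqref{e:main(i).cond2-1}--\eqref{e:main(i).cond2-2}: the first piece is again the integral from (a), while the second is $\tfrac{n}{b_{m,n}}\int_{c_1R_n}^{R_n}\E\bigl[\mC_{m,n}(u,\Pn)-\mC_{m,n}(u,\Pn\setminus\{X_1\})\bigr]\radpdf(u)\,\dif u$, and since removing $X_1$ affects only $m$-cliques through $X_1$, using $X_1\perp\Pn\setminus\{X_1\}$ and Proposition \ref{p:exp.clique}$(iii)$ one bounds the integrand expectation by $C\,\P(d_H(X_1,p)\le R_n)\,\E[\#(m-1)\text{-cliques at }p]\le Cn^{-1}e^{c\omega_n}e^{(d-1)u}$; the resulting integral $\tfrac{n}{b_{m,n}}\int_{c_1R_n}^{R_n}n^{-1}e^{c\omega_n}e^{(1-\al)(d-1)u}\dif u$ is of order $n^{2(1-\al)-\frac1{2\al}}e^{c\omega_n}\to0$, the exponent being negative because $(2\al-1)^2>0$. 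For (c), Kolmogorov's maximal inequality exactly as in \eqref{e:chebyshev} bounds it by $\tfrac{n}{\eta^2 b_{m,n}^2}\int_0^{c_1R_n}\nu_{m,n}(u)^2\radpdf(u)\,\dif u$; since $c_1R_n<\gaR$, Proposition \ref{p:exp.clique}$(i)$ gives $\nu_{m,n}(u)\le C e^{\frac12(d-1)u}$, so this is at most $\tfrac{Cn}{b_{m,n}^2}\int_0^{c_1R_n}e^{(1-\al)(d-1)u}\dif u\le Cn^{1-\frac1\al+2(1-\al)c_1}\to0$ by the upper bound $c_1<\frac1{2\al}$.

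The main obstacle is (d): the summands $\mC_{m,n}(X_i)\one\{U_i\le c_1R_n\}$ are \emph{not} independent, since each depends on the whole Poisson configuration $\Pn$, so Kolmogorov's inequality does not apply directly. As in the star-shape case, I would control the uniform fluctuation of this partial-sum process through a maximal inequality of Pruss type, combined with second-moment (and covariance) estimates for sums of the truncated clique counts; this is the technically heavy part, to be isolated in an appendix lemma analogous to Lemma \ref{l:Pruss.maximal.inequ}. The difficulty specific to cliques is that their connectivity couples the vertices more strongly than that of star shapes — a single far-out vertex may participate in the clique counts centered at several near-center vertices — so the required covariance bounds are more delicate, and it is exactly here (together with step (a)) that the hypothesis $\al>\frac{2m-3}{2m-2}$ is needed to keep the relevant exponents below threshold, in line with Remark \ref{rem:open.q.clique}. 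Once (d) is established, combining (a)--(d) yields the lemma verbatim as in Section \ref{sec:main.result.star.count}.
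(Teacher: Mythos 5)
Your overall strategy is exactly the paper's: the same choice of $c_1$ subject to \eqref{e:constraint.c1.clique}, the same split into peripheral $\{U_i>c_1R_n\}$ and central $\{U_i\le c_1R_n\}$ parts, and the same four reductions, which are precisely \eqref{e:main(i).cond1.clique}--\eqref{e:main(i).cond4.clique}. Your steps (a), (b) and (c) are correct and coincide with the paper's estimates, including the exponent bookkeeping: in (a) the exponent $\tfrac{1}{4}(d-1)(2m-3-2m\al)$ and the use of the lower bound on $c_1$ match the paper's bound $Cn^{1-\frac{1}{2\al}+c_1(m-\frac32-m\al)}e^{\half(d-1)(m-\frac32)\omega_n}$; in (b) your bound $Cn^{-1}e^{c\omega_n}e^{(d-1)u}$ on $\E[\mC_{m,n}(u,\Pn)-\mC_{m,n}(u,\Pn\setminus\{X_1\})]$ and the resulting rate $n^{2(1-\al)-\frac{1}{2\al}}=n^{-\frac{2}{\al}(\al-\frac12)^2}$ are exactly \eqref{e:exp.diff.C} and the computation following it; and (c) is the routine Kolmogorov bound that the paper omits by analogy with \eqref{e:main(i).cond3}.

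The genuine gap is your step (d), i.e.\ \eqref{e:main(i).cond4.clique}. You correctly name the tool (Pruss's maximal inequality for exchangeable summands, as in Lemma \ref{l:Pruss.maximal.inequ}), but you do not carry out the estimate that makes it work, and that estimate is the bulk of the proof. After Pruss's inequality the problem becomes a variance bound for $\sum_{i}\mC_{m,n}(X_i)\one\{U_i\le c_1R_n\}$, and the Mecke formula splits it into a diagonal term, controlled by Propositions \ref{p:exp.clique} and \ref{p:var.clique}, and a genuinely new covariance term over pairs of $m$-cliques centered at two distinct points $p,p'$ and sharing $q\in\{1,\dots,m-1\}$ vertices (the quantities $K_{n,q}(p,p')$ in \eqref{e:cov.K.nq}). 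The paper bounds $K_{n,q}$ by passing to the spanning path configurations \eqref{e:path.no.removal.cov}--\eqref{e:path.removal.cov} through the shared vertex closest to the boundary, factorizing the resulting edge probabilities via Lemma \ref{l:Lemma4.OY} and the trick \eqref{e:ignore.restriction.omega}, and then performing a multi-fold integral whose convergence requires both $c_1<\frac{1}{2\al}$ and a case distinction at $\al=\frac{2m-2}{2m-1}$, with the lower constraint in \eqref{e:constraint.c1.clique} invoked again when $\al>\frac{2m-2}{2m-1}$ (Lemma \ref{l:Pruss.maximal.inequ.clique}). Your proposal merely asserts that "the required covariance bounds are more delicate" and that $\al>\frac{2m-3}{2m-2}$ keeps the exponents below threshold; without the configuration analysis and the exponent computation there is no verification that this is actually the case, so the hardest quarter of the lemma remains unproved.
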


\begin{proof}[Proof of Lemma \ref{l:marginal.max.clique}]
As indicated by \eqref{e:max.difference}, it remains to verify that for any $\vep>0$,
$$
n\int_0^{R_n}\P\Big(\big| \mC_{m,n}(u) - \nu_{m,n}(u) \big| >\vep b_{m,n}  \Big)\radpdf (u) \dif u \to 0, \ \ \ n\to\infty. 
$$
By Chebyshev's inequality, Proposition \ref{p:var.clique}, and Lemma \ref{l:Lemma1.OY}, the expression above  is bounded by 
\begin{align*}
\frac{n}{\vep^2 b_{m,n}^2}\, \int_0^{R_n} \text{Var} \big(  \mC_{m,n}(u)\big)\radpdf (u)\dif u &\le Cn^{1-1/\al} e^{\half (d-1)(2m-3)\omega_n} \int_0^{R_n} e^{\half (d-1)(2m-3-(2m-2)\al)u}\dif u \\
&\le Cn^{1-1/\al}  e^{\half (d-1)(2m-3)\omega_n} \to 0, \ \ \text{as } n\to\infty. 
\end{align*}
\end{proof}

\begin{proof}[Proof of Lemma \ref{l:stable.limit.thm.C}]
First, choose a constant $c_1$ so that 
\begin{equation}  \label{e:constraint.c1.clique}
0 < \frac{2\al-1}{\al(2m\al-2m+3)} <c_1< \frac{1}{2\al}. 
\end{equation}
Using this constant, it suffices to prove a series of conditions analogous to \eqref{e:main(i).cond1}--\eqref{e:main(i).cond4}. Specifically, for every $\eta>0$, we claim that as $n\to\infty$, 
\begin{equation}   \label{e:main(i).cond1.clique}
\P \bigg( \sup_{0\le t \le 1} \bigg| \sum_{i=1}^{[N_nt]} \big(\mC_{m,n}(X_i) -\nu_{m,n}(U_i) \big)\one \{ U_i >c_1 R_n \} \bigg|>\eta b_{m,n} \bigg) \to 0, 
\end{equation}
\begin{equation}  \label{e:main(i).cond2.clique}
\P \bigg( \sup_{0\le t \le 1} [N_nt] \Big| \, \E \big[ \mC_{m,n}(X_1)\one\{ U_1>c_1 R_n \} \big] - \E \big[ \nu_{m,n}(U_1)\one\{ U_1>c_1 R_n \} \big] \, \Big| >\eta b_{m,n} \bigg) \to 0, 
\end{equation}
\begin{equation}  \label{e:main(i).cond3.clique}
\P \bigg( \sup_{0\le t \le 1} \bigg| \sum_{i=1}^{[N_nt]} \Big(\nu_{m,n}(U_i)\one \{ U_i \le c_1 R_n \} -\E \big[ \nu_{m,n}(U_i)\one \{ U_i \le c_1 R_n \} \big] \Big) \bigg|>\eta b_{m,n} \bigg) \to 0, 
\end{equation}
and
\begin{equation}  \label{e:main(i).cond4.clique}
\P \bigg( \sup_{0\le t \le 1} \bigg| \sum_{i=1}^{[N_nt]} \Big(\mC_{m,n}(X_i)\one \{ U_i \le c_1 R_n \} -\E \big[ \mC_{m,n}(X_i)\one \{ U_i \le c_1 R_n \} \big] \Big) \bigg|>\eta b_{m,n} \bigg) \to 0. 
\end{equation}
First, by proceeding as in \eqref{e:diff.big.u}, the probability in \eqref{e:main(i).cond1.clique} can be bounded above  by 
$$
Cn^{1-\frac{1}{2\al}} \int_{c_1R_n}^{R_n} \sqrt{\text{Var}\big( \mC_{m,n}(u) \big)} e^{-\al(d-1)u} \dif u. 
$$
By Proposition \ref{p:var.clique}, this term is further bounded by 
\begin{align*}
&Cn^{1-\frac{1}{2\al}} e^{\half (d-1)(m-\frac{3}{2})\omega_n} \int_{c_1R_n}^{R_n} e^{\half (d-1)(m-\frac{3}{2}-m\al)u} \dif u \le Cn^{1-\frac{1}{2\al}+c_1 (m-\frac{3}{2}-m\al)} e^{\half (d-1)(m-\frac{3}{2})\omega_n} \to 0,
\end{align*}
where the last convergence follows from \eqref{e:constraint.c1.clique}. 

Next, the proof of  \eqref{e:main(i).cond3.clique} is straightforward and  proceeds in essentially the same manner    as the proof of    \eqref{e:main(i).cond3}, so we omit  the details. 

The argument for \eqref{e:main(i).cond2.clique} is  likewise analogous to that for \eqref{e:main(i).cond2}. Accordingly, we present only the proof of
$$
\frac{n}{b_{m,n}} \int_{c_1R_n}^{R_n} \E \big[ \mC_{m,n}(u,\Pn) -  \mC_{m,n} (u, \Pn \setminus \{X_1\}) \big]\radpdf (u)\dif u \to 0, \ \ \ n\to\infty, 
$$
which corresponds to \eqref{e:diff.Poissonization} but requires some  modification.
In particular, the expression \eqref{e:difference.D.X1} must be replaced by the bound
\begin{align*}
&\mC_{m,n}(u,\Pn) - \mC_{m,n}(u, \Pn\setminus \{X_1\}) \\
&\le  \one \big\{  d_H(X_1,p)\le R_n, \, U_1 \le u \big\}\hspace{-25pt} \sum_{(Y_1,\dots, Y_{m-2})\in (\{ X_2,\dots,X_{N_n} \})_{\neq}^{m-2}} \hspace{-30pt}\one \{ (Y_1,\dots, Y_{m-2}, p) \text{ forms an }(m-1)\text{-clique}, \\
&\qquad \qquad \qquad \qquad \qquad \qquad \qquad \qquad \qquad \qquad \qquad V_1 \le \cdots \le V_{m-2}\le u\}, 
\end{align*}
where $V_i = R_n-d_H(o,Y_i)$ for $i=1,\dots,m-2$. Taking expectations on both sides, 
\begin{align}
&\E \big[ \mC_{m,n}(u,\Pn) - \mC_{m,n}(u, \Pn\setminus \{X_1\}) \big] \label{e:exp.diff.C}\\
&\le \P\big(  d_H(X_1,p)\le R_n, \, U_1 \le u \big) \E\bigg[ \sum_{(Y_1,\dots, Y_{m-2})\in (\Pn)_{\neq}^{m-2}} \hspace{-15pt}\one \{  (Y_1,\dots, Y_{m-2}, p) \text{ forms an }(m-1)\text{-clique}, \notag \\
&\qquad \qquad \qquad \qquad \qquad \qquad \qquad \qquad \qquad \qquad\qquad \qquad V_1 \le \cdots \le V_{m-2}\le u\} \bigg]. \notag 
\end{align}
By \eqref{e:ignore.restriction.omega} and Lemma \ref{l:Lemma1.OY}, 
\begin{align*}
\P\big(  d_H(X_1,p)\le R_n, \, U_1 \le u \big) &\le C\int_0^u e^{\half (d-1)\omega_n} \cdot e^{-\half (d-1)(R_n-t-u)}\cdot e^{-\al(d-1)t}\dif t\\
&\le Cn^{-1} e^{\half (d-1)\omega_n}\cdot \mainu. 
\end{align*}
On the other hand,  the expectation term on the right-hand side of \eqref{e:exp.diff.C} represents  the expected number of $(m-1)$-cliques in which $p$ is a fixed  vertex closest to the  origin of $\B_d$. Hence, by Proposition \ref{p:exp.clique} $(iii)$,  this can be bounded by $Ce^{c_0\omega_n}\cdot \mainu$ for some constants $c_0, C \in (0,\infty)$. Combining these bounds, 
\begin{align*}
&\frac{n}{b_{m,n}} \int_{c_1R_n}^{R_n} \E \big[ \mC_{m,n}(u,\Pn) -  \mC_{m,n} (u, \Pn \setminus \{X_1\}) \big]\radpdf (u)\dif u\\
&\le Cn^{1-\frac{1}{2\al}} \int_{c_1R_n}^{R_n} n^{-1} e^{\half (d-1)\omega_n}\cdot \mainu \cdot e^{c_0\omega_n}\cdot \mainu \cdot e^{-\al(d-1)u}\dif u \\
&=C n^{-\frac{1}{2\al}} e^{((d-1)/2+ c_0)\omega_n} \int_{c_1R_n}^{R_n} e^{(d-1)(1-\al)u}\dif u\\
&\le C n^{-\frac{2}{\al}(\al-\half)^2} e^{((d-1)/2+ c_0)\omega_n} \to 0, \ \ \text{as } n\to\infty. 
\end{align*}

Finally, the proof of \eqref{e:main(i).cond4.clique} requires a separate argument and is  moved to  Lemma \ref{l:Pruss.maximal.inequ.clique}.
\end{proof}
\medskip

\section{Limit theorem for the global clustering coefficient}   \label{sec:clustering.coeff}

In this short section we discuss the limit theorem for the \emph{global clustering coefficient} defined by 
$$
\ms{CC}_n :=  \frac{3\sum_{i=1}^{N_n} \mC_{3,n}(X_i)}{\sum_{i=1}^{N_n} \mD_{3,n}(X_i)}, 
$$
where $\mD_{3,n}(X_i)$  was already defined in Section \ref{sec:star.shape} as the number of pairs $(Y,Z)\in (\Pn)_{\neq}^2$ satisfying either $Y\to X_i$, $Z\to X_i$ or $Z\to X_i \to Y$ or $X_i \to Y$, $X_i \to Z$, without any condition on whether  $Y$ and $Z$ are connected. 
Moreover, $\sum_{i=1}^{N_n} \mC_{3,n}(X_i)$ is the number of $3$-cliques, that is, the triangle count, so that $\mC_{3,n}(X_i)$ is the number of pairs $(Y,Z)\in (\Pn)_{\neq}^2$  such that $Y\to X_i$, $Z\to X_i$, and  $Y\to Z$. 

\begin{theorem}  \label{t:clustering.coeff}
Let $\frac{3}{4}<\al<1$. As $n\to\infty$, 
$$
B_{3,\al} C_{3,\al}^{-1}n^{\frac{1}{2\al}} \bigg( \ms{CC}_n - \frac{3N_n \E[\mC_{3,n}(X_1)]}{\sum_{i=1}^{N_n}\mD_{3,n}(X_i)} \bigg) \Rightarrow \frac{3S_{2\al}(1)}{\widetilde S_\al (1)},
$$
where $B_{3,\al}$ and $C_{3,\al}$ are constants defined before Theorems \ref{t:joint.sum.max.star} and \ref{t:joint.sum.max.clique}, and the process $(\widetilde S_\al(\cdot), S_{2\al}(\cdot))$ is a {bivariate L\'evy process with perfectly dependent stable jumps}, defined by 
\begin{align}
\E \big[ e^{i(v_1\widetilde S_\al(t) + v_2 S_{2\al}(t))} \big] &=  \exp\Big\{  t\int_{(0,\infty)^2} \big( e^{i (v_1 x_1 + v_2 x_2)} -1- iv_2 x_2 \big) \kappa (\dif x_1, \dif x_2) \Big\}  \label{e:ch.f.bivariate.perfect.jumps}\\
&= \exp\Big\{  t\int_0^\infty \big( e^{i (v_1 y + v_2 y^{1/2})} -1- iv_2 y^{1/2} \big) \ms{m}_\al (\dif y) \Big\},  \ \ \ t \ge 0, \ v_i \in \R, \, i=1,2. \notag  
\end{align}
Here,  $\kappa$ is a L\'evy measure concentrated on the curve $\{(y,y^{1/2}):y>0\}$, and it satisfies   $\kappa\big( (y_1,\infty] \times (y_2,\infty] \big) = y_1^{-\al}\wedge y_2^{-2\al}$ for  $y_1, y_2 >0$. 
\end{theorem}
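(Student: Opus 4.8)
The plan is to reduce the theorem to the \emph{joint} functional limit theorem for the triangle and wedge counts, which then follows from the point process convergence of Section \ref{sec:star.shape} together with the first-moment asymptotics of Sections \ref{sec:moments.star} and \ref{sec:moments.clique}. First I would record the algebraic identity
\[
B_{3,\al}C_{3,\al}^{-1}n^{1/(2\al)}\Bigl(\ms{CC}_n-\tfrac{3N_n\E[\mC_{3,n}(X_1)]}{\sum_{i=1}^{N_n}\mD_{3,n}(X_i)}\Bigr)
=\frac{a_{3,n}}{b_{3,n}}\cdot\frac{3\bigl(\sum_{i=1}^{N_n}\mC_{3,n}(X_i)-N_n\E[\mC_{3,n}(X_1)]\bigr)}{\sum_{i=1}^{N_n}\mD_{3,n}(X_i)}
=3\cdot\frac{b_{3,n}^{-1}\bigl(\sum_{i=1}^{N_n}\mC_{3,n}(X_i)-N_n\E[\mC_{3,n}(X_1)]\bigr)}{a_{3,n}^{-1}\sum_{i=1}^{N_n}\mD_{3,n}(X_i)},
\]
which holds because $a_{3,n}=B_{3,\al}n^{1/\al}$ and $b_{3,n}=C_{3,\al}n^{1/(2\al)}$. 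The map $(w,x)\mapsto 3w/x$ is continuous at every $(w,x)$ with $x\neq0$, and the limiting $\widetilde S_\al(1)$ is a.s.\ strictly positive (a nondegenerate $\al$-stable subordinator with $\al\in(0,1)$), so by the continuous mapping theorem it suffices to prove
\[
\Bigl(b_{3,n}^{-1}\bigl(\textstyle\sum_{i=1}^{N_n}\mC_{3,n}(X_i)-N_n\E[\mC_{3,n}(X_1)]\bigr),\ a_{3,n}^{-1}\textstyle\sum_{i=1}^{N_n}\mD_{3,n}(X_i)\Bigr)\ \Rightarrow\ \bigl(S_{2\al}(1),\,\widetilde S_\al(1)\bigr),
\]
with the joint law of $(\widetilde S_\al,S_{2\al})$ as in \eqref{e:ch.f.bivariate.perfect.jumps}.

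By Lemma \ref{l:stable.limit.thm.D}$(ii)$ (the case $k=3$, valid since $\tfrac34<\al<1=\tfrac12(k-1)$) and Lemma \ref{l:stable.limit.thm.C}, replacing $\mC_{3,n}(X_i),\mD_{3,n}(X_i)$ by $\nu_{3,n}(U_i),\mu_{3,n}(U_i)$ in the two coordinates (with the matching centerings) changes each coordinate by an amount that tends to $0$ in probability uniformly; so it is enough to prove the same joint limit for $\bigl(b_{3,n}^{-1}(\sum\nu_{3,n}(U_i)-N_n\E[\nu_{3,n}(U_1)]),\,a_{3,n}^{-1}\sum\mu_{3,n}(U_i)\bigr)$. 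The crucial structural input is that the two deterministic profiles are \emph{perfectly aligned}: combining Propositions \ref{p:exp.mu.n}$(i)$--$(ii)$ and \ref{p:exp.clique}$(i)$--$(ii)$ (for $k=m=3$) gives $\mu_{3,n}(u)=(1+o_n(1))B_{3,\al}e^{(d-1)u}$ and $\nu_{3,n}(u)=(1+o_n(1))C_{3,\al}e^{\frac12(d-1)u}$ uniformly over $u\in(\omega_n,\gaR)$, hence
\[
b_{3,n}^{-1}\nu_{3,n}(u)=(1+o_n(1))\bigl(a_{3,n}^{-1}\mu_{3,n}(u)\bigr)^{1/2}\qquad\text{uniformly over }u\in(\omega_n,\gaR).
\]
Consequently the joint behaviour is governed by the \emph{single} point process $\sum_{i=1}^{N_n}\delta_{(i/N_n,\,a_{3,n}^{-1}\mu_{3,n}(U_i))}\Rightarrow\ms{PPP}(\ms{Leb}\otimes\ms{m}_\al)$ from \eqref{e:PPmu_n} with $k=3$: a point of value $y$ carries wedge-contribution $y$ and clique-contribution $y^{1/2}$.

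Next I would run the standard summation functional. For $\vep>0$ let $\Psi_\vep:M_p([0,1]\times(0,\infty])\to D([0,1],\R\times[0,\infty))$ be $\Psi_\vep(\sum_\ell\delta_{(s_\ell,y_\ell)})=\bigl(\sum_{s_\ell\le\cdot}y_\ell^{1/2}\one\{y_\ell>\vep\},\,\sum_{s_\ell\le\cdot}y_\ell\one\{y_\ell>\vep\}\bigr)$, which is a.s.\ continuous with respect to $\ms{PPP}(\ms{Leb}\otimes\ms{m}_\al)$ by the same reasoning used for the map $\hat T_\vep$ in the proof of Proposition \ref{p:main.mu.version.star} (see Section 7.2.3 of \cite{resnick:2007} and Appendix 3.5 of \cite{resnick:1986}). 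Applying the continuous mapping theorem to \eqref{e:PPmu_n}, then replacing $(a_{3,n}^{-1}\mu_{3,n}(U_i))^{1/2}$ by $b_{3,n}^{-1}\nu_{3,n}(U_i)$ on the retained (large-$U_i$) indices via the alignment relation — a negligible perturbation, since only finitely many indices (in probability) are retained and each carries a contribution bounded in probability — and subtracting the centering $[N_n\cdot]\,b_{3,n}^{-1}\E[\nu_{3,n}(U_1)\one\{\nu_{3,n}(U_1)>\vep' b_{3,n}\}]\Rightarrow(\cdot)\int_\vep^\infty y^{1/2}\ms{m}_\al(\dif y)$ in the first coordinate (here $\vep'$ is the matching truncation level, differing from $\vep^{1/2}$ by a negligible amount), I obtain, for each $\vep>0$, weak convergence of the $\vep$-truncated centered pair to $\bigl(\sum_{t_\ell\le\cdot}j_\ell^{1/2}\one\{j_\ell>\vep\}-(\cdot)\int_\vep^\infty y^{1/2}\ms{m}_\al(\dif y),\,\sum_{t_\ell\le\cdot}j_\ell\one\{j_\ell>\vep\}\bigr)$, where $\sum_\ell\delta_{(t_\ell,j_\ell)}$ is $\ms{PPP}(\ms{Leb}\otimes\ms{m}_\al)$. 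As $\vep\downarrow0$ the first coordinate converges to the zero-mean $2\al$-stable L\'evy process $S_{2\al}$ (compensation is needed since $2\al>1$) and the second to the $\al$-stable subordinator $\widetilde S_\al=\sum_{t_\ell\le\cdot}j_\ell$ (no compensation since $\al<1$); since both are built from the \emph{same} $\sum_\ell\delta_{(t_\ell,j_\ell)}$, their jumps occur at the common times $t_\ell$ with sizes $(j_\ell,j_\ell^{1/2})$, and the characteristic function \eqref{e:ch.f.bivariate.perfect.jumps} follows from \cite[Proposition 2.3]{resnick:1986} and the PPP identity $\E\exp\{i\sum_\ell f(t_\ell,j_\ell)\}=\exp\{\int(e^{if}-1)\,d(\ms{Leb}\otimes\ms{m}_\al)\}$ applied to $f(s,y)=(v_1 y+v_2 y^{1/2})\one\{s\le t\}$ with the $v_2y^{1/2}$-term compensated; the change of variables $y\mapsto(y,y^{1/2})$ identifies the L\'evy measure with $\kappa$ and yields $\kappa((y_1,\infty]\times(y_2,\infty])=\ms{m}_\al((y_1\vee y_2^2,\infty])=y_1^{-\al}\wedge y_2^{-2\al}$. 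The interchange of the limits $\vep\downarrow0$ and $n\to\infty$ is justified by the uniform negligibility of the small-jump remainders: for the clique coordinate this is exactly \eqref{e:prop3.5(1).clique}, and for the wedge coordinate it is \eqref{e:2nd.conv.subordinator} with $k=3$; because the two truncations are coupled through the alignment relation, these combine into the required joint statement, via the usual three-term $\eta/\vep$ estimate as in the proofs of Proposition \ref{p:main.mu.version.star} and \eqref{e:join.weak.nu.version}. Evaluating at $t=1$ and applying the continuous mapping theorem for $(w,x)\mapsto 3w/x$ then gives the assertion.

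The main obstacle I anticipate is organizational rather than conceptual: making the ``perfect alignment'' $b_{3,n}^{-1}\nu_{3,n}(u)=(1+o_n(1))(a_{3,n}^{-1}\mu_{3,n}(u))^{1/2}$ precise \emph{uniformly} on the relevant window of $u$ — which requires a careful two-sided reconciliation of the bounds in Propositions \ref{p:exp.mu.n} and \ref{p:exp.clique}, including the reduction $\mu_{3,n}(u)=(1+o_n(1))B_{3,\al}e^{(d-1)u}$ for $u>\omega_n$ — and then managing the two coupled truncation levels $\vep$ versus $\vep'$ so that the two previously established small-jump negligibility statements genuinely merge into a single joint one. Everything substantive (the moment asymptotics, the point process convergence, and the stable--Fr\'echet/L\'evy functional CMT machinery) is already available from the earlier sections.
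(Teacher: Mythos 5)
Your proposal is correct, and its overall skeleton coincides with the paper's: the same algebraic reduction of the centered clustering coefficient to the ratio $3\bigl(\sum_i\mC_{3,n}(X_i)-N_n\E[\mC_{3,n}(X_1)]\bigr)/\sum_i\mD_{3,n}(X_i)$ with $a_{3,n}/b_{3,n}=B_{3,\al}C_{3,\al}^{-1}n^{1/(2\al)}$, the same replacement of $\mD_{3,n}(X_i),\mC_{3,n}(X_i)$ by $\mu_{3,n}(U_i),\nu_{3,n}(U_i)$ via Lemma \ref{l:stable.limit.thm.D}$(ii)$ and Lemma \ref{l:stable.limit.thm.C}, the same truncation/small-jump scheme using \eqref{e:2nd.conv.subordinator} and \eqref{e:prop3.5(1).clique}, and the same final continuous-mapping step at $t=1$. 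The one genuine difference is how you capture the joint dependence: the paper proves a bivariate point process convergence, $\sum_i\delta_{(i/N_n,\,a_{3,n}^{-1}\mu_{3,n}(U_i),\,b_{3,n}^{-1}\nu_{3,n}(U_i))}\Rightarrow\ms{PPP}(\ms{Leb}\otimes\kappa)$ (Proposition \ref{p:pp.conv.clustering.coeff}), by checking the joint tail $n\P(\mu_{3,n}(U_1)\ge y_1a_{3,n},\,\nu_{3,n}(U_1)\ge y_2b_{3,n},\,U_1\le\gaR)\to y_1^{-\al}\wedge y_2^{-2\al}$, and then applies a two-coordinate summation functional; you instead keep only the univariate convergence \eqref{e:PPmu_n} (with $k=3$) and transfer the clique coordinate through the deterministic uniform alignment $b_{3,n}^{-1}\nu_{3,n}(u)=(1+o_n(1))\bigl(a_{3,n}^{-1}\mu_{3,n}(u)\bigr)^{1/2}$ on $u\in(\omega_n,\gaR)$, which indeed follows from Propositions \ref{p:exp.mu.n}$(i)$--$(ii)$ and \ref{p:exp.clique}$(i)$--$(ii)$ since $\ell(u)\to\ell(\infty)$ and $\ell'(u)\to0$. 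Both routes rest on exactly the same moment inputs; your version saves restating a joint tail estimate but requires the extra bookkeeping you already flag (checking that all retained points satisfy $U_i>\omega_n$, which follows from Proposition \ref{p:exp.mu.n}$(iii)$ and monotonicity, and reconciling the $\mu$-threshold $\vep$ with the $\nu$-threshold $\vep'$, e.g.\ by a small annulus argument exploiting that $\ms{m}_\al$ puts no mass on the truncation boundary), whereas the paper's joint PPP statement is slightly more robust and of independent interest. No gap; the remaining work is the technical reconciliation you identified.
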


\begin{remark}
From \eqref{e:ch.f.bivariate.perfect.jumps}, it is immediate that $\widetilde S_\al(\cdot)$ is an $\al$-stable subordinator and $S_{2\al}(\cdot)$ is a zero-mean $2\al$-stable L\'evy process. In particular, the second line in \eqref{e:ch.f.bivariate.perfect.jumps} follows from the relation $\kappa= \ms{m}_\al \circ T^{-1}$, where $T(y)=(y,y^{1/2})$ for $y>0$. 
\end{remark}

\begin{remark}
The centering in Theorem \ref{t:clustering.coeff} is random because the denominator $\sum_{i=1}^{N_n} \mD_{3,n}(X_i)$ lies in the domain of attraction of an $\al$-stable subordinator and therefore does not admit a deterministic centering compatible with the scaling $a_{3,n}$. The random centering isolates the leading self-normalized term of $\ms{CC}_n$ and yields a non-degenerate stable limit. From a statistical perspective, this does not necessarily pose a difficulty, especially when the centering term can be estimated from the data. 
\end{remark}

The weak convergence in Theorem \ref{t:clustering.coeff} follows from the propositions stated below.

\begin{proposition}  \label{p:pp.conv.clustering.coeff}
As $n\to\infty$, 
$$
\sum_{i=1}^{N_n} \delta_{(i/N_n,\, a_{3,n}^{-1}\mu_{3,n}(U_i), \, b_{3,n}^{-1}\nu_{3,n}(U_i))} \Rightarrow \ms{PPP} (\ms{Leb}\otimes \kappa), \ \ \text{in } M_p\big( [0,1]\times \big([0,\infty]^2 \setminus \{ {\bf 0}\}\big) \big). 
$$
\end{proposition}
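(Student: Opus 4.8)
## Proof plan for Proposition \ref{p:pp.conv.clustering.coeff}

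\textbf{Overall strategy.} The plan is to reduce the bivariate point process convergence to a single-variate regular-variation statement for the pair $\bigl(\mu_{3,n}(U_1), \nu_{3,n}(U_1)\bigr)$, using the fact that both coordinates are deterministic increasing functions of the single radial variable $U_1$. Concretely, I would first establish a truncated version of the claim: arguing exactly as in \eqref{e:vagueP3}--\eqref{e:direct.conv.mu_n}, show that the contribution of the indices with $U_i > \ga R_n$ is vaguely negligible (the expected count is $n\int_{\ga R_n}^\infty \radpdf(u)\,\dif u \le Cn^{1-2\al\ga}\to 0$ for $\ga\in\bigl(\tfrac{1}{2\al},1\bigr)$), and de-Poissonize as in \eqref{e:de-Poissonization.PP} so that it suffices to treat the i.i.d.\ sequence $\sum_{i=1}^n \one\{U_i\le \ga R_n\}\,\delta_{(i/n,\, a_{3,n}^{-1}\mu_{3,n}(U_i),\, b_{3,n}^{-1}\nu_{3,n}(U_i))}$. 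By Corollary 6.1 and Lemma 6.1 of \cite{resnick:2007} (applied in $[0,1]\times([0,\infty]^2\setminus\{\mathbf 0\})$), convergence to $\ms{PPP}(\ms{Leb}\otimes\kappa)$ reduces to checking that for every ``upper-right'' rectangle $(y_1,\infty]\times(y_2,\infty]$ with $y_1,y_2>0$,
\begin{equation}  \label{e:clustering.key}
n\,\P\bigl(\mu_{3,n}(U_1)\ge y_1 a_{3,n},\ \nu_{3,n}(U_1)\ge y_2 b_{3,n},\ U_1\le \ga R_n\bigr) \to y_1^{-\al}\wedge y_2^{-2\al},
\end{equation}
together with the fact that the limit measure is concentrated on the curve $\{(y,y^{1/2}):y>0\}$, i.e.\ it charges no set bounded away from that curve; the latter I would also read off from \eqref{e:clustering.key} applied to thin rectangles, since $\kappa$ is supported where $y_2=y_1^{1/2}$ precisely because $b_{3,n}=C_{3,\al}n^{1/(2\al)}$ and $a_{3,n}=B_{3,\al}n^{1/\al}$ (recall $a_{k,n}$ has exponent $(k-1)/(2\al)$, so $a_{3,n}$ has exponent $1/\al$), hence $a_{3,n}\asymp b_{3,n}^2$.

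\textbf{Verifying \eqref{e:clustering.key}.} Here I use the moment asymptotics already proved: Proposition \ref{p:exp.mu.n}(i)--(ii) with $k=3$ gives $\mu_{3,n}(u)=(1+o_n(1))B_{3,\al}e^{(d-1)u}$ uniformly on $(0,\ga R_n)$ away from a bounded initial segment (after absorbing the $s_n$ and $(1-e^{\half(d-1)(1-2\al)u})^2$ corrections as in \eqref{e:mu_n.upper.lower.bdd}), and Proposition \ref{p:exp.clique}(i)--(ii) with $m=3$ gives $\nu_{3,n}(u)=(1+o_n(1))C_{3,\al}e^{\half(d-1)u}$ uniformly on $(0,\ga R_n)$, again away from a bounded initial segment (the $\ell(u),\ell'(u)$ corrections vanish as $u\to\infty$ exactly as exploited in \eqref{e:lower.nu.n.exact.asym}). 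Since both functions are strictly increasing in $u$, the event $\{\mu_{3,n}(U_1)\ge y_1 a_{3,n}\}$ becomes, up to $1+o_n(1)$ factors on the thresholds, $\{U_1 \ge \tfrac{2}{d-1}\log(y_1 a_{3,n}/B_{3,\al})\} = \{U_1 \ge \tfrac{2}{d-1}\cdot\tfrac{1}{\al}\log n + O(1)\}$, while $\{\nu_{3,n}(U_1)\ge y_2 b_{3,n}\}$ becomes $\{U_1\ge \tfrac{2}{d-1}\log(y_2 b_{3,n}/C_{3,\al})\} = \{U_1\ge \tfrac{2}{d-1}\cdot\tfrac{1}{2\al}\log n + O(1)\}$. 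The intersection is therefore $\{U_1\ge \ell_n(y_1,y_2)\}$ with $\ell_n(y_1,y_2)$ equal to the larger of the two thresholds, and then
\[
n\,\P\bigl(U_1\ge \ell_n(y_1,y_2)\bigr) = n\int_{\ell_n(y_1,y_2)}^{\ga R_n}\radpdf(u)\,\dif u \longrightarrow \bigl(y_1^{1/\al}\vee y_2^{1/(2\al)}\bigr)^{-\,2\al} \cdot (\text{limit constant})
\]
by Lemma \ref{l:Lemma1.OY}(ii) and the computation already carried out in \eqref{e:upper.limit.mun}; matching the constant exactly as in the $k=2$/$m=3$ cases of Propositions \ref{p:pp.conv.star} and \ref{p:pp.conv.clique} yields the right-hand side $y_1^{-\al}\wedge y_2^{-2\al}$. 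The upper bound uses Proposition \ref{p:exp.mu.n}(i) and Proposition \ref{p:exp.clique}(i) (splitting off the bounded initial segment $U_1<M$, which contributes $o(n^{-1})$ after multiplication by $n$), and the lower bound uses Proposition \ref{p:exp.mu.n}(ii) and Proposition \ref{p:exp.clique}(ii) restricted to $U_1\ge\omega_n$ where the correction factors are $\ge 1-\vep$; letting $\vep\downarrow 0$ closes the gap. Restricting the radial variable further to $\{U_1\ge\omega_n\}$ throughout is harmless since $n\,\P(U_1<\omega_n,\ U_1\ge\text{threshold})=0$ for large $n$ once the threshold exceeds $\omega_n$, which it does because it is $\Theta(\log n)$.

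\textbf{Main obstacle.} The routine parts (truncation, de-Poissonization, the Resnick reduction) are copied almost verbatim from the proofs of Propositions \ref{p:pp.conv.star} and \ref{p:pp.conv.clique}. The one genuinely new point is confirming that the limiting intensity is the \emph{degenerate} measure $\kappa$ supported on $\{y_2=y_1^{1/2}\}$ rather than a product-type measure on $[0,\infty]^2$. This is where the precise scaling constants matter: one must verify that $\mu_{3,n}$ and $\nu_{3,n}$ become \emph{asymptotically deterministic functions of each other} on the relevant range, i.e.\ that $\mu_{3,n}(u)/a_{3,n}$ and $\bigl(\nu_{3,n}(u)/b_{3,n}\bigr)^2$ agree up to $1+o_n(1)$ when both are of order $1$ — equivalently, that the leading-order relations $\mu_{3,n}(u)\sim B_{3,\al}e^{(d-1)u}$ and $\nu_{3,n}(u)\sim C_{3,\al}e^{\frac12(d-1)u}$ give $\mu_{3,n}(u)\sim B_{3,\al}C_{3,\al}^{-2}\nu_{3,n}(u)^2$, together with the numerical identity $a_{3,n}=B_{3,\al}C_{3,\al}^{-2}b_{3,n}^2$ coming from $n^{1/\al}=(n^{1/(2\al)})^2$. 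Granting these, \eqref{e:clustering.key} forces $\kappa$ to vanish on any rectangle bounded away from the curve, pinning it down uniquely as the image of $\ms{m}_\al$ under $y\mapsto(y,y^{1/2})$ with tail $\kappa((y_1,\infty]\times(y_2,\infty])=y_1^{-\al}\wedge y_2^{-2\al}$, which is exactly the statement. I expect the bookkeeping of the $o_n(1)$ corrections from Propositions \ref{p:exp.mu.n} and \ref{p:exp.clique} near $u=\omega_n$ versus $u=M$ to be the fiddliest part, but it is entirely analogous to \eqref{e:mu_n.upper.lower.bdd}--\eqref{e:liminf.lower.bdd}.
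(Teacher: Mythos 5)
Your proposal follows essentially the same route as the paper: truncate to $\{U_i\le \gaR\}$ and de-Poissonize exactly as in Propositions \ref{p:pp.conv.star} and \ref{p:pp.conv.clique}, reduce via Resnick's Corollary 6.1/Lemma 6.1 to the joint tail condition $n\P(\mu_{3,n}(U_1)\ge y_1a_{3,n},\,\nu_{3,n}(U_1)\ge y_2b_{3,n},\,U_1\le\gaR)\to y_1^{-\al}\wedge y_2^{-2\al}$, and verify it by sandwiching with Propositions \ref{p:exp.mu.n}(i)--(ii) and \ref{p:exp.clique}(i)--(ii) at $k=m=3$ (splitting off $U_1<M$ resp.\ restricting to $U_1\ge\omega_n$ and letting $\vep\downarrow0$); this is precisely the paper's argument, and your observation that the degeneracy of $\kappa$ comes from $\mu_{3,n}(u)\sim B_{3,\al}C_{3,\al}^{-2}\nu_{3,n}(u)^2$ together with $a_{3,n}=B_{3,\al}C_{3,\al}^{-2}b_{3,n}^2$ is the right explanation. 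Two algebraic slips should be fixed, though: since $\mu_{3,n}(u)\asymp B_{3,\al}e^{(d-1)u}$ for $k=3$, the $\mu$-threshold is $\tfrac{1}{d-1}\log(y_1a_{3,n}/B_{3,\al})$, not $\tfrac{2}{d-1}\log(\cdot)$ --- with your extra factor $2$ the two thresholds are no longer of the same order in $\log n$, the maximum is always the $\mu$-threshold, and the joint tail would tend to $0$ rather than to $y_1^{-\al}\wedge y_2^{-2\al}$; relatedly, your displayed limit $\big(y_1^{1/\al}\vee y_2^{1/(2\al)}\big)^{-2\al}$ equals $y_1^{-2}\wedge y_2^{-1}$, whereas the correct evaluation (as in the paper) is $n\,e^{-\al(d-1)\ell_n}\to(y_1\vee y_2^{2})^{-\al}=y_1^{-\al}\wedge y_2^{-2\al}$, with no residual ``limit constant'' because $B_{3,\al}$ and $C_{3,\al}$ are absorbed into $a_{3,n}$ and $b_{3,n}$. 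These are computational slips, not conceptual gaps, and once corrected your proof coincides with the paper's.
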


\begin{proposition}  \label{p:joint.weak.clustering.coeff}
We have, in the space $D\big( [0,1], [0,\infty) \times \R \big)$, 
\begin{align*}
&\left( \frac{1}{a_{3,n}}\sum_{i=1}^{[N_n\cdot\,]}\mD_{3,n}(X_i), \  \frac{1}{b_{3,n}} \bigg( \sum_{i=1}^{[N_n\cdot\,]}\mC_{3,n}(X_i) - [N_n\cdot\,]  \E [\mC_{3,n}(X_1)] \bigg) \right) \Rightarrow \big( \widetilde S_\al(\cdot), S_{2\al}(\cdot) \big), \ \ \ n\to\infty. 
\end{align*}
\end{proposition}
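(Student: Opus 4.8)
I would follow the two-stage template used for Theorems \ref{t:joint.sum.max.star} and \ref{t:joint.sum.max.clique}. \emph{Stage 1}: establish the statement with the conditional means $\mu_{3,n}(U_i)$ and $\nu_{3,n}(U_i)$ in place of $\mD_{3,n}(X_i)$ and $\mC_{3,n}(X_i)$. \emph{Stage 2}: remove the resulting errors via the negligibility lemmas. For Stage 1, write $\sum_\ell \delta_{(t_\ell, j^{(1)}_\ell, j^{(2)}_\ell)}$ for a realization of $\ms{PPP}(\ms{Leb}\otimes\kappa)$. Since $\kappa$ is concentrated on $\{(y,y^{1/2}):y>0\}$, every atom satisfies $j^{(1)}_\ell = (j^{(2)}_\ell)^2$, which is the continuum counterpart of the relations $a_{3,n}^{-1}\mu_{3,n}(u)\sim n^{-1/\al}e^{(d-1)u}$ and $b_{3,n}^{-1}\nu_{3,n}(u)\sim n^{-1/(2\al)}e^{(d-1)u/2}$ furnished by Propositions \ref{p:exp.mu.n} and \ref{p:exp.clique}. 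Given $\vep>0$, I would use the map $T_\vep$ sending a point configuration to the pair of paths
\[
T_\vep\Bigl(\sum_\ell\delta_{(s_\ell, z^{(1)}_\ell, z^{(2)}_\ell)}\Bigr) = \Bigl(\ \sum_{\ell:\, s_\ell\le\,\cdot} z^{(1)}_\ell\,\one\{z^{(1)}_\ell>\vep\},\ \ \sum_{\ell:\, s_\ell\le\,\cdot} z^{(2)}_\ell\,\one\{z^{(2)}_\ell>\vep\}\ \Bigr),
\]
which, exactly as for $\hat T_\vep$ in the proof of Proposition \ref{p:main.mu.version.star} (cf.\ Section 7.2.3 in \cite{resnick:2007} and Appendix 3.5 in \cite{resnick:1986}), is a.s.\ continuous for the law of $\ms{PPP}(\ms{Leb}\otimes\kappa)$. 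Combining Proposition \ref{p:pp.conv.clustering.coeff}, the continuous mapping theorem, and the convergence $b_{3,n}^{-1}[N_n\cdot\,]\,\E\bigl[\nu_{3,n}(U_1)\one\{\nu_{3,n}(U_1)>\vep b_{3,n}\}\bigr]\Rightarrow(\cdot)\int_{\{x_2>\vep\}} x_2\,\kappa(\dif x_1,\dif x_2)$ (obtained as in \eqref{e:sub.comp.exp}), one gets, for each fixed $\vep>0$, joint weak convergence in $D\bigl([0,1],[0,\infty)\times\R\bigr)$ of the $\vep$-truncated prelimit pair to $T_\vep\bigl(\ms{PPP}(\ms{Leb}\otimes\kappa)\bigr)$ with the second coordinate recentered by $(\cdot)\int_{\{x_2>\vep\}} x_2\,\kappa$.

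Next I would let $\vep\downarrow0$. On the limit side, the recentered $T_\vep\bigl(\ms{PPP}(\ms{Leb}\otimes\kappa)\bigr)$ is the usual series representation of a bivariate L\'evy process; since $\int_{\{x_1\le1\}}x_1\,\kappa<\infty$ (as $\al<1$) while $\int_{\{x_2\le1\}}x_2\,\kappa=\infty$ (as $\al>1/2$), only the second coordinate requires compensation, and as $\vep\downarrow0$ the pair converges in $D\bigl([0,1],[0,\infty)\times\R\bigr)$ to the process $(\widetilde S_\al(\cdot),S_{2\al}(\cdot))$ whose characteristic function is read off from the L\'evy--Khintchine exponent of $\ms{Leb}\otimes\kappa$, namely \eqref{e:ch.f.bivariate.perfect.jumps}. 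On the prelimit side, the truncation errors are controlled componentwise: for the first coordinate this is exactly \eqref{e:2nd.conv.subordinator} with $k=3$, and for the second coordinate it is \eqref{e:prop3.5(1).clique}, already verified through \eqref{e:analog.second.moment.convergence}; a union bound then gives $\lim_{\vep\downarrow0}\limsup_{n\to\infty}$ of the probability that either sup-norm discrepancy exceeds $\eta a_{3,n}$, respectively $\eta b_{3,n}$, equal to $0$. A standard three-$\vep$ argument combines these two facts and yields the $(\mu_{3,n},\nu_{3,n})$-version of Proposition \ref{p:joint.weak.clustering.coeff}.

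For Stage 2, Lemma \ref{l:stable.limit.thm.D}$(ii)$ gives $\sup_{0\le t\le1}\bigl|a_{3,n}^{-1}\sum_{i=1}^{[N_nt]}\mD_{3,n}(X_i) - a_{3,n}^{-1}\sum_{i=1}^{[N_nt]}\mu_{3,n}(U_i)\bigr|\stackrel{\P}{\to}0$, and Lemma \ref{l:stable.limit.thm.C} gives the analogous uniform negligibility for the centered clique sums; since convergence in the sup-norm implies convergence in the Skorokhod $J_1$ topology, a Slutsky-type argument in $D\bigl([0,1],[0,\infty)\times\R\bigr)$ transfers the result to $(\mD_{3,n},\mC_{3,n})$ and completes the proof. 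The main obstacle is Stage 1, and within it the decisive input is Proposition \ref{p:pp.conv.clustering.coeff}: the hard part is to verify that the joint tail $n\P\bigl(a_{3,n}^{-1}\mu_{3,n}(U_1)>y_1,\ b_{3,n}^{-1}\nu_{3,n}(U_1)>y_2,\ U_1\le\ga R_n\bigr)$ converges to $y_1^{-\al}\wedge y_2^{-2\al}$, which simultaneously identifies the limit measure $\kappa$ as supported on the curve $\{(y,y^{1/2})\}$ and is the source of the perfectly dependent jumps. This rests on the sharp first-order asymptotics and the monotonicity in $u$ of $\mu_{3,n}$ and $\nu_{3,n}$ from Sections \ref{sec:moments.star} and \ref{sec:moments.clique}, together with the precise values of the normalizing constants $B_{3,\al}$ and $C_{3,\al}$. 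A secondary point of care is that the two coordinates live in different scaling regimes — an $\al$-stable subordinator (uncompensated) versus a zero-mean $2\al$-stable L\'evy process (compensated) — so the truncation functional $T_\vep$ and the identification of the limit must treat them asymmetrically.
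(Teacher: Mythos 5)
Your proposal is correct and follows essentially the same route as the paper: truncate via the bivariate summation functional applied to the point process of Proposition \ref{p:pp.conv.clustering.coeff}, recenter only the second coordinate, let $\vep\downarrow0$ using \eqref{e:2nd.conv.subordinator} and \eqref{e:prop3.5(1).clique} to control the truncation errors and the L\'evy--Khintchine computation with $\kappa$ to identify the limit as $(\widetilde S_\al,S_{2\al})$, and finally pass from $(\mu_{3,n},\nu_{3,n})$ to $(\mD_{3,n},\mC_{3,n})$ via Lemma \ref{l:stable.limit.thm.D}$(ii)$ and Lemma \ref{l:stable.limit.thm.C}. Your observations about the asymmetric compensation of the two marginals and the curve support of $\kappa$ match the paper's treatment exactly.
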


Assuming   Proposition \ref{p:joint.weak.clustering.coeff} for  the moment, we  first complete the proof of Theorem \ref{t:clustering.coeff}.

\begin{proof}[Proof of Theorem \ref{t:clustering.coeff}]
Specializing Proposition \ref{p:joint.weak.clustering.coeff} to the case $t=1$ and applying the  continuous mapping theorem,
$$
\frac{a_{3,n}}{b_{3,n}}\bigg( \ms{CC}_n - \frac{3N_n \E[\mC_{3,n}(X_1)]}{\sum_{i=1}^{N_n}\mD_{3,n}(X_i)} \bigg) \Rightarrow \frac{3S_{2\al}(1)}{\widetilde S_\al (1)}, \ \ \text{as } n\to\infty. 
$$
Note that by definition,  $a_{3,n} b_{3,n}^{-1} = B_{3,\al}C_{3,\al}^{-1}n^{\frac{1}{2\al}}$. 
\end{proof}

\begin{proof}[Proof of Proposition \ref{p:pp.conv.clustering.coeff}]
Arguing analogously to the proofs of Propositions \ref{p:pp.conv.star} and \ref{p:pp.conv.clique},  it suffices to show that for $\gamma\in \big( \frac{1}{2\al}, 1 \big)$, 
$$
\sum_{i=1}^{n}\one\big\{ U_i \le \gaR \big\} \delta_{(i/n,\, a_{3,n}^{-1}\mu_{3,n}(U_i), \, b_{3,n}^{-1}\nu_{3,n}(U_i))} \Rightarrow \ms{PPP} (\ms{Leb}\otimes \kappa), \ \ \text{as } n\to\infty. 
$$
By Corollary 6.1 and Lemma 6.1 in \cite{resnick:2007}, this weak  convergence is equivalent to 
$$
n\P\big( \mu_{3,n}(U_1)\ge y_1 a_{3,n}, \, \nu_{3,n}(U_1)\ge y_2 b_{3,n}, \,  U_1 \le \gaR \big) \to \kappa\big( (y_1,\infty]\times (y_2,\infty] \big) = y_1^{-\al}\wedge y_2^{-2\al}
$$
for all $y_1, y_2 >0$. 

According to Proposition \ref{p:exp.mu.n} $(i)$ and Proposition \ref{p:exp.clique} $(i)$ (with $k=m=3$), we obtain that,  for any $\vep>0$, there exists  $N\in \bbn$, such that for all $n\ge N$ and $u \in (0,\gaR)$, 
$$
\mu_{3,n}(u) \le (1+\vep) B_{3,\al} \Big( 1+ \sum_{\ell=0}^1 \binom{2}{\ell} (e^{\half (d-1)(1-2\al)u} + s_N)^{2-\ell} \Big)e^{(d-1)u}, \  \ \ \nu_{3,n}(u) \le (1+\vep) C_{3,\al}\mainu, 
$$
and $\sum_{\ell=0}^1\binom{2}{\ell}(2s_N)^{2-\ell}\le \vep$. 
Choosing $M\in (0,\infty)$ so large that $e^{\half (d-1)(1-2\al)M} \le s_N$, it follows that  for all $n\ge N$, 
\begin{align*}
&n\P\big( \mu_{3,n}(U_1)\ge y_1 a_{3,n}, \, \nu_{3,n}(U_1)\ge y_2 b_{3,n}, \,  U_1 \le \gaR \big) \\
&\le n\P\big( (1+\vep)^2 B_{3,\al} e^{(d-1)U_1} \ge y_1 a_{3,n}, \, (1+\vep) C_{3,\al} e^{\half (d-1)U_1} \ge y_2 b_{3,n}, \, M\le U_1 \le \gaR \big)\\
&= n\P\Big( \frac{1}{d-1} \log \Big\{ n^{1/\al} \Big( \frac{y_1}{(1+\vep)^2}\vee \frac{y_2^2}{(1+\vep)^2}  \Big) \Big\}\le U_1 \le \gaR \Big) \\
&=n \int_{\frac{1}{d-1} \log \{ n^{1/\al} (\frac{y_1}{(1+\vep)^2} \vee \frac{y_2^2}{(1+\vep)^2}) \}}^\gaR \radpdf(u)\dif u \to \Big( \frac{y_1}{(1+\vep)^2} \Big)^{-\al} \wedge
 \Big( \frac{y_2^2}{(1+\vep)^2} \Big)^{-\al}, \ \ \ n\to\infty. 
\end{align*}
Thus, by letting $\vep\downarrow 0$, 
$$
\limsup_{n\to\infty} n\P\big( \mu_{3,n}(U_1)\ge y_1 a_{3,n}, \, \nu_{3,n}(U_1)\ge y_2 b_{3,n}, \,  U_1 \le \gaR \big)  \le y_1^{-\al} \wedge y_2^{-2\al}. 
$$ 

For the lower bound, by appealing to Proposition \ref{p:exp.mu.n} $(ii)$ and Proposition \ref{p:exp.clique} $(ii)$ (specifically, \eqref{e:mu_n.upper.lower.bdd} and \eqref{e:lower.nu.n.exact.asym} with $k=m=3$), and repeating the same argument, we also obtain
$$
\liminf_{n\to\infty} n\P\big( \mu_{3,n}(U_1)\ge y_1 a_{3,n}, \, \nu_{3,n}(U_1)\ge y_2 b_{3,n}, \,  U_1 \le \gaR \big)  \ge y_1^{-\al} \wedge y_2^{-2\al}. 
$$ 
\end{proof}

\begin{proof}[Proof of Proposition \ref{p:joint.weak.clustering.coeff}]
Write $\sum_{\ell} \delta_{(t_\ell, \, j_\ell^{(1)}, \, j_\ell^{(2)})}$ be the $\ms{PPP}(\ms{Leb}\otimes \kappa)$. Given $\vep>0$, we define the continuous map $\hat U_\vep :  M_p\big( [0,1]\times \big( [0,\infty]^2\setminus \{{\bf 0}\} \big) \big) \to D\big( [0,1], [0,\infty)^2 \big)$ by 
$$
\hat U_\vep\Big( \sum_{\ell} \delta_{(s_\ell, \, z_\ell^{(1)}, \, z_\ell^{(2)})} \Big) = \bigg( \sum_{\ell: \, s_\ell \le \, \cdot} z_\ell^{(1)} \one \{z_\ell^{(1)}> \vep\}, \  \sum_{\ell: \, s_\ell \le \, \cdot} z_\ell^{(2)} \one \{z_\ell^{(2)}> \vep\}\bigg). 
$$
Applying the continuous mapping theorem to the weak convergence in Proposition \ref{p:pp.conv.clustering.coeff}, while using the convergence 
$$
\frac{[N_n  \cdot\,]}{b_{3,n}}\, \E\Big[ \nu_{3,n}(U_1)\one \big\{ \nu_{3,n}(U_1)>\vep b_{3,n} \big\}  \Big] \Rightarrow (\cdot) \int_\vep^\infty x \ms{m}_{2\al}(\dif x), \ \ \text{in } D\big( [0,1], [0,\infty) \big), 
$$
we have that 
\begin{align*}
&\bigg( \frac{1}{a_{3,n}}\sum_{i=1}^{[N_n\cdot\,]}\mu_{3,n}(U_i)\one \big\{ \mu_{3,n}(U_i)>\vep a_{3,n} \big\}, \\
&\qquad \qquad \qquad  \frac{1}{b_{3,n}}  \sum_{i=1}^{[N_n\cdot\,]}\nu_{3,n}(U_i) \one \big\{ \nu_{3,n}(U_i)> \vep b_{3,n} \big\} - \frac{[N_n  \cdot\,]}{b_{3,n}}\, \E\Big[ \nu_{3,n}(U_1)\one \big\{ \nu_{3,n}(U_1)>\vep b_{3,n} \big\}  \Big]\bigg) \\
&\Rightarrow \Big(  \sum_{\ell: \, t_\ell \le \, \cdot} j_\ell^{(1)} \one \{j_\ell^{(1)}> \vep\}, \  \sum_{\ell: \, t_\ell \le \, \cdot} j_\ell^{(2)} \one \{j_\ell^{(2)}> \vep\} -(\cdot) \int_\vep^\infty x \ms{m}_{2\al}(\dif x) \Big), \\
&\quad = \Big(  \sum_{\ell: \, t_\ell \le \, \cdot} (j_\ell^{(1)},\,  j_\ell^{(2)}) \one \{j_\ell^{(1)} \wedge j_\ell^{(2)} > \vep\} -(\cdot) \int_{(\vep,\infty)^2} (0,x_2) \kappa(\dif x_1, \dif x_2) \Big) \ \ \text{in } D \big( [0,1], [0,\infty)\times \R \big). 
\end{align*}
The weak limit above converges weakly to the process $\big( \widetilde S_\al (\cdot), S_{2\al}(\cdot) \big)$ in the space $D \big( [0,1], [0,\infty)\times \R \big)$. More precisely, this follows from the fact that,   for any $v=(v_1,v_2)\in \R^2$ and $t\ge0$, 
\begin{align*}
&\E \big[ e^{i \langle v, \, \lim_{\vep\downarrow0} \sum_{\ell: \, t_\ell \le t}(j_\ell^{(1)},\,  j_\ell^{(2)}) \one \{j_\ell^{(1)} \wedge j_\ell^{(2)} > \vep\} -t \int_{(\vep,\infty)^2} (0,x_2) \kappa(\dif x_1, \dif x_2)  \rangle } \big] \\
&= \lim_{\vep\downarrow0} \E \big[ e^{i \langle v, \, \sum_{\ell: \, t_\ell \le t} (j_\ell^{(1)},\,  j_\ell^{(2)}) \one \{j_\ell^{(1)} \wedge j_\ell^{(2)} > \vep\}  \rangle } \big] e^{-t\int_{(\vep,\infty)^2} iv_2 x_2 \kappa(\dif x_1, \dif x_2) }\\
&= \exp \Big\{ t\int_{(0,\infty)^2} \big( e^{i(v_1 x_1+v_2x_2)}-1-iv_2x_2 \big) \kappa(\dif x_1, \dif x_2) \Big\} = \E \big[ e^{i(v_1\widetilde S_\al(t) + v_2 S_{2\al}(t))} \big]. 
\end{align*}
Now,  \eqref{e:2nd.conv.subordinator} and  \eqref{e:prop3.5(1).clique} imply that
\begin{align} 
&\bigg( \frac{1}{a_{3,n}}\sum_{i=1}^{[N_n\cdot\,]}\mu_{3,n}(U_i), \  \frac{1}{b_{3,n}}  \sum_{i=1}^{[N_n\cdot\,]}\nu_{3,n}(U_i) - \frac{[N_n  \cdot\,]}{b_{3,n}}\, \E\big[ \nu_{3,n}(U_1)  \big]\bigg)  \label{e:weak.conv.to.perfect.stable.jumps} \\
&\quad \Rightarrow \big( \widetilde S_\al (\cdot), S_{2\al}(\cdot) \big),  \, \text{in } D \big( [0,1], [0,\infty)\times \R \big). \notag 
\end{align}
Finally, by Lemma \ref{l:stable.limit.thm.D} $(ii)$ and Lemma  \ref{l:stable.limit.thm.C}, we may replace $\mu_{3,n}(U_i)$ and $\nu_{3,n}(U_i)$ in \eqref{e:weak.conv.to.perfect.stable.jumps} by $\mD_{3,n}(X_i)$ and $\mC_{3,n}(X_i)$, respectively. This yields the desired weak convergence. 
\end{proof}
\medskip

\section{Appendix}

\subsection{Fundamental properties of hyperbolic random geometric graphs}

\begin{lemma}[Lemma 1 in \cite{owada:yogeshwaran:2022}]  \label{l:Lemma1.OY}
$(i)$ As $n\to\infty$, we have 
$$
\bar \rho_{n,\al} (t) \le \big( 1+o_n(1) \big) \al (d-1)e^{-\al(d-1)t}
$$
uniformly for $t\in (0,R_n)$. \\ 
$(ii)$ For every $0<\gamma<1$, we have, as $n\to\infty$, 
$$
\bar \rho_{n,\al} (t) = \big( 1+o_n(1) \big) \al (d-1)e^{-\al(d-1)t}
$$
uniformly for $t\in (0,\gaR)$. 
\end{lemma}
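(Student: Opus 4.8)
The plan is to reduce everything to elementary estimates on the explicit density $\radpdf(t) = \sinh^{d-1}(\al(R_n-t))\big/\int_0^{R_n}\sinh^{d-1}(\al s)\,\dif s$, using the factorization $\sinh^{d-1}(\al x) = 2^{-(d-1)}e^{\al(d-1)x}\big(1-e^{-2\al x}\big)^{d-1}$, which separates the exponential growth from a bounded correction factor lying in $(0,1]$.

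\textbf{Step 1: the normalizing integral.} I would first show that $D_n := \int_0^{R_n}\sinh^{d-1}(\al s)\,\dif s = \big(1+o_n(1)\big)\,2^{-(d-1)}\big(\al(d-1)\big)^{-1}e^{\al(d-1)R_n}$. The upper bound is immediate from $\big(1-e^{-2\al s}\big)^{d-1}\le 1$, which gives $D_n \le 2^{-(d-1)}\big(\al(d-1)\big)^{-1}e^{\al(d-1)R_n}$. For the matching lower bound, fix $\vep>0$, choose $M$ with $\big(1-e^{-2\al M}\big)^{d-1}>1-\vep$, and restrict the integral to $[M,R_n]$; since $R_n=\tfrac{2}{d-1}\log n\to\infty$, the contribution of $[0,M)$ is $O(e^{\al(d-1)M})$, which is negligible relative to $e^{\al(d-1)R_n}$, so $D_n\ge (1-\vep)\big(1+o_n(1)\big)2^{-(d-1)}\big(\al(d-1)\big)^{-1}e^{\al(d-1)R_n}$. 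Letting $\vep\downarrow0$ finishes this step.

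\textbf{Step 2: the numerator and the ratio.} Writing $\sinh^{d-1}(\al(R_n-t)) = 2^{-(d-1)}e^{\al(d-1)R_n}e^{-\al(d-1)t}\big(1-e^{-2\al(R_n-t)}\big)^{d-1}$ and dividing by the expression from Step 1, the factors $2^{-(d-1)}e^{\al(d-1)R_n}$ cancel, leaving $\radpdf(t) = \big(1+o_n(1)\big)\,\al(d-1)\,e^{-\al(d-1)t}\,\big(1-e^{-2\al(R_n-t)}\big)^{d-1}$. For part $(i)$ one uses $\big(1-e^{-2\al(R_n-t)}\big)^{d-1}\le 1$ for every $t\in(0,R_n)$, yielding the claimed uniform upper bound. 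For part $(ii)$, on the truncated range $t\in(0,\ga R_n)$ one has $R_n-t\ge (1-\ga)R_n\to\infty$, hence $1\ge\big(1-e^{-2\al(R_n-t)}\big)^{d-1}\ge 1-(d-1)e^{-2\al(1-\ga)R_n} = 1-o_n(1)$ uniformly in $t$, which upgrades the upper bound to the asserted asymptotic equivalence uniformly on $(0,\ga R_n)$.

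\textbf{Main obstacle.} There is no real difficulty; the only point requiring care is that every $o_n(1)$ be uniform in $t$. In part $(i)$ this is automatic, since the only $n$-dependence not already exhibited explicitly is carried by $D_n$ from Step 1, and the correction factor is simply dominated by $1$. In part $(ii)$ it follows from the uniform bound $e^{-2\al(R_n-t)}\le e^{-2\al(1-\ga)R_n}$ valid on the restricted range. (If desired, one can replace $o_n(1)$ by an explicit $O(n^{-c})$ rate by the same computation, but this is not needed in the sequel.)
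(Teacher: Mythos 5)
Your proof is correct: the factorization $\sinh^{d-1}(\al x)=2^{-(d-1)}e^{\al(d-1)x}(1-e^{-2\al x})^{d-1}$, the two-sided estimate of the normalizing integral, and the uniform control of the correction factor (dominated by $1$ on all of $(0,R_n)$ for part $(i)$, and bounded below by $1-(d-1)e^{-2\al(1-\ga)R_n}$ on $(0,\ga R_n)$ for part $(ii)$) give exactly the claimed uniform bounds. The present paper does not prove this lemma but imports it from \cite{owada:yogeshwaran:2022}; your argument is the standard direct computation one would expect there, so nothing further is needed.
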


\begin{lemma}[Lemma 3 in \cite{owada:yogeshwaran:2022}] \label{l:Lamma3.OY} Given $x_1, x_2\in B(o,R_n)$, let $\ta_{12}\in [0,\pi]$ be the relative angle between  $x_1$ and $x_2$, which are  viewed respectively as vectors emanating from the origin  of $\B_d$. Define $t_i=R_n-d_H(0,x_i)$, $i=1,2$, and set $\hat \ta_{12} = \big( e^{-2(R_n-t_1)} +e^{-2(R_n-t_2)} \big)^{1/2}$. If $\hat \ta_{12}/\ta_{12}$ vanishes as $n\to\infty$, then 
$$
d_H(x_1, x_2) = 2R_n -(t_1+t_2) +2 \log \sin (\ta_{12}/2) + O\big( (\hat \ta_{12} / \ta_{12})^2 \big), \ \ \ n\to\infty, 
$$
uniformly for all $x_1, x_2$ with $t_1+t_2\le R_n-\omega_n$. 
\end{lemma}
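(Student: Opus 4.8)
The plan is to derive the expansion directly from the exact distance formula in the Poincar\'e ball and then track error terms carefully. Recall that for $x_1,x_2\in\B_d$ with Euclidean norms $r_i=\|x_i\|$ and relative angle $\ta_{12}$ one has
\begin{equation*}
\cosh d_H(x_1,x_2)=1+\frac{2\|x_1-x_2\|^2}{(1-r_1^2)(1-r_2^2)},\qquad \|x_1-x_2\|^2=(r_1-r_2)^2+4r_1r_2\sin^2(\ta_{12}/2).
\end{equation*}
Since $d_H(o,x_i)=R_n-t_i$ and the radial hyperbolic coordinate satisfies $r_i=\tanh\big((R_n-t_i)/2\big)$, I would introduce $\delta_i:=e^{-(R_n-t_i)}$ and record the clean identities $r_i=(1-\delta_i)/(1+\delta_i)$, $1-r_i^2=4\delta_i/(1+\delta_i)^2$, and $r_1-r_2=2(\delta_2-\delta_1)/\big((1+\delta_1)(1+\delta_2)\big)$. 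Substituting these into the ratio, all factors $(1+\delta_i)$ cancel, and using $\delta_1\delta_2=e^{-(2R_n-t_1-t_2)}$ one obtains the exact formula
\begin{equation*}
\cosh d_H(x_1,x_2)=1+\tfrac12\,e^{2R_n-t_1-t_2}\Big[(\delta_2-\delta_1)^2+(1-\delta_1^2)(1-\delta_2^2)\sin^2(\ta_{12}/2)\Big].
\end{equation*}

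Next I would isolate the dominant term inside the bracket, call it $M$. The hypothesis $t_1+t_2\le R_n-\omega_n$ forces $R_n-t_i\ge\omega_n$, hence $\delta_i\le e^{-\omega_n}\to0$ uniformly, so $(1-\delta_i^2)^{-1}\le2$ for $n$ large. Using the elementary bounds $(\delta_2-\delta_1)^2\le\delta_1^2+\delta_2^2=\hat\ta_{12}^2$ and $\sin(\ta_{12}/2)\ge\ta_{12}/\pi$ on $[0,\pi]$, one gets $(\delta_2-\delta_1)^2\le\pi^2(\hat\ta_{12}/\ta_{12})^2\sin^2(\ta_{12}/2)$, whence $M=(1-\delta_1^2)(1-\delta_2^2)\sin^2(\ta_{12}/2)\big(1+O((\hat\ta_{12}/\ta_{12})^2)\big)$. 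Moreover $e^{2R_n-t_1-t_2}\hat\ta_{12}^2=e^{t_1-t_2}+e^{t_2-t_1}\ge2$, so combined with the lower bound on $\sin$ one sees $e^{2R_n-t_1-t_2}M\ge c\,(\ta_{12}/\hat\ta_{12})^2\to\infty$ uniformly. In particular $\cosh d_H(x_1,x_2)\to\infty$, so the additive $1$ above is a relative perturbation of size $(e^{2R_n-t_1-t_2}M)^{-1}=O((\hat\ta_{12}/\ta_{12})^2)$, and $e^{d_H}=2\cosh d_H-e^{-d_H}=2\cosh d_H\,\big(1+O((\cosh d_H)^{-2})\big)$.

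Putting everything together gives $e^{d_H(x_1,x_2)}=e^{2R_n-t_1-t_2}(1-\delta_1^2)(1-\delta_2^2)\sin^2(\ta_{12}/2)\big(1+O((\hat\ta_{12}/\ta_{12})^2)\big)$; taking logarithms and using $\log(1-\delta_i^2)=O(\delta_i^2)=O(\hat\ta_{12}^2)=O((\hat\ta_{12}/\ta_{12})^2)$ (the last step because $\ta_{12}\le\pi$) and $\log(1+O(\epsilon))=O(\epsilon)$ yields
\begin{equation*}
d_H(x_1,x_2)=2R_n-(t_1+t_2)+2\log\sin(\ta_{12}/2)+O\big((\hat\ta_{12}/\ta_{12})^2\big),
\end{equation*}
uniformly over $\{(x_1,x_2):t_1+t_2\le R_n-\omega_n\}$, as claimed. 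There is no serious conceptual obstacle here; the only delicate part is the bookkeeping in the third paragraph, namely verifying that \emph{every} error contribution — from $(\delta_2-\delta_1)^2$, from the additive $1$, from the $e^{-d_H}$ correction, and from the $\log(1-\delta_i^2)$ terms — is genuinely of order $O((\hat\ta_{12}/\ta_{12})^2)$ and uniform in $(x_1,x_2)$, which is precisely where both hypotheses ($\hat\ta_{12}/\ta_{12}\to0$ and $\omega_n\to\infty$, i.e. $t_1+t_2\le R_n-\omega_n$) are used.
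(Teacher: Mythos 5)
Your argument is correct: the exact identity $\cosh d_H(x_1,x_2)=1+\tfrac12 e^{2R_n-t_1-t_2}\bigl[(\delta_2-\delta_1)^2+(1-\delta_1^2)(1-\delta_2^2)\sin^2(\theta_{12}/2)\bigr]$ checks out, the bound $(\delta_2-\delta_1)^2\le\delta_1^2+\delta_2^2=\hat\theta_{12}^2\le\pi^2(\hat\theta_{12}/\theta_{12})^2\sin^2(\theta_{12}/2)$ correctly controls the off-diagonal radial term, and the remaining error sources (the additive $1$, the $e^{-d_H}$ correction, and the $\log(1-\delta_i^2)$ terms) are each $O((\hat\theta_{12}/\theta_{12})^2)$ uniformly on $\{t_1+t_2\le R_n-\omega_n\}$, exactly as you claim; note in particular that $t_1+t_2\le R_n-\omega_n$ together with $t_i\ge 0$ gives $\delta_i\le e^{-\omega_n}$, which is what makes the constants uniform. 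There is nothing to compare with inside this paper, since the statement is imported verbatim as Lemma 3 of Owada--Yogeshwaran (2022) and no proof is reproduced here; the derivation usually given in that literature starts from the hyperbolic law of cosines $\cosh d_H(x_1,x_2)=\cosh\rho_1\cosh\rho_2-\sinh\rho_1\sinh\rho_2\cos\theta_{12}$ with $\rho_i=R_n-t_i$ and expands in $e^{-\rho_i}$, whereas you work directly from the Poincar\'e-ball chordal distance formula with $r_i=\tanh(\rho_i/2)$. The two routes are computationally equivalent, but yours has the small advantage of producing a clean exact identity before any asymptotics, which makes the bookkeeping of the $O((\hat\theta_{12}/\theta_{12})^2)$ error completely transparent; the law-of-cosines route avoids introducing the Euclidean coordinates $r_i$ at all. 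Either way, the proof is complete and uniform as required.
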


\begin{lemma}[Lemma 4 in \cite{owada:yogeshwaran:2022}]  \label{l:Lemma4.OY}
Let $X_1=(t_1,\Theta_1)$, $X_2=(t_2, \Theta_2)$, where $\Theta_1$, $\Theta_2$ are i.i.d.~random vectors on $\mA_d$ with density \eqref{e:uniform.angular} and $t_1, t_2$ are deterministic, representing the hyperbolic distance from the boundary of $B(o,R_n)$. Then, 
$$
\P \big( d_H(X_1,X_2)\le R_n \big) = \big( 1+o_n(1) \big)\frac{2^{d-1}}{(d-1)\kappa_{d-2}}\, e^{-\frac{1}{2}(d-1)(R_n-t_1-t_2)}, \ \ \ n\to\infty, 
$$
uniformly on $\{ (t_1, t_2): t_1 + t_2\le R_n-\omega_n\}$. 
\end{lemma}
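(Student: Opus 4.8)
The approach is to condition on the relative angle $\Theta_{12}\in[0,\pi]$ between the angular components $\Theta_1,\Theta_2$ (viewed as unit vectors in $\R^d$) and to invoke Lemma~\ref{l:Lamma3.OY}. By rotational invariance of the uniform density \eqref{e:uniform.angular}, $\Theta_{12}$ has density $\sin^{d-2}\ta/\kappa_{d-2}$ on $[0,\pi]$; since $\sin^{d-2}\ta=\ta^{d-2}(1+O(\ta^2))$ near $0$, this gives, uniformly over deterministic sequences $\vep_n\downarrow0$,
\[
\P(\Theta_{12}\le\vep_n)=\frac{1}{\kappa_{d-2}}\int_0^{\vep_n}\sin^{d-2}\ta\,\dif\ta=\bigl(1+o_n(1)\bigr)\frac{\vep_n^{\,d-1}}{(d-1)\kappa_{d-2}}.
\]
By Lemma~\ref{l:Lamma3.OY}, the event $\{d_H(X_1,X_2)\le R_n\}$ should, up to negligible corrections, coincide with $\{\Theta_{12}\le\vartheta_n\}$ for the critical angle $\vartheta_n=\vartheta_n(t_1,t_2):=2e^{-\frac12(R_n-t_1-t_2)}$, since to leading order $d_H(X_1,X_2)\le R_n$ amounts to $2\log\sin(\Theta_{12}/2)\le -(R_n-t_1-t_2)$, i.e.\ to $\Theta_{12}\le\vartheta_n$ as all angles involved are small. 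Substituting $\vartheta_n^{\,d-1}=2^{d-1}e^{-\frac12(d-1)(R_n-t_1-t_2)}$ into the display then yields the assertion, so the work lies in making ``up to negligible corrections'' precise, uniformly on $\{t_1+t_2\le R_n-\omega_n\}$.

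The key quantitative input --- and the place where the hypothesis $t_1+t_2\le R_n-\omega_n$ is used --- is the uniform bound
\[
\frac{\hat\ta_{12}}{\vartheta_n}=\frac{\bigl(e^{-2(R_n-t_1)}+e^{-2(R_n-t_2)}\bigr)^{1/2}}{2e^{-\frac12(R_n-t_1-t_2)}}=\Bigl(\tfrac12 e^{-R_n}\cosh(t_1-t_2)\Bigr)^{1/2}\le\Bigl(\tfrac12 e^{-R_n}e^{\,t_1+t_2}\Bigr)^{1/2}\le\tfrac1{\sqrt2}e^{-\omega_n/2}=:\rho_n\longrightarrow0,
\]
with $\hat\ta_{12}$ as in Lemma~\ref{l:Lamma3.OY}. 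I would split the target probability at $\Theta_{12}=\sqrt{\rho_n}\,\vartheta_n$. On the small-angle part $\{\Theta_{12}\le\sqrt{\rho_n}\,\vartheta_n\}$, where Lemma~\ref{l:Lamma3.OY} need not apply, I bound the contribution crudely by $\P(\Theta_{12}\le\sqrt{\rho_n}\,\vartheta_n)=(1+o_n(1))\rho_n^{(d-1)/2}\vartheta_n^{\,d-1}/((d-1)\kappa_{d-2})=o_n(1)\cdot\vartheta_n^{\,d-1}/((d-1)\kappa_{d-2})$, which is negligible relative to the claimed main term. On the complementary part, $\hat\ta_{12}/\Theta_{12}\le\sqrt{\rho_n}\to0$ uniformly, so Lemma~\ref{l:Lamma3.OY} applies and (using also $\vartheta_n\le 2e^{-\omega_n/2}\to0$ to replace $\sin(\Theta_{12}/2)$ by $\Theta_{12}/2$) shows that $\{d_H(X_1,X_2)\le R_n\}$ coincides with $\{\Theta_{12}\le\vartheta_n\}$ outside a shell $\{\vartheta_n e^{-c\rho_n}\le\Theta_{12}\le\vartheta_n e^{c\rho_n}\}$, $c$ a constant; this shell has probability $O(\rho_n)\vartheta_n^{\,d-1}/((d-1)\kappa_{d-2})$ and is likewise absorbed into $o_n(1)$.

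Combining the two parts gives $\P(d_H(X_1,X_2)\le R_n)=(1+o_n(1))\P(\Theta_{12}\le\vartheta_n)$, whence the stated formula. I expect the main obstacle to be not any individual estimate but the uniform bookkeeping: the $o_n(1)$ from the expansion near $0$, the $O(\rho_n)$ error from Lemma~\ref{l:Lamma3.OY}, and the small-angle and shell remainders all have to be controlled uniformly over the strip $\{t_1+t_2\le R_n-\omega_n\}$, and it is precisely the explicit uniform inequality $\hat\ta_{12}/\vartheta_n\le\rho_n$ --- which fails near $t_1+t_2\approx R_n$ --- that makes this possible.
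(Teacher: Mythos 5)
Your proof is correct: conditioning on the relative angle with its $\sin^{d-2}$ density, identifying the critical angle $\vartheta_n=2e^{-\frac12(R_n-t_1-t_2)}$ via Lemma~\ref{l:Lamma3.OY}, and splitting at $\sqrt{\rho_n}\,\vartheta_n$ to quarantine the small-angle regime where that lemma is unavailable all work, and the uniform bound $\hat\ta_{12}/\vartheta_n\le\rho_n$ is precisely where the hypothesis $t_1+t_2\le R_n-\omega_n$ is used. Note that the present paper gives no proof of this statement --- it is imported verbatim as Lemma~4 of \cite{owada:yogeshwaran:2022} --- and your derivation is essentially the standard critical-angle argument used there, so there is nothing to flag.
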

\medskip

\subsection{Proof of \eqref{e:PPD_n}}

\begin{lemma}  \label{l:PPD_n}
Let $\half <\al< k-1$; then, as $n\to\infty$, 
$$
\sum_{i=1}^{N_n} \delta_{(i/N_n, \, a_{k,n}^{-1} \mD_{k,n}(X_i))}  \Rightarrow \ms{PPP}\big(\ms{Leb}\otimes \ms{m}_{2\al/(k-1)}\big), \  \ \text{in } M_p\big( [0,1]\times  (0,\infty ] \big). 
$$
\end{lemma}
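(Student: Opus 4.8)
\textbf{Proof proposal for Lemma \ref{l:PPD_n}.}

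The plan is to transfer the point process convergence \eqref{e:PPmu_n}, already established in Proposition \ref{p:pp.conv.star}, from the deterministic functionals $\mu_{k,n}(U_i)$ to the random functionals $\mD_{k,n}(X_i)$. Since the two point processes live on the same index set $\{i/N_n : 1 \le i \le N_n\}$ and differ only in the second coordinate, it suffices to show that the superposition $\sum_{i=1}^{N_n} \delta_{(i/N_n,\, a_{k,n}^{-1}\mD_{k,n}(X_i))}$ and $\sum_{i=1}^{N_n} \delta_{(i/N_n,\, a_{k,n}^{-1}\mu_{k,n}(U_i))}$ are asymptotically close in the vague metric $d_\ms{v}$ on $M_p\big([0,1]\times(0,\infty]\big)$. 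By the same reduction used repeatedly in the paper (testing against $\one\{i/N_n \in B\} f(\cdot)$ for $B \subset [0,1]$ and $f \in C_K^+\big((0,\infty]\big)$ with support in $[\eta,\infty]$ for some $\eta>0$), this reduces to proving
$$
\sum_{i=1}^{N_n} \one \Big\{ \big| \mD_{k,n}(X_i) - \mu_{k,n}(U_i) \big| > \vep a_{k,n} \Big\} \stackrel{\P}{\to} 0, \qquad n\to\infty,
$$
for every $\vep > 0$. Indeed, on the complementary event the two functionals are within $\vep a_{k,n}$ of each other for all $i$, and a standard argument (split the support of $f$ into finitely many small intervals, use uniform continuity of $f$) shows the difference of the two integrals against $f$ tends to $0$ as first $n\to\infty$ and then $\vep\downarrow0$.

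The key step, then, is the displayed convergence in probability, which by Markov's inequality and the Mecke formula is bounded by
$$
\E\Big[ \sum_{i=1}^{N_n} \one\big\{ |\mD_{k,n}(X_i)-\mu_{k,n}(U_i)| > \vep a_{k,n} \big\}\Big] = n\int_0^{R_n} \P\big( |\mD_{k,n}(u)-\mu_{k,n}(u)| > \vep a_{k,n}\big)\radpdf(u)\dif u.
$$
This is exactly the quantity \eqref{e:A_n.to.0.second} referenced in the proof of Lemma \ref{l:marginal.max.star}, and it is shown to vanish there. Concretely, one splits the radial integral at the threshold $u_n := \tfrac{2}{(d-1)(k-1)}\log(Ma_{k,n})$ for a large constant $M$: for $u$ above $u_n$ one simply bounds the probability by $1$ and uses Lemma \ref{l:Lemma1.OY}$(i)$ to get $n\int_{u_n}^\infty \radpdf(u)\dif u \le CM^{-2\al/(k-1)}$; for $u$ below $u_n$ one applies Chebyshev's inequality together with the variance bound of Proposition \ref{p:var.mu.n} and Lemma \ref{l:Lemma1.OY}$(i)$, obtaining an estimate of the form $Cn^{1-(k-1)/(2\al)} e^{\al(d-1)(k-\frac32)\omega_n}\cdot(\text{polynomial in }\log n)$, which tends to $0$ because $n^{1-(k-1)/(2\al)}\to0$ when $\al<k-1$ and the $\omega_n$-factor is subpolynomial. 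Letting $M\to\infty$ afterwards kills the boundary term.

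The main obstacle — though it is essentially bookkeeping rather than a conceptual difficulty — is keeping the two nested limits ($n\to\infty$ then $\vep\downarrow0$ in the vague-metric comparison, and $n\to\infty$ then $M\to\infty$ in the radial split) organized, and checking that the variance estimate from Proposition \ref{p:var.mu.n}, which carries the factor $e^{\al(d-1)(2k-3)\omega_n}$, is genuinely dominated by the gain $n^{1-(k-1)/(2\al)}$ coming from the scaling $a_{k,n}$. This works precisely in the stated range $\half < \al < k-1$; the case $\al = (k-1)/2$ is not excluded here because, unlike in the first-component statements, only the maxima/point-process structure is at stake and the variance bound suffices throughout. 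Everything else is a direct transcription of arguments already carried out in Section \ref{sec:main.result.star.count}.
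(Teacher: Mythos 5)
Your overall strategy is the same as the paper's (Appendix Lemma \ref{l:PPD_n}): compare the $\mD$-point process with the $\mu$-point process through test functions $\one\{i/N_n\in B\}f(\cdot)$, control the "large discrepancy" indices via the integral $n\int\P\big(|\mD_{k,n}(u)-\mu_{k,n}(u)|>\vep a_{k,n}\big)\radpdf(u)\dif u$ with a split at $\frac{2}{(d-1)(k-1)}\log(Ma_{k,n})$, and handle the "small discrepancy" part by uniform continuity of $f$. However, your quantitative justification of the central estimate is wrong as written. The bound you state, $Cn^{1-(k-1)/(2\al)}e^{\al(d-1)(k-\frac32)\omega_n}\cdot(\text{polynomial in }\log n)$, is the \emph{first-moment} estimate (Markov with $\E|\mD_{k,n}(u)-\mu_{k,n}(u)|\le\sqrt{\text{Var}(\mD_{k,n}(u))}$, prefactor $n/a_{k,n}$), and your claim that it vanishes "because $n^{1-(k-1)/(2\al)}\to0$ when $\al<k-1$" is false: $1-\frac{k-1}{2\al}<0$ only when $\al<\half(k-1)$, so this route genuinely fails on $\big[\half(k-1),\,k-1\big)$, which is part of the range the lemma must cover. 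What actually works on the whole range $\half<\al<k-1$ — and what \eqref{e:A_n.to.0.second} does — is Chebyshev with the variance over $a_{k,n}^2$: the prefactor is $n/a_{k,n}^2=Cn^{1-\frac{k-1}{\al}}$ and the $\omega_n$-exponent is $\al(d-1)(2k-3)$, after which the below-threshold integral is of order $\log n$ when $\al\ge k-\tfrac32$ (so the bound vanishes since $\al<k-1$) and of order a positive power of $Ma_{k,n}$ when $\al<k-\tfrac32$ (not a power of $\log n$), combining to $CM^{\frac{2k-3-2\al}{k-1}}n^{-1/(2\al)}$; in both cases one finishes by letting $M\to\infty$. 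Since you ultimately defer to \eqref{e:A_n.to.0.second}, the argument is repairable, but the exponent bookkeeping you give would not prove the lemma on its stated range.

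A smaller point: in the reduction, bounding each term on the complementary event by $\omega_f(\vep)$ is not enough by itself, since there are $N_n\approx n$ terms; you also need that only indices with $\mu_{k,n}(U_i)\ge(\eta-\vep)a_{k,n}$ can contribute (support of $f$ bounded away from $0$) and that the number of such indices is tight. This tightness follows from \eqref{e:PPmu_n}, or, as in the paper's proof, from inserting the indicator $\one\{\mu_{k,n}(u)\ge\eta' a_{k,n}\}$ and using Proposition \ref{p:exp.mu.n} $(i)$ to get $n\P\big(\mu_{k,n}(U_1)\ge\eta' a_{k,n},\,U_1\le\gaR\big)\le C$ (the paper first truncates to $U_i\le\gaR$ precisely so that this uniform estimate applies, the untruncated points being negligible as in \eqref{e:vagueP3}). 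Your "split the support into small intervals" phrasing does not by itself supply this step.
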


\begin{proof}
We  fix the constant $\gamma\in \big(\frac{1}{2\al},1\big)$. By virtue of \eqref{e:direct.conv.mu_n}, it is sufficient to prove that as $n\to\infty$, 
\begin{align}
&d_{\ms{v}} \bigg(\sum_{i=1}^{N_n} \delta_{(i/N_n, \, a_{k,n}^{-1} \mD_{k,n}(X_i))},  \ \sum_{i=1}^{N_n}\one \big\{ U_i\le \gaR \big\} \, \delta_{(i/N_n, \, a_{k,n}^{-1} \mD_{k,n}(X_i))} \bigg) \stackrel{\P}{\to} 0, \label{e:vagueP1} \\
&d_{\ms{v}} \bigg(\sum_{i=1}^{N_n} \one \big\{ U_i\le \gaR \big\}\, \delta_{(i/N_n, \, a_{k,n}^{-1} \mD_{k,n}(X_i))}, \ \sum_{i=1}^{N_n} \one \big\{ U_i\le \gaR \big\} \, \delta_{(i/N_n, \, a_{k,n}^{-1} \mu_{k,n}(U_i))}  \bigg) \stackrel{\P}{\to} 0.  \label{e:vagueP2} 
\end{align}
Since \eqref{e:vagueP1} can be established similarly to  \eqref{e:vagueP3}, we can omit its  proof.  To show \eqref{e:vagueP2}, our goal is to verify that 
\begin{equation}  \label{e:Dn.mu.n.PP}
\sum_{i=1}^{N_n} \one \big\{ U_i\le \gaR \big\} \, \one\{ i/N_n\in B \}\Big(  f\big( a_{k,n}^{-1}\mD_{k,n}(X_i)\big)- f\big( a_{k,n}^{-1}\mu_{k,n}(U_i) \big) \Big)\stackrel{\P}{\to} 0, 
\end{equation}
for every $B\subset [0,1]$ with $\ms{Leb}(\partial B)=0$, and any $f\in C_K^+\big( (0,\infty] \big)$. Let $\eta_0$ be a positive constant such that the support of $f$ is contained in $[\eta_0,\infty]$. Choose $\eta' \in (0,\eta_0/2)$. For every $\eta \in (0,\eta_0/2)$, we have
\begin{align*}
&\E \bigg[ \, \bigg|\sum_{i=1}^{N_n} \one \big\{ U_i\le \gaR \big\} \, \one\{ i/N_n\in B \}\Big(  f\big( a_{k,n}^{-1}\mD_{k,n}(X_i)\big)- f\big( a_{k,n}^{-1}\mu_{k,n}(U_i) \big) \Big) \, \bigg| \bigg] \\
&\le n\int_{0}^{\gaR} \E \Big[ \Big| f\big(a_{k,n}^{-1}\mD_{k,n}(u)\big) -f\big( a_{k,n}^{-1} \mu_{k,n}(u) \big)  \Big| \Big] \, \bar \rho_{n,\al}(u) \dif u \\
&\le 2\| f \|_\infty n \int_0^{\gaR} \P  \Big( \big| \mD_{k,n}(u)-\mu_{k,n}(u) \big| >\eta a_{k,n} \Big) \radpdf (u) \dif u \\
&\qquad + n\int_0^\gaR \E\bigg[ \Big| f\big(a_{k,n}^{-1} \mD_{k,n}(u)\big) - f\big(a_{k,n}^{-1} \mu_{k,n}(u)\big) \Big|\, \one \Big\{ \big| \mD_{k,n}(u) -\mu_{k,n}(u) \big| \le \eta a_{k,n} \Big\} \bigg] \radpdf (u) \dif u  \\
&=: A_n + B_n, 
\end{align*}
where $\|f\|_\infty:= \sup_xf(x)$. 

We start by  showing  that $A_n \to 0$ as $n \to \infty$. 
Since $\al>1/(2\gamma)$, we have, for any $M>0$, 
$$
\gaR >\frac{2}{(d-1)(k-1)} \, \log (Ma_{k,n}), \ \ \text{for large enough } n. 
$$
It then follows from Lemma \ref{l:Lemma1.OY}, Chebyshev's inequality, and Proposition \ref{p:var.mu.n} that 
\begin{align}
A_n &\le \frac{Cn}{ a_{k,n}^2}\, \int_0^{\frac{2}{(d-1)(k-1)}\log (Ma_{k,n})} \text{Var}\big( \mD_{k,n}(u) \big) e^{-\al(d-1)u}\dif u + Cn \int_{\frac{2}{(d-1)(k-1)}\log (Ma_{k,n})}^\infty e^{-\al(d-1)u} \dif u \label{e:A_n.to.0.second} \\
&\le Cn^{1-\frac{k-1}{\al}} e^{\al(d-1)(2k-3)\omega_n}\int_0^{\frac{2}{(d-1)(k-1)}\log (Ma_{k,n})}e^{\half (d-1)(2k-3-2\al)u} \dif u + CM^{-2\al/(k-1)}. \notag 
\end{align}
If $k - \frac{3}{2} \le \al < k - 1$, then 
$A_n \le Cn^{1-\frac{k-1}{\al}} e^{\al(d-1)(2k-3)\omega_n}\log n + CM^{-2\al/(k-1)}$. On the other hand, if $\frac{1}{2\gamma}< \al < k-\frac{3}{2}$, then  
$A_n \le CM^{\frac{2k-3-2\al}{k-1}} n^{-1/(2\al)}e^{\al(d-1)(2k-3)\omega_n} + CM^{-2\al/(k-1)}$. 
In both regimes, we have $\limsup_{n\to\infty}A_n\le CM^{-2\al/(k-1)}$  for every $M>0$.  Letting  $M\to\infty$ therefore yields $A_n\to0$ as $n\to\infty$.

For the term $B_n$, note that if $\mu_{k,n}(u) < \eta' a_{k,n}$, then $f\big( a_{k,n}^{-1} \mu_{k,n}(u) \big) = 0$, and
$$
\mD_{k,n}(u) \le \big| \mD_{k,n}(u) - \mu_{k,n}(u) \big| + \mu_{k,n}(u) < \eta a_{k,n} + \eta' a_{k,n} < \eta_0 a_{k,n},
$$
which, in turn, implies $f\big( a_{k,n}^{-1} \mD_{k,n}(u) \big) = 0$. Thus, we can insert an additional indicator $\one \big\{ \mu_{k,n}(u) \ge \eta' a_{k,n} \big\}$ under the integral sign in $B_n$ and obtain
$$
B_n \le \omega_f(\eta) n \int_0^\gaR \one \big\{ \mu_{k,n}(u) \ge \eta' a_{k,n} \big\} \radpdf(u) \dif u,
$$
where $\omega_f(\eta)$ is the modulus of continuity of $f$. By  Proposition \ref{p:exp.mu.n} $(i)$ and Lemma \ref{l:Lemma1.OY},
$$
B_n \le C \omega_f(\eta) n \int_{\frac{2}{(d-1)(k-1)} \log(\eta' a_{k,n} / C)}^\infty e^{-\al(d-1)u} \dif u \le C \omega_f(\eta) \to 0, \ \ \text{as } \eta \to 0,
$$
and \eqref{e:Dn.mu.n.PP} is thus established.
\end{proof}

\medskip

\subsection{Proof of \eqref{e:main(i).cond4}}

\begin{lemma}    \label{l:Pruss.maximal.inequ}
Let $\half (k-1)<\al<k-1$; then,  as $n\to\infty$, 
$$
\P \bigg( \sup_{0\le t \le 1} \bigg| \sum_{i=1}^{[N_nt]} \Big(\mD_{k,n}(X_i)\one \{ U_i \le c_1 R_n \} -\E \big[ \mD_{k,n}(X_i)\one \{ U_i \le c_1 R_n \} \big] \Big) \bigg|>\eta a_{k,n} \bigg) \to 0. 
$$
\end{lemma}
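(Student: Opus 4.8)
The plan is to control the supremum of the partial-sum process by a maximal inequality for sums of independent (non-identically distributed after conditioning on $N_n$) random variables, combined with a truncation at level $a_{k,n}$. Write $\xi_i := \mD_{k,n}(X_i)\one\{U_i\le c_1 R_n\} - \E[\mD_{k,n}(X_i)\one\{U_i\le c_1R_n\}]$, so that the process in question is $\sum_{i=1}^{[N_nt]}\xi_i$. First I would condition on $N_n$ and split each $\mD_{k,n}(X_i)$ according to whether $\mD_{k,n}(X_i)\le a_{k,n}$ or not. For the ``large'' part $\mD_{k,n}(X_i)\one\{U_i\le c_1R_n\}\one\{\mD_{k,n}(X_i)>a_{k,n}\}$, a first-moment bound suffices: using $\P(\mD_{k,n}(u)>a_{k,n})$ together with Proposition~\ref{p:var.mu.n} and Chebyshev (as in \eqref{e:diff.big.u}), one shows that $n\int_0^{c_1R_n}\E[\mD_{k,n}(u)\one\{\mD_{k,n}(u)>a_{k,n}\}]\radpdf(u)\dif u = o(a_{k,n})$; here the constraint $\al>(k-1)/2$ and the choice of $c_1$ in \eqref{e:constraint.c1} are exactly what make the exponent negative. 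This controls both the sum of large parts and its mean.

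For the ``small'' (truncated) part, I would use a maximal inequality of Kolmogorov/Doob type for the partial sums of the centered truncated variables. Conditionally on $N_n$, the $\xi_i^{\mathrm{tr}} := \mD_{k,n}(X_i)\one\{U_i\le c_1R_n\}\one\{\mD_{k,n}(X_i)\le a_{k,n}\} - \E[\cdots]$ are i.i.d.\ mean-zero, so Kolmogorov's maximal inequality gives
\begin{align*}
\P\bigg(\max_{0\le j\le N_n}\Big|\sum_{i=1}^j\xi_i^{\mathrm{tr}}\Big|>\eta a_{k,n}\,\Big|\,N_n\bigg)
\le \frac{N_n}{\eta^2 a_{k,n}^2}\,\E\big[(\mD_{k,n}(U_1)\one\{U_1\le c_1R_n, \mD_{k,n}(U_1)\le a_{k,n}\})^2\big],
\end{align*}
and taking expectations reduces matters to showing
$$
\frac{n}{a_{k,n}^2}\int_0^{c_1R_n}\E\big[\mD_{k,n}(u)^2\one\{\mD_{k,n}(u)\le a_{k,n}\}\big]\radpdf(u)\dif u \to 0.
$$
Using $\E[\mD_{k,n}(u)^2]=\mathrm{Var}(\mD_{k,n}(u))+\mu_{k,n}(u)^2$, Proposition~\ref{p:var.mu.n}, Proposition~\ref{p:exp.mu.n}, and Lemma~\ref{l:Lemma1.OY}, the integral is dominated by $\int_0^{c_1R_n}e^{(d-1)(k-1-\al)u}\dif u + \int_0^{c_1R_n}e^{\frac12(d-1)(2k-3-2\al)u}e^{\al(d-1)(2k-3)\omega_n}\dif u$, each of order $e^{c_1(d-1)(\cdots)R_n}=n^{c_1(\cdots)}$ up to slowly varying factors. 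Since $a_{k,n}^2 = B_{k,\al}^2 n^{(k-1)/\al}$ and $n\le n$, the prefactor $n^{1-(k-1)/\al}$ multiplied by $n^{2c_1(k-1-\al)}$ (resp.\ $n^{c_1(2k-3-2\al)}$) tends to $0$ precisely under \eqref{e:constraint.c1}; note the variance term, being the more dangerous one, is the reason the lower bound on $c_1$ in \eqref{e:constraint.c1} is of the form $\tfrac{2\al-k+1}{2\al(2\al-k+3/2)}$.

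Finally I would glue the two pieces together: the event $\{\sup_{0\le t\le1}|\sum_{i=1}^{[N_nt]}\xi_i|>\eta a_{k,n}\}$ is contained in the union of the event that some $\mD_{k,n}(X_i)\one\{U_i\le c_1R_n\}$ exceeds $a_{k,n}$, the event that the sum of large centered parts exceeds $\tfrac{\eta}{2}a_{k,n}$, and the event that the maximal truncated sum exceeds $\tfrac{\eta}{2}a_{k,n}$; the first two are handled by the first-moment estimate and the third by the maximal inequality above, so each probability vanishes. The main obstacle is the bookkeeping of exponents: one must check carefully that the constraint \eqref{e:constraint.c1} on $c_1$ simultaneously kills the large-part mean (requiring $\tfrac{2\al-k+1}{2\al}<c_1(2\al-k+\tfrac32)$, i.e.\ the lower bound in \eqref{e:constraint.c1}) and the second-moment integral on $[0,c_1R_n]$ (requiring $c_1<\tfrac{1}{2\al}$, i.e.\ the upper bound), with the $e^{\al(d-1)(2k-3)\omega_n}$ and $\log n$ factors from $\omega_n=\log\log R_n$ being harmless since they are subpolynomial in $n$. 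A secondary technical point is the de-Poissonization: after conditioning on $N_n$ one has a random number of summands, but since the maximal inequality is applied conditionally and then averaged, and since $\E[N_n]=n$, no extra argument beyond monotonicity of the maximal functional in the number of terms is needed.
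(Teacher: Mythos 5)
There is a genuine gap, and it sits at the very first step of your argument: you assert that, conditionally on $N_n$, the variables $\mD_{k,n}(X_i)\one\{U_i\le c_1R_n\}$ are i.i.d., and you then invoke Kolmogorov's maximal inequality. This is false. Each $\mD_{k,n}(X_i)$ is a functional of the \emph{entire} process $\Pn$ (it counts ordered tuples of neighbours of $X_i$ among all the other Poisson points), so two star counts $\mD_{k,n}(X_i)$ and $\mD_{k,n}(X_j)$ share leaves and are dependent; the sequence is only exchangeable. This is precisely the distinction between the present lemma and the earlier step \eqref{e:chebyshev}, where the summands $\mu_{k,n}(U_i)$ are deterministic functions of the i.i.d.\ radii $U_i$ and Kolmogorov's inequality is legitimate. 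The paper's proof replaces Kolmogorov by Pruss's maximal inequality for finite exchangeable sequences, which reduces the supremum to the tail of a single partial sum of about $N_n/2$ terms, and then bounds its (unconditional) variance.

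The second, consequent gap is that your second-moment reduction
$$
\frac{n}{a_{k,n}^2}\int_0^{c_1R_n}\E\big[\mD_{k,n}(u)^2\one\{\mD_{k,n}(u)\le a_{k,n}\}\big]\radpdf(u)\dif u \to 0
$$
is only the diagonal part of $\mathrm{Var}\big(\sum_i \mD_{k,n}(X_i)\one\{U_i\le c_1R_n\}\big)$. Because of the dependence just described, the variance of the sum also contains the cross terms $\mathrm{Cov}\big(\mD_{k,n}(u,\btheta),\mD_{k,n}(v,\bphi)\big)$ integrated over pairs of centres, and these do not vanish: two stars centred at different points can share one or more leaves. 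Bounding this covariance integral (via the Mecke formula, the $K_{n,\ell}(p,q)$ terms, and repeated use of Lemmas \ref{l:Lemma1.OY} and \ref{l:Lemma4.OY}) is the bulk of the paper's proof, and it is there that the upper bound $c_1<\tfrac{1}{2\al}$ is actually used, together with a case analysis for $k=2$ and $\tfrac12<\al<\tfrac23$. Your proposal never estimates these cross terms, so the exponent bookkeeping you describe does not establish the lemma. (A minor misattribution as well: the lower bound in \eqref{e:constraint.c1} is not what this lemma needs; it is used in \eqref{e:diff.big.u} for the region $U_i>c_1R_n$, i.e.\ for \eqref{e:main(i).cond1}, while here only $c_1<\tfrac{1}{2\al}$ enters.) Your truncation at level $a_{k,n}$ and the first-moment treatment of the large part are not where the difficulty lies; the dependence between the $\mD_{k,n}(X_i)$ is.
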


\begin{proof}
The summands above form a sequence of exchangeable random variables. Hence, Pruss's maximal inequality for exchangeable random variables (see Corollary 2 in \cite{pruss:1998}) implies that  
\begin{align*}
&\P \bigg( \sup_{0\le t \le 1} \bigg| \sum_{i=1}^{[N_nt]} \Big(\mD_{k,n}(X_i)\one \{ U_i \le c_1 R_n \} -\E \big[ \mD_{k,n}(X_i)\one \{ U_i \le c_1 R_n \} \big] \Big) \bigg|>\eta a_{k,n} \bigg)\\
&= \E \bigg[  \P \bigg( \max_{0\le j \le N_n} \bigg| \sum_{i=1}^{j} \Big(\mD_{k,n}(X_i)\one \{ U_i \le c_1 R_n \} -\E \big[ \mD_{k,n}(X_i)\one \{ U_i \le c_1 R_n \} \big] \Big) \bigg|>\eta a_{k,n} \, \bigg|\,  N_n \bigg) \bigg] \\
&\le C^*  \E \bigg[  \P \bigg(  \, \bigg| \sum_{i=1}^{\lceil N_n/2\rceil} \Big(\mD_{k,n}(X_i)\one \{ U_i \le c_1 R_n \} -\E \big[ \mD_{k,n}(X_i)\one \{ U_i \le c_1 R_n \} \big] \Big) \bigg|>\frac{\eta a_{k,n}}{C^*} \, \bigg|\,  N_n \bigg) \bigg] \\
&\le \frac{(C^*)^3}{\eta^2 a_{k,n}^2}\, \E \bigg[ \text{Var} \Big( \sum_{i=1}^{\lceil N_n/2\rceil} \mD_{k,n}(X_i)\one \{ U_i \le c_1 R_n \} \, \Big| \, N_n\Big) \bigg]\\
&\le \frac{(C^*)^3}{\eta^2 a_{k,n}^2}\, \text{Var} \Big( \sum_{i=1}^{\lceil N_n/2\rceil} \mD_{k,n}(X_i)\one \{ U_i \le c_1 R_n \} \Big), 
\end{align*}
for some universal constant $C^*\in (0,\infty)$. 

Appealing to the Mecke formula to the last term, 
\begin{align}
&\frac{1}{a_{k,n}^2} \text{Var} \Big( \sum_{i=1}^{\lceil N_n/2\rceil} \mD_{k,n}(X_i)\one \big\{ U_i \le c_1 \R_n \} \Big) = \frac{n}{2a_{k,n}^2}\, \int_{0}^{c_1R_n} \E \big[ \mD_{k,n}(u)^2 \big] \radpdf (u) \dif u \label{e:var.Dn(X)} \\
&\qquad + \Big(\frac{n}{2a_{k,n}}  \Big)^2 \int_{\mA_d}\int_{\mA_d} \int_{0}^{c_1R_n}  \int_{0}^{c_1R_n} \text{Cov} \big( \mD_{k,n}(u,\btheta), \, \mD_{k,n}(v,\bphi) \big) \notag \\
&\qquad \qquad \qquad \qquad\qquad \qquad \times \radpdf (u)\radpdf (v) \pi(\btheta)\pi(\bphi)\dif v \dif u \dif  \bphi \dif \btheta +R_n, \notag 
\end{align}
where $R_n$ is the negligible term as $n\to\infty$. We  note that by Proposition \ref{p:exp.mu.n} $(i)$ and Proposition \ref{p:var.mu.n}, 
\begin{align*}
 \E \big[ \mD_{k,n}(u)^2 \big] &= \text{Var} \big( \mD_{k,n}(u) \big) + \mu_{k,n}(u)^2 \\
&\le Ce^{\al(d-1)(2k-3)\omega_n}\cdot e^{\half(d-1)(2k-3)u} + C e^{(d-1)(k-1)u} \le Ce^{\al(d-1)(2k-3)\omega_n}\cdot e^{(d-1)(k-1)u} 
\end{align*}
for all $n\ge1$ and $0<u\le c_1 R_n$. Therefore, 
\begin{align*}
\frac{n}{a_{k,n}^2}\, \int_{0}^{c_1R_n} \E \big[ \mD_{k,n}(u)^2 \big] \radpdf (u) \dif u &\le Cn^{1-\frac{k-1}{\al}} e^{\al(d-1)(2k-3)\omega_n}\int_0^{c_1R_n} e^{(d-1)(k-1-\al)u}\dif u \\
&\le Cn^{-(k-1-\al)(\frac{1}{\al}-2c_1)}e^{\al(d-1)(2k-3)\omega_n} \to 0, \ \ \text{as } n\to\infty. 
\end{align*}

One more application of the Mecke formula shows that the second term in \eqref{e:var.Dn(X)} is equal to 
\begin{align}
 \sum_{\ell=1}^{k-1}&\sum_{s_1=0}^{k-1} \sum_{s_2=0}^{k-1}  \Big( \frac{n}{2a_{k,n}} \Big)^2 \int_{\mA_d}\int_{\mA_d} \int_{0}^{c_1R_n}  \int_{0}^{c_1R_n} \label{e:cov.K.nell}\\
&\quad \E \bigg[ \sum_{(Y_1,\dots,Y_{k-1})\in (\Pn)_{\neq}^{k-1}} \hspace{-10pt}\sum_{\substack{(Z_1,\dots,Z_{k-1})\in (\Pn)_{\neq}^{k-1} \\ |(Y_1, \dots, Y_{k-1})\cap (Z_1,\dots, Z_{k-1})|=\ell}} \hspace{-25pt} g_n^{(s_1)}(p,Y_1,\dots,Y_{k-1}) g_n^{(s_2)}(q,Z_1,\dots,Z_{k-1})\bigg] \notag \\
&\qquad \qquad \qquad \qquad\qquad \qquad\qquad \qquad \times  \radpdf (u)\radpdf (v) \pi(\btheta)\pi(\bphi)\dif v \dif u \dif  \bphi \dif \btheta \notag \\
&=: \sum_{\ell=1}^{k-1}\sum_{s_1=0}^{k-1} \sum_{s_2=0}^{k-1} \Big( \frac{n}{2a_{k,n}} \Big)^2 \notag \\
&\qquad \qquad  \times\int_{\mA_d}\int_{\mA_d} \int_{0}^{c_1R_n}  \int_{0}^{c_1R_n}  K_{n,\ell}^{(s_1, s_2)}(p,q) \, \radpdf (u)\radpdf (v) \pi(\btheta)\pi(\bphi) \dif v \dif u \dif  \bphi \dif \btheta, \notag 
\end{align}
where we set $p=(u,\btheta)$ and $q=(v,\bphi)$ (with $u$ and $v$ being radial components measured from the boundary of $B(o,R_n)$, and $\btheta, \bphi\in \A_d$). We now consider, without loss of generality, the case $s_1=s_2=k-1$, and suppress the superscript $(s_1, s_2)$ from $K_{n,\ell}^{(s_1, s_2)}(p,q)$.  Recalling that $X_i = (U_i,\Theta_i)$, $i=1,\dots,2(k-1)-\ell$, are i.i.d.~random variables with density $\radpdf \otimes \pi$ such that $U_i=R_n-d_H(o,X_i)$, we obtain, by the Mecke formula,  that 
\begin{align*}
K_{n,\ell}(p,q) &= n^{2(k-1)-\ell} \P \big( d_H(X_i,p) \le R_n, \, U_i \le u,\, i=1,\dots,k-1,   \\
&\qquad \qquad \qquad  \quad d_H(X_j, q)\le R_n, \, U_j \le v, \, j=1,\dots, \ell, k, \dots, 2(k-1)-\ell\big) \\
&= n^{2(k-1)-\ell} \Big( \int_0^{u \wedge v} \P \big( d_H(X,p)\le R_n, \, d_H(X,q)\le R_n \, \big| \, t \big) \, \radpdf (t) \dif t\Big)^\ell  \\
&\quad \times  \Big( \int_0^{u} \P \big( d_H(X,p)\le R_n \, \big| \, t \big) \, \radpdf (t) \dif t\Big)^{k-1-\ell}\Big( \int_0^{v} \P \big( d_H(X,q)\le R_n \, \big| \, t \big) \, \radpdf (t) \dif t\Big)^{k-1-\ell}, 
\end{align*}
where $X=(t,\Theta)$ with $t=R_n-d_H(o,X)$. 
According to Lemma \ref{l:Lemma4.OY}, 
\begin{align}
\P \big( d_H(X, p)\le R_n \big|\, t \big)&\le \P \big( d_H(X, p)\le R_n \big|\, t \big) \one \{ t+u\le R_n-\omega_n\} + \one \{ t+u> R_n-\omega_n\}   \label{e:plus.omega} \\
&\le Ce^{\half (d-1)\omega_n} \cdot e^{-\half (d-1)(R_n-t-u)}. \notag 
\end{align}
Combining Lemma \ref{l:Lemma1.OY} with \eqref{e:plus.omega}, and applying an analogous bound to $\P( d_H(X,q)\le R_n \,\big|\, t )$, we see that $K_{n,\ell}(p,q)$ is, up to a constant factor, bounded by

\begin{align*}
&n^\ell e^{(d-1)(k-1-\ell)\omega_n}\cdot e^{\half (d-1)(k-1-\ell)(u+v)} \bigg( \int_0^{c_1R_n} \P \big( d_H(X,p)\le R_n, \, d_H(X, q) \le R_n\, |\, t \big) \\
&\qquad\qquad\qquad\qquad\qquad\qquad\qquad \qquad \qquad \qquad\times  \one \{ t\le u \wedge v \} e^{-\al(d-1)t} \dif t\bigg)^\ell \\
&\qquad \qquad \qquad \qquad \qquad \qquad \times \Big( \int_0^u e^{\half(d-1)(1-2\al)t} \dif t \Big)^{k-1-\ell} \Big( \int_0^v e^{\half(d-1)(1-2\al)t} \dif t \Big)^{k-1-\ell}  \\
&\le n^\ell e^{(d-1)(k-1-\ell)\omega_n}\cdot e^{\half (d-1)(k-1-\ell)(u+v)} \bigg( \int_0^{c_1R_n} \P \big( d_H(X,p)\le R_n, \, d_H(X, q) \le R_n\, |\, t \big) \\
&\qquad\qquad\qquad\qquad\qquad\qquad\qquad\qquad\qquad\qquad \times  \one \{ t\le u \wedge v \} e^{-\al(d-1)t} \dif t\bigg)^\ell. 
\end{align*}

Applying  \eqref{e:plus.omega} once more, the above integral  is upper bounded as follows: 
\begin{align*}
&\int_0^{c_1R_n} \P \big( d_H(X,p)\le R_n, \, d_H(X, q) \le R_n\, |\, t \big) \one \{ t\le u \wedge v \} e^{-\al(d-1)t} \dif t \\
&\le \int_0^{c_1R_n} \P \big( d_H(X,p)\le R_n \, | \, t \big)  e^{-\al(d-1)t}\dif t \\
&\le Ce^{\half (d-1)\omega_n}\int_0^{c_1R_n} e^{-\half(d-1)(R_n-t-u)} \cdot e^{-\al(d-1)t} \dif t \\
&= Cn^{-1} e^{\half (d-1)\omega_n}\cdot e^{\half(d-1)u} \int_0^{c_1R_n} e^{\half(d-1)(1-2\al)t}\dif t \le Cn^{-1} e^{\half (d-1)\omega_n}\cdot e^{\half(d-1)u}. 
\end{align*}
By substituting this bound, 
\begin{align*}
K_{n,\ell}(p,q) &\le C n^\ell e^{(d-1)(k-1-\ell)\omega_n}\cdot e^{\half (d-1)(k-1-\ell)(u+v)} \big( n^{-1} e^{\half (d-1)\omega_n} \cdot e^{\half(d-1)u} \big)^{\ell-1} \\
&\qquad \times \int_0^{c_1R_n} \P \big( d_H(X,p)\le R_n, \, d_H(X, q) \le R_n\, |\, t \big)  \one \{ t\le u \wedge v \} e^{-\al(d-1)t} \dif t  \\
&\le Cn e^{ (d-1)(k-2)\omega_n}\cdot e^{\half(d-1)(k-2)(u+v)} \int_0^{c_1R_n} \P \big( d_H(X,p)\le R_n, \, d_H(X, q) \le R_n\, |\, t \big) \\
&\qquad \qquad \qquad \qquad\qquad \qquad\qquad\qquad \qquad\qquad \qquad  \times  \one \{ t\le u \wedge v \} e^{-\al(d-1)t} \dif t. 
\end{align*}
Since the randomness in the probability $\P \big( d_H(X,p)\le R_n, \, d_H(X,q)\le R_n \,\big|\, t \big)$ lies only in the angular component of $X$, it can be written as
\begin{align*}
&\P \big( d_H(X,p)\le R_n, \, d_H(X, q) \le R_n\, |\, t \big)  = \int_{\mA_d} \one \big\{ d_H(x,p)\le R_n, \, d_H(x,q)\le R_n \big\} \pi(\bpsi) \dif \bpsi, 
\end{align*}
where $x=(t,\bpsi)$ with $t=R_n-d_H(o,x)$ and $\bpsi\in \mA_d$. Consequently, 
\begin{align*}
K_{n,\ell}(p,q) &\le Cn  e^{ (d-1)(k-2)\omega_n}\cdot  e^{\half (d-1)(k-2)(u+v)} \int_0^{c_1R_n}\int_{\mA_d} \one \big\{ d_H(x,p)\le R_n, \, d_H(x,q)\le R_n \big\} \, \pi(\bpsi) \\
&\qquad \qquad \qquad  \qquad \qquad \qquad \qquad\qquad \qquad \qquad\qquad \qquad  \times \one \{ t\le u \wedge v \} e^{-\al(d-1)t} \dif \bpsi \dif t. 
\end{align*}
Note that the right-hand side above does not depend on $\ell$. Substituting this bound into \eqref{e:cov.K.nell} and using Fubini's theorem, we obtain 
\begin{align*}
\eqref{e:cov.K.nell} &\le C \Big( \frac{n}{a_{k,n}} \Big)^2 ne^{ (d-1)(k-2)\omega_n}  \int_{\bpsi\in \mA_d} \pi(\bpsi)\int_{t=0}^{c_1R_n} e^{-\al(d-1)t}  \\
&\quad \times  \int_{t}^{c_1R_n} e^{\half(d-1)(k-2-2\al)u}  \Big(\int_{\mA_d} \one \{ d_H(x,p)\le R_n \}\,  \pi(\btheta) \dif \btheta \Big) \dif u  \\
&\quad \times  \int_{t}^{c_1R_n} e^{\half(d-1)(k-2-2\al)v} \Big(\int_{\mA_d} \one \{ d_H(x,q)\le R_n \}\,  \pi(\bphi) \dif \bphi \Big) \dif v. 
\end{align*}
By Lemma \ref{l:Lemma4.OY} and the bound in \eqref{e:plus.omega}, 
\begin{align*}
&\int_{t}^{c_1R_n} e^{\half(d-1)(k-2-2\al)u}  \Big(\int_{\mA_d} \one \{ d_H(x,p)\le R_n \}\,  \pi(\btheta) \dif \btheta \Big) \dif u \\
&\le C e^{\half (d-1)\omega_n} \int_t^{c_1R_n} e^{\half(d-1)(k-2-2\al)u}  \cdot e^{-\half (d-1)(R_n-t-u)} \dif u \\
&\le Cn^{-1} e^{\half (d-1)\omega_n} \cdot e^{\half (d-1)(k-2\al)t}, 
\end{align*}
and hence, 
\begin{align*}  
\eqref{e:cov.K.nell} &\le C \Big( \frac{n}{a_{k,n}} \Big)^2 n e^{ (d-1)(k-2)\omega_n}\int_{\bpsi\in \mA_d} \pi(\bpsi)\int_{t=0}^{c_1R_n} e^{-\al(d-1)t} \Big(  n^{-1} e^{\half (d-1)\omega_n} \cdot e^{\half (d-1)(k-2\al)t} \Big)^2  \\
&=C n^{1-\frac{k-1}{\al}}e^{ (d-1)(k-1)\omega_n}\int_0^{c_1R_n} e^{(d-1)(k-3\al)t} \dif t. 
\end{align*}
If $k\ge3$, then $k-3\al<0$, so that $\eqref{e:cov.K.nell}\le Cn^{1-\frac{k-1}{\al}}e^{ (d-1)(k-1)\omega_n}\to 0$ as $n\to\infty$. If $k=2$ and $k-3\al=2-3\al \le 0$, then $\eqref{e:cov.K.nell}\le Cn^{1-1/\al} e^{(d-1)\omega_n}R_n\to0$ as $n\to\infty$. Finally, suppose that $k=2$ and $k-3\al=2-3\al>0$. Then, it must be that $1/2<\al < 2/3$, so that 
$$
n^{1-\frac{1}{\al}} e^{(d-1)\omega_n} \int_0^{c_1R_n} e^{(d-1)(2-3\al)t}\dif t\le Cn^{1-\frac{1}{\al}+2c_1(2-3\al)}e^{(d-1)\omega_n}. 
$$
Since $c_1 < 1/(2\al)$ and $1/2 < \al < 2/3$, it is straightforward to check that $1-\frac{1}{\al}+2c_1(2-3\al) < 0$, and therefore, \eqref{e:cov.K.nell} tends to $0$ as $n \to \infty$.
\end{proof}
\medskip

\subsection{Negligibility of $B_n(u)$, $C_{n,\ell}(u)$, $D_n(u)$, and $E_n(u)$}

\begin{lemma} \label{l:negligible.BCDE}
In the setting of Proposition \ref{p:exp.clique}, there exist $C, c_0 \in (0,\infty)$, both of which are independent of $n$ and $u$, such that 
\begin{equation*}
B_n(u) + \sum_{\ell=2}^{m-2} C_{n,\ell}(u) + D_n(u) + E_n(u) \le  \begin{cases}
C \beta_n e^{\half (d-1)u}, & \text{for all } n \ge 1 \text{ and } u \in (0,\gaR), \\[3pt]
Ce^{c_0\omega_n} \cdot e^{\half (d-1)u}, & \text{for all } n \ge 1 \text{ and } u \in (0,R_n), 
\end{cases}
\end{equation*}
for some sequence $\beta_n\to0$. 
\end{lemma}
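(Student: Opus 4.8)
The plan is to treat all four remainder terms by a single mechanism. In the Mecke representation \eqref{e:Mecke.exp.clique} one bounds the conditional probability $\P(\,\cdot\mid\bt)$ by the probability of a suitably chosen \emph{forest} of edge events on $\{X_1,\dots,X_{m-1},p\}$ — which factorizes into a product by the conditional independence of the angular components exploited in \eqref{e:prod.form.var.clique} — retaining exactly those edges whose radial components satisfy $t_i+t_j\le R_n-\omega_n$, so that Lemma \ref{l:Lemma4.OY} applies through \eqref{e:ignore.restriction.omega}/\eqref{e:plus.omega}, and bounding the remaining edge indicators by $1$. One then integrates against $\radpdf(\bt)\le C\prod_{i=1}^{m-1}e^{-\al(d-1)t_i}$ (Lemma \ref{l:Lemma1.OY}$(i)$), uses $e^{-\frac12(d-1)(R_n-s)}=n^{-1}e^{\frac12(d-1)s}$, and evaluates the elementary integrals over the ordered simplex $\wholet$. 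The decisive structural fact is that each of $B_n(u)$, $C_{n,\ell}(u)$, $D_n(u)$, $E_n(u)$ vanishes unless $u>\frac12(R_n-\omega_n)$: in every case the defining indicator forces some $t_i+t_j$ or $t_i+u$ to exceed $R_n-\omega_n$ while all $t_i\le u$. Hence $e^{\frac12(d-1)u}\ge n^{1/2}e^{-\frac14(d-1)\omega_n}$ on the support, and after dividing by $e^{\frac12(d-1)u}$ each term will be seen to be bounded, up to polylogarithmic and $e^{c\omega_n}$ factors, by $n^{-c_m(\al)}$ where $c_m(\al)$ is a positive multiple of $2\al(m-1)-(2m-3)$, which is positive precisely because $\al>\tfrac{2m-3}{2m-2}$. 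On $(0,\gaR)$ with $\ga<1$ this negative power (times the harmless $e^{c\omega_n}$ from the accumulated Lemma \ref{l:Lemma4.OY} factors) yields the sequence $\beta_n\to0$; on all of $(0,R_n)$ one keeps only boundedness, i.e.\ $Ce^{c_0\omega_n}$.

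For $B_n(u)$ — the critical term, the discrete avatar of the ``all vertices near level $u$'' configuration of Remark \ref{rem:open.q.clique} — the indicator $\one\{t_1+t_2>R_n-\omega_n\}$ forces $t_i+u>R_n-\omega_n$ for every $i$, so \emph{no} edge meets the Lemma \ref{l:Lemma4.OY} hypothesis and one must bound the entire conditional probability by $1$. Integrating $n^{m-1}\int_\wholet\one\{t_1+t_2>R_n-\omega_n\}\prod_i e^{-\al(d-1)t_i}$, whose binding constraint is $t_1>R_n-\omega_n-t_2$, gives $B_n(u)\le Cn^{(m-1)(1-\al)}e^{c\omega_n}$ times a fixed power of $R_n$; dividing by $e^{\frac12(d-1)u}\gtrsim n^{1/2}$ leaves $n^{(m-1)(1-\al)-1/2}$ up to polylog, which is $o(1)$ exactly when $(m-1)(1-\al)<\tfrac12$, i.e.\ when $\al>\tfrac{2m-3}{2m-2}$. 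The intermediate terms $C_{n,\ell}(u)$ ($2\le\ell\le m-2$, vacuous for $m=3$) and $D_n(u)$ are handled similarly: only a prefix $t_1,\dots,t_\ell$ of the radial components (all of them, for $D_n$) has small pairwise sums, and every edge $X_i\to p$ is bad because $t_1+u>R_n-\omega_n$ (directly for $D_n$, and via $t_1+u\ge t_1+t_{\ell+1}$ for $C_{n,\ell}$). One keeps the star $X_1\to X_i$, $2\le i\le\ell$ (resp.\ $2\le i\le m-1$), which are the only reliably good edges, bounds everything else by $1$, and uses that the far components $t_{\ell+1},\dots,t_{m-1}$ exceed $R_n-\omega_n-t_1$: integrating their densities contributes $n^{-2\al(m-1-\ell)}$, the ordered $t_2,\dots,t_\ell$ integral is $O(1)$ because $\al>\tfrac12$, and once the powers of $n$ are collected the $\ell$-dependence cancels and one is again left with a power $n^{-c_m(\al)}$ times $e^{c\omega_n}$.

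The last term, $E_n(u)$, lives in the region $t_1+u\le R_n-\omega_n$, so the edge $X_1\to p$ \emph{is} good and contributes a genuine factor $n^{-1}e^{\frac12(d-1)(t_1+u)}$, while at least one constraint in \eqref{e:main.restrictions} fails — necessarily a pair $t_i+t_j$ or a sum $t_i+u$ with $i,j\ge2$. The plan is to split $E_n$ according to which constraint fails, keep $X_1\to p$ together with any further good edges, bound the offending edge by $1$, and use that the offending radial component exceeds $R_n-\omega_n-u$; each resulting integral is structurally a sub-case of those already treated and reduces once more to $\al>\tfrac{2m-3}{2m-2}$. The main obstacle is $B_n(u)$: it is the only term whose smallness essentially requires the hypothesis — the remaining three being controlled by the same inequality once the power count is carried out — and it carries the full weight of the $R_n$-power bookkeeping, so the real care will lie in organizing the integrals so that the polynomial-in-$R_n$ factors are genuinely absorbed rather than merely bounded by $n^{\vep}$.
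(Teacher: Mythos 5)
Your overall mechanism is the paper's: keep a tree of ``good'' edges controlled by Lemma \ref{l:Lemma4.OY} (through bounds of the type \eqref{e:ignore.restriction.omega}), bound the remaining edge indicators by one, integrate the radial densities via Lemma \ref{l:Lemma1.OY}, and exploit that every remainder term is supported on $\{u>\half(R_n-\omega_n)\}$, so that $\mainu\ge n^{1/2}e^{-\frac14(d-1)\omega_n}$. Your treatments of $B_n(u)$ and $C_{n,\ell}(u)$ agree with the paper's computations (the paper absorbs the polynomial-in-$R_n$ factor by the trick $1\le n^{-\frac12(1-\vep/(d-1))}e^{\half(d-1-\vep)u}$, you note it is polylogarithmic; both work), and for $E_n(u)$ the mechanism you name---the offending radial component exceeds $R_n-\omega_n-u$, while the star through $X_1$ and the edge $X_1\to p$ remain good---is the one the paper uses.

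The gap is in $D_n(u)$, and with it the explanation of why the lemma has two regimes at all. For $D_n$ there are no ``far components'': all pairwise sums $t_i+t_j$ are small and the only violated constraint is $t_1+u>R_n-\omega_n$, so the gain you invoke (``the far components exceed $R_n-\omega_n-t_1$'') is vacuous, and with the steps you list the power count is $n^{m-1}\cdot n^{-(m-2)}=n$, giving $D_n(u)/\mainu\lesssim n^{1/2}$ up to $e^{c\omega_n}$ factors---neither $o(1)$ nor even the boundedness required in the second display. The missing step is to use $t_1>R_n-\omega_n-u$ as the lower limit of the $t_1$-integration: since the resulting $t_1$-exponent $(d-1)[(m-2)-\al(m-1)]$ is negative, this yields $D_n(u)\le Ce^{c\omega_n}\,\mainu\,e^{\half(d-1)(2m-3-(2m-2)\al)(R_n-u)}$, a genuinely $u$-dependent bound: it is $o(1)\,\mainu$ only after inserting $R_n-u\ge(1-\ga)R_n$, i.e.\ only for $u\le\gaR$, while near $u=R_n$ it is merely $O(e^{c_0\omega_n})\,\mainu$. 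Hence your claim that ``the $\ell$-dependence cancels'' and $D_n$ is left with a uniform power $n^{-c_m(\al)}$ is false near $u=R_n$, your computations never use $\ga$, and the two-regime form of the statement---forced precisely by $D_n$ and $E_n$, for which the same factor $e^{\half(d-1)(2m-3-(2m-2)\al)(R_n-u)}$ appears once the offending component is integrated from $R_n-\omega_n-u$---is asserted in your overview but never derived. A minor further inaccuracy: your closing remark that $B_n$ is the only term whose smallness requires the hypothesis is not right, since the exponents governing $C_{n,\ell}$, $D_n$ and $E_n$ all change sign at $\al=\frac{2m-3}{2m-2}$ as well.
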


\begin{proof}
For clarity of exposition, we shall omit $\omega_n$ throughout the proof. Since  each of the  conditions imposed by the indicators  in $B_n(u)$, $C_{n,\ell}(u)$, $D_n(u)$, and $E_n(u)$ implies $u \in (R_n/2, R_n)$, we may (and do) assume that $u \in (R_n/2, R_n)$. 
By this feature, we have for any $0 \le \vep < 1$, 
\begin{equation}  \label{e:bound.of.1}
1= e^{-\half (d-1-\vep)u} \cdot e^{\half (d-1-\vep)u} \le n^{-\frac{1}{2}(1-\vep/(d-1))} e^{\half (d-1-\vep)u}. 
\end{equation}

We begin with $B_n(u)$: For $u \in (R_n/2, R_n)$, it follows from Lemma \ref{l:Lemma1.OY} $(i)$ that 
\begin{align*}
B_n(u) &\le Cn^{m-1} \int_{R_n/2 \le t_2 \le \cdots \le t_{m-1}\le u }\prod_{i=2}^{m-1} e^{-\al(d-1)t_i} \int_{t_1=R_n-t_2}^{t_2} e^{-\al(d-1)t_1} \\
&\le  C n^{m-1-2\al} \int_{R_n/2 \le t_2 \le \cdots \le t_{m-1}\le u }\prod_{i=3}^{m-1} e^{-\al(d-1)t_i}\\
&\le C u n^{m-1-2\al}  \int_{R_n/2 \le t_3 \le \cdots \le t_{m-1}\le u }\prod_{i=3}^{m-1} e^{-\al(d-1)t_i} \\
&\le C u n^{m-1-2\al}  \big( e^{-\half \al(d-1)R_n} \big)^{m-3} = Cun^{(m-1)(1-\al)}. 
\end{align*}
By \eqref{e:bound.of.1} with positive and sufficiently small $\vep$, 
\begin{align}
B_n(u) &\le C un^{(m-1)(1-\al)} n^{-\frac{1}{2}(1-\vep/(d-1))} e^{\half (d-1-\vep)u}\label{e:Bn(u).on1} \\
&= Cn^{(m-1)(1-\al)-\frac{1}{2} + \frac{\vep}{2(d-1)}} \mainu =o_n(1) \mainu. \notag 
\end{align}
The last equality follows from the assumption $\al>(2m-3)/(2m-2)$. 

For $m\ge4$ and $\ell\in \{2,\dots,m-2\}$, we next deal with $C_{n,\ell}(u)$. For $u\in (R_n/2, R_n)$, 
\begin{align*}
C_{n,\ell}(u) &\le n^{m-1} \int_\wholet \P \big( d_H(X_1,X_i) \le R_n, \, i=1,\dots,\ell \, \big| \bt \big) \\
&\qquad \qquad \qquad \qquad \qquad\times \one \{ t_1 + t_\ell \le R_n, \, t_1 + t_{\ell+1} >R_n \} \radpdf (\bt). 
\end{align*}
By Lemma \ref{l:Lemma4.OY}, if $t_1+t_\ell\le R_n$, then 
\begin{equation}  \label{e:prob.X1.others}
\P \big( d_H(X_1,X_i) \le R_n, \, i=1,\dots,\ell \, \big| \bt \big) \le Cn^{-(\ell-1)} e^{\half (d-1)(\ell-1)t_1} \prod_{i=2}^{\ell} e^{\half (d-1)t_i}, 
\end{equation}
and applying Lemma \ref{l:Lemma1.OY} $(i)$, together with \eqref{e:prob.X1.others}, 
\begin{align*}
C_{n,\ell}(u) &\le Cn^{m-\ell} \int_\wholet e^{\half (d-1)(\ell-1-2\al)t_1} \prod_{i=2}^{\ell} e^{\half (d-1)(1-2\al)t_i} \\
&\qquad \qquad\qquad  \times \prod_{i=\ell+1}^{m-1}e^{-\al(d-1)t_i} \one \{ t_1+t_\ell \le R_n, \, t_1+t_{\ell+1} >R_n \}\\
&= Cn^{m-\ell} \int_{t_1=R_n-u}^{R_n/2} e^{\half (d-1)(\ell-1-2\al)t_1} \int_{(t_2,\dots, t_\ell):\, t_1\le t_2 \le \cdots \le t_\ell \le R_n-t_1} \prod_{i=2}^{\ell} e^{\half (d-1)(1-2\al)t_i}\\
&\qquad \qquad\qquad  \times \int_{(t_{\ell+1}, \dots, t_{m-1}):\, R_n-t_1 < t_{\ell+1} \le \cdots \le t_{m-1} \le u}\prod_{i=\ell+1}^{m-1}e^{-\al(d-1)t_i}\\
&\le Cn^{m-\ell} \int_{t_1=R_n-u}^{R_n/2} e^{\half (d-1)(\ell-1-2\al)t_1} \Big(  \int_{t_1}^{R_n-t_1} e^{\half (d-1) (1-2\al)t}\dif t\Big)^{\ell-1}\\
&\qquad \qquad \qquad \qquad\qquad \qquad \qquad \qquad\times \Big(  \int_{R_n-t_1}^{u} e^{-\al(d-1)t}\dif t\Big)^{m-\ell-1}\\
&\le Cn^{m-\ell-2(m-\ell-1)\al} \int_{t_1=R_n-u}^{R_n/2} e^{(d-1) (\ell-1+(m-2\ell-1)\al)t_1}\\
&\le C n^{m-\ell-2(m-\ell-1)\al} e^{\half (d-1) (\ell-1+(m-2\ell-1)\al)R_n} \\
&= Cn^{(m-1)(1-\al)}.  
\end{align*}
By \eqref{e:bound.of.1} with $\vep=0$, it holds that 
\begin{equation}  \label{e:Cn(u).on1}
C_{n,\ell}(u)\le Cn^{(m-1)(1-\al)-1/2 } \mainu =o_n(1) \mainu. 
\end{equation}

Next, let $m\ge3$ and $u\in (R_n/2,R_n)$. It again follows from Lemmas \ref{l:Lemma1.OY} and \ref{l:Lemma4.OY} that 
\begin{align*}
D_n(u) &\le n^{m-1} \int_\wholet \P\big( d_H(X_1,X_i)\le R_n, \, i=2,\dots,m-1 \, \big| \bt \big) \\
&\qquad\qquad\qquad\qquad\qquad\qquad\times  \one \{ t_1+t_{m-1}\le R_n, \, t_1+u >R_n \} \radpdf (\bt)  \\
&\le Cn \int_\wholet e^{\half (d-1)(m-2-2\al)t_1} \prod_{i=2}^{m-1} e^{\half (d-1)(1-2\al)t_i} \one \{t_1 + u>R_n\}   \\
&=Cn \int_{t_1=R_n-u}^{R_n/2} e^{\half (d-1)(m-2-2\al)t_1} \int_{(t_2,\dots,t_{m-1}):\, t_1 \le t_2 \le \cdots \le t_{m-1}\le u}\prod_{i=2}^{m-1} e^{\half (d-1)(1-2\al)t_i}  \\
&\le Cn  \int_{t_1=R_n-u}^{R_n/2} e^{(d-1)(m-2-(m-1)\al)t_1}  \\
&\le Cne^{(d-1)(m-2-(m-1)\al)(R_n-u)}  \\
&=Cn^{2m-3-(2m-2)\al} \mainu \cdot e^{\half (d-1)((2m-2)\al-(2m-3))u}.  
\end{align*}
If $u\in (R_n/2, \gaR)$, then 
\begin{align}
D_n(u)&\le Cn^{2m-3-(2m-2)\al} \mainu \cdot e^{\half (d-1)((2m-2)\al-(2m-3))\gaR} \label{e:Dn(u).on1.no.omega}\\
&=Cn^{(1-\gamma) (2m-3-(2m-2)\al)} \mainu =o_n(1) \mainu. \notag 
\end{align}
On the other hand, if $u\in (R_n/2, R_n)$ (so $u$ may be larger than $\gaR$), then 
\begin{equation}  \label{e:Dn(u).on1.omega}
D_n(u)\le Cn^{2m-3-(2m-2)\al} \mainu \cdot e^{\half (d-1)((2m-2)\al-(2m-3))R_n} =C\mainu. 
\end{equation}
In the latter  case, if the previously omitted term $\omega_n$ is restored, a more accurate bound can be expressed as 
$$
D_n(u)\le C e^{c_0\omega_n} \cdot \mainu, 
$$
for some constant $c_0>0$. In contrast, in the former case of \eqref{e:Dn(u).on1.no.omega}, even after restoring  $\omega_n$, we still obtain 
$$
D_n(u) \le Cn^{(1-\gamma) (2m-3-(2m-2)\al)} e^{c_0\omega_n}\cdot \mainu =o_n(1)  \mainu. 
$$
Namely, the contribution of $\omega_n$ in the former case  is  negligible in the final bound for $D_n(u)$.

Finally, we observe that the indicator function in $E_n(u)$ can be bounded above by $\one \{ t_1+u \le R_n, \, t_{m-1}+u >R_n \}$, and by Lemmas \ref{l:Lemma1.OY} and \ref{l:Lemma4.OY}, we have for $u\in (R_n/2,R_n)$, 
\begin{align*}
E_n(u) &\le n^{m-1} \int_\wholet \P \big( d_H(X_1,X_i) \le R_n, \, i=2,\dots,m-1, \ d_H(X_1,p) \le R_n \, \big| \bt \big) \\
&\qquad \qquad\qquad \qquad\qquad \qquad\qquad \qquad \times \one \{ t_1+u\le R_n, \, t_{m-1}+ u >R_n \} \\
&\le C\mainu \int_\wholet e^{\half (d-1)(m-1-2\al)t_1} \prod_{i=2}^{m-1} e^{\half (d-1)(1-2\al)t_i}\\
&\qquad \qquad \qquad \qquad \qquad \qquad \qquad \times \one \{t_1 + u\le R_n, \, t_{m-1}+u >R_n \}\\
&=C\mainu \int_{t_1=0}^{R_n-u} e^{\half (d-1)(m-1-2\al)t_1} \int_{(t_2,\dots, t_{m-2}):\, t_1 \le t_2 \le \cdots \le t_{m-2}\le u}  \prod_{i=2}^{m-2} e^{\half (d-1)(1-2\al)t_i}  \\
&\qquad \qquad \qquad \qquad \qquad \qquad \qquad \qquad \times \int_{t_{m-1}=(R_n-u)\vee t_{m-2}}^u e^{\half (d-1)(1-2\al)t_{m-1}}\\
&\le C\mainu \cdot e^{\half (d-1)(1-2\al)(R_n-u)} \int_{t_1=0}^{R_n-u} e^{\half (d-1)(m-1-2\al)t_1}  \Big( \int_{t_1}^u e^{\half (d-1) (1-2\al)t}\dif t \Big)^{m-3}  \\
&\le C\mainu \cdot e^{\half (d-1)(1-2\al)(R_n-u)} \int_{t_1=0}^{R_n-u} e^{(d-1)(m-2)(1-\al)t_1}\\
&\le C\mainu \cdot e^{\half (d-1)(2m-3-(2m-2)\al)(R_n-u)}. 
\end{align*}
If $u\in (R_n/2, \gaR)$, then 
\begin{align}
E_n(u) &\le C\mainu \cdot e^{\half (d-1)(2m-3-(2m-2)\al)(1-\gamma)R_n} \label{e:En(u).on1.no.omega} \\
&\le Cn^{(1-\gamma)(2m-3-(2m-2)\al)} \mainu = o_n(1) \mainu. \notag 
\end{align}
Furthermore, if $u\in (R_n/2,R_n)$, we use $u< R_n$ (instead of $u<\gaR$ as above) and restore $\omega_n$, which yields  that 
\begin{equation}  \label{e:En(u).on1.omega}
E_n(u)\le Ce^{c_0\omega_n}\cdot \mainu. 
\end{equation}

It now follows from \eqref{e:Bn(u).on1}, \eqref{e:Cn(u).on1}, \eqref{e:Dn(u).on1.no.omega}, and \eqref{e:En(u).on1.no.omega} that for all $n\ge1$ and $u\in (0,\gaR)$, 
$$
 B_n(u) + \sum_{\ell=2}^{m-2} C_{n,\ell}(u) + D_n(u) + E_n(u) \le C\beta_n \mainu, 
$$
for some sequence $\beta_n\to0$. If $u\in (0,R_n)$, we obtain from \eqref{e:Bn(u).on1}, \eqref{e:Cn(u).on1},  \eqref{e:Dn(u).on1.omega}, and \eqref{e:En(u).on1.omega} that 
$$
 B_n(u) + \sum_{\ell=2}^{m-2} C_{n,\ell}(u) + D_n(u) + E_n(u) \le C e^{c_0\omega_n}\cdot  \mainu. 
$$
\end{proof}
\medskip

\subsection{Proof of \eqref{e:main(i).cond4.clique}}

\begin{lemma}    \label{l:Pruss.maximal.inequ.clique}
We have, as $n\to\infty$, 
$$
\P \bigg( \sup_{0\le t \le 1} \bigg| \sum_{i=1}^{[N_nt]} \Big(\mC_{m,n}(X_i)\one \{ U_i \le c_1 R_n \} -\E \big[ \mC_{m,n}(X_i)\one \{ U_i \le c_1 R_n \} \big] \Big) \bigg|>\eta b_{m,n} \bigg) \to 0. 
$$
\end{lemma}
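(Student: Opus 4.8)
The plan is to transcribe the proof of Lemma \ref{l:Pruss.maximal.inequ} with the star moment bounds replaced by their clique counterparts. Since the summands $\mC_{m,n}(X_i)\one\{U_i \le c_1 R_n\} - \E[\mC_{m,n}(X_i)\one\{U_i \le c_1 R_n\}]$, $i=1,\dots,N_n$, are exchangeable, Pruss's maximal inequality for exchangeable sequences (Corollary 2 in \cite{pruss:1998}), applied conditionally on $N_n$, followed by the elementary bound $\E[\text{Var}(\,\cdot\mid N_n)] \le \text{Var}(\,\cdot\,)$, shows that the probability in question is at most
\begin{align*}
\frac{(C^*)^3}{\eta^2 b_{m,n}^2}\, \text{Var}\Big( \sum_{i=1}^{\lceil N_n/2 \rceil} \mC_{m,n}(X_i)\one\{ U_i \le c_1 R_n \} \Big)
\end{align*}
for a universal constant $C^* \in (0,\infty)$, where $c_1$ satisfies \eqref{e:constraint.c1.clique}. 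Applying the Mecke formula, this variance splits, exactly as in \eqref{e:var.Dn(X)}, into a diagonal term $\tfrac{n}{2b_{m,n}^2}\int_0^{c_1 R_n}\E[\mC_{m,n}(u)^2]\radpdf(u)\,\dif u$ and a covariance term involving $\text{Cov}\big(\mC_{m,n}(u,\btheta),\mC_{m,n}(v,\bphi)\big)$, plus an asymptotically negligible remainder; it remains to show both terms vanish.

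For the diagonal term I would write $\E[\mC_{m,n}(u)^2] = \text{Var}(\mC_{m,n}(u)) + \nu_{m,n}(u)^2$ and invoke Proposition \ref{p:var.clique} and Proposition \ref{p:exp.clique}$(iii)$ to obtain, for all $n\ge1$ and $u \in (0, c_1 R_n)$, a bound of the form $\E[\mC_{m,n}(u)^2] \le Ce^{c\omega_n}\big( e^{\half(d-1)(2m-3-(2m-4)\al)u} + e^{(d-1)u}\big)$. Using $n/b_{m,n}^2 = Cn^{1-1/\al}$, the bound $\radpdf(u) \le Ce^{-\al(d-1)u}$ from Lemma \ref{l:Lemma1.OY}, and integrating up to $c_1 R_n$, the diagonal term is at most a constant times $e^{c\omega_n}$ multiplied by a power of $n$; the constraints $\tfrac{2m-3}{2m-2} < \al < 1$ and $c_1 < \tfrac{1}{2\al}$ make this exponent strictly negative, and since $e^{c\omega_n} = (\log R_n)^{c}$ is subpolynomial, the diagonal term tends to $0$.

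The covariance term is where the real work lies. One more application of the Mecke formula decomposes it over the number $\ell \in \{1,\dots,m-1\}$ of Poisson points shared by the two $m$-cliques (the case $\ell=0$ contributes nothing, since the two clique indicators are then measurable with respect to disjoint parts of the process), giving finitely many terms of the form $\big(\tfrac{n}{2b_{m,n}}\big)^2\int_{\mA_d}\int_{\mA_d}\int_0^{c_1R_n}\int_0^{c_1R_n} K_{n,\ell}(p,q)\,\radpdf(u)\radpdf(v)\pi(\btheta)\pi(\bphi)\,\dif v\,\dif u\,\dif\bphi\,\dif\btheta$ with $p=(u,\btheta)$, $q=(v,\bphi)$, as in \eqref{e:cov.K.nell}. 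For each $\ell$ the clique-pair probability $K_{n,\ell}(p,q)$ must be bounded: following the mechanism of the proof of Proposition \ref{p:var.clique}, I would argue that the binding configuration is the one in which the shared vertex closest to the boundary carries the heaviest exponential weight, select an acyclic spanning subgraph of the union of the two cliques so that the conditional probability factorizes over its edges, estimate each edge factor by Lemma \ref{l:Lemma4.OY} as a constant times $n^{-1}e^{\half(d-1)\omega_n}e^{\half(d-1)(t_i+t_j)}$ (cf.\ \eqref{e:plus.omega}), write the probability that a shared vertex lies within $R_n$ of both $p$ and $q$ as an integral over $\mA_d$, and apply Fubini's theorem to integrate the radial variables first. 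With all radial integrals confined to $(0,c_1 R_n)$, each $K_{n,\ell}$-term is bounded by $Cn^{1-1/\al}e^{c\omega_n}\int_0^{c_1R_n}e^{(d-1)\beta_\ell t}\,\dif t$ for an exponent $\beta_\ell$ determined by $\ell$, $m$, and $\al$; as in the final paragraph of the proof of Lemma \ref{l:Pruss.maximal.inequ}, the constraints $\tfrac{2m-3}{2m-2} < \al < 1$ and $c_1 < \tfrac{1}{2\al}$ drive the resulting power of $n$ strictly below zero, so each term vanishes.

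\textbf{Main obstacle.} The delicate part is the covariance term: organizing the overlap cases and, for each, identifying the correct dominant spanning subforest for $K_{n,\ell}(p,q)$. Unlike the star graph, an $m$-clique has many internal edges, so one must argue carefully (as in Case $(i)$ of Proposition \ref{p:var.clique}) which internal connections produce the leading exponential order and which are negligible before the probability can be reduced to a product of edge factors and the radial integrals carried out. Everything else — the reduction via Pruss's inequality, de-Poissonization, and the diagonal estimate — is a routine adaptation of the star argument.
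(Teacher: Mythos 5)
Your architecture coincides with the paper's: Pruss's inequality conditionally on $N_n$, the Mecke decomposition of the variance into a diagonal term and an overlap (covariance) term, and the diagonal estimate from Propositions \ref{p:exp.clique} and \ref{p:var.clique} are exactly the paper's steps, and your diagonal computation is sound. The gap is that the whole content of the lemma sits in the overlap term, and there your proposal stays at the level of intentions: you say you would "identify the correct dominant spanning subforest" and obtain a bound $Cn^{1-1/\al}e^{c\omega_n}\int_0^{c_1R_n}e^{(d-1)\beta_\ell t}\,\dif t$ with a negative resulting power of $n$, but you never exhibit the subforest, never compute $\beta_\ell$, and never verify negativity on the whole range $\frac{2m-3}{2m-2}<\al<1$; you even flag this as an unresolved obstacle. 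Since everything else is, as you note, a transcription of the star case, this unexecuted step is the proof.

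For comparison, the paper resolves it by extending the configuration of Proposition \ref{p:var.clique} rather than the star argument: with $q$ shared random vertices, it takes $Z$ the shared vertex closest to the boundary, $Y_1$ (resp.\ $Z_1$) the non-shared vertex of the first (resp.\ second) clique closest to the boundary, restricts to the worst case in which $Z$ lies radially above both, and retains only the edges along the two paths $Y_1\to Z$, $Y_1\to Y_2\to\cdots\to Y_{m-2}\to p$ and $Z_1\to Z$, $Z_1\to Z_2\to\cdots\to Z_{m-q-1}\to p'$ together with the radial orderings. No vertex in this structure is required to be adjacent to both $p$ and $p'$, so the constraint graph is a forest, the conditional probability factorizes into edge factors of the form $Cn^{-1}e^{\half(d-1)\omega_n}e^{\half(d-1)(t_i+t_j)}$ as in \eqref{e:ignore.restriction.omega}, and the radial integrals give, uniformly in $q$, a bound of order $n^{1-1/\al}e^{(d-1)(m-1)\omega_n}\int_0^{c_1R_n}e^{(d-1)(2m-2-(2m-1)\al)u}\,\dif u$; the final power of $n$ is strictly negative (up to a possible $\log n$ in the boundary regime) precisely because of \eqref{e:constraint.c1.clique} and $\al>\frac{2m-3}{2m-2}$, and checking this is not automatic — it is where the hypothesis on $\al$ enters. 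In particular, the step you import from Lemma \ref{l:Pruss.maximal.inequ}, namely keeping the joint constraint that a shared vertex lies within $R_n$ of both $p$ and $q$ and integrating it over $\mA_d$ via Fubini, plays no role in the clique case: it is harmless as an upper bound but unnecessary, and relying on it does not by itself produce the exponent $2m-2-(2m-1)\al$ that makes the estimate close.
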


\begin{proof}
Analogously to \eqref{e:var.Dn(X)}, we need to prove that as $n\to\infty$, 
$$
\frac{n}{b_{m,n}^2}\, \int_{0}^{c_1R_n} \E \big[ \mC_{m,n}(u)^2 \big] \radpdf (u) \dif u \to 0, 
$$
and 
\begin{align}  
&\Big(\frac{n}{b_{m,n}} \Big)^2 \int_{\mA_d}\int_{\mA_d} \int_{0}^{c_1R_n}  \int_{0}^{c_1R_n} \text{Cov} \big( \mC_{m,n}(u,\btheta), \, \mC_{m,n}(v,\bphi) \big) \label{e:cov.clique} \\
&\qquad \qquad \qquad \qquad \qquad \qquad \qquad \times  \radpdf (u)\radpdf (v) \pi(\btheta)\pi(\bphi)\dif v \dif u \dif  \bphi \dif \btheta \to 0. \notag
\end{align}
By Proposition \ref{p:exp.clique} $(i)$ and Proposition \ref{p:var.clique}, 
$$
 \E \big[ \mC_{m,n}(u)^2 \big] =\text{Var}\big(\mC_{m,n}(u)  \big)+\nu_{m,n}(u)^2 \le Ce^{\tilde c_1\omega_n}\cdot e^{(d-1)u}, 
$$
for some $\tilde c_1\in (0,\infty)$, 
from which we get that 
\begin{align*}
\frac{n}{b_{m,n}^2}\, \int_{0}^{c_1R_n} \E \big[ \mC_{m,n}(u)^2 \big] \radpdf (u) \dif u &\le Cn^{1-1/\al}e^{\tilde c_1\omega_n} \int_0^{c_1R_n} e^{(d-1)(1-\al)u}\dif u \\
&\le Cn^{(1-\al)(2c_1-\frac{1}{\al})} e^{\tilde c_1\omega_n} \to 0, \ \ \ n\to\infty. 
\end{align*}

Appealing to the Mecke formula as in \eqref{e:cov.K.nell}, one can see that \eqref{e:cov.clique} is equal to 
\begin{align}
&\sum_{q=1}^{m-1}\Big( \frac{n}{b_{m,n}} \Big)^2 \int_{\mA_d}\int_{\mA_d} \int_{0}^{c_1R_n}  \int_{0}^{c_1R_n}  \frac{1}{\big((m-q-1)!\big)^2 q!}\label{e:cov.K.nq}\\
&\quad\times \E \bigg[ \sum_{(Y_1,\dots,Y_{2(m-1)-q})\in (\Pn)_{\neq}^{2(m-1)-q}} \hspace{-25pt} h_n(p,Y_1,\dots,Y_{m-1}) h_n(p',Y_1,\dots,Y_q, Y_m,\dots,Y_{2(m-1)-q})\bigg] \notag \\
&\qquad \qquad \qquad \qquad\qquad \qquad\qquad \qquad \times  \radpdf (u)\radpdf (v) \pi(\btheta)\pi(\bphi)\dif v \dif u \dif  \bphi \dif \btheta \notag \\
&=: \sum_{q=1}^{m-1}\Big( \frac{n}{b_{m,n}} \Big)^2 \int_{\mA_d}\int_{\mA_d} \int_{0}^{c_1R_n}  \int_{0}^{c_1R_n}  K_{n,q}(p,p') \, \radpdf (u)\radpdf (v) \pi(\btheta)\pi(\bphi) \dif v \dif u \dif  \bphi \dif \btheta, \notag 
\end{align}
where we set $p=(u,\btheta)$, $p'=(v,\bphi)$ with $u=R_n-d_H(o,p)$, $v=R_n-d_H(o,p')$, and $h_n(\cdot)$ is defined in \eqref{e:def.hn}. 

As in the proof of Proposition \ref{p:var.clique},  we consider two $m$-cliques,  again denoted by $\mC_1$ and $\mC_2$.  We first define the three points $Z=(W,\Psi)$, $Y_1=(V_1,\Ta_1)$, and $Z_1=(W_1,\Phi_1)$  in the same way as  the proof of Proposition \ref{p:var.clique},    and we again restrict attention to the case $W \ge V_1 \vee W_1$. Suppose without loss of generality that $p\in \mC_1$ and $p'\in \mC_2$. We now take the path formed by the nodes in $\mC_1$: 
\begin{equation} \label{e:path.no.removal.cov}
Y_1 \to Z, \ \ Y_1\to Y_2 \to \cdots \to Y_{m-2} \to p, 
\end{equation}
where $Y_i=(V_i, \Ta_i)$, $V_i=R_n-d_H(o,Y_i)$, $i=1,\dots,m-2$, such that $V_1\le \cdots \le V_{m-2}\le u$. Note that all the $m$ nodes in \eqref{e:path.no.removal.cov} form $\mC_1$. Additionally, we consider the path consisting of nodes in $\mC_2$: 
\begin{equation} \label{e:path.removal.cov}
Z_1 \to Z, \ \ Z_1\to Z_2 \to\cdots \to Z_{m-q-1} \to p',
\end{equation}
where $Z_i=(W_i, \Phi_i)$, $W_i=R_n-d_H(o,Z_i)$, $i=1,\dots,m-q-1$, such that $W_1 \le \cdots \le W_{m-q-1}\le v$. 

Under these configurations, we can bound $K_{n,q}(p,p')$ in \eqref{e:cov.K.nq} by 
\begin{align*}
L_{n,q}(p,p') &:= n^{2m-q-2}\, \P \big( \max_{1\le i \le m-3}d_H(Y_i,Y_{i+1})\le R_n, \, d_H(Y_{m-2},p)\le R_n, \\
&\qquad \qquad \qquad \quad  \max_{1\le i \le m-q-2} d_H(Z_i, Z_{i+1})\le R_n, \, d_H(Z_{m-q-1}, p')\le R_n, \\ 
&\qquad \qquad \qquad \quad d_H(Y_1,Z)\le R_n, \, d_H(Z_1,Z)\le R_n, \\
&\qquad \qquad \qquad \quad V_1 \le \cdots \le V_{m-2} \le u, \, W_1 \le \cdots \le W_{m-q-1} \le v,\, V_1 \vee W_1 \le W \le u \wedge v \big). 
\end{align*}
Substituting $L_{n,q}(p,p')$ into \eqref{e:cov.K.nq}, we need to bound the integral 
\begin{align}
B_{n,q} &:= \Big( \frac{n}{b_{m,n}} \Big)^2 \int_{\mA_d}\int_{\mA_d} \int_{0}^{c_1R_n}  \int_{0}^{c_1R_n}  L_{n,q}(p,p') \, \radpdf (u)\radpdf (v) \pi(\btheta)\pi(\bphi) \dif v \dif u \dif  \bphi \dif \btheta \label{e:B.nq} \\
&= \frac{n^{2m-q}}{b_{m,n}^2}\, \int_{\mU_n(\bt, \bs, u, v, w)} \P \big( \max_{1\le i \le m-3}d_H(Y_i,Y_{i+1})\le R_n, \, d_H(Y_{m-2},A)\le R_n, \, \notag \\
&\qquad \qquad \qquad \qquad  \qquad\quad \max_{1\le i \le m-q-2} d_H(Z_i, Z_{i+1})\le R_n, \, d_H(Z_{m-q-1}, B)\le R_n, \, \notag \\
&\qquad \qquad \qquad \qquad  \qquad\quad  d_H(Y_1,Z)\le R_n, \, d_H(Z_1,Z)\le R_n\, \big|\, \bt, \bs, u, v, w\big) \notag \\
&\qquad \qquad \qquad \qquad  \qquad\quad  \radpdf (\bt)\radpdf (\bs)\radpdf (u)\radpdf (v) \radpdf (w), \notag
\end{align}
where, by the same abuse of notation as  in \eqref{e:main.variance.clique}, we set $Y_i=(t_i, \Ta_i)$ for $i=1,\dots,m-2$,  $Z_i=(s_i, \Phi_i)$ for $i=1,\dots,m-q-1$, and $Z=(w,\Psi)$. Furthermore,  we define $A=(u,\Ta)$ and $B=(v,\Phi)$, and
\begin{align*}
\mU_n(\bt, \bs, u, v, w) &:= \big\{  (\bt, \bs, u, v,w):  0\le t_1\le \cdots \le t_{m-2}\le u \le c_1R_n, \\
&\qquad \qquad \qquad \qquad \quad 0\le s_1\le \cdots \le s_{m-q-1}\le v \le c_1R_n, \, t_1 \vee s_1 \le w\le u \wedge v \big\}. 
\end{align*}

The configurations in \eqref{e:path.no.removal.cov} and \eqref{e:path.removal.cov} do not contain any cycles, so by  independence of the angular components, the conditional probability in \eqref{e:B.nq}  can be written in product form as in \eqref{e:prod.form.var.clique}. Applying bounds of the type \eqref{e:ignore.restriction.omega} repeatedly to this product representation, the conditional probability in \eqref{e:B.nq} can be bounded above by
$$
C (n^{-1}e^{\half (d-1)\omega_n})^{2m-q-1} \prod_{i=1}^{m-2} e^{(d-1)t_i} \prod_{i=1}^{m-q-1} e^{(d-1)s_i} \cdot e^{(d-1)w}\cdot e^{\half (d-1) u}\cdot e^{\half (d-1)v}
. 
$$
Applying this bound as well as Lemma \ref{l:Lemma1.OY}, 
\begin{align*}
B_{n,q} &\le \frac{Cn}{b_{m,n}^2} e^{(d-1)(m-1)\omega_n} \int_{\mU_n(\bt, \bs, u, v, w)}  \prod_{i=1}^{m-2} e^{(d-1)(1-\al)t_i} \prod_{i=1}^{m-q-1} e^{(d-1)(1-\al)s_i}\\
&\qquad \qquad \qquad \qquad \qquad  \times e^{(d-1)(1-\al)w}\cdot e^{\half (d-1)(1-2\al) u}\cdot e^{\half (d-1)(1-2\al)v}  \\
&= \frac{Cn}{b_{m,n}^2} e^{(d-1)(m-1)\omega_n} \int_{u=0}^{c_1R_n} e^{\half (d-1)(1-2\al) u} \int_{v=0}^{c_1R_n} e^{\half (d-1)(1-2\al) v} \int_{w=0}^{u \wedge v} e^{ (d-1)(1-\al) w} \\
&\quad \times \int_{t_1=0}^w e^{(d-1)(1-\al)t_1} \int_{s_1=0}^w e^{(d-1)(1-\al)s_1}  \int_{(t_2,\dots,t_{m-2}):\, t_1 \le t_2\le \cdots \le t_{m-2}\le u} \prod_{i=2}^{m-2} e^{(d-1)(1-\al)t_i} \\
&\quad  \times  \int_{(s_2,\dots,s_{m-q-1}):\, s_1 \le s_2\le \cdots \le s_{m-q-1}\le v}  \prod_{i=2}^{m-q-1} e^{(d-1)(1-\al)s_i} \\
&\le \frac{Cn}{b_{m,n}^2} e^{(d-1)(m-1)\omega_n} \int_{u=0}^{c_1R_n} e^{\half (d-1)(2m-5-(2m-4)\al) u}  \int_{v=0}^{c_1R_n} e^{\half (d-1)(2m-5-(2m-4)\al) v} \\
&\qquad \qquad \times  \int_{w=0}^{u \wedge v} e^{ (d-1)(1-\al) w}  \int_{t_1=0}^w e^{(d-1)(1-\al)t_1} \int_{s_1=0}^w e^{(d-1)(1-\al)s_1} \\
&\le \frac{Cn}{b_{m,n}^2} e^{(d-1)(m-1)\omega_n} \int_{u=0}^{c_1R_n} e^{\half (d-1)(2m-5-(2m-4)\al) u} \\
&\qquad\qquad\qquad\qquad\qquad\times   \int_{v=0}^{c_1R_n} e^{\half (d-1)(2m-5-(2m-4)\al) v} \cdot e^{(d-1)(3-3\al)(u\wedge v)} \\
&= \frac{2Cn}{b_{m,n}^2} e^{(d-1)(m-1)\omega_n}\int_{u=0}^{c_1R_n} e^{\half (d-1)(2m+1-(2m+2)\al) u}  \int_{v=u}^{c_1R_n} e^{\half (d-1)(2m-5-(2m-4)\al) v} \\
&\le Cn^{1-1/\al} e^{(d-1)(m-1)\omega_n}\int_{u=0}^{c_1R_n} e^{ (d-1)(2m-2-(2m-1)\al) u}. 
\end{align*}
If $\frac{2m-2}{2m-1}<\al< 1$, then as $n\to\infty$, 
\begin{align*}
B_{n,q} &\le Cn^{1-\frac{1}{\al} + 2c_1(2m-2-(2m-1)\al)}  e^{(d-1)(m-1)\omega_n} \le Cn^{-(2m-2)+\frac{2m-3}{\al}} e^{(d-1)(m-1)\omega_n} \to 0, 
\end{align*}
where the second inequality follows from the constraint \eqref{e:constraint.c1.clique}. Finally, if $\frac{2m-3}{2m-2}<\al\le \frac{2m-2}{2m-1}$, then $B_{n,q}\le Cn^{1-1/\al}  e^{(d-1)(m-1)\omega_n} \log n\to 0$ as $n\to\infty$. 
\end{proof}
\medskip


\end{document}